%% file: ZIM_EJP_preprint.tex
\documentclass[EJP,preprint]{ejpecp} 

\usepackage[utf8]{inputenc}
\usepackage[T1]{fontenc}
\usepackage{
	amsmath,
	bbm,
	float,
	mathtools,
	subcaption
}
\usepackage{mathrsfs}
\usepackage{enumerate}
\usepackage{nicefrac}
\usepackage{enumitem}
\usepackage{verbatim}
\usepackage{epsfig}
\usepackage{hyperref}

\usepackage[linesnumbered, ruled]{algorithm2e}
\usepackage{amssymb}
\usepackage[english]{babel}
\usepackage{csquotes}
\usepackage{yfonts}
\usepackage{color}
\usepackage{tikz}
\usetikzlibrary{decorations.pathreplacing}
\usetikzlibrary{shapes.geometric}

\mathtoolsset{showonlyrefs}

\SHORTTITLE{The Zombie Infection Model}

\TITLE{The Zombie Infection Model}

\AUTHORS{%
 Stein Andreas~Bethuelsen\footnote{University of Bergen, All\'egaten 41, 5020 BERGEN, Norway.   \EMAIL{stein.bethuelsen@uib.no} }
 \and 
 Erik~Broman\footnote{Chalmers University of Technology and Gothenburg University, 412 96 Göteborg, Sweden. \EMAIL{erik.broman@chalmers.se}}
  \and 
 Samuel~Modée\footnote{University of Bergen, All\'egaten 41, 5020 BERGEN, Norway. \EMAIL{samuel.modee@uib.no}}
 }

\KEYWORDS{zombie; stochastic growth model; monotonicity; phase transition} 

\AMSSUBJ{60K35; 82C43} 

\SUBMITTED{January 2, 2013} 
\ACCEPTED{December 13, 2014} 

\VOLUME{0}
\YEAR{2023}
\PAPERNUM{0}
\DOI{10.1214/YY-TN}

\ABSTRACT{
        We study a variant of the stochastic SIR model on graphs that has previously been introduced in the physics literature for modelling zombie outbreaks and here referred to as the Zombie Infection Model (ZIM). In this model , initially each node of a graph is either susceptible, infected or removed.
		As in the SIR model, a susceptible node becomes infected at rate $\lambda$
		times the number of its infected neighbours. Moreover, in the ZIM,
		an infected node is removed at rate $1$ times the number
		of its susceptible neighbours. 
        This process exhibits rich and sometimes counterintuitive behaviour. 

		By combining various coupling techniques, we provide a rigorous mathematical
		analysis of the model, focusing on monotonicity properties and the
		probability of the infection spreading indefinitely. One of our main results is that this probability is monotone with respect to an increase of $\lambda$ for the process on trees, but that there
		are graphs of bounded degree for which it is continuous and yet not monotone. We also establish bounds on this probability for the process on general graphs, and derive more precise results for complete graphs, regular trees, and the $d$-dimensional integer lattice.}

\DeclareMathOperator*{\argmin}{arg\,min}

\newcommand{\BE}{{\mathbb{E}}}

\newcommand{\BN}{{\mathbb{N}}}

\DeclareMathOperator{\BP}{\mathbb{P}}

\newcommand{\BT}{{\mathbb{T}}}

\newcommand{\BZ}{{\mathbb{Z}}}

\newcommand{\CA}{{\mathcal{A}}}

\newcommand{\CC}{{\mathcal{C}}}

\newcommand{\CF}{{\mathcal{F}}}

\newcommand{\CI}{{\mathcal{I}}}

\newcommand{\CT}{{\mathcal{T}}}

\newcommand{\CZ}{{\mathcal{Z}}}

\DeclareMathOperator{\ZIM}{ZIM}
\DeclareMathOperator{\SIR}{SIR}
\DeclareMathOperator{\contactprocess}{CP}
\DeclareMathOperator{\biasedvoter}{BV}
\DeclareMathOperator{\siteperc}{site}
\DeclareMathOperator{\bondperc}{bond}
\newcommand{\SIboundary}{\partial_{\mathrm{SI}}}

\newcommand{\Bernoulli}[1]{\mathrm{Bern}\!\left(#1\right)}
\newcommand{\Exponential}[1]{\mathrm{Exp}\!\left(#1\right)}
\newcommand{\Uniform}[2]{\mathrm{U}\!\left(#1, #2\right)}
\newcommand{\Binomial}[2]{\mathrm{Bin}\!\left(#1, #2\right)}
\newcommand{\Geometric}[1]{\mathrm{Geom}\!\left(#1\right)}

\begin{document}

	\section{Introduction}
	In this paper we study a variant of the classic SIR model that we
	call the Zombie Infection Model (ZIM).
	Informally, the SIR model concerns individuals (represented by nodes on a graph)
	who are in one of three possible states. They can be susceptible (S), infected
	(I) or recovered (R). The infection spreads from infected nodes to neighbouring susceptible nodes. After some random time, an infected individual recovers and is thereafter
	immune to further infections. The ZIM is similar in structure but with the
	crucial difference that an individual cannot recover ``on its own''. Instead,
	an infected individual, i.e.\ a zombie, will naturally attempt to attack its neighbours
	and bite them. The neighbours do not wish to be bitten, and therefore they fight
	back. If a healthy (i.e.\ susceptible) neighbour wins the fight against the zombie,
	the zombie is killed and is thereafter inactive. It may not be fair to refer to
	a dead zombie as ``recovered'', but it can be considered ``removed'' from the dynamics of the process. Thus, for the ZIM, S
	again refers to susceptible, I to infected (i.e.\ the individual is a zombie) and
	R to a dead (also referred to as \emph{removed}) individual.

	Both the SIR model and ZIM fall within the general class of models known as interacting
	particle systems as examples of stochastic growth models \cite{swart_lecnotes_2022}.
	In fact, as we detail in the next section, the ZIM can be seen as a hybrid
	between the SIR model and the biased voter (or Williams-Bjerknes tumour growth)
	model, another well-studied interacting particle system. Previously, the ZIM has
	appeared in the physics literature as a model for the spread of a zombie infection \cite{alemi_you_2015,munz_when_2009}.
	More recently a variant of it has been introduced in the applied sciences as a
	model for a rumor propagating on a network \cite{PhysRevE.101.062418}.

	We perform, to the best of our knowledge, a first rigorous study of the ZIM. Our initial
	aim was to prove that this model is monotone in the sense that increasing the bite
	rate increases the chance of the zombies spreading indefinitely.
	Such monotonicity is implicitly assumed in \cite{alemi_you_2015} (e.g.\ in
	their definition of a critical parameter value), and simulations
	of the process on $\BZ^{2}$ clearly indicate that the probability of a zombie
	outbreak (i.e., initiated with finitely many zombies, the number of zombies
	eventually tends to infinity) is monotone. However, there are many examples in
	the literature of processes similar to the ZIM which appear to be monotone
	at first consideration, but for which a rigorous mathematical proof of such a
	property is lacking. Examples include the SIRS model studied in
	\cite{DurrettNeuhauser1991, BergGrimmettSchinazi1998}, the chase-escape model \cite{bailey2025chaseescapeconversionmultiplesclerosis,DurrettJungeTang2020},
	and other infection processes \cite{Chatterjee2022}. In general, the study of monotonicity
	properties for processes where there is no obvious monotone coupling at hand is
	considered hard.

	In this paper, we conclude in Theorems \ref{thm:monoLambdaTree} and
	\ref{thm:monoStartTree} that, under certain additional assumptions, the ZIM is
	monotone with respect to either the starting configuration or the bite rate when the
	underlying graph is a tree. However, perhaps surprisingly, for general graphs,
	we conclude in Theorem~\ref{thm:NonMon} that the ZIM is not monotone with
	respect to any of its parameters. In fact, there are graphs of bounded degree on which the ZIM
	has positive probability of a zombie outbreak for some $\lambda_{1}$ and
	almost surely no zombie outbreak for some $\lambda_{2}>\lambda_{1}$, see Theorem
	\ref{thm:NonMonBit} below.

	In addition to studying the monotonicity question, we provide bounds on the
	probability of a zombie outbreak. For this, we apply various coupling
	techniques. By comparison with a simple random walk we conclude that the ZIM almost
	surely has no zombie outbreak if the bite rate is less than or equal to the kill rate,
	see Theorem~\ref{thm:exinction}. Moreover, we show that the ZIM has positive
	probability of a zombie outbreak for high bite rates if and only if ordinary
	site percolation exhibits a non-trivial phase transition, see Theorem~\ref{thm:ZIMsurvival}.
	More detailed results are provided when restricting to the ZIM on graphs with
	more structure, such as complete graphs, regular trees and the $d$-dimensional
	integer lattice, for which we refer to Theorems \ref{thm:ZIMcompelete}-\ref{thm:ZIMsurvivalZd2}
	below.

	\section{The models, main results and discussion}

	\subsection{The models}

	Let $G=(V,E)$ be a countable, connected and locally bounded graph. Then, the Zombie
	Infection Model (ZIM) on $G$ as informally described in the previous section is
	a two-parameter family of continuous-time Markov processes with parameters $\lambda
	, \mu\in(0,\infty)$. These processes evolve on the state space
	$\Omega\coloneqq \{S,I,R\}^{V}$ and are contained in the general class of
	processes characterized as interacting particle systems
	\cite{liggett_interacting_1985}. More precisely, letting
	$\mathcal{C}(\Omega,\mathbb{R})$ denote the set of bounded and continuous functions
	$f \colon \Omega \rightarrow \mathbb{R}$, the ZIM can be specified by its pre-generator
	$L_{\lambda,\mu}^{\ZIM}\colon \mathcal{C}(\Omega,\mathbb{R}) \mapsto \mathcal{C}(\Omega,\mathbb{R}
	)$, where, for $\omega \in \Omega$,
	\begin{equation}
		\label{eq:ZIM_generator}L_{\lambda,\mu}^{\ZIM}f(\omega) \coloneqq \sum_{\substack{x \in V \\ \omega(x)=I}}\!
		\left( \mu \sum_{\substack{y \in V \colon \\ \omega(y)=S \\ x\sim y}}[f(\omega
		^{x \leftarrow R})-f(\omega)] + \lambda \sum_{\substack{y \in V \colon \\ \omega(y)=S \\ x\sim y}}
		[f(\omega^{y \leftarrow I})-f(\omega)] \right).
	\end{equation}
	Here, for $z \in V$ and $i \in \{I,R\}$, we denote by
	$\omega^{z \leftarrow i}$ the configuration where
	$\omega^{z \leftarrow i}(z)=i$ and $\omega^{z \leftarrow i}(x)=\omega(x)$ for
	any $x\neq z$, and we use $x\sim y$ to indicate $\{x,y\} \in E$.

	Unless otherwise specified, we restrict to initial states contained in the
	subspace
	\begin{equation}
		\Omega_{<\infty}\coloneqq \left\{ \omega \in \Omega \colon \sum_{y \in V}1_{\{\omega(y)=I\}}
		\in \BN, \sum_{y \in V}1_{\{\omega(y)=R\}}=0 \right\} \subset \Omega,
	\end{equation}
	i.e.\ the configurations having non-zero but finitely many nodes in state $I$
	and no nodes in the $R$ state (using the convention $0\notin\BN$). Note that infected nodes are necessary for the dynamics
	of the ZIM to evolve. Conversely, a node in state $R$ does not play any part in the
	dynamics of the process and it is equivalent to study the evolution of the process on the graph
	obtained by removing that node and the edges to which it is connected.

	For $\omega \in \Omega_{<\infty}$, the evolution of the ZIM, which we denote by
	$(\eta_{t})=(\eta_{t})_{t\geq 0}$, is governed by the quadruple
	$(\lambda,\mu,G, \Delta)$ where
	$\Delta = \{x \in V \colon \eta_{0}(x)=I\} \Subset V$ is the initial set of
	nodes in the infected state. We denote the distribution of this process by $\BP
	_{\lambda,\mu,G,\Delta}^{\ZIM}$. The notation $\Subset$ is used to indicate
	that $\Delta$ is finite and non-empty. If $\Delta = \{x\}$ is a singleton,
	we use the shorthand notation $\BP_{\lambda,\mu,G,x}^{\ZIM}$ for the distribution of the process started at $x$. Furthermore,
	as we conclude in Proposition~\ref{prop:scale_invariance}, for any $c\in (0,\infty
	)$ the law of $\BP^{\ZIM}_{\lambda,\mu,G,\Delta}$ agrees with that of $\BP^{\ZIM}
	_{c \lambda, c\mu,G,\Delta}$ after a scaling of time. Therefore, unless otherwise
	stated, we set $\mu=1$ and write $\BP^{\ZIM}_{\lambda,G,\Delta}$ for
	$\BP^{\ZIM}_{\lambda,1,G,\Delta}$.

	As alluded to in the introduction, the ZIM is related to several classic interacting
	particle systems. For example, the (Markovian) SIR model on $G$ with parameters
	$\lambda,\mu \in (0,\infty)$ is the process $(\xi_{t})=(\xi_{t})_{t\geq 0}$ specified
	by its pre-generator $L_{\lambda,\mu}^{\SIR}\colon \mathcal{C}(\mathbb{R}) \mapsto
	\mathcal{C}(\mathbb{R})$, where, for $\omega \in \Omega$,
	\begin{equation}
		\label{eq:SIR_generator}L_{\lambda,\mu}^{\SIR}f(\omega) \coloneqq \sum_{\substack{x \in V \\ \omega(x)=I}}\!
		\left( \mu [f(\omega^{x \leftarrow R})-f(\omega)] + \lambda \sum_{\substack{y \in V \colon \\ \omega(y)=S \\ x\sim y}}
		[f(\omega^{y \leftarrow I})-f(\omega)] \right),
	\end{equation}
	i.e., comparing with \eqref{eq:ZIM_generator}, the term
	\begin{equation*}
		\sum_{\substack{y \in V \colon \\ \omega(y)=S \\ x\sim y}}[f(\omega^{x
		\leftarrow R})-f(\omega)]
	\end{equation*}
	is simply replaced by the term $[f(\omega^{x \leftarrow R})-f(\omega)]$. Thus,
	in the SIR model, a node in the infected state transfers to the state $R$ at rate $\mu$
	independently of the states at any of the other nodes. 
	Similar to the ZIM, we denote by $\BP_{\lambda,\mu,G,\Delta}^{\SIR}$ the law
	of this process on $G$ with parameters $\lambda$, $\mu$ and starting configuration
	$\xi_{0}\in \Omega_{<\infty}$ so that $\Delta = \{x\in V \colon \xi_{0}(x)=I\}$,
	and write $\BP_{\lambda,G,\Delta}^{\SIR}$ when $\mu=1$.

	The above definition of the SIR model gives a Markovian process. In particular,
	the time that it takes for an infected individual to transfer to the state
	$R$ is exponentially distributed with rate $\mu$. In the literature, e.g.\ in \cite{andjel_shape_2011,cox_limit_1988}, the term ``SIR model'' often refers to the
	version where this time can take any distribution on $[0,\infty)$. To distinguish between
	the two, we call this latter model the general SIR model.

	The biased-voter model mentioned in the introduction is obtained
	from \eqref{eq:ZIM_generator} by interchanging $[f(\omega^{x \leftarrow R})-f(\omega
	)]$ with $[f(\omega^{x \leftarrow S})-f(\omega)]$. Thus, this process has the same
	interactions as the ZIM, but instead of recovering or dying, an infected individual transfers
	back to the susceptible state. When initially there are no nodes in the $R$-state,
	this provides a process that evolves on $\{S,I\}^{V}$. Similarly, the well-studied
	contact process, or SIS model, is obtained from \eqref{eq:SIR_generator} by
	interchanging $[f(\omega^{x \leftarrow R})-f(\omega)]$ with
	$[f(\omega^{x \leftarrow S})-f(\omega)]$. Also this process, when initially
	there are no nodes in the $R$-state, evolves solely on $\{S,I\}^{V}$.
	Analogously with the ZIM and the SIR model, we denote by $\BP_{\lambda, \mu, G, \Delta}
	^{\biasedvoter}$ and $\BP_{\lambda, \mu, G, \Delta}^{\contactprocess}$ the law
	of the biased voter model and the contact process, respectively.

	\subsection{Main results}
	\label{sec:main_results}

	In this subsection we summarize the main results that we prove in this paper. Unless otherwise specified, we restrict to connected graphs
	$G=(V,E)$ of bounded degree, i.e.\ $\mathrm{deg}(G)<\infty$, where
	\begin{equation}
		\mathrm{deg}(G) \coloneqq \sup_{x \in V}\left|\{ y \colon y \sim x\}\right|.
	\end{equation}

	The first statements concern the monotonicity of the ZIM. 
	To make precise what we here mean by monotonicity, we write
	$\eta^{(1)} \leq \eta^{(2)}$ if for every $x\in V$ for which $\eta^{(2)}(x)=S$ it
	also holds that $\eta^{(1)}(x)=S.$ Thus, the set of nodes which are either infected
	or removed is larger for $\eta^{(2)}$ than for $\eta^{(1)}.$ In the
	following statements we also need certain assumptions on the initial state. For this,
	given a graph $G=(V,E)$, we say that $\Delta \subset V$ is connected within $G$
	if for every $x,y\in \Delta$ there is a sequence $(x_{i})_{i=1,\dots,n}$ in $\Delta$
	such that $x_{i-1}\sim x_{i}$ with $x_{0}=x$ and $y=x_{n}$. We say that
	$\Delta \subset V$ has at most two connected clusters if it is connected or it
	can be partitioned into two non-trivial connected subsets.

	\begin{theorem}[Monotonicity in bite rate on trees]
		\label{thm:monoLambdaTree} Let $G=(V,E)$ be a tree, $\lambda_1, \lambda_2\in(0,\infty)$, and consider $\Delta \Subset
		V$ having at most two connected clusters. Then, if $\lambda_{1}\leq \lambda_{2}$,
		there is a coupling $\widehat{\BP}$ of
		$\big(\eta_{t}^{(1)}\big)\sim\BP^{\ZIM}_{\lambda_1,G,\Delta}$ and
		$\big(\eta_{t}^{(2)}\big)\sim\BP^{\ZIM}_{\lambda_2,G,\Delta}$ such that
		\begin{equation}
			\widehat{\BP}\!\left( \eta_{t}^{(1)}\leq \eta_{t}^{(2)}\: \forall \: t\in [0,
			\infty) \right) =1.
		\end{equation}
	\end{theorem}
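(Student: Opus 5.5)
The plan is to construct the coupling through \emph{local clocks attached to directed edges}, and to exploit the fact that on a tree every edge is ``used'' at most once. Root the tree at the initial infected set. Consider first the case where $\Delta$ is connected. Then the set of non-$S$ nodes stays connected for all time. Hence, when a node $x$ becomes infected, its only non-susceptible neighbour is its parent (or its neighbours in $\Delta$), and every other neighbour $y$ is a susceptible child. For each directed edge $(x,y)$ pointing away from $\Delta$ we attach two independent clocks. The first is a kill clock $K_{xy}\sim\Exponential{1}$. The second is a bite clock $B^{(1)}_{xy}\sim \Exponential{\lambda_1}$, and we set $B^{(2)}_{xy}\coloneqq (\lambda_1/\lambda_2)B^{(1)}_{xy}\sim\Exponential{\lambda_2}$, so that $B^{(2)}_{xy}\le B^{(1)}_{xy}$. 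Both clocks are read in the local time elapsed since $x$ became infected. By memorylessness, running both processes with these clocks reproduces the correct laws $\BP^{\ZIM}_{\lambda_i,G,\Delta}$.

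The key step is a one-node comparison. Since the kill clock on edge $xy$ only runs while $y$ is susceptible, and on a tree $y$ can only be bitten by $x$, the local death time of $x$ in process $i$ is
\begin{equation}
D^{(i)}_x=\min\{K_{xy}\colon y \text{ child of } x,\ K_{xy}<B^{(i)}_{xy}\}.
\end{equation}
A child $y$ gets infected in process $i$ if and only if $B^{(i)}_{xy}<D^{(i)}_x$, and this happens at local time $B^{(i)}_{xy}$. Since $B^{(2)}\le B^{(1)}$, the index set in the minimum for $i=2$ is contained in that for $i=1$, so $D^{(2)}_x\ge D^{(1)}_x$. Hence $B^{(1)}_{xy}<D^{(1)}_x$ implies $B^{(2)}_{xy}<D^{(2)}_x$, so every child infected in process $1$ is infected in process $2$. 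Moreover, the real infection times satisfy
\begin{equation}
T^{(2)}(y)=T^{(2)}(x)+B^{(2)}_{xy}\le T^{(1)}(x)+B^{(1)}_{xy}=T^{(1)}(y).
\end{equation}
Induction along the generations of the tree then shows that every node that is ever non-$S$ in process $1$ is non-$S$ in process $2$, and becomes so no later. Since a node never returns to $S$, this gives $\eta^{(1)}_t\le\eta^{(2)}_t$ for all $t$ simultaneously, almost surely.

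For $\Delta$ with two clusters $\Delta_1,\Delta_2$, the above argument applies verbatim to every node off the unique geodesic path $\gamma$ between them. The nodes on $\gamma$ are the only ones that can be threatened from two sides. On $\gamma$ we attach clocks to both orientations of each edge and keep the same local-time rule. A node $y\in\gamma$ is infected at the first successful bite from either side, and that bite freezes the opposite edge, which now joins two infected nodes. I expect this to be the main obstacle. In process $2$ a path node may be infected from the other side than in process $1$, so different directed clocks are consumed, and the death time of a path node $x$ also depends on whether its path-neighbour was already reached from the far side, which removes one kill edge from the minimum.

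My first attempt would be to prove, by induction on the order of events, the joint statement ``$T^{(2)}(y)\le T^{(1)}(y)$ for all $y$, and $D^{(2)}_x\ge D^{(1)}_x$ whenever $x$ is infected in process $1$.'' The needed monotonicity is this. Earlier infection of a path-neighbour in process $2$ can only remove edges from the set over which $D_x$ is minimised, so $D_x$ increases. The infection time of a path node is a minimum of two quantities, each of which is monotone by the induction hypothesis. With three or more clusters this bookkeeping presumably breaks, since a node could be squeezed between clusters in a non-monotone way; this is consistent with the hypothesis that $\Delta$ has at most two clusters.
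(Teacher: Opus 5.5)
Your one-cluster argument is correct, and it takes a genuinely different route from the paper. The paper shares one $\mathrm{Exp}(1)$ fight clock and one uniform outcome variable per directed edge. The faster process is then a uniform time change of the slower one with more zombie victories, and the order of fights at each node is the same in both processes. You instead share kill clocks and scale bite clocks. Fight orders can then differ between the processes, but the inclusion of index sets gives $D^{(2)}_x\ge D^{(1)}_x$ directly. You even get $T^{(2)}=(\lambda_1/\lambda_2)\,T^{(1)}$ along common ancestral lines, which is slightly sharper than the paper's time factor $(\lambda_1+1)/(\lambda_2+1)$.

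The gap is the two-cluster case, which is part of the statement and which you only sketch. With independent clocks on the two orientations of a geodesic edge, as you propose, the coupling is \emph{not} monotone, so the induction you describe cannot be completed.

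Concretely, take the path $a\sim\ell\sim x\sim b$ with a leaf $c$ attached to $x$, $\Delta=\{a,b\}$, $\lambda_2=2\lambda_1$, and the clocks
$B^{(1)}_{a\ell}=2$, $K_{a\ell}=3$; $B^{(1)}_{\ell x}=1$, $K_{\ell x}=2$; $B^{(1)}_{bx}=1$, $K_{bx}=3/4$; $B^{(1)}_{x\ell}=1$, $K_{x\ell}=1/10$; $B^{(1)}_{xc}=1$, $K_{xc}=2$.
Small perturbations of these values have positive probability.
\begin{itemize}
\item In process $1$, $b$ is killed at time $3/4$, $\ell$ is infected at time $2$, and $x$ at time $3$. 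At that point $x$'s only susceptible neighbour is $c$, which is infected at time $4$.
\item In process $2$, $b$ bites $x$ at time $1/2$ while $\ell$ is still susceptible, since $a$ bites $\ell$ only at time $1$. The fresh clock $K_{x\ell}$ kills $x$ at time $0.6$, and $c$ is never infected.
\end{itemize}
So $D^{(1)}_x=\infty$ while $D^{(2)}_x=1/10$. Being reached from the far side does not remove a kill edge; it adds one whose clock process $1$ never reads.

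Your hypothesis $T^{(2)}\le T^{(1)}$ is also too weak for same-side infections. An earlier infection of $x$ can lengthen the local window during which its far-side neighbour is still susceptible.

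The paper's own step for this case has the same blind spot. It asserts that if $x_{K-1}$ is infected from the other cluster in the faster process, this infection passed through $x_K$. That assertion fails exactly here, with $x_{K-1}=x$ and $x_K=c$. With independent variables on the two orientations, its coupling breaks in the same way. Take $p_i=\lambda_i/(\lambda_i+1)$, $U_{(b,x)}\in(p_1,p_2]$, $U_{(x,\ell)}>p_2$, the other relevant $U$'s below $p_1$, and $Y_{(b,x)}+Y_{(x,\ell)}<\min\{Y_{(a,\ell)},\,Y_{(b,x)}+Y_{(x,c)}\}$.

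The missing ingredient is to use a single pair $(K_e,B^{(1)}_e)$ per \emph{undirected} edge. This is legitimate, since within one process each edge is activated in at most one orientation. Then induct on $T^{(2)}(y)\le\rho\,T^{(1)}(y)$, with $\rho=\lambda_1/\lambda_2$, at the first violation $y$ in process-$1$ order.
\begin{itemize}
\item Suppose the process-$1$ infector $x$ of $y$ were killed in process $2$ along an edge $\{x,z\}$ before biting $y$. The same clock would then have killed $x$ in process $1$, unless $z$ was already non-susceptible, i.e.\ $T^{(1)}(z)\le T^{(1)}(x)+K_{xz}$.
\item Shared clocks rule out that $z$ is $x$'s process-$1$ infector, because that edge has $B^{(1)}<K$ and hence $\rho B^{(1)}<K$. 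So $z$ is $x$'s geodesic neighbour towards the other cluster.
\item If $z$ infects $x$ in process $2$, the edge $\{x,z\}$ is never active from $x$, a contradiction.
\item Otherwise $x$ has the same ancestral line in both processes. Then $T^{(2)}(x)=\rho T^{(1)}(x)$ and $T^{(2)}(z)\le\rho T^{(1)}(z)\le T^{(2)}(x)+\rho K_{xz}$. This contradicts $z$ being still susceptible at local time $K_{xz}$.
\end{itemize}
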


	\begin{theorem}[Monotonicity in starting configuration on trees]
		\label{thm:monoStartTree} Let $G=(V,E)$ be a tree and $\lambda \in (0,\infty)$.
		Then, if $\Delta_{1} \subseteq \Lambda \subseteq \Delta_{2} \Subset V$ with $\Lambda$
		connected, there is a coupling $\widehat{\BP}$ of
		$\big(\eta_{t}^{(1)}\big)\sim\BP^{\ZIM}_{\lambda,G,\Delta_1}$ and
		$\big(\eta_{t}^{(2)}\big)\sim\BP^{\ZIM}_{\lambda,G,\Delta_2}$ such that
		\begin{equation}
			\widehat{\BP}\!\left( \eta_{t}^{(1)}\leq \eta_{t}^{(2)}\: \forall \: t\in [0,
			\infty) \right) =1.
		\end{equation}
	\end{theorem}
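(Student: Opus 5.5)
The plan is to build an explicit coupling on the tree from clocks attached to \emph{oriented edges}, run in time relative to the moment the edge becomes active. Then we compare infection times vertex by vertex. Throughout, write $T^{(i)}(v)$ for the time at which $v$ leaves state $S$ in $(\eta^{(i)}_t)$, with $T^{(i)}(v)=\infty$ if this never happens. Since $\eta^{(1)}_t\le\eta^{(2)}_t$ only concerns the set of non-$S$ vertices, it suffices to show $T^{(2)}(v)\le T^{(1)}(v)$ for every $v\in V$, almost surely.

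First I would split the statement through the intermediate set: compare $\Delta_1$ with $\Lambda$, and then $\Lambda$ with $\Delta_2$. Composing two couplings (by conditioning on the middle process) gives the result, since $\le$ is transitive.

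The basic observation for the step $\Delta_1\subseteq\Lambda$ is the following. If the infected/removed region is connected at time $0$, it stays connected. On a tree, a vertex $y$ outside it then has a unique neighbour $p(y)$ through which it can ever be infected, namely its parent when the tree is rooted at the initial cluster. Hence, once $x$ is infected, the only $SI$-edges at $x$ are the edges to its children, and these are all active from time $T(x)$.

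Attach to each oriented edge $(x,y)$ independent clocks $B_{xy}\sim\Exponential{\lambda}$ and $K_{xy}\sim\Exponential{1}$, read relative to $T(x)$. The dynamics then become deterministic given the clocks:
\begin{itemize}
\item $x$ is removed at $T(x)+\tau_x$, where $\tau_x=\min\{K_{xy}\colon K_{xy}<B_{xy}\}$, the minimum running over the $S$-neighbours $y$ of $x$;
\item a child $y$ is infected at $T(x)+B_{xy}$ if and only if $B_{xy}<\tau_x$.
\end{itemize}
By the memoryless property this reproduces $\BP^{\ZIM}$.

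For the comparison with initial sets $\Delta_1\subseteq\Lambda$, note that in the $\Lambda$-process every vertex of $\Lambda$ is already infected at time $0$. So for $v\in\Lambda$ the set of $S$-neighbours of $v$ at any time is a subset of what it is in the $\Delta_1$-process. This gives $\tau^{(2)}_v\ge\tau^{(1)}_v$, because the minimum runs over fewer edges. Every child that $v$ bites in process $1$ is then bitten in process $2$, at relative time $B_{vy}$, and since $T^{(2)}(v)=0\le T^{(1)}(v)$ it is bitten at an absolute time no later. Induction outward along the tree from $\Lambda$ (on the depth of $v$ in the rooted tree) then propagates $T^{(2)}\le T^{(1)}$ together with $\tau^{(2)}\ge\tau^{(1)}$. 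The reason is that outside $\Lambda$ both processes use the same parent-child structure and the same relative clocks.

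The main obstacle is the step $\Lambda\subseteq\Delta_2$, where $\Delta_2\setminus\Lambda$ may consist of clusters far away in the tree. Such a cluster can infect a vertex $y\notin\Lambda$ from below, before its parent $p(y)$ is infected. In the larger process, $y$ then has an extra active kill edge towards $p(y)$ that never exists in the smaller process, so the naive inequality $\tau^{(2)}_y\ge\tau^{(1)}_y$ can fail.

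What I would try first is the following. Give the ``backward'' oriented edges $(y,p(y))$ their own independent clocks, which the $\Lambda$-process never reads. Then prove by induction, in the order of $T^{(1)}$-times, the weaker invariant: every $y$ infected in process $1$ is non-$S$ in process $2$ by time $T^{(1)}(y)$. The idea is that if $y$ dies early in process $2$ through a backward edge, it was infected from a child subtree. In that case the children that $y$ would infect in process $1$ are themselves reached in process $2$, either from that subtree or from $y$ within the window $[T^{(2)}(y),T^{(1)}(y)+B]$.

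Making this precise will require the order-preserving comparison of relative clocks to be combined with the fact that a dying zombie never blocks anything on a tree. Concretely: a removed vertex in process $2$ separates the tree, and the component beyond it must already contain a process-$2$ infection source. This step is where I expect the bulk of the work to lie.
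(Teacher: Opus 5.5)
\textbf{The second step is a genuine gap.} Your first step ($\Delta_1\subseteq\Lambda$) is fine. Your relative edge clocks are Algorithm~\ref{alg:ZIM_coupling1} in disguise. The comparison only uses that an outward neighbour of $v\in\Lambda$ can be reached only through $v$. (Your closed formula for the removal time holds for the connected $\Lambda$-process, not for a disconnected $\Delta_1$, but you never need it there.)

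The second step is not proved, and the route you sketch for it fails. Take the tree with edges $\{a,b\},\{b,w\},\{w,c_1\},\{w,c'\}$, with $\Lambda=\{a\}$ and $\Delta_2=\{a,c'\}$, coupled as you propose: shared clocks on outward edges, fresh clocks on backward edges. With positive probability the following happens.
\begin{enumerate}
\item In the $\Lambda$-process, $a$ bites $b$, $b$ bites $w$, and $w$ bites $c_1$ before any losing fight.
\item In the $\Delta_2$-process, $c'$ bites $w$, and $w$ then loses its backward fight against the still susceptible $b$. This happens before its shared clock towards $c_1$ rings and before $a$ reaches $b$.
\end{enumerate}
Then $w$ is removed, $c_1$ has no other neighbour, and $c_1$ stays $S$ forever in the larger process although it is infected in the smaller one. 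So your invariant is false, and so is the heuristic behind it. A removed zombie does block on a tree, and the branch beyond it (here $\{c_1\}$) need not contain any source of the larger process.

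\textbf{The missing idea is the opposite of fresh backward clocks.} Use \emph{one} pair of clocks per undirected edge, $B_{xy}=B_{yx}$ and $K_{xy}=K_{yx}$, equivalently one pair $(Y_e,U_e)$ per $e\in E$. This leaves the law of each process unchanged, because an edge lies in the $SI$-boundary during at most one time interval and in one orientation. Now every edge crossed by the smaller process has $B<K$ in both orientations. So if, in the larger process, $x_{i-1}$ is infected from below and then fights its parent $x_{i-2}$, it wins; the extra kill edge you worried about is harmless.

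A single induction along the ancestral path $x_0\in\Delta_1,\dots,x_n$ of the smaller process then proves the theorem directly, with no detour through $\Lambda$:
\begin{enumerate}
\item Vertices of $\Lambda$ have $T^{(2)}=0$, and the path never re-enters $\Lambda$ after leaving it.
\item Suppose $T^{(2)}(x_{i-1})\le T^{(1)}(x_{i-1})$. In the larger process, $x_{i-1}$ can be killed before relative time $B_{x_{i-1}x_i}$ only by an outward neighbour whose clocks would have killed it in the smaller process too.
\item If $x_{i-1}\in\Lambda$, its $\Lambda$-neighbours are infected at time $0$; and $x_{i-2}$ is excluded as just explained.
\end{enumerate}

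This is the paper's proof, phrased via the first index $K$ with $\tau^{(1)}_{x_K}<\tau^{(2)}_{x_K}$ and common $(Y_e,U_e)$ from Algorithm~\ref{alg:ZIM_coupling1}. That proof asserts $\hat Y_{(x_{K-1},x_K)}<\hat Y_{(x_{K-1},y)}$ for all $y\sim x_{K-1}$ with $U_{(x_{K-1},y)}>p$. For $y=x_{K-2}$ this is justified only under the undirected convention. With independent variables on the two orientations, the example above breaks that coupling as well. So the obstacle you identified is real, and sharing the variables across orientations is how it is removed.
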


	For the proofs of the above two theorems, we provide an explicit coupling representation
	of the two processes preserving the monotonicity, see Section~\ref{sec:monotonicity_lambda}.
	Perhaps surprisingly, as is detailed in conjunction with the proofs of
	Theorems \ref{thm:monoLambdaTree} and \ref{thm:monoStartTree}, their statements
	fail for that particular coupling representation when the assumptions on the initial
	state are modified slightly. In fact, there are graphs for which no such monotone
	coupling representation can exist. This is the content of the next theorem
	which together with Theorem~\ref{thm:NonMonBit} below are what we consider the
	main results of this paper. Here, and in the following, for $x \in V$ we define the \emph{infection time} of $x$ as
	\begin{equation}
		\label{eq:def_tau}\tau_{x} \coloneqq \inf \{ t>0 \colon \eta_{t}(x)=I\},
	\end{equation}
	i.e.\ the first time at which the node $x$ becomes infected.
	\begin{theorem}[Non-monotonicity]
		\label{thm:NonMon} $ $
		\begin{enumerate}[label=\alph*)]
			\item \label{thm:NonMon-a}Non-monotonicity in graph structure:\\ 
            There exist finite graphs $G_1=(V_1,E_1)$ and $G_2=(V_2,E_2)$, where $V_1\subset V_2$ and $E_1 \subset E_2$, and nodes $x,y \in V_1$, such that, for every $\lambda>0$,
\begin{equation}\label{eq:Non_mon_a)}
					\BP^{\ZIM}_{\lambda,G_1,x}( \tau_{y} <\infty ) > \BP^{\ZIM}_{\lambda,G_2,x}( \tau_{y} <\infty).
				\end{equation}
			\item \label{thm:NonMon-b}Non-monotonicity in starting configuration: \\ Let $\lambda>1$. There
				exists a finite and connected graph $G=(V,E)$ with nodes $x,y \in V$, and
				$\Delta \subset V\setminus \{x,y\}$ such that
				\begin{equation}
					\BP^{\ZIM}_{\lambda,G,\Delta}( \tau_{y} <\infty ) > \BP^{\ZIM}_{\lambda,G,\Delta \cup \{x\}}( \tau_{y} <\infty ).
				\end{equation}
			\item \label{thm:NonMon-c}Non-monotonicity in bite rate:\\ There exists a finite and connected
				graph $G=(V,E)$ with nodes $z,y \in V$ and parameters
				$\lambda_{1}<\lambda_{2}$ such that
				\begin{equation}
					\BP^{\ZIM}_{\lambda_1,G,z}( \tau_{y} <\infty) > \BP^{\ZIM}_{\lambda_2,G,z}( \tau_{y} <\infty ).
				\end{equation}
		\end{enumerate}
	\end{theorem}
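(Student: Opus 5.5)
For all three parts the plan is the same. We exhibit explicit small graphs. On a finite graph, $\BP^{\ZIM}_{\lambda,G,\Delta}(\tau_y<\infty)$ is the absorption probability of a finite embedded jump chain, so it is a rational function of $\lambda$ that can be computed by conditioning on the first few jumps. The only mechanism by which ``more'' (more edges, more zombies, larger $\lambda$) can hurt is that a node becomes infected \emph{too early or in a bad environment}. It then faces many susceptible neighbours, is killed, and the resulting $R$-node permanently blocks the route to $y$. Each construction is built to isolate this effect.

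\emph{Part \ref{thm:NonMon-a}.} Take $G_1$ to be the single edge $x\sim y$, and obtain $G_2$ by adding one pendant node $w$ with $w\sim x$. In $G_1$ the first event is a bite or a kill, so $\BP^{\ZIM}_{\lambda,G_1,x}(\tau_y<\infty)=\lambda/(\lambda+1)$. In $G_2$ the zombie at $x$ initially bites $y$ at rate $\lambda$, bites $w$ at rate $\lambda$, and is killed at rate $2$. If $w$ is bitten first, then $w$ has no susceptible neighbour and is frozen, and $x$ bites $y$ before being killed with probability $\lambda/(\lambda+1)$. Hence
\[
\BP^{\ZIM}_{\lambda,G_2,x}(\tau_y<\infty)=\frac{\lambda}{2(\lambda+1)}\Big(1+\frac{\lambda}{\lambda+1}\Big)=\frac{\lambda(2\lambda+1)}{2(\lambda+1)^2},
\]
which is strictly smaller than $\lambda/(\lambda+1)$ for every $\lambda>0$, since $2\lambda+1<2\lambda+2$.

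\emph{Part \ref{thm:NonMon-b}.} Here we use a bottleneck node $b$ on the only route to $y$, with $b\sim y$. We give $b$ many extra neighbours $w_1,\dots,w_k$, each of which is also adjacent to the node $c$ through which the $\Delta$-wave arrives ($c\sim b$, and $c$ is at distance one from $\Delta$). Without $x$, the wave typically infects $c$ and then several $w_i$ before $b$. Those $w_i$ are then infected neighbours of $b$ and cannot kill it, so when $b$ is bitten it fights only the few remaining susceptibles and, since $\lambda>1$, passes the infection to $y$ with probability bounded away from zero. We then add a zombie $x$ adjacent to $b$ only. The point is that $x$ bites $b$ immediately, while $b$ still has $k+1$ susceptible neighbours. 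The aim is to show that this early infection makes $b$ likely to be killed before it bites $y$. Once $b$ is in state $R$, the route to $y$ is closed forever.

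The main obstacle in part \ref{thm:NonMon-b} is exactly this last claim. With $\lambda>1$ and $k+1$ susceptible neighbours, $b$ bites at total rate $\lambda(k+1)$ and is killed at rate $k+1$. So $b$ is not automatically likely to die, and bitten $w_i$'s may themselves reinfect things. The first thing I would try is to hang further pendant leaves on each $w_i$. This makes a $w_i$ that is infected early by $b$ die quickly, and ensures the subsequent fight at $b$ is lost with high probability. I would then tune $k$ and the number of leaves, depending on $\lambda$, by conditioning on the first few jumps and comparing the two resulting bounds.

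\emph{Part \ref{thm:NonMon-c}.} I would reuse the same bad-timing mechanism, with a larger bite rate playing the role of the extra zombie. In the graph from part \ref{thm:NonMon-b}, increasing $\lambda$ speeds up both routes from $z$ to $b$. The goal is a graph in which a short side path to $b$ (a fast route) delivers $b$ while its neighbourhood is still susceptible. A longer route through $c$ and the $w_i$'s should deliver $b$ in a ``protected'' state, and this route should be favoured at small $\lambda$. Concretely, I would compute the rational function $p(\lambda)=\BP^{\ZIM}_{\lambda,G,z}(\tau_y<\infty)$ for a small graph of this type, or for the simplest candidate graphs, and exhibit $\lambda_1<\lambda_2$ with $p(\lambda_1)>p(\lambda_2)$, for instance by showing $p'(\lambda)<0$ at some point. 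If the hand computation becomes unwieldy, I would fix explicit numerical values $\lambda_1<\lambda_2$ and evaluate the finite absorption probabilities exactly, which suffices for an existence statement.
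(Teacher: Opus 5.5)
Your part a) is correct and is the same argument as the paper's. It is exactly the pendant-node example the paper records right after its proof of a); the paper's main proof instead adds only the edge $\{z,x\}$ to close a triangle, which keeps the vertex set fixed. Parts b) and c), however, are plans rather than proofs, and the constructions you sketch would not deliver them.

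\textbf{Part b).} The claimed protection of $b$ in the $\Delta$-process fails. The $w_i$ are adjacent only to $c$ and $b$, so before $b$ is infected only $c$ can convert them. A single zombie wins only a $\Geometric{p}$ number of fights, of mean $\lambda$, before it is killed. So when $b$ is first bitten it still faces about $k-\lambda$ susceptible neighbours, and it reaches $y$ with probability of order $\lambda/k$. That is exactly the hostile environment you meant to create with $x$. Taking $k$ small instead removes the separation, since then $b$ bitten early by $x$ also faces only a few susceptibles. Pendant leaves on the $w_i$ would equally throttle the route $c\to w_i\to b$. Moreover, the edge $(x,b)$ does not fire ``immediately'': every active edge rings at rate $\lambda+1$. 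And if $x$ loses that fight it becomes $R$, which removes a potential killer of $b$ and so helps the $\Delta$-route.

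The missing ingredient is a gadget that makes \emph{every} other neighbour of the bottleneck non-susceptible, with probability bounded away from $0$. The paper uses a large clique. For $\lambda>1$ (this is where that hypothesis enters), the ZIM on the clique ends with no susceptible node with probability close to $1-\lambda^{-M}$, and does so almost instantly (Lemmas~\ref{lem:ZIM_onCG}, \ref{lem:ZIMfast1}, \ref{lem:ZIMfast2}). The paper also puts the added zombie at the node $x\sim y$ itself, with $m\gg k$ clique neighbours. With $x$ initially infected, the very first fight is, with high probability, between $x$ and a clique node, so reaching $y$ needs two wins (probability $\approx p^2$). Without it, $x$ is infected only after the clique has burnt out, and it bites $y$ with probability $\approx p$.

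\textbf{Part c).} You exhibit no graph, and the guiding heuristic points the wrong way. The event $\{\tau_y<\infty\}$ is determined by the embedded jump chain, in which the next fight occurs on a uniformly chosen active edge whatever $\lambda$ is. So raising $\lambda$ does not make one route faster than another; it only raises $p$. For path-like routes of lengths $s<L$, the relative weight $p^{L}/p^{s}$ of the long route \emph{increases} with $p$. Thus larger $\lambda$ favours your long, protective route, not the short harmful one.

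To switch a harmful route on only at the larger bite rate you need a sharp gate, and the paper gets it from a phase transition. The values $\lambda_1\approx 1.6178<\lambda_2\approx 1.6247$ straddle $\lambda_c^{\ZIM}(\BT_3)=(1+\sqrt5)/2$, while both exceed $\lambda_c^{\ZIM}(\BT_4)$ (Theorem~\ref{thm:ZIMtrees}). Hence the short route through $k$ truncated copies of $\BT_3$ is essentially closed at $\lambda_1$ and open with high probability at $\lambda_2$. Supercriticality on $\BT_4$ makes the long route essentially free regardless of its length. Cliques and the gadget $G(m,n)$ then give the explicit bounds $0.1908$ versus $0.1825$. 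Without an idea of this kind, ``compute small candidate graphs until one is non-monotone'' is not an argument, since nothing you have shown guarantees that any small graph exhibits the effect.
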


	The proof of Theorem~\ref{thm:NonMon} is given in Section~\ref{sec:NmonoWRTgraph}.
	As detailed there, the non-monotonicity in the graph structure follows by quite
	straightforward calculations. However, the proofs of non-monotonicity with respect to the starting
	configuration and, in particular, the bite rate are significantly more complex and rely on several results established in Sections \ref{sec:monotonicity}--\ref{sec:CPP}.
    
	We now turn to our results on the probability of a zombie outbreak on infinite
	graphs. For this, having in mind the interpretation that nodes in state $I$
	represent individuals of the network that are zombies, we write
	\begin{equation}
		\CZ_{t} \coloneqq \lvert \{ x \in V \colon \eta_{t}(x)=I\}\rvert
	\end{equation}
	for the number of infected nodes at time $t\geq 0$, and denote by
	\begin{equation}
		\label{eq:At}\CA_{t}\coloneqq \bigcup_{s \in [0,t]} \{ x \in V \colon \eta_{s}(x)=I\}
	\end{equation}
	the set of \emph{affected} nodes, i.e.\ those that have ever been infected by time $t$. Note that
	$\CZ_{t} \leq |\CA_{t}|$ for every $t\in [0,\infty)$ since the set $\CA_{t}$
	contains both the nodes that are infected at time $t$ and those that have been removed (killed). 

In the ZIM, an infected node that no longer has any susceptible neighbours will stay infected forever. Thus, the process may have regions of infected and susceptible nodes separated by a boundary of removed (state $R$) nodes. Once separated, the states within such a region will stay the same. This is in contrast to the SIR model, where any infected node eventually recovers almost surely.
	In principle it could be that the size of the set $\CA_{\infty}\coloneqq \lim_{t\rightarrow
	\infty}\CA_{t}$ grows unboundedly whereas $\CZ_{t} \leq N$ for all $t$ for some
	$N \in \BN$, but our next statement says that this almost surely never happens.

	\begin{proposition}
		\label{prop:cosurvival1} For any $\lambda >0$ and $\Delta \Subset V$,
		$\CZ_{\infty}\coloneqq \lim_{t\to \infty}\CZ_{t}$ is well defined
		$\BP_{\lambda, G, \Delta}$-a.s.\ and 
		$\BP_{\lambda, G, \Delta}( \CZ_{\infty}= \infty ) = \BP_{\lambda,
		G, \Delta}( \lvert \CA_{\infty}\rvert = \infty ).$
	\end{proposition}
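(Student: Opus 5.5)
The proof rests on two observations. Each node changes state at most twice ($S\to I\to R$), and a zombie all of whose neighbours are non-susceptible stays infected forever. We treat the events $\{|\CA_\infty|<\infty\}$ and $\{|\CA_\infty|=\infty\}$ separately.

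On $\{|\CA_\infty|<\infty\}$, only the finitely many nodes of $\CA_\infty$ ever change state, and each changes at most twice. Hence there are a.s. only finitely many transitions in total. So $\CZ_t$ is eventually constant, the limit $\CZ_\infty$ exists, and $\CZ_\infty\le|\CA_\infty|<\infty$. It therefore suffices to show that on $\{|\CA_\infty|=\infty\}$ we have $\CZ_t\to\infty$ a.s. This gives both the existence of the limit and the identity $\BP(\CZ_\infty=\infty)=\BP(|\CA_\infty|=\infty)$, since the reverse inclusion $\{\CZ_\infty=\infty\}\subseteq\{|\CA_\infty|=\infty\}$ is trivial from $\CZ_t\le|\CA_t|$.

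For the infinite case we use the graphical (Harris) construction. Each oriented edge $(x,y)$ carries an independent Poisson clock of rate $\lambda$ (bite of $y$ by $x$) and one of rate $1$ (kill of $x$ by $y$). A transition is effective only if at that moment $x$ is in state $I$ and $y$ in state $S$. Let $y_1,y_2,\dots$ be the nodes of $\CA_\infty\setminus\Delta$ in order of infection, infected at the stopping times $\sigma_1<\sigma_2<\dots$, and let $D=\deg(G)$.

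Call $y_n$ \emph{permanent} if, for every neighbour $z$ with $\eta_{\sigma_n}(z)=S$, the first ring after $\sigma_n$ among the two clocks on $(y_n,z)$ is a bite clock. On this event $y_n$ is never killed. Indeed, a kill of $y_n$ by $z$ requires $z\in S$ at the ring of the kill clock, but by then the bite clock has rung first and infected $z$, unless $z$ had already left state $S$ for another reason. Since $y_n$ is never killed, a permanent node is counted in $\CZ_t$ for all $t\ge\sigma_n$. By the strong Markov property and independence of the clocks after $\sigma_n$, we get
\begin{equation}
\BP\big(y_n \text{ permanent}\,\big|\,\CF_{\sigma_n}\big)\ \ge\ p\coloneqq\Big(\tfrac{\lambda}{1+\lambda}\Big)^{D}>0 .
\end{equation}
If infinitely many $y_n$ are permanent, then $\CZ_t\to\infty$, which is what we want.

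The main obstacle is upgrading this uniform conditional lower bound to "infinitely many permanent nodes a.s." The difficulty is that the event $\{y_n \text{ permanent}\}$ is not $\CF_{\sigma_{n+1}}$-measurable, so Lévy's conditional Borel–Cantelli lemma does not apply directly. Moreover, the clocks deciding permanence of $y_n$ may also drive the infection of later nodes.

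My plan is to thin the sequence. Bounded degree lets me greedily select, as the process evolves, a subsequence $y_{n_k}$ such that the edge-sets incident to distinct selected nodes are disjoint (graph distance at least $3$ suffices). The selection rule must depend only on the information available at $\sigma_{n_k}$, so that each $n_k$ is a stopping index. Because infinitely many nodes are infected and $G$ has bounded degree, the subsequence is infinite on $\{|\CA_\infty|=\infty\}$.

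To show that
\begin{equation}
\BP\big(\text{none of } y_{n_k},\dots,y_{n_{k+m}} \text{ is permanent}\big)\le(1-p)^{m+1},
\end{equation}
I reveal, for each selected node, only the first post-$\sigma_{n_k}$ ring on each of its (disjoint) edges. I then argue that the decision for $y_{n_k}$ is independent of the decisions of later selected nodes, given the selection. If this sequential revealing proves too delicate, the fallback is the following. Prove directly that $\BP(\text{only finitely many permanent},\,|\CA_\infty|=\infty)=0$ by showing, for each $k$, that after any stopping time at which at least $m$ further disjoint nodes will be infected, the probability that all of them fail is at most $(1-p)^m$. Then let $m\to\infty$.
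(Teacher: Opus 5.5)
Your reduction to $\{|\CA_\infty|=\infty\}$ is correct. So is the permanence lemma: a zombie whose first post-$\sigma_n$ ring towards each susceptible neighbour is a bite is never killed, and this has conditional probability at least $(\lambda/(1+\lambda))^{D}$ given $\CF_{\sigma_n}$. This is a genuinely different route from the paper's.

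\textbf{The gap.} The step everything hinges on is only announced, and as phrased it is not the right statement. Later selections depend on earlier decisions: a permanent $y_{n_k}$ bites its susceptible neighbours, while a non-permanent one is killed. So conditioning on ``the selection'' can bias the decision for $y_{n_k}$, and ``independence given the selection'' is neither true in general nor what you need. Your fallback only restates the claim, and ``a stopping time at which at least $m$ further nodes will be infected'' is not a stopping time.

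\textbf{How to close it.} Revealing only the first post-$\sigma$ rings is the right idea; it just has to be done chronologically. Let $E_n$ be the set of oriented edges $(y_n,z)$ with $\eta_{\sigma_n}(z)=S$. It is $\CF_{\sigma_n}$-measurable, $|E_n|\le D$, and $E_n\cap E_{n'}=\emptyset$ for $n\ne n'$ automatically, since every edge of $E_n$ starts at $y_n$. So the thinning is unnecessary. Set
\[
\mathcal{G}_n=\CF_{\sigma_n}\vee\sigma\big(\text{types of the first post-}\sigma_i\text{ rings on }E_i,\ i<n\big).
\]
This is a filtration with $\{y_{n-1}\text{ permanent}\}\in\mathcal{G}_n$. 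Each revealed type is either $\CF_{\sigma_n}$-measurable or equals the type of the first post-$\sigma_n$ ring on an edge of $\bigcup_{i<n}E_i$, which is disjoint from $E_n$. By the strong Markov property of the Poisson clocks at $\sigma_n$, the post-$\sigma_n$ clocks on $E_n$ are therefore still fresh given $\mathcal{G}_n$. Hence $\BP(y_n\text{ permanent}\mid\mathcal{G}_n)\ge(\lambda/(1+\lambda))^{D}$ on $\{\sigma_n<\infty\}$. Apply L\'evy's conditional Borel--Cantelli lemma to the events $\{y_n\text{ permanent},\,\sigma_n<\infty\}\in\mathcal{G}_{n+1}$. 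This gives infinitely many permanent zombies a.s.\ on $\{|\CA_\infty|=\infty\}$. Iterating the conditional bound instead recovers your $(1-p)^{m+1}$ estimate.

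\textbf{Comparison with the paper.} The paper never looks into the future. On $\{\CZ_j\le k\}$, with probability at least $\epsilon(k)$ there are no bites and at least one kill on each of the at most $\mathrm{deg}(G)k$ active edges during $[j,j+1/2)$, which empties the SI-boundary. Hence $\BE[1_{\{|\CA_\infty|<\infty\}}\mid\CF_j]\ge\epsilon 1_{\{\CZ_j\le k\}}$, and martingale convergence gives $\{\CZ_j\le k\text{ i.o.}\}\subseteq\{|\CA_\infty|<\infty\}$ a.s. This L\'evy $0$--$1$ argument needs neither an enlarged filtration nor a structural lemma.

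Your route, once completed, proves more: on the outbreak event infinitely many zombies are never killed. Also, because your lower bound on $\CZ_t$ is nondecreasing in $t$, it gives $\CZ_t\to\infty$ directly in continuous time, whereas the paper's argument is run along integer times $j$.
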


	In the following, a \emph{zombie
	outbreak} refers to the event $\CZ_{\infty}= \infty$. Moreover, we say that the ZIM on the graph $G=(V,E)$ with bite rate $\lambda
	>0$ and initially $\Delta\Subset V$ infected \emph{admits a zombie outbreak}
	if
	\begin{equation}
		\phi(\lambda,G,\Delta) \coloneqq \BP^{\ZIM}_{\lambda,G,\Delta}( \CZ_{\infty}
		= \infty ) >0.
	\end{equation} 
	Conversely, if $\phi(\lambda,G,\Delta) =0$, we say that the process \emph{admits no zombie outbreak}. 
	Our next result shows that the property of admitting a zombie outbreak is in general not monotone with respect to the graph structure and the bite rate. 

	\begin{theorem}[Non-monotonicity in zombie outbreak]
		\label{thm:NonMonBit} $ $
		\begin{enumerate}[label=\alph*)]
			\item \label{thm:NonMonBit-a}Let $\lambda>1$. Then there exist countable and connected graphs
				$G_{1}=(V_{1},E_{1})$ and $G_{2}=(V_{2},E_{2})$, both with bounded degree,
				satisfying $V_{1} \subset V_{2}$ and $E_{1}\subset E_{2}$ and such that,
				for any $\Delta \Subset V_{1}$, 
				\begin{equation}
					\phi(\lambda,G_{1},\Delta)>0\quad \text{ and }\quad \phi(\lambda,G_{2},\Delta)=0.
				\end{equation}

			\item \label{thm:NonMonBit-b}There exists a countable and connected graph $G=(V,E)$ with bounded degree
				such that, for some $\lambda_{1}<\lambda_{2}$ and for any
				$\Delta \Subset V$, 
				\begin{equation}
					\phi(\lambda_{1},G,\Delta)>0\quad  \text{ and } \quad \phi(\lambda_{2},G,\Delta)=0.
				\end{equation}

				Moreover, $\phi(\cdot,G,\Delta)$ is continuous on $[0,\infty)$.
		\end{enumerate}
	\end{theorem}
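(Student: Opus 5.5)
The plan is to build both graphs from finite gadgets that feed into a backbone, reusing Theorem~\ref{thm:NonMon}. The backbone is an infinite tree on which the ZIM admits a zombie outbreak for the relevant $\lambda$. Theorem~\ref{thm:monoLambdaTree} and the results on regular trees show that for $\lambda>1$ some tree of bounded degree (for instance a path-like backbone with attached bushes) has $\phi>0$. For part~\ref{thm:NonMonBit-a}, $G_1$ will be an infinite chain of copies of the finite gadget $(G_1^{\mathrm{fin}},x,y)$ from Theorem~\ref{thm:NonMon}\ref{thm:NonMon-a}, glued so that the node $y$ of the $k$-th copy is identified with the node $x$ of the $(k+1)$-th copy. $G_2$ is obtained by adding the extra edges and vertices of $G_2^{\mathrm{fin}}$ in each copy. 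The degree stays bounded because the gadgets are finite and identical.

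In part~\ref{thm:NonMon-a} the gadget inequality holds only with some fixed gap, so I would first amplify it. The goal is to design, or iterate within one copy, gadgets with passage probabilities $p_1(\lambda)$ for $G_1$ and $p_2(\lambda)$ for $G_2$. These should satisfy two things. First, on $G_1$ the infection crosses a gadget with probability close to $1$, and the failures are summable when gadget sizes grow slowly, or are compensated by branching (put two gadget chains in parallel at each stage, giving a Galton--Watson structure with mean $2p_1>1$). Second, on $G_2$ the infection has $2p_2<1$. Because the gadgets are separated by cut vertices, the process restricted to successive gadgets behaves like a branching process: the state past a cut vertex depends only on whether, and roughly when, it is infected. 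The Markov property at $\tau_y$ makes this rigorous. Then $\phi(\lambda,G_1,\Delta)>0$ by supercriticality. For $G_2$, $\phi(\lambda,G_2,\Delta)=0$ by subcriticality together with Proposition~\ref{prop:cosurvival1}, which reduces $\CZ_\infty=\infty$ to $|\CA_\infty|=\infty$. The latter requires crossing infinitely many gadgets, and finitely many initial zombies can only seed finitely many branching lines.

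Part~\ref{thm:NonMonBit-b} uses the same architecture with the bite-rate gadget of Theorem~\ref{thm:NonMon}\ref{thm:NonMon-c}, $(G,z,y,\lambda_1<\lambda_2)$. The gadget is placed in a $b$-ary tree of gadgets, with $b$ chosen so that $b\,p(\lambda_1)>1>b\,p(\lambda_2)$, where $p(\lambda)=\BP^{\ZIM}_{\lambda,G,z}(\tau_y<\infty)$. This needs the ratio $p(\lambda_1)/p(\lambda_2)$ to exceed some integer ratio. If the original gap is too small, I would make it large by putting $m$ copies in series, which raises the ratio to the $m$-th power, and then choose $b$ between $p(\lambda_1)^{-m}$ and $p(\lambda_2)^{-m}$.

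The hardest step is independence across gadgets: in the ZIM, an infected $y$ can be killed by susceptible neighbours in the next gadget while it is still active in the previous one. I would handle this with cut vertices of degree two and a bound on the time dependence. The passage probability of the next gadget should be shown to depend only on the event that its entry vertex becomes infected, and then only on the neighbourhood after that moment. Alternatively, the per-gadget quantity can be defined as the probability computed with the entry vertex's outside neighbour already removed. Continuity of $\phi(\cdot,G,\Delta)$ would come from writing $\phi$ as the survival probability of a multitype branching structure whose offspring law is a finite-state Markov chain absorption probability. That law is analytic in $\lambda$, and the survival probability is continuous in the offspring law when the law varies continuously with bounded offspring number. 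The critical point requires care, since there extinction holds and continuity from the supercritical side follows from the standard fixed-point argument. At $\lambda=0$ continuity holds because $\phi=0$ near $0$ by Theorem~\ref{thm:exinction}.
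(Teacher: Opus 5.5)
The gap is in the branching step, which your plan treats as free. In (a) you write ``two gadget chains in parallel, giving mean $2p_1$''. In (b) you use a ``$b$-ary tree of gadgets'' with $b\,\pi(\lambda_1)^r>1>b\,\pi(\lambda_2)^r$, where $\pi(\lambda)$ is the probability of crossing one gadget and $r$ is the number of gadgets in series.

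In the ZIM, the fan-out vertex wins only $\gamma\sim\mathrm{Geom}(p)$ fights before it is killed, and a child block can only be entered through a fight that this vertex wins. So whether the $b$ children hang off it by edges or share it as their entry vertex, the expected number of child blocks entered at all is at most $\mathbb{E}[\min(\gamma,b)]=\sum_{k=1}^{b}p^k<\lambda$, however large $b$ is. With a shared entry the traversals are not even independent. For edge-attached children the mean offspring per stage is therefore below $\lambda\,\pi(\lambda)^r$. For the gadget of Theorem~\ref{thm:NonMon}\ref{thm:NonMon-c}, with $\pi(\lambda_i)$ roughly $0.19$ and $\lambda_i\approx 1.62$, this is subcritical for both rates, so no choice of $b$ produces the separation.

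Part (a) has the same flaw, and the other option you offer there is impossible. A block entered at a single vertex is crossed with probability at most $p$, because the entry vertex must win its first fight, so ``crossing with probability close to $1$'' cannot happen. Also, even granting the factor $2$, the condition $2\cdot\frac{p}{2}(1+p)>1$ fails for $\lambda\le(1+\sqrt{5})/2$, whereas (a) is claimed for every $\lambda>1$.

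What is missing is an amplifier built from several generations of ZIM branching. For example, take a $d$-ary tree of depth $h$ rooted at the exit of a stage, whose leaves are the entries of the next stage. By the branching representation of Proposition~\ref{prop:comparisionWbp}, its expected number of infected leaves is $\rho_d(\lambda)^h$ with $\rho_d(\lambda)=\frac{p}{1-p}(1-p^{d})$.

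In (a) this repairs your plan, because $\lambda$ is fixed and $G_1,G_2$ share the amplifier. Take $d$ with $\rho_d(\lambda)>1$. Then take $r$ with $(a/c)^r>\rho_d(\lambda)$, where $a>c$ are the two gadget crossing probabilities. Finally choose $h$ with $a^r\rho_d(\lambda)^h>1>c^r\rho_d(\lambda)^h$. The paper's route is much shorter: $G_1=\mathbb{T}_d$ is supercritical, and $G_2$ is obtained by attaching $N$ leaves to every vertex, which for large $N$ makes the induced branching process on $G_1$ subcritical.

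In (b), however, the gain $\rho_d(\lambda)^h$ increases with $\lambda$ and so works against you. One must verify $\pi(\lambda_1)^r\,p_1\,\rho_d(\lambda_1)^h>1>\pi(\lambda_2)^r\,p_2\,\rho_d(\lambda_2)^h$, where the factor $p_i$ comes from the edge to the tree root. Equivalently, the gadget gap $(\pi(\lambda_1)/\pi(\lambda_2))^r$ must beat $(p_2/p_1)(\rho_d(\lambda_2)/\rho_d(\lambda_1))^h$, with $h$ forced to grow with $r$. This is exactly the paper's numerical check with $r=2$, $d=6$, $h=9$: $0.1908^2\,p_1\,\rho_6(\lambda_1)^9>1.01$ and $0.1825^2\,p_2\,\rho_6(\lambda_2)^9<0.97$.

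The independence issue you single out as hardest disappears with leaf gluing. Each entry vertex is attached to the previous block only through its own infector, so once infected it never interacts with that block again, and the next block runs as a fresh copy. No degree-two cut vertices (which could not serve as fan-out points anyway) or time-dependence control are needed. Your continuity argument agrees with the paper's.
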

	While part \ref{thm:NonMonBit-a} admits a fairly simple proof based on adding leaves to regular trees, part \ref{thm:NonMonBit-b} requires a carefully balanced construction. Using the graph constructed in the proof of Theorem~\ref{thm:NonMon}\ref{thm:NonMon-c} as a building block, we construct a tree-like graph where the ZIM with bite rate $\lambda_{1}$ admits a zombie outbreak, but the ZIM with bite rate $\lambda_{2}$ does not. See Sections~\ref{subs:nMonInGraph} and \ref{subs:nonMonZombieOutbreak} for the detailed proofs. 
    
    Results such as Theorem~\ref{thm:NonMonBit} seem scarce in the literature on stochastic
	growth models. In fact, we only know of \cite{CandelleroStauffer2024} who recently obtained
	a similar statement to Theorem~\ref{thm:NonMonBit}\ref{thm:NonMonBit-b} for a certain variant of first
	passage percolation in a hostile environment.

	Our remaining results provide bounds on $\lambda$ for which we can guarantee that
	the process admits no zombie outbreak or admits a zombie outbreak,
	respectively.

	\begin{theorem}[Extinction of the ZIM]
		\label{thm:exinction}

		If $\lambda \leq 1$ and $\Delta \Subset V$, then $\phi(\lambda,G,\Delta)=0$ for
		any graph $G$, i.e.\ the corresponding ZIM admits no zombie outbreak.
	\end{theorem}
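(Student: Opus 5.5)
The plan is to compare the number of zombies, observed only at the times when something happens, with a simple random walk whose drift is non-positive when $\lambda\le 1$. The key observation is that in \eqref{eq:ZIM_generator} (with $\mu=1$) every transition is attached to an $SI$ edge, i.e.\ an edge $\{x,y\}$ with $\eta(x)=I$ and $\eta(y)=S$. Such an edge triggers a bite ($y\leftarrow I$) at rate $\lambda$ and a kill ($x\leftarrow R$) at rate $1$. A bite raises $\CZ_t$ by exactly one and a kill lowers it by exactly one.

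Let $N_t$ denote the number of $SI$ edges at time $t$. In any state with $N_t>0$, the total jump rate is $(1+\lambda)N_t$. By the standard decomposition of a continuous-time Markov chain into holding times and a jump chain, the next event is a bite with probability $\lambda/(1+\lambda)$ and a kill with probability $1/(1+\lambda)$. This probability does not depend on the current configuration. Hence we can construct the process as follows. Take i.i.d.\ $\Bernoulli{\lambda/(1+\lambda)}$ coins $(\xi_k)_{k\ge1}$ and let the $k$-th event be a bite if and only if $\xi_k=1$. Given the type, choose the $SI$ edge uniformly at random, and draw exponential holding times independently of everything else.

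Under this construction, as long as at least $k$ events have occurred,
\[
\CZ_{T_k}=|\Delta|+S_k,\qquad S_k\coloneqq\sum_{j=1}^{k}(2\xi_j-1),
\]
where $T_k$ is the time of the $k$-th event. Now $S_k$ is a simple random walk with up-probability $\lambda/(1+\lambda)\le 1/2$. It therefore hits $-|\Delta|$ in finite time almost surely: in the case $\lambda<1$ this is because of the negative drift, and in the case $\lambda=1$ it follows from recurrence. Since $\CZ_t\ge 0$, the walk can never actually be realised at $-|\Delta|$. So the total number $K$ of events that ever occur satisfies $K<\tau_{-|\Delta|}\coloneqq\inf\{k: S_k=-|\Delta|\}<\infty$ almost surely. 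In words, the process gets stuck with $N=0$ after finitely many events. In particular, a finite number of events rules out explosion, and $\CZ_t$ is eventually constant.

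Every affected node is either initially infected or was infected by a bite. It follows that $|\CA_\infty|\le|\Delta|+K<\infty$ almost surely, and hence $\CZ_\infty\le|\CA_\infty|<\infty$. This gives $\phi(\lambda,G,\Delta)=0$. Alternatively, one can invoke Proposition~\ref{prop:cosurvival1} at this point. The argument uses no structure of $G$ beyond local finiteness, which guarantees that $N_t<\infty$ whenever $\CZ_t<\infty$.

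The main technical point I expect to need care is the rigorous justification of the coin coupling. One must check that the sequence of event types, read off from the jump chain, is i.i.d.\ and independent of the past; this follows from the strong Markov property at each jump time, since the conditional probability of a bite is the constant $\lambda/(1+\lambda)$. One must also handle a subtlety when the chain stops: if the process halts after $K$ events, the unused coins are simply discarded. The identity $\CZ_{T_k}=|\Delta|+S_k$ is only claimed for $k\le K$, and this suffices for the argument.
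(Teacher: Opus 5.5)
Your proposal is correct and is essentially the paper's argument: the paper builds the embedded jump chain via Algorithm~\ref{alg:ZIM_coupling_complete} with i.i.d.\ $\Bernoulli{p}$ fight outcomes, uses $Z_n=|\Delta|+\sum_{i\le n}(2X_i-1)$ while the process is active, and concludes that $\tau_{dt}\le\tau_X<\infty$ almost surely, by recurrence when $\lambda=1$ and negative drift when $\lambda<1$. One small slip: $\CZ_t=0$ is attainable (all zombies killed), so the walk can be realised at $-|\Delta|$, and the correct bound is $K\le\tau_{-|\Delta|}$ rather than a strict inequality; this does not affect the conclusion.
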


	This is similar to the biased-voter (or Williams-Bjerknes tumour growth) model,
	which converges (with respect to weak convergence) under the same assumptions 
	to the ``all susceptible''-state \cite{BramsonGriffeath1981}. In fact, our proof of Theorem~\ref{thm:exinction} mimics that for the biased-voter
	model by a coupling argument with a simple random walk on $\BN_0 \coloneqq \BN \cup \{0\}$, see Section~\ref{sec:DiesOutLambda1}
	for the details.

	For our next result, for $p\in [0,1]$ and a graph $G$, let $G_{p}^{\siteperc}$
	be the random subgraph obtained by the procedure of ordinary site percolation,
	that is, by removing each node (or site) independently with probability $1-p$. For $x \in
	V$, let $C_{x}^{\siteperc}$ be the largest connected subset in
	$G_{p}^{\siteperc}$ containing $x$, and let $C_{\Delta}^{\siteperc}= \cup_{x\in
	\Delta}C_{x}^{\siteperc}$ for $\Delta \subset V$. We denote by
	$p_{c}^{\text{site}}(G) \in [0,1]$ the corresponding critical value, that is,
	\begin{equation}
		p_{c}^{\text{site}}(G) \coloneqq \inf\! \left\{ p \in [0,1] \colon \BP_{p}(\lvert C_{\Delta}
		^{\siteperc}\rvert = \infty)>0 \text{ for some }\Delta \Subset V \right\}.
	\end{equation}

	Via coupling arguments developed in Section~\ref{sec:CPP}, we obtain the following
	relations between the ZIM, the SIR model and ordinary site percolation. As in the ZIM, we write
	\begin{equation}
		\label{eq:I_t_definition}\CI_{t} \coloneqq \lvert \{ x \in V \colon \xi_{t}(x
		)=I\} \rvert, \quad t\geq 0,
	\end{equation}
	for the number of infected nodes at time $t$ in the SIR model.

	\begin{theorem}[Zombie outbreak for high bite rates]
		\label{thm:ZIMsurvival} The following statements are equivalent:
		\begin{enumerate}
			\item \label{item:thmsupercrit_equiv-1}$\exists \lambda_{*}^{\ZIM}\in [1,\infty)$ such that $\BP^{\ZIM}_{\lambda,G,\Delta}
				( \lim_{t\rightarrow \infty}\CZ_{t}= \infty ) >0$ for all $\lambda
				>\lambda_{*}^{\ZIM}$ and $\Delta \Subset V$.
				\label{item:thm_ZIMsurvival_1}

			\item \label{item:thmsupercrit_equiv-2}$\exists \lambda_{c}^{\SIR}\in (0,\infty)$ such that $\BP_{\lambda,G,\Delta}
				^{\SIR}( \lim_{t\rightarrow \infty}\CI_{t} = \infty ) >0$ for
				all $\lambda>\lambda_{c}^{\SIR}$ and $\Delta \Subset V$.
				\label{item:thm_ZIMsurvival_2}

			\item \label{item:thmsupercrit_equiv-3}$p_{c}^{\siteperc}(G)<1$. \label{item:thm_ZIMsurvival_3}
		\end{enumerate}
		In particular, if $p_{c}=p_{c}^{\siteperc}(G)<1$, we may set
		$\lambda_{*}^{\ZIM}= \frac{\sqrt[d]{p_{c}}}{1-\sqrt[d]{p_{c}}}$ with
		$d=\mathrm{deg}(G)-1$.
	\end{theorem}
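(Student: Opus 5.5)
We prove the cycle of implications (3)$\Rightarrow$(1)$\Rightarrow$(3) and (3)$\Leftrightarrow$(2). Both ZIM and SIR will be compared with site percolation through a graphical representation with exponential clocks.

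\emph{Step 1: (3)$\Rightarrow$(1), with the explicit $\lambda_*^{\ZIM}$.} We build a site-percolation configuration that the ZIM dominates. Root the exploration at $\Delta$. When a node $v$ becomes infected, it has at most $d=\mathrm{deg}(G)-1$ susceptible neighbours, apart from the edge through which it was infected. (Initial nodes are handled separately; they have positive probability of infecting every neighbour before dying.) Consider the competing clocks on the edges from $v$ to these at most $d$ susceptible neighbours. Each bite clock has rate $\lambda$ and each kill clock has rate $1$. Since the kill rate is $1$ per susceptible neighbour, $v$ dies at total rate equal to its number of susceptible neighbours. We want $v$ to infect all of them before dying. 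We order the events: while $k$ susceptible neighbours remain, the next event is a bite with probability $\lambda k/(\lambda k+k)=\lambda/(1+\lambda)$.

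Biting a neighbour removes it from the susceptible set. That neighbour might also have become infected earlier via another route, or been removed, but this only helps $v$ or is neutral. So $v$ infects all of its at most $d$ targets with probability at least $q^{d}$, where $q=\lambda/(1+\lambda)$. The coins "all targets of $v$ are bitten" can be realised independently across nodes, by assigning each node its own i.i.d. sequence of uniform variables used for successive competitions. Declare $v$ open if this event occurs.

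Open clusters then dominate a site percolation with parameter $p=q^{d}$, and the set of ever-infected nodes contains the open cluster of $\Delta$. One has to be careful that $v$ being open depends only on $v$'s own randomness; we achieve this by the per-node sequential competition just described. Hence if $q^d>p_c$, i.e.\ $\lambda>\sqrt[d]{p_c}/(1-\sqrt[d]{p_c})$, then $|\CA_\infty|=\infty$ with positive probability. Proposition~\ref{prop:cosurvival1} then gives $\CZ_\infty=\infty$. Also $\lambda_*^{\ZIM}\ge1$ is consistent with Theorem~\ref{thm:exinction}.

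\emph{Step 2: (1)$\Rightarrow$(3).} We show conversely that $p_c=1$ forces $|\CA_\infty|<\infty$ for every $\lambda$. Each infected node $x$ that still has a susceptible neighbour dies with positive probability before biting anyone new. More concretely, a susceptible node $y$ adjacent to an infected node gets to kill it before being bitten with probability at least $1/(1+\lambda)$. We try to define closed sites with probability at least $c=c(\lambda,\deg G)>0$, independently, with closed sites never infected. For example, a node $y$ is declared closed if, on each of its incident edges, the first kill clock from $y$'s side rings before the bite clock directed at $y$. Then any zombie attacking $y$ is killed before biting, so $y$ is never infected. Conditions on distinct nodes use disjoint clocks, so they are independent. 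Thus $\CA_\infty\subset C^{\siteperc}_\Delta$ for site percolation with $p=1-c<1$, which is finite a.s.\ when $p_c=1$. This step is where care is needed: we must check that the kill clock of the directed edge $(y\to x)$ is used only when $y$ is susceptible and $x$ infected, so that its first ring indeed kills.

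\emph{Step 3: (2)$\Leftrightarrow$(3).} This is standard. SIR with exponential recovery is dominated by a dependent bond percolation in which each edge is open with probability $\lambda/(1+\lambda)$. We reduce it to site percolation: a node is open if all of its outgoing infections succeed, which has probability at least $(\lambda/(1+\lambda))^{d}$. This gives (3)$\Rightarrow$(2). For the reverse, a node recovers before any transmission with probability at least $1/(1+\lambda\deg G)$. Infinite SIR spread would therefore give infinite site percolation at parameter $<1$, contradicting $p_c=1$. In both directions, $\CI_t\to\infty$ versus an infinite ever-infected set must be related. In SIR every node recovers a.s., so in the reverse direction "$\lim\CI_t=\infty$" implies an infinite affected set. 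In the forward direction we interpret survival via an infinite affected set, as the statement intends; I expect this point, along with the independence in Step 1, to need the most care.
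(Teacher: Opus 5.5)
Your argument is correct, and Steps 1 and 3 are essentially the paper's. Step 1 is the lower half of Proposition~\ref{prop:siteperc_dom_ZIM}: per-node fight counts as in Algorithm~\ref{alg:ZIM_coupling_geom}, with a site open when $\gamma_x\ge \mathrm{deg}(G)-1$. This is the improvement discussed right after that proposition, and it gives the stated $\lambda_*^{\ZIM}$. Your separate treatment of the finitely many initial nodes is a clean way to get that exponent for every $\Delta$. Step 3 is Proposition~\ref{prop:siteperc_dom_SIR}, built on the directed representation of Proposition~\ref{prop:SIR_perc_equivalence}.

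You differ from the paper in Step 2. The paper closes a site from the \emph{attacker's} side: $x$ is closed when the zombie at $x$ loses its first fight ($\gamma_x=0$). This gives open density $p=\lambda/(\lambda+1)$. Since closed sites can still be infected, it only yields $\CA_\infty\subset C\cup\partial_v^o C$. You close a site from the \emph{defender's} side: $y$ is closed when every fight on an edge into $y$ is a zombie loss. This is a site-wise version of the bond coupling in Proposition~\ref{prop:bondPerco}, using directed edge variables so that different sites are independent. It buys a cleaner inclusion: every affected node outside $\Delta$ is open, so no outer boundary is needed. The cost is open density $1-(1-p)^{\mathrm{deg}(G)}$, much worse than $p$. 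That is irrelevant for the equivalence, but the paper's version is what yields quantitative subcritical bounds such as $\lambda<1.25$ on $\BZ^2$ in Theorem~\ref{thm:ZIMsurvivalZd1}.

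Two points need tightening.

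First, ``the first kill clock rings before the bite clock'' is not meaningful in the Harris construction. There the Poisson clocks run in absolute time, and their first rings may occur before $x$ is ever infected. Use a lag-invariant construction such as Algorithm~\ref{alg:ZIM_coupling1}, and declare $y$ closed iff $U_{(x,y)}>p$ for all $x\sim y$. A directed edge $(x,y)$ is activated only when $x$ is infected, which happens at most once. It leaves the SI-boundary after its single fight, since either $y$ becomes $I$ or $x$ becomes $R$. Hence this condition does guarantee $y\notin\CA_\infty$ for $y\notin\Delta$.

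Second, in $(3)\Rightarrow(2)$ there is nothing to reinterpret: statement (2) really concerns $\CI_t\to\infty$. The bridge from an infinite ever-infected set is \eqref{eq:SIR_survival_prob_equivalence}, the SIR analogue of Proposition~\ref{prop:cosurvival1}. It is proved by the same L\'evy $0$--$1$ argument: given $\CI_j\le k$, with probability at least $\epsilon(k)$ all infected nodes recover before transmitting. Cite it, as the paper does.
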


	Thus, irrespective of $\Delta \Subset V$, the ZIM admits a zombie outbreak under $\BP^{\ZIM}_{\lambda,G,\Delta}$ for
	all high bite rates $\lambda$ if and only if site percolation exhibits a non-trivial phase transition on $G$. Note, however, that for the
	ZIM the above result is one-way only, i.e.\ it provides no information about the admissibility of a zombie outbreak for
	$\lambda < \lambda_{*}^{\ZIM}$. On the contrary, as follows by Proposition
	\ref{thm:SIR_monotonicity} below, for the SIR model we may set $\lambda_{c}^{\SIR}=\lambda_{c}^{\SIR}(G)$, where
	\begin{equation}
		\lambda_{c}^{\SIR}(G) \coloneqq \inf\! \left\{ \lambda \in [0,\infty) \colon \BP_{\lambda,G,\Delta}
		^{\SIR}\!\left(\lim_{t\rightarrow \infty}\CI_{t}=\infty \right) >0 \text{ for some
		}\Delta \Subset V\right\},
	\end{equation}
which, again by  Proposition
	\ref{thm:SIR_monotonicity}, is equivalent to the supremum over the values of $\lambda$ for which $\BP_{\lambda,G,\Delta}
		^{\SIR}(\lim_{t\rightarrow \infty}\CI_{t}=\infty)=0$ for some $\Delta \Subset V$.
        
	Our remaining results focus on the ZIM on certain particular graphs. For the first
	statement, we let $K_{N}$ denote the complete graph with node set
	$V=\{1,2,\dots,N\}$.

	\begin{theorem}[ZIM on the complete graph]
		\label{thm:ZIMcompelete} Consider the ZIM on $K_{N}$ and let $\Delta = \{1,\dots
		,M\}$ with $M<N$. Then, as $N\rightarrow \infty$, the distribution of
		$\CZ_{\infty}/N$ converges under $\BP^{\ZIM}_{\lambda,K_N,\Delta}$ to $0$
		when $\lambda\leq 1$ and to $\frac{\lambda-1}{\lambda}X$ where
		$X\sim \Bernoulli{1-\frac{1}{\lambda^{M}}}$ when $\lambda>1$.
	\end{theorem}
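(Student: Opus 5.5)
The plan is to reduce the ZIM on $K_N$ to a one-dimensional random walk via the embedded jump chain, and then read off the limit from gambler's ruin and a law of large numbers.

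\textbf{Step 1: reduction to counts.} By symmetry of $K_N$, the pair $(s_t,i_t)$ (number of susceptible and infected nodes) is a Markov chain. No removed nodes exist initially, so $s_t+i_t+r_t=N$. From state $(s,i)$ with $s,i>0$ there are two possible transitions:
\begin{itemize}
\item an infection $(s,i)\to(s-1,i+1)$ at rate $\lambda i s$;
\item a kill $(s,i)\to(s,i-1)$ at rate $i s$.
\end{itemize}
The process freezes once $i=0$ or $s=0$. The total rate $(1+\lambda)is$ is common to both moves. Hence in the embedded jump chain each step is independently an infection with probability $p=\lambda/(1+\lambda)$ and a kill with probability $q=1/(1+\lambda)$, regardless of the state.

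\textbf{Step 2: coupling with a random walk.} Let $(W_n)$ be a simple random walk on $\BZ$ with up-probability $p$ and $W_0=M$. Take i.i.d.\ steps $\xi_1,\xi_2,\dots$ and run the ZIM jump chain with them: $i$ follows $W$, and $s$ decreases by one at each up-step. The process stops at the first time $W$ hits $0$, or at the time of the $(N-M)$-th up-step, whichever comes first. Thus $\CZ_\infty>0$ exactly when $W$ avoids $0$ up to the time $T_N$ of its $(N-M)$-th up-step. On this event
\[
\CZ_\infty = M+(N-M)-D_N,
\]
where $D_N$ is the number of down-steps before $T_N$. Everything is now expressed through a single sequence $(\xi_k)$ that does not depend on $N$.

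\textbf{Step 3: the case $\lambda\le1$.} Here $p\le q$, so $W$ hits $0$ almost surely at some finite random time $\sigma$. Since $T_N\ge N-M\to\infty$, we get $\{\sigma>T_N\}\downarrow\emptyset$. Therefore $\CZ_\infty=0$ with probability tending to $1$, and $\CZ_\infty/N\to0$ in probability.

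\textbf{Step 4: the case $\lambda>1$.} By gambler's ruin,
\[
\BP(\sigma<\infty)=(q/p)^M=\lambda^{-M}.
\]
\begin{itemize}
\item On $\{\sigma<\infty\}$, the same argument as in Step 3 gives $\CZ_\infty=0$ for all large $N$.
\item On $\{\sigma=\infty\}$, we have $\CZ_\infty=N-D_N$. The quantity $D_N$ is a sum of $N-M$ i.i.d.\ geometric counts of downs between consecutive ups, each with mean $q/p=1/\lambda$. By the strong law of large numbers (applied along the fixed sequence $(\xi_k)$), $D_N/N\to1/\lambda$ almost surely, so $\CZ_\infty/N\to1-1/\lambda=(\lambda-1)/\lambda$.
\end{itemize}
Hence $\CZ_\infty/N\to\frac{\lambda-1}{\lambda}\mathbbm{1}_{\{\sigma=\infty\}}$ almost surely in the coupling. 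The indicator $\mathbbm{1}_{\{\sigma=\infty\}}$ is $\Bernoulli{1-\lambda^{-M}}$, which gives the claimed convergence in distribution.

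The only point needing care is building all $N$ on one probability space so that the survival event does not depend on $N$; Step 2 handles this. I therefore do not expect a genuine obstacle.
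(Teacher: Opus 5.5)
Your proposal is correct and follows essentially the same route as the paper. Both arguments couple the embedded jump chain for all $N$ simultaneously to a single biased random walk (the paper does this via Algorithm~\ref{alg:ZIM_coupling_complete} and Lemma~\ref{lem:ZIM_onCG}), obtain the extinction probability $\lambda^{-M}$ from gambler's ruin, and apply the strong law of large numbers to the geometric gaps between successive infections. Your bookkeeping through the down-count $D_N$ gives the almost-sure limit $\frac{\lambda-1}{\lambda}\mathbbm{1}_{\{\sigma=\infty\}}$ directly, which packages the paper's argument with $G_{N-M}$ slightly more cleanly.
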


	This result follows by a coupling with a simple random walk on $\BN_0$ starting from
	$M$ that jumps to the right with probability $\lambda/(\lambda+1)$ and
	to the left with probability $1/(\lambda+1)$. The claim for $\lambda \leq 1$ follows by the same argument as
	in Theorem~\ref{thm:exinction}, whereas for $\lambda>1$, with probability
	$1-\frac{1}{\lambda^{M}}$, this random walk does not reach $0$.
    
	Combining the monotonicity property from Theorem \ref{thm:monoLambdaTree} with a coupling to a branching process yields corresponding results for the ZIM on regular trees. We denote by $\BT_{d} = (V,E)$ the regular tree of degree $d$, i.e.\ where every node has degree $d$.

	\begin{theorem}[ZIM on regular trees]
\label{thm:ZIMtrees}
		For every $d\geq 3$ it holds that  $\phi(\lambda,\BT_{d},\Delta)>0$ for 
		$\Delta \Subset V$ if and only if $\lambda\! \left(1-\big(\frac{\lambda}{\lambda+1}\big)^{d-1} \right) >1$. \end{theorem}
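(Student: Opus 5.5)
The plan is to reduce the ZIM on $\BT_d$ to a Galton--Watson branching process whose offspring law we can write down explicitly. Then we use Proposition~\ref{prop:cosurvival1} to identify a zombie outbreak with $\lvert\CA_\infty\rvert=\infty$, that is, with survival of the branching process. Write $q\coloneqq \lambda/(\lambda+1)$.

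\emph{Step 1: one fresh zombie.} Suppose a node $y$ becomes infected at a moment when its parent is infected or removed and its $d-1$ children are all susceptible.
\begin{itemize}
\item The edge to the parent is never again active, since an $I$--$I$ or $I$--$R$ edge carries no transitions.
\item Each child $c$ carries two clocks: a bite clock of rate $\lambda$ and a kill clock of rate $1$. Once $c$ is bitten, the edge $y$--$c$ becomes inactive as well.
\item While $k\geq 1$ children are still susceptible, the next event at $y$ is a bite with probability $k\lambda/(k\lambda+k)=q$ and a kill with probability $1-q$, independently of the past by the memoryless property.
\end{itemize}
Hence the number $N$ of children that $y$ infects satisfies $\BP(N\geq j)=q^j$ for $j\le d-1$. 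So $N=\min(\mathrm{Geom},d-1)$, and
\[
\BE N=\sum_{j=1}^{d-1}q^j=\frac{q(1-q^{d-1})}{1-q}=\lambda\Big(1-\Big(\frac{\lambda}{\lambda+1}\Big)^{d-1}\Big).
\]

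\emph{Step 2: independence.} Each infected child $c$ is again in the situation of Step~1: its parent $y$ is infected or removed, and its own subtree is untouched. Because $\BT_d$ is a tree, the subtrees of distinct children are disjoint and interact only through their roots. Using the graphical representation with independent Poisson clocks on each edge, the evolutions inside distinct subtrees are therefore independent. It follows that, started from a single zombie $x$, the affected set $\CA_\infty$ is the total progeny of a Galton--Watson tree. In this tree the root has offspring $\min(\mathrm{Geom},d)$ and every later generation has offspring law $N$. For the root, $x$ has $d$ susceptible neighbours, and the same geometric argument applies with truncation at $d$.

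Since $N$ is non-degenerate, the process survives with positive probability iff $\BE N>1$. The root's modified law does not change this criterion. Survival is equivalent to $\lvert\CA_\infty\rvert=\infty$, which is equivalent to $\CZ_\infty=\infty$ by Proposition~\ref{prop:cosurvival1}. This proves the claim for $\Delta=\{x\}$.

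\emph{Step 3: general $\Delta\Subset V$.} Let $H$ be the finite subtree spanned by $\Delta$.
\begin{itemize}
\item \emph{Lower bound.} With positive probability the very first event is a bite from some $x\in\Delta\cap\partial H$ to a neighbour $y\notin H$. From then on, the subtree below $y$ evolves independently of everything else, as a Step~1 Galton--Watson process. This gives $\phi>0$ whenever $\BE N>1$.
\item \emph{Upper bound.} Every node outside $H$ that ever gets infected is infected from its parent, and that parent lies on the path towards $H$. So $\CA_\infty\setminus H$ is contained in the union of the descendant trees of the finitely many nodes adjacent to $H$. Each of these is a Galton--Watson tree with offspring law $N$, or $N$ truncated at a different degree for boundary nodes. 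When $\BE N\le1$ they are all a.s.\ finite, hence $\lvert\CA_\infty\rvert<\infty$ a.s.
\end{itemize}

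The main point needing care is Step~2: we must justify rigorously that, once a child is infected, its subtree's evolution is independent of the rest and has exactly the same structure. This is where the tree structure is essential, and I would argue it via the graphical construction of the process. Theorem~\ref{thm:monoLambdaTree} is not needed for this argument.
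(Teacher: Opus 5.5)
Your proof is correct. For a single initial zombie it follows the paper. The paper also identifies $\CA_\infty$ with the progeny of a Galton--Watson tree with offspring $\min(\gamma_x,d-1)$ (root: $\min(\gamma_o,d)$) and computes the same mean. The difference is only in how the tree structure is obtained: the paper uses Proposition~\ref{prop:comparisionWbp} (built on Algorithm~\ref{alg:ZIM_coupling_geom}), while you use the Harris construction and memorylessness. For Step~2, Algorithm~\ref{alg:ZIM_coupling1} is the convenient tool. There the number of children $y$ infects is an explicit function of the variables $(Y_{(y,c)},U_{(y,c)})$ on its own child edges. Independence across disjoint subtrees is then immediate, and no strong Markov argument at the random time $\tau_y$ is needed.

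Where you genuinely differ is the passage to general $\Delta$. The paper lets $\Lambda$ be the minimal connected set containing $\Delta\cup\{o\}$. It invokes Theorem~\ref{thm:monoStartTree} to get $\phi(\lambda,\BT_d,o)\le\phi(\lambda,\BT_d,\Lambda)$ and $\phi(\lambda,\BT_d,\Delta)\le\phi(\lambda,\BT_d,\Lambda)$. It then closes both directions with the Markov property, since ``exactly $\Lambda$ infected, nothing removed'' is reached at time $1$ with positive probability from $o$ and from $\Delta$. You instead decompose by hand. The set $\CA_\infty\setminus H$ lies in the union of the descendant trees of the finitely many nodes adjacent to $H$, and each is a Galton--Watson tree with law $N$ given that its root is infected. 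A single escaping bite gives the lower bound.

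Your route is more elementary, needs no monotone coupling, and shows that only the tree structure matters. The paper's route is shorter given its earlier machinery, and it reuses the same reduction later, e.g.\ to extend Theorem~\ref{thm:NonMonBit}\ref{thm:NonMonBit-a} to arbitrary $\Delta$.

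Two small points. Your hedge about ``$N$ truncated at a different degree for boundary nodes'' is unnecessary. A node adjacent to $H$ has exactly one neighbour in $H$, since otherwise $\BT_d$ would contain a cycle. It therefore has exactly $d-1$ susceptible children and offspring law exactly $N$. Also, nodes of $\Delta$ with a neighbour outside $H$ exist because the leaves of $H$ lie in $\Delta$ by minimality.
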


	Thus, Theorem~\ref{thm:ZIMtrees} yields an explicit expression for the critical point of the ZIM on regular trees. 
    Indeed, let
\begin{align}
    \lambda_c^{\ZIM}(G) \coloneqq &\inf\! \left\{ \lambda \in [0,\infty) \colon \BP_{\lambda,G,\Delta}^{\ZIM}\!\left(\lim_{t\rightarrow \infty} \CZ_t=\infty \right) >0 \text{ for some } \Delta \Subset V\right\}.
\end{align}
By Theorems~\ref{thm:monoLambdaTree} and \ref{thm:monoStartTree}, when $G=(V,E)$ is a tree,
this also equals the supremum over $\lambda$ for which $\BP_{\lambda,G,\Delta}^{\ZIM}(\lim_{t\rightarrow \infty} \CZ_t=\infty) =0$ for some $\Delta\Subset V$. 
	In particular, Theorem~\ref{thm:ZIMtrees} implies that $\lambda_{c}^{\ZIM}(\BT_3) \approx 1.618$  and that  $\lim_{d\rightarrow \infty} \lambda_{c}^{\ZIM}
	(\BT_{d}) \downarrow 1$. 

    Simulations indicate that the ZIM on the $d$-dimensional lattice $\BZ^{d}$ with $d\geq 2$ undergoes a phase transition similar to that of Theorem~\ref{thm:ZIMtrees}, as discussed further in Section~\ref{subsec:discussion}. Our final two results provide partial support for this conjecture.

	\begin{theorem}[ZIM on $\BZ^{2}$]
		\label{thm:ZIMsurvivalZd1}
		If $\lambda<1.25$, then $\sup_{\Delta \Subset \BZ^2}\phi(\lambda, \BZ^{2}, \Delta
		)=0$ and, if $\lambda>6.92$, then $\inf_{\Delta \Subset \BZ^2}\phi(\lambda, \BZ
		^{2}, \Delta)>0$.
	\end{theorem}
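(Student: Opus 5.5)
The two bounds in Theorem~\ref{thm:ZIMsurvivalZd1} need different methods. I would treat the supercritical bound with Theorem~\ref{thm:ZIMsurvival}, and the subcritical bound with a supermartingale argument in the spirit of Theorem~\ref{thm:exinction}.

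\emph{Supercritical bound ($\lambda>6.92$).} Here $\mathrm{deg}(\BZ^2)=4$, so $d=3$ in Theorem~\ref{thm:ZIMsurvival}. That theorem allows $\lambda_*^{\ZIM}=p^{1/3}/(1-p^{1/3})$ for any rigorously known $p\ge p_c^{\siteperc}(\BZ^2)$, and this map is increasing in $p$. The value $6.92$ corresponds to $p^{1/3}=6.92/7.92$, i.e.\ $p\approx 0.667$. So I would plug in a published rigorous upper bound on $p_c^{\siteperc}(\BZ^2)$ of at most about $0.667$; the true value is $\approx 0.5927$, and rigorous bounds somewhat below $0.68$ exist. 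If the bound available to me is not quite sharp enough, I would reopen the coupling of Section~\ref{sec:CPP} on $\BZ^2$. A node infected from a neighbour has at most $3$ susceptible neighbours, and I would check whether the percolation parameter it produces can be sharpened slightly.

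For the infimum over $\Delta$ I need more than positivity for each fixed $\Delta$. I would note that the coupling gives $\phi(\lambda,\BZ^2,\Delta)\ge \BP_p(|C_x^{\siteperc}|=\infty)$ for any $x\in\Delta$. The reason is that infected nodes in $\Delta$ only remove susceptible competitors and never block the exploration from $x$. Translation invariance then makes the bound uniform in $\Delta$. This uniformity is the point I would verify most carefully, since monotonicity in $\Delta$ fails in general (Theorem~\ref{thm:NonMon}\ref{thm:NonMon-b}).

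\emph{Subcritical bound ($\lambda<1.25$).} The proof of Theorem~\ref{thm:exinction} rests on the fact that, per SI edge, $\CZ_t$ increases at rate $\lambda$ and decreases at rate $1$. On $\BZ^2$ I would strengthen this with a Lyapunov functional
\begin{equation}
\Phi(\eta)=\CZ(\eta)+a\,N_{SI}(\eta)+b\,N_{SS}^{\partial}(\eta),
\end{equation}
where $N_{SI}$ is the number of SI edges and $N_{SS}^{\partial}$ is a local count of susceptible--susceptible edges adjacent to the affected set. The constants $a,b$ are to be chosen. I would compute the drift of $\Phi$ per SI edge, using the $\BZ^2$ geometry:
\begin{itemize}
\item A bite on $y$ destroys the edge $xy$, together with every other SI edge through $y$.
\item It creates at most $3$ new SI edges.
\item A kill of $x$ removes all SI edges of $x$ at once.
\end{itemize}
The aim is to choose $a,b\ge 0$ so that the drift of $\Phi$ is $\le -\varepsilon N_{SI}$ whenever $\lambda<1.25$. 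Then $\Phi$ is a nonnegative supermartingale, and by Proposition~\ref{prop:cosurvival1} $N_{SI}$ hits $0$ almost surely. In particular $\CZ_\infty<\infty$ almost surely for every finite $\Delta$, which gives $\sup_\Delta\phi=0$.

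The main obstacle is the local inequality: the worst-case configurations around a bitten or killed node must still yield negative drift. I expect to need a finite case analysis over the neighbourhood of the updated node, and the threshold $1.25$ presumably comes out of optimising $a,b$ in that analysis. If a linear functional does not reach $1.25$, my fallback would be to weight SI edges by the number of susceptible neighbours of the infected endpoint.
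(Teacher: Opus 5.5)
\textbf{The subcritical half is not proved.} It depends entirely on a drift inequality for $\Phi=\CZ+aN_{SI}+bN_{SS}^{\partial}$ that you neither specify nor verify, and the simplest versions of it provably fail.

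Take $b=0$ and consider a single active zombie $x$ with $R$-neighbours $x-e_1,x-e_2$ and susceptible neighbours $x+e_1,x+e_2$, each of which has three susceptible neighbours. This is reachable from $\Delta=\{x,x-e_1,x-e_2\}$. Each bite changes $\Phi$ by $1+2a$ and each kill by $-(1+2a)$, so the drift is $2(1+2a)(\lambda-1)\ge 0$ for all $a\ge0$ and $\lambda\ge1$. The linear part therefore gives nothing beyond Theorem~\ref{thm:exinction}.

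More generally, let $\Phi=\sum_{x\in I}f(s_x)$ with $f\ge0$, where $s_x$ is the number of susceptible neighbours of $x$. Consider the three configurations ``one active zombie with $s=1,2,3$ susceptible neighbours, each having three susceptible neighbours''. Negative drift there forces $f(1)>pf(3)$, $f(2)>p\,(f(1)+f(3))$ and $f(3)>\lambda f(2)$. Together these give $p+p^2+p^3<1$, i.e.\ $\lambda<\lambda_c^{\ZIM}(\BT_4)\approx 1.19<1.25$. The reason is that up to radius one $\BZ^2$ looks exactly like $\BT_4$.

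So the whole burden falls on the undefined $N_{SS}^{\partial}$ term, and nothing suggests that $1.25$ ``comes out of optimising $a,b$''. In fact $1.25$ has nothing to do with a drift computation. The paper uses the upper inclusion of Proposition~\ref{prop:siteperc_dom_ZIM}:
\begin{itemize}
\item Declare a site open iff its zombie wins its first fight. This is site percolation with parameter $p=\lambda/(\lambda+1)$, and $\CA_\infty\subset\CC^{\siteperc}_{\Delta}\cup\partial_v^o(\CC^{\siteperc}_{\Delta})$.
\item By van den Berg--Ermakov, $p_c^{\siteperc}(\BZ^2)\ge 0.556$. For $\lambda<1.25$ we get $p<1.25/2.25<0.556$, so $\CA_\infty$ is a.s.\ finite.
\item Proposition~\ref{prop:cosurvival1} then gives $\phi(\lambda,\BZ^2,\Delta)=0$ for every $\Delta$.
\end{itemize}
The constant $1.25$ is just $0.556/0.444\approx1.252$ rounded down. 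You already had this coupling, since you use its lower half for the other bound, but you did not apply its upper half.

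\textbf{The supercritical half} is the paper's argument: Theorem~\ref{thm:ZIMsurvival} with exponent $\mathrm{deg}(\BZ^2)-1=3$. It needs a rigorous bound $p_c^{\siteperc}(\BZ^2)\le (6.92/7.92)^3\approx 0.6670$. The paper uses Wierman--Oberly's $0.666894$. A bound such as $0.6795$ yields only $\lambda>7.27$, so ``somewhat below $0.68$'' is not enough.

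On uniformity in $\Delta$:
\begin{itemize}
\item With exponent $4$, your translation-invariance argument is correct. A site with $\gamma\ge4$ wins every fight it can face, so the $p^4$-cluster of any $x\in\Delta$ lies in $\CA_\infty$. But this only gives $\lambda>9.38$.
\item With exponent $3$, the inclusion needs every open site to face at most three susceptible neighbours when it is infected. This holds for sites infected by a neighbour, but can fail at $x$ and at other sites of $\Delta$. So the issue is not ``blocking'' by $\Delta$, as your argument suggests.
\end{itemize}
One fix is as follows. Take $x$ lexicographically maximal in $\Delta$ and require $\gamma_x\ge4$. Then explore $p^3$-open sites in the half-plane $\{z_1>x_1\}$, which contains no site of $\Delta$. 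This gives $\inf_\Delta\phi\ge p^4\,\theta^{+}(p^3)>0$, using that half-plane site percolation on $\BZ^2$ has critical point $p_c^{\siteperc}(\BZ^2)$. The paper's own proof just invokes Theorem~\ref{thm:ZIMsurvival} and does not spell out this uniformity either.
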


	\begin{theorem}[ZIM on $\BZ^{d}$] 
\label{thm:ZIMsurvivalZd2} 
		For any $\lambda>1$ it holds that $\inf_{\Delta \Subset \BZ^d}\phi(\lambda, \BZ^{d}, \Delta
		)>0$ for all $d$ large enough.
	\end{theorem}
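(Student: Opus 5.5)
The statement to prove is Theorem~\ref{thm:ZIMsurvivalZd2}: for fixed $\lambda>1$ and all $d$ large enough, the ZIM on $\BZ^d$ admits a zombie outbreak, uniformly in $\Delta$. The strategy is to compare the ZIM on $\BZ^d$ with the ZIM on a regular tree embedded in $\BZ^d$, and then use Theorem~\ref{thm:ZIMtrees}. That theorem says survival on $\BT_k$ holds iff
\begin{equation}
\lambda\left(1-\left(\tfrac{\lambda}{\lambda+1}\right)^{k-1}\right)>1.
\end{equation}
For fixed $\lambda>1$ this holds once $k$ is large. A tree is not literally a subgraph of $\BZ^d$ in a way that preserves the ZIM dynamics, and extra neighbours hurt the zombies, since kills happen at rate equal to the number of susceptible neighbours. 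So a direct comparison is not available. Instead I would use an oriented exploration and count only the useful events.

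\textbf{Step 1: one-step survival estimate.} Restrict attention to an oriented structure. A zombie at $x$ tries to bite only in the $d$ positive directions $x+e_i$, whose nodes have not yet been explored. Each of its $2d$ neighbours is a potential killer while susceptible. Work in the graphical (Poisson clock) representation with independent bite and kill clocks on each oriented edge. A zombie at $x$ with about $2d$ susceptible neighbours behaves roughly like a node in $\BT_{2d}$. Before being killed, it produces a number of bites which is geometric: each event is a bite with probability $\lambda/(\lambda+1)$ and a kill otherwise, and the number of susceptible neighbours decreases as bites succeed.

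\textbf{Step 2: domination by a supercritical branching process.} The branching number in Theorem~\ref{thm:ZIMtrees}, namely $\lambda(1-(\lambda/(\lambda+1))^{k-1})$, comes from the number of children bitten before death. Collisions in $\BZ^d$ (a node already infected or removed) occur only with probability $O(1/d)$ for each child in the oriented exploration, because paths through distinct coordinate directions rarely meet. I would therefore dominate from below by a Galton--Watson process with mean $\lambda(1-(\lambda/(\lambda+1))^{2d-1})-O(\lambda/d)$, which exceeds $1$ for $d$ large. Alternatively, I would use the Cox--Durrett / Kesten style embedding: restrict growth to a sparse subset, e.g.\ take $x$ to $x+e_i$ only for $i$ in a fresh coordinate block, which gives an exactly tree-like exploration on a sublattice of $\BZ^d$ isomorphic to a tree up to depth $\sim\sqrt d$. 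Then use a renormalisation to oriented site percolation.

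\textbf{Step 3: renormalisation.} Since collisions can't be avoided at all depths, the cleaner route is block construction. Declare a block good if a zombie seeded there, using the near-tree exploration of Step 2 to depth $n$, produces zombies in two designated neighbouring blocks with probability $\ge 1-\varepsilon$, with events depending only on clocks inside the blocks. Then compare with supercritical oriented percolation, via Theorem~\ref{thm:ZIMsurvival}'s style of argument and Proposition~\ref{prop:cosurvival1} to convert to $\CZ_\infty=\infty$.

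\textbf{Main obstacle.} The main obstacle is monotonicity. ZIM is not monotone on general graphs (Theorem~\ref{thm:NonMon}), so ignoring other zombies' activity in the block argument is not automatically a lower bound. I would handle this by designing the good-block events directly in the graphical representation. A zombie's child is counted only if its bite clock on that edge rings before all kill clocks incident to it; I would then verify that the presence of extra zombies can only remove susceptible neighbours, and hence only remove kill opportunities for the tracked zombies. That gives the needed pathwise domination, the same principle behind Theorem~\ref{thm:monoStartTree}. Uniformity in $\Delta$ then follows by starting from any single initial zombie.
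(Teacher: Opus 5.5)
\textbf{Your exploration cannot reach all $\lambda>1$, and the hard step is missing.} In Step~1 you count only children in the $d$ positive directions. But a zombie spends its fights uniformly over all its susceptible neighbours, so only about half of its roughly $\lambda$ won fights land there. Under your own counting rule (a child counts if its bite clock rings before all $2d$ kill clocks of the parent), each node has $d\lambda/(\lambda+2d)$ counted children in expectation. This is $<1$ for every $d$ whenever $\lambda\le 2$. Distinct nodes use disjoint clocks, so the oriented structure is then dominated by a subcritical Galton--Watson process and is a.s.\ finite, which makes it useless as a lower bound. The mean $\lambda(1-p^{2d-1})-O(\lambda/d)$ in Step~2 counts all $2d-1$ directions, so it does not match Step~1. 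If you do count all directions, Step~3 is where the whole difficulty lies, and it is not argued. Your good-block event cannot have probability $1-\varepsilon$, because a single seeded zombie loses its very first fight with probability $1/(1+\lambda)$. You would have to show that large populations reproduce across blocks despite collisions when the mean offspring is only slightly above $1$. That is a Kesten-type high-dimensional asymptotic, not something the $O(1/d)$ collision heuristic delivers.

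\textbf{The monotonicity device fails in the Harris picture.} There the clocks are tied to absolute time, so whether ``the bite clock rings before all kill clocks'' depends on when the zombie was infected. Another zombie may infect a node earlier than your exploration does. The node may then still have many susceptible neighbours, be killed early, and become $R$, blocking a path you counted. This is exactly the mechanism of Theorem~\ref{thm:NonMon}\ref{thm:NonMon-b}, so ``extra zombies only remove susceptible neighbours'' gives no domination.

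What does work is the time-lag invariant construction of Algorithm~\ref{alg:ZIM_couplingSIR}, in which the clocks of $x$ start at $\tau_x$. With $Y_x=\min_{z\sim x}Y^{\mu}_{(x,z)}$, every node reached from $\Delta$ along directed edges with $Y^{\lambda}_{(x,y)}<Y_x$ lies in $\CA_\infty$. This is Proposition~\ref{prop:ZIM_dom_SIR}, and it is the paper's route. The ZIM dominates the SIR model with infection rate $\lambda$ and recovery rate $2d$. By Proposition~\ref{prop:scale_invariance}, that model has the same survival probability as the SIR model with infection rate $\lambda/(2d)$ and recovery rate $1$, whose mean offspring is $(2d-1)\lambda/(\lambda+2d)\to\lambda>1$. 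The high-dimensional step you attempt by renormalisation is imported from \cite{xue_asymptotic_2018}, which identifies the critical SIR infection rate on $\BZ^d$ as asymptotically $1/(2d)$.

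Finally, uniformity in $\Delta$ cannot come from ``starting from a single zombie'', since the ZIM is not monotone in the initial set. It follows instead because the ZIM from $\Delta$ dominates the SIR model from $\Delta$. By Theorem~\ref{thm:SIR_monotonicity}, that in turn dominates the SIR model from any single site of $\Delta$, whose survival probability is translation invariant.
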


	The last two theorems follow as applications of the coupling arguments
	that we develop for the proof of Theorem~\ref{thm:ZIMsurvival}.

	\subsection{Discussion}\label{subsec:discussion}
	The results presented in the previous subsection give rise to 
    potential further studies
	of the ZIM that we briefly discuss here. For instance, recall that our initial
	aim was to prove that this model is monotone. Although Theorems \ref{thm:NonMon}
	and \ref{thm:NonMonBit} show that in general this is not the case, Theorems
	\ref{thm:monoLambdaTree}--\ref{thm:monoStartTree} and Theorem
	\ref{thm:ZIMtrees} conclude that the ZIM is monotone on trees. However, even
	on trees, Theorems \ref{thm:monoLambdaTree}--\ref{thm:monoStartTree} come with relatively
	restrictive conditions on the initial state. In fact, as
	discussed more carefully in Section~\ref{sec:monotonicity_lambda}, these monotonicity
	statements fail for our particular coupling construction of the ZIM once these
	conditions are relaxed slightly. Thus, we leave open the question whether there
	exist other coupling constructions that preserve monotonicity:

	\textbf{Question 1.} Do the statements of Theorems \ref{thm:monoLambdaTree}--\ref{thm:monoStartTree}
	hold under less stringent conditions on the initial state?

	From the simulation studies of the ZIM on $\BZ^{2}$ in \cite{PhysRevE.101.062418},
	it seems evident that the probability of a zombie outbreak on
	$\BZ^{2}$ is monotone in the bite rate. See also Figure~\ref{fig:ZIM_survival_plot}
	below for an illustration. However, providing a mathematical justification of
	this remains out of reach.

	\textbf{Question 2.} Is $\phi(\lambda,\BZ^{2},\Delta)$ monotonically increasing
	in $\lambda$ for some $\Delta \Subset \BZ^{2}$?

	\textbf{Question 3.} Does there exist a (critical) value
	$\lambda_{c} \in [1.25,6.92]$ such that, for some $\Delta \Subset \BZ^{2}$, we
	have that $\phi(\lambda,\BZ^{2},\Delta)=0$ for all $\lambda<\lambda_{c}$ and
	$\phi(\lambda,\BZ^{2},\Delta)>0$ for all $\lambda>\lambda_{c}$?

	\textbf{Question 4.} If the answer to either Question 2.\ or Question 3.\ is affirmative,
	does it hold for all $\Delta \Subset V$?

	Obviously, the above questions can also be phrased for graphs other than $\BZ^{2}$,
	such as the higher dimensional lattices. On these graphs, motivated by Theorem
	\ref{thm:ZIMtrees} and Theorems \ref{thm:ZIMsurvivalZd1}--\ref{thm:ZIMsurvivalZd2},
	the following questions naturally present themselves.

	\textbf{Question 5.} For fixed $\lambda>0$ and $\Delta \Subset \BZ^d$, is $\phi(\lambda,\BZ^{d},\Delta)$ increasing in
	the dimension $d$?

	\textbf{Question 6.} For $d \geq 3$, does there exist $\epsilon>0$ such that $\sup_{\Delta \Subset V}\phi(\lambda,\BZ^{d},\Delta)=0$ for all $\lambda<1+
	\epsilon$?

	The work of \cite{PhysRevE.101.062418} presents, among
	other results, an extensive simulation study of the ZIM on $\BZ^{2}$. They argue that
	there is a critical point $\lambda_c$ in the sense of Question 3 above with $\lambda_c \approx (0.43734613)^{-1}$, which
	corresponds to $\lambda_c \approx 2.28651843$. Moreover, 
	$\BP_{\lambda_c,\BZ^2,o}^{\ZIM}(\CZ_{\infty}\geq s)$  follows
	a power law and decays as $s^{2-\tau}$, where $\tau$ is its critical exponent. In fact, their simulations demonstrate that $\tau = 187/91$. 
	They also argue that ZIM is in the percolation universality class.

	We have conducted our own simulation studies  of the ZIM on $\BZ^{2}$, see \cite{modee_SIM_2025}, and these
	to a large extent confirm that it is ``critical'' at $\lambda\approx 2.28$. Indeed,
	our simulations indicate that above this value the ZIM admits a zombie
	outbreak whereas below this value it admits no zombie outbreak. However, these
	simulations also show other interesting phenomena.

	As illustrated in Figures~\ref{fig:ZIM_sim_226},\ref{fig:ZIM_sim_227}, and \ref{fig:ZIM_survival_plot}, even for values of $\lambda$
	significantly below the critical value, the sizes of $\CZ_{\infty}$ and
	$\CA_{\infty}$ are frequently quite large. Figures~\ref{fig:ZIM_sim_226} and \ref{fig:ZIM_sim_227} display the ZIM on
	$\BZ^{2}$ simulated with $\lambda=2.26$ and $\lambda=2.27$, respectively, and initially only the origin infected.
	Note that, even this far from the critical point, the size of $\CA_{\infty}$ is of order
	$10^6$ nodes. This is seemingly in
	sharp contrast to classic percolation models, where for parameter
	values $p<p_c^{\siteperc}$ such events are exponentially unlikely, see e.g.\ \cite[Theorem
	6.75]{Grimmett1999}. 
	Furthermore, when simulating the ZIM with $\lambda$ significantly above the critical
	value, there are ``islands'' of susceptible nodes that cover large regions of the
	lattice. Indeed, as seen in Figure~\ref{fig:ZIM_sim_229} displaying the ZIM with
	$\lambda=2.29$, there are several such islands of order $10^6$ nodes, and
	this remains the case even for the ZIM with $\lambda=2.30$ as seen in Figures~\ref{fig:ZIM_sim_230} and \ref{fig:ZIM_sim_230_zoom}. 
	Again, this is in sharp contrast to classic percolation models,
	where for parameter values $p>p_c^{\siteperc}$ the probability of such an event decays stretched-exponentially, see e.g.\ \cite[Theorem 8.61]{Grimmett1999}.

	\textbf{Question 7.} Does the ZIM on $\BZ^2$ exhibit a sharp phase transition in the sense that there is a $\lambda_c \in [1.25,6.92]$ such that $\BP_{\lambda,\BZ^2,o}^{\ZIM}(\CA_{\infty} = \infty )> 0$ for all $\lambda>\lambda_c$  and $\BP_{\lambda,\BZ^2,o}^{\ZIM}(\CA_{\infty} \geq n ) \leq \tilde{C} e^{-\tilde{c}n}$ for all $\lambda<\lambda_c$ and some constants $\tilde{C},\tilde{c}\in(0,\infty)$? 

    \textbf{Question 8.} In the setting of the former question, does $\BP_{\lambda,\BZ^2,o}^{\ZIM}(n \leq \CA_{\infty} <\infty)$ decay (stretched-)exponentially for all $\lambda>\lambda_c$? And, conditional on a zombie outbreak, how does the distribution of the largest island of susceptible nodes in an $[-n,n]\times[-n,n]$-box scale as $n \rightarrow \infty$?

	\begin{figure}[tb]
		\centering
		\begin{minipage}{0.3\textwidth}
			\centering
			\includegraphics[width=\textwidth]{
				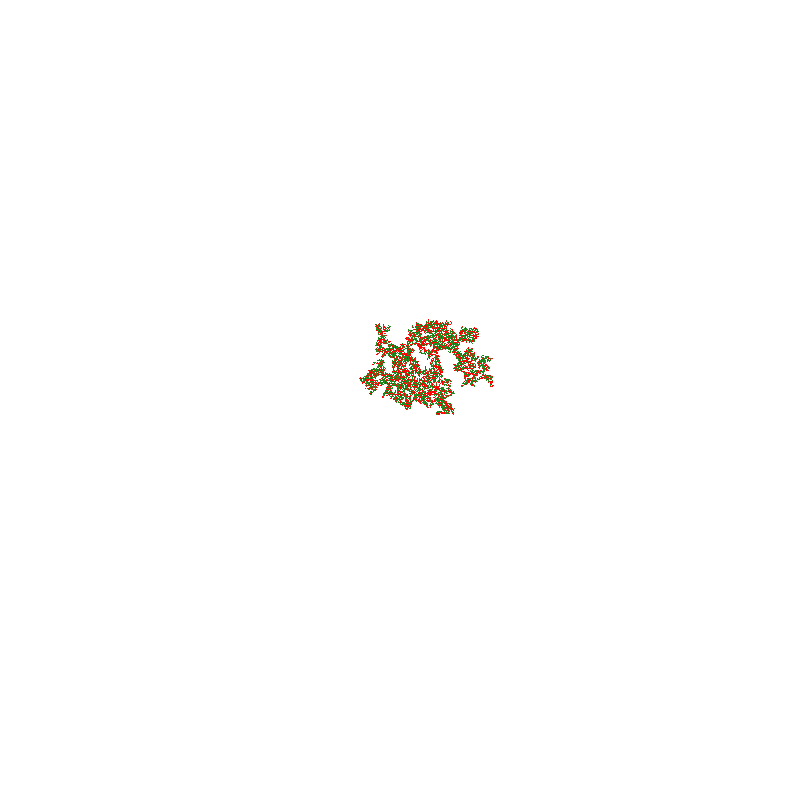
			}
			\subcaption{Cluster size: 790,205. $\lambda=2.26$. Did not reach the boundary.} \label{fig:ZIM_sim_226}
		\end{minipage}
		\hfill
		\begin{minipage}{0.3\textwidth}
			\centering
			\includegraphics[width=\textwidth]{
				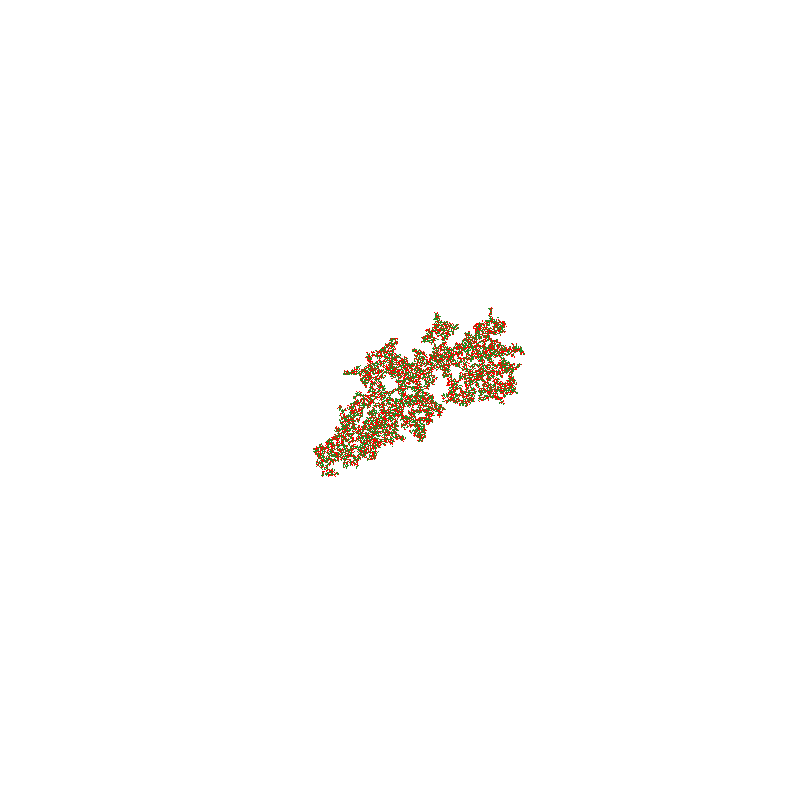
			}
			\subcaption{Cluster size: 1,658,421. $\lambda=2.27$. Did not reach boundary.} \label{fig:ZIM_sim_227}
		\end{minipage}
		\hfill
		\begin{minipage}{0.3\textwidth}
			\centering
			\includegraphics[width=\textwidth]{
				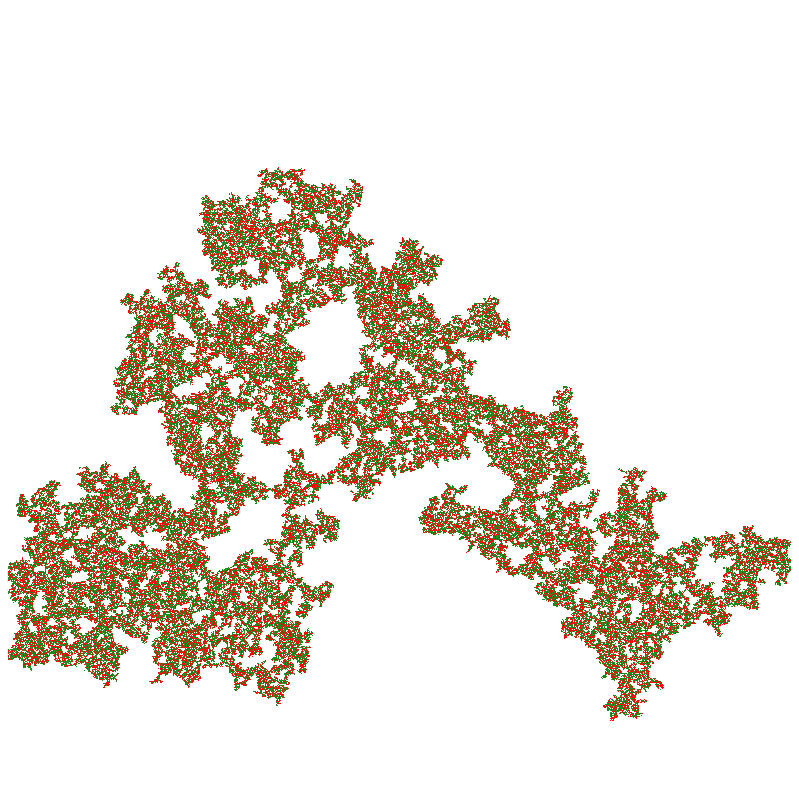
			}
			\subcaption{Cluster size: 17,448,501. $\lambda=2.28$. Reached the boundary.} \label{fig:ZIM_sim_228}
		\end{minipage}
		\vspace{0.3cm}
		\centering
		\begin{minipage}{0.3\textwidth}
			\centering
			\includegraphics[width=\textwidth]{
				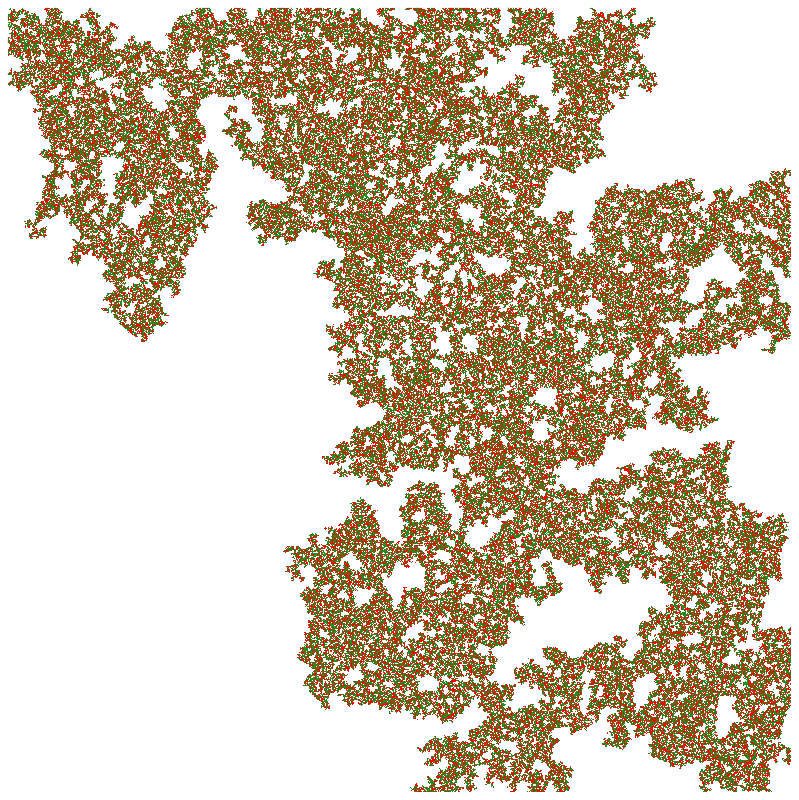
			}
			\subcaption{Cluster size: 35,616,869. $\lambda=2.29$. Reached the boundary.} \label{fig:ZIM_sim_229}
		\end{minipage}
		\hfill
		\begin{minipage}{0.3\textwidth}
			\centering
			\includegraphics[width=\textwidth]{
				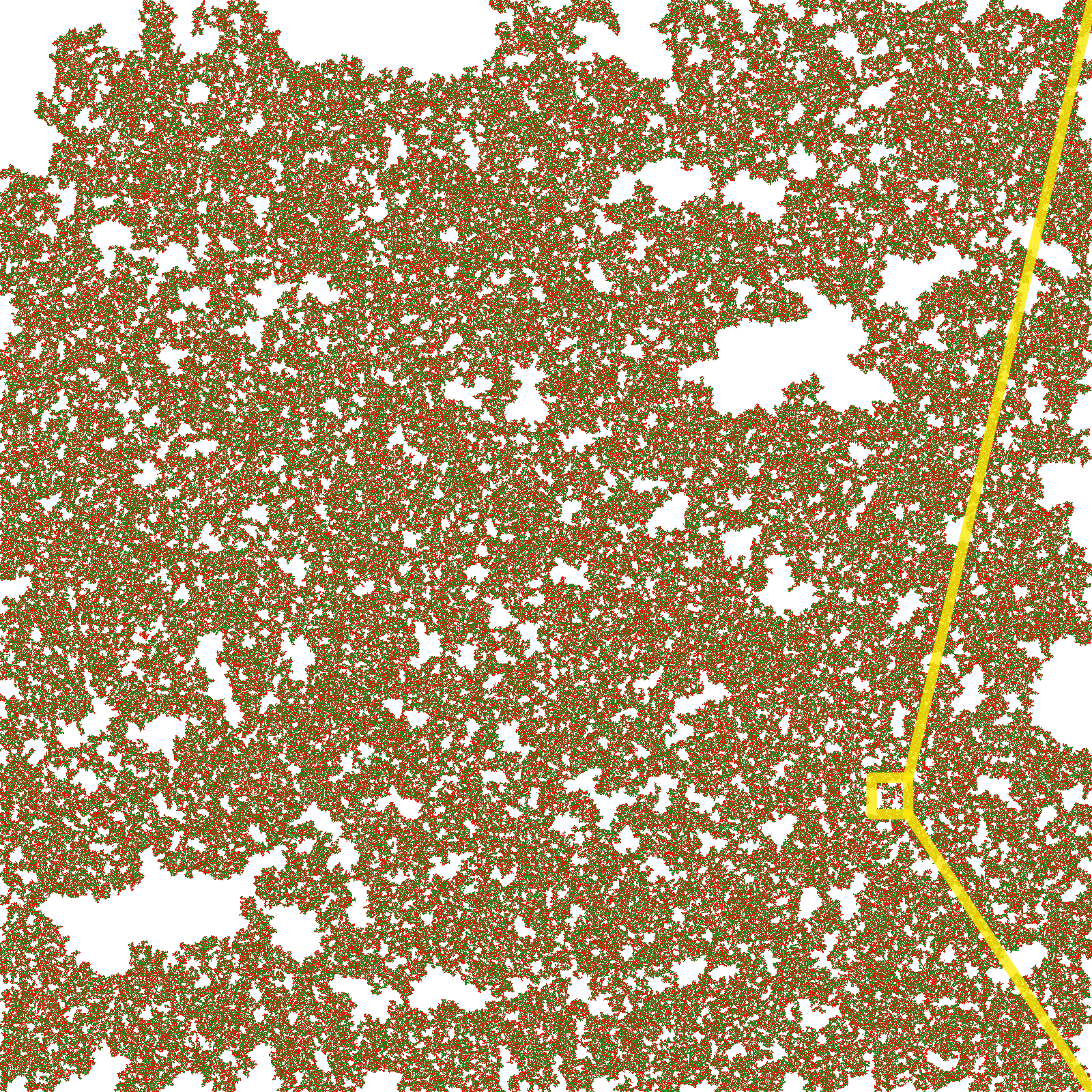
			}
			\subcaption{Cluster size: 61,002,692. $\lambda=2.30$. Reached the boundary.} \label{fig:ZIM_sim_230}
		\end{minipage}
		\hfill
		\begin{minipage}{0.3\textwidth}
			\centering
			\includegraphics[width=\textwidth]{
				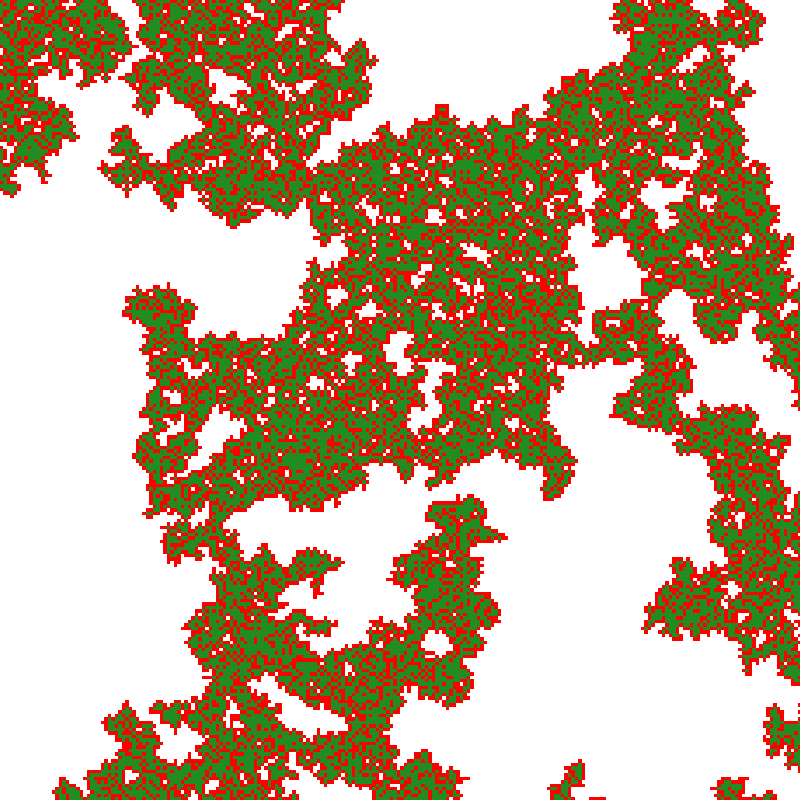
			}
			\subcaption{$234\times 234$ close-up from Figure~\ref{fig:ZIM_sim_230}, with individual green and red nodes visible.}
			\label{fig:ZIM_sim_230_zoom}
		\end{minipage}
		\caption{Simulations of the ZIM on a $10^4\times10^4$ lattice with
		absorbing boundary conditions. Green are zombies (state $I$) and red are dead zombies (state $R$).
		Initial zombie is the centre node, and all simulations use the same seed.}
		\label{fig:ZIM_sim_Z2}
	\end{figure}

	\begin{figure}[tb]
		\centering
		\includegraphics[width=0.8\textwidth]{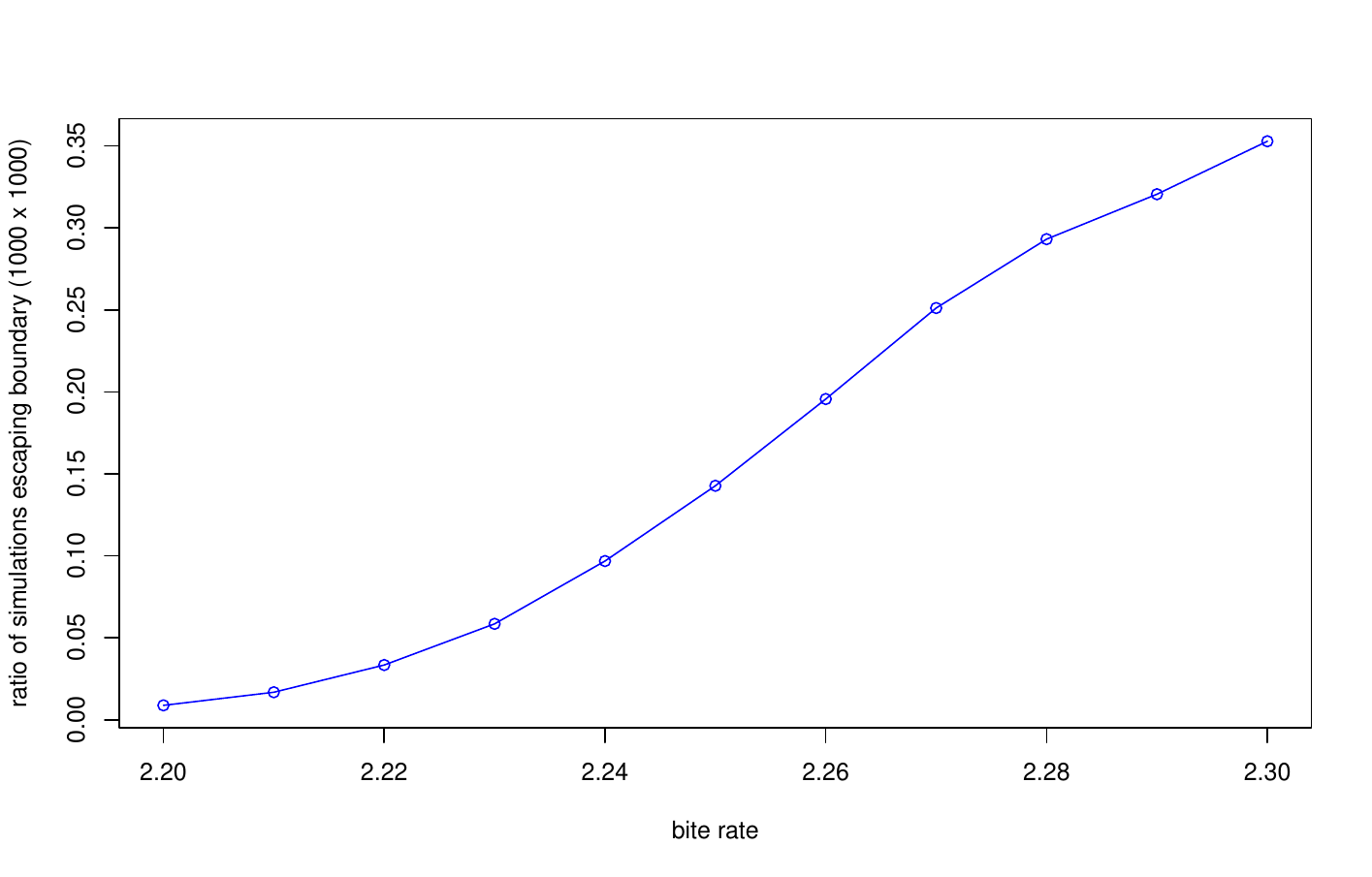}
		\caption{The ratio of simulations that escape a $10^3\times 10^3$ box
		plotted against the bite rate. This ratio can be viewed as a proxy (and
		upper bound) for survival probability on $\BZ^{2}$.}
		\label{fig:ZIM_survival_plot}
	\end{figure}

	\paragraph{Outline of the paper.}
	In the next section, Section~\ref{sec:monotonicity}, we focus on how to construct a monotone coupling for the ZIM on trees, presenting in particular the proofs of Theorems~\ref{thm:monoLambdaTree} and \ref{thm:monoStartTree}. We also derive some related results for the SIR model that are useful in later sections, and give a proof of Proposition~\ref{prop:cosurvival1}. 
In Section~\ref{sec:SRW} we provide the proof of Theorems~\ref{thm:exinction} and \ref{thm:ZIMcompelete}. These results follow by a detailed study of the embedded discrete-time process through a coupling with a simple random walk. 
In Section~\ref{sec:CPP} we couple the ZIM with percolation processes to give the proofs of Theorems~\ref{thm:ZIMsurvival} and \ref{thm:ZIMtrees}, as well as \ref{thm:ZIMsurvivalZd1} and \ref{thm:ZIMsurvivalZd2}. 
Lastly, in Section~\ref{sec:NmonoWRTgraph} we present the proofs of Theorems \ref{thm:NonMon} and \ref{thm:NonMonBit}, providing examples of non-monotonicity for the ZIM. This is the most technical part of the paper and relies on the results derived in the preceding sections. 

	\section{Graphical representations and monotonicity on trees}
	\label{sec:monotonicity} In this section our focus is the ZIM on trees, i.e.
	graphs $G=(V,E)$ with no loops. In particular, we present the proofs of Theorems~\ref{thm:monoLambdaTree} and \ref{thm:monoStartTree}. We also give a proof
	of Proposition~\ref{prop:cosurvival1}.

	\subsection{The Harris construction}
	\label{sec:Harris}

	In addition to the definition via the pre-generator provided in the previous
	section, there are often many, equivalent, representations of an interacting particle
	system. For instance, many interacting particle systems can be constructed ``pathwise''
	based on i.i.d.\ randomness via what is often called a ``graphical
	representation'', see e.g.\ \cite[Section 4.3]{swart_lecnotes_2022} for a
	fairly general construction.

	The Harris construction \cite{harris_contact_1974} is a classic graphical
	representation of the contact process and can be depicted as illustrated in
	Figure~\ref{fig:harris_SIS} for this process on $\BZ$. There, the bricks appear independently
	for each $x \in V$ according to a Poisson point process with rate $\mu$.
	Moreover, green arrows appear independently for each directed edge $(x,y) \in \overrightarrow{E}$
	according to a Poisson point process with rate $\lambda$, where $y$ is the
	endpoint of the arrow. Here, and in the following, given a graph $G=(V,E)$,
	the set
	\begin{equation}
		\overrightarrow{E}\coloneqq \{ (x,y) \colon x\sim y \text{ in }G\}
	\end{equation}
	denotes all directed edges. 
	Then, given an initial configuration of states, say
	$\omega_{0} \in \Omega_{<\infty}$, the configuration of the contact process at
	time $t>0$ is determined by letting $\eta_{t}(x) = I$ if and only if, for some
	$y \in V$ with $\omega_{0}(y)=I$, there is a ``green path'' from $(y,0)$ to $(x
	,t)$ going either forward in time without hitting bricks or ``sideways'' following
	arrows in the prescribed direction.

	The SIR model can be constructed similarly using the same Poisson point processes
	and symbols as for the contact process, with one key difference: a node transitions to state $R$ at the first occurrence of a brick
	after entering state $I$ (see Figure~\ref{fig:harris_SIR}). Once in state $R$, the node remains there for all future times.

	Both the biased-voter model and the ZIM admit analogous constructions. As before, green arrows appear independently for each
	directed edge according to a Poisson point process with rate $\lambda$. However, there are no bricks, but rather
	red arrows, appearing independently for each directed edge 
	according to a Poisson point process with rate $\mu$, see Figures~\ref{fig:harris_BVM} and \ref{fig:harris_ZIM}.
	There, for the biased-voter model, the $I$-state is transferred along a green arrow
	if its starting point is in the $I$-state and the endpoint is in the $S$-state,
	and the $S$-state is transferred along a red arrow if its starting point is in
	the $S$-state and the endpoint is in the $I$-state. For the ZIM, the $I$-state
	spreads in the same manner, but nodes in the $I$-state transition to the state
	$R$ at the occurrence of a red arrow from a neighbouring node in state $S$ and, as for the SIR model, then remains in the $R$-state for all future times.
    
    In the following we will also consider other graphical representations of the ZIM,
	as outlined in Subsection~\ref{sec:fgr}. As a common term across these
	representations, a \emph{fight} refers to the interaction between a susceptible node and a zombie (infected
	node) that results in either the zombie dying or the susceptible node becoming
	infected. When presenting these other
	graphical representations of the ZIM, we set as convention to let green edges mean
	that the zombie wins any fight that takes place along that edge, and red edges
	mean that the zombie loses the fight. Be aware that this convention will apply
	to both directed and undirected edges (even the Harris representation can be
	done with undirected edges instead of directed as we have presented here), as
	well as edges that have a time attribute (as is the case with the ones in the Harris
	representation) and those without. Moreover, green edges are sometimes referred to as
	\emph{open}, and red edges as \emph{closed}.

	\begin{figure}[tb]
		\centering
		\begin{minipage}{0.49\textwidth}
			\centering
			\includegraphics[width=\textwidth]{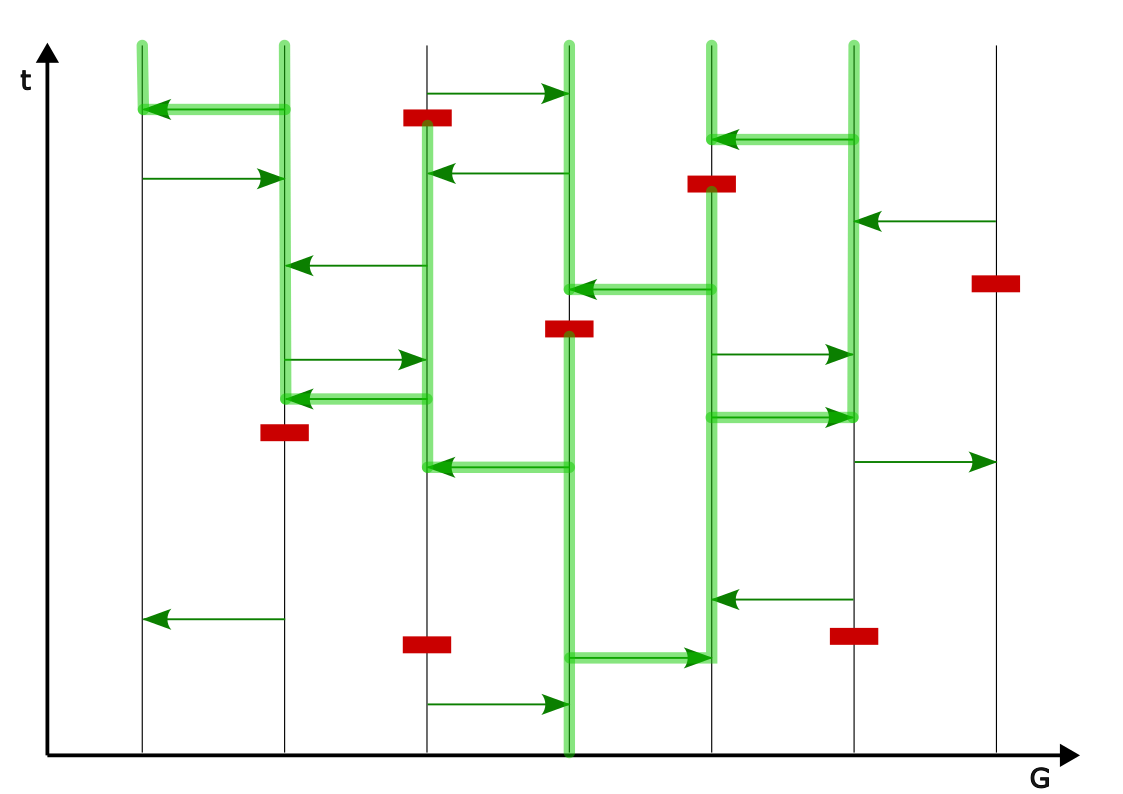}
			\subcaption{SIS/Contact process} \label{fig:harris_SIS}
		\end{minipage}
		\hfill
		\begin{minipage}{0.49\textwidth}
			\centering
			\includegraphics[width=\textwidth]{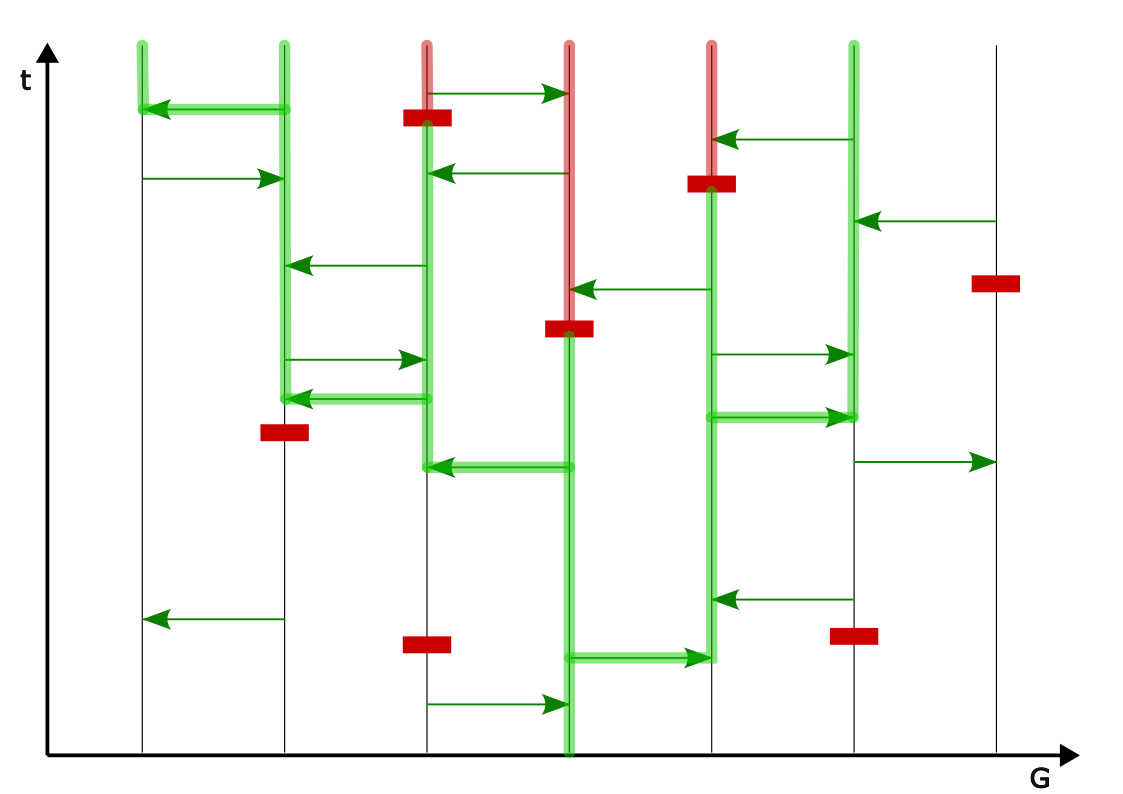}
			\subcaption{SIR} \label{fig:harris_SIR}
		\end{minipage}
		\vspace{0.5cm}
		\centering
		\begin{minipage}{0.49\textwidth}
			\centering
			\includegraphics[width=\textwidth]{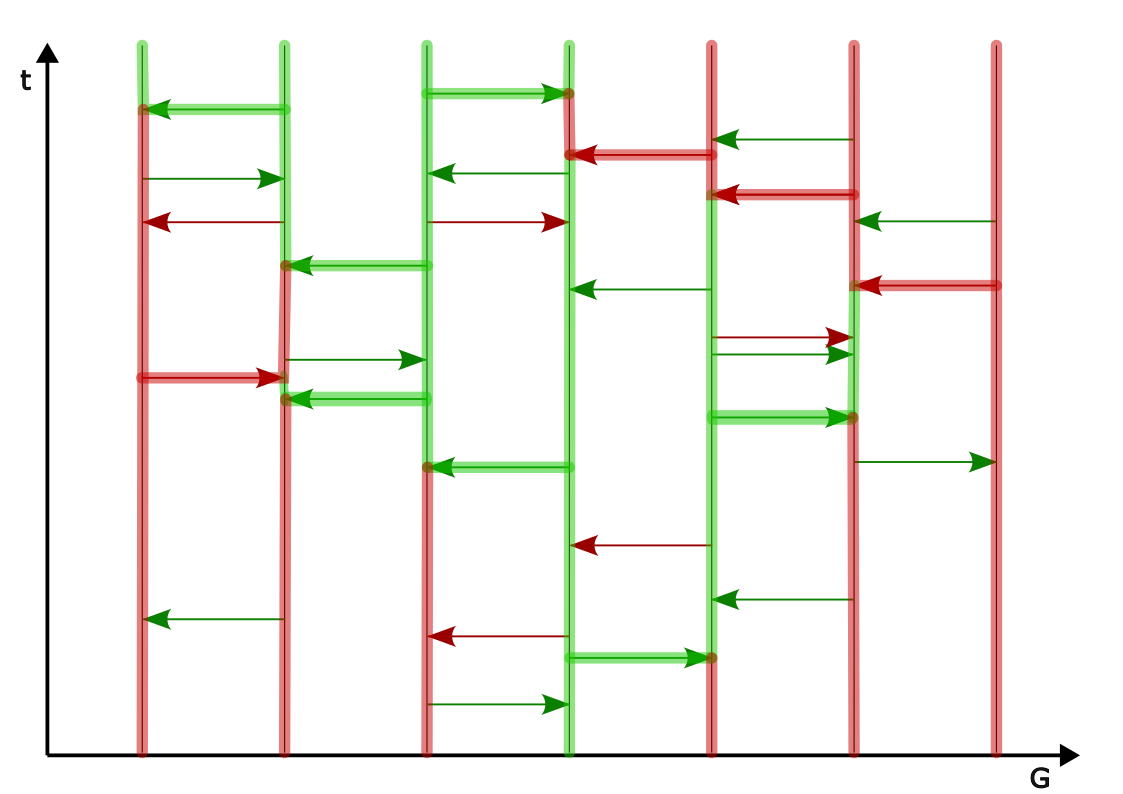}
			\subcaption{Biased voter model} \label{fig:harris_BVM}
		\end{minipage}
		\hfill
		\begin{minipage}{0.49\textwidth}
			\centering
			\includegraphics[width=\textwidth]{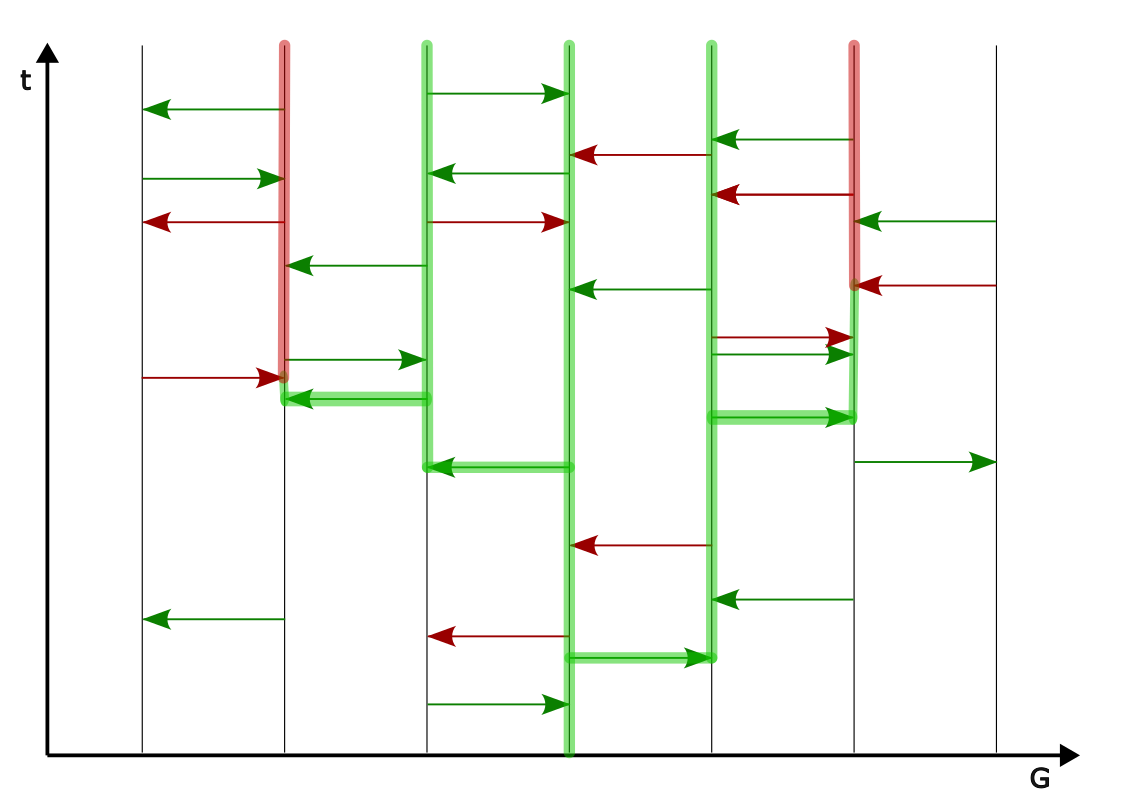}
			\subcaption{ZIM} \label{fig:harris_ZIM}
		\end{minipage}
		\caption{Illustration of the Harris graphical representation for the contact
		process, the SIR model, the biased-voter model and the ZIM.}
		\label{fig:harris_representation}
	\end{figure}

	\subsection{Monotonicity in the kill rate}
	\label{sec:monotonicity_mu}

	Graphical representations, such as those described in the previous subsection,
	provide a basis for constructing couplings of processes with different parameters
	by letting them e.g.\ share sets of arrows and/or bricks. This idea will be instrumental
	to several of the proofs of our main results. For instance, a useful property is
	that of invariance under scaling of time. This is an immediate consequence of
	the construction and basic properties of Poisson point processes, and thus stated
	next without proof.

	\begin{proposition}
		\label{prop:scale_invariance} Let $G=(V,E)$ be a graph of bounded degree and
		let $k$ indicate one of four models, $k \in \{\SIR,\contactprocess,\biasedvoter
		,\ZIM\}$. For $\lambda,\mu>0$, $\Delta \subset V$ and $c>0$, consider
		$\big(\eta_{t}^{(1)}\big) \sim \BP_{\lambda,\mu,G,\Delta}^{(k)}$ and
		$\big(\eta_{t}^{(2)}\big) \sim \BP_{c\lambda,c\mu,G,\Delta}^{(k)}$. Then the
		processes $\big(\eta_{t}^{(1)}\big)$ and $\big(\eta_{t/c}^{(2)}\big)$ agree in distribution.
	\end{proposition}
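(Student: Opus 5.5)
The proof goes through the Harris construction of Section~\ref{sec:Harris}, using the fact that a time-rescaled Poisson point process is again a Poisson point process. Fix $c>0$. Let $\mathcal{H}$ be the graphical representation for model $k$ with parameters $(\lambda,\mu)$. It consists of independent Poisson point processes on $[0,\infty)$: rate $\lambda$ for the green arrows on each directed edge, and rate $\mu$ for the bricks at each node (SIR, contact process) or for the red arrows on each directed edge (biased voter, ZIM). Define $\mathcal{H}^{c}$ by mapping every point at time $s$ to time $s/c$. By the mapping theorem for Poisson processes, each rescaled process is a Poisson point process with rate multiplied by $c$. The rescaled processes are still independent. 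Hence $\mathcal{H}^c$ has exactly the law of the graphical representation of model $k$ with parameters $(c\lambda,c\mu)$.

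Next, I will show that the processes built from $\mathcal{H}$ and $\mathcal{H}^c$ with the same initial state coincide pathwise up to time rescaling. Each of the four update rules is a deterministic function of the ordered sequence of marks and of the current states at their endpoints. The rules only use which mark occurs, where it occurs, and in what order; absolute times never enter. The map $s\mapsto s/c$ is strictly increasing, so it preserves this order. Consequently, if $(\eta^{(1)}_t)$ is built from $\mathcal{H}$ and $(\tilde\eta_t)$ is built from $\mathcal{H}^c$, then $\tilde\eta_{t}=\eta^{(1)}_{ct}$ for all $t\ge0$. Equivalently, $\tilde\eta_{t/c}=\eta^{(1)}_t$. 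Since $\tilde\eta\sim\BP^{(k)}_{c\lambda,c\mu,G,\Delta}$, this gives the claim.

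The main point needing care is that the Harris construction is well defined on an infinite graph, i.e.\ that in finite time the state at a site depends on only finitely many marks. On graphs of bounded degree this is the standard argument: on a short time interval of deterministic length, the graph splits almost surely into finite clusters that do not interact. Rescaling time maps such a partition for $\mathcal{H}$ to one for $\mathcal{H}^c$, with interval length divided by $c$. The pathwise identity therefore holds on each finite cluster and extends to all of $V$ and all times. For the ZIM and the SIR model one could instead argue via generators, since $L^{(k)}_{c\lambda,c\mu}=cL^{(k)}_{\lambda,\mu}$ and the associated semigroups satisfy $S^{c}_t=S_{ct}$. The graphical argument, however, covers all four models uniformly.
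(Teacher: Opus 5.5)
Your proposal is correct and is the argument the paper has in mind. The paper gives no proof and calls the result an immediate consequence of the Harris construction and basic properties of Poisson point processes. You carry this out explicitly: you rescale the marks, observe that the update rules depend only on the order of marks, and add the standard well-posedness remark for infinite graphs of bounded degree.
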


	As is well known, the Harris construction of the contact process provides the basis
	for a monotone coupling $\widehat{\BP}$ with respect to its model parameters.
	That is, for the contact process on any graph $G=(V,E)$, for any set of
	parameters $\lambda_{1}\leq \lambda_{2}$, $\mu_{1}\geq \mu_{2}$, and for any
	$\Delta_1\subset \Delta_2$, it holds that
	\begin{equation}
		\label{eq:CP_monotone}\widehat{\BP}\!\left( \eta_{t}^{(1)}\leq \eta_{t}^{(2)}\:
		\forall \: t\in [0,\infty) \right) =1,
	\end{equation}
	where $\eta_{t}^{(1)}\sim \BP_{\lambda_1,\mu_1, G, \Delta_{1}}^{\contactprocess}$
	and $\eta_{t}^{(2)}\sim \BP_{\lambda_2,\mu_2, G, \Delta_2}^{\contactprocess}$.
	This also holds for the biased voter model, and other additive interacting particle
	systems, see e.g.\ \cite{swart_lecnotes_2022} for a general result.

	Due to the presence of $R$-states, in general the corresponding coupling
	construction of the SIR model or the ZIM fails to preserve any of these
	monotonicity properties, as we illustrate in Figure~\ref{fig:harris_counterex}. There,
	the baseline case for ZIM is shown in Figure~\ref{fig:harris_counterex_a}. In
	Figure~\ref{fig:harris_counterex_b}, $\lambda$ has been increased, which is represented
	by an additional green arrow in the graph. Observe that the additional green
	arrow causes the process to spread \emph{less} than the baseline case (it
	spreads to four nodes in total, while the baseline case spreads to five). The case
	of changing the initial state to contain one more infected node is explored in
	Figure~\ref{fig:harris_counterex_c}. As in the previous example, however, we see
	that this is detrimental to the overall spread of the process, and it performs
	worse than the baseline case by only managing to infect four nodes in total.

	\begin{figure}[tb]
		\centering
		\begin{minipage}{0.49\textwidth}
			\centering
			\includegraphics[width=\textwidth]{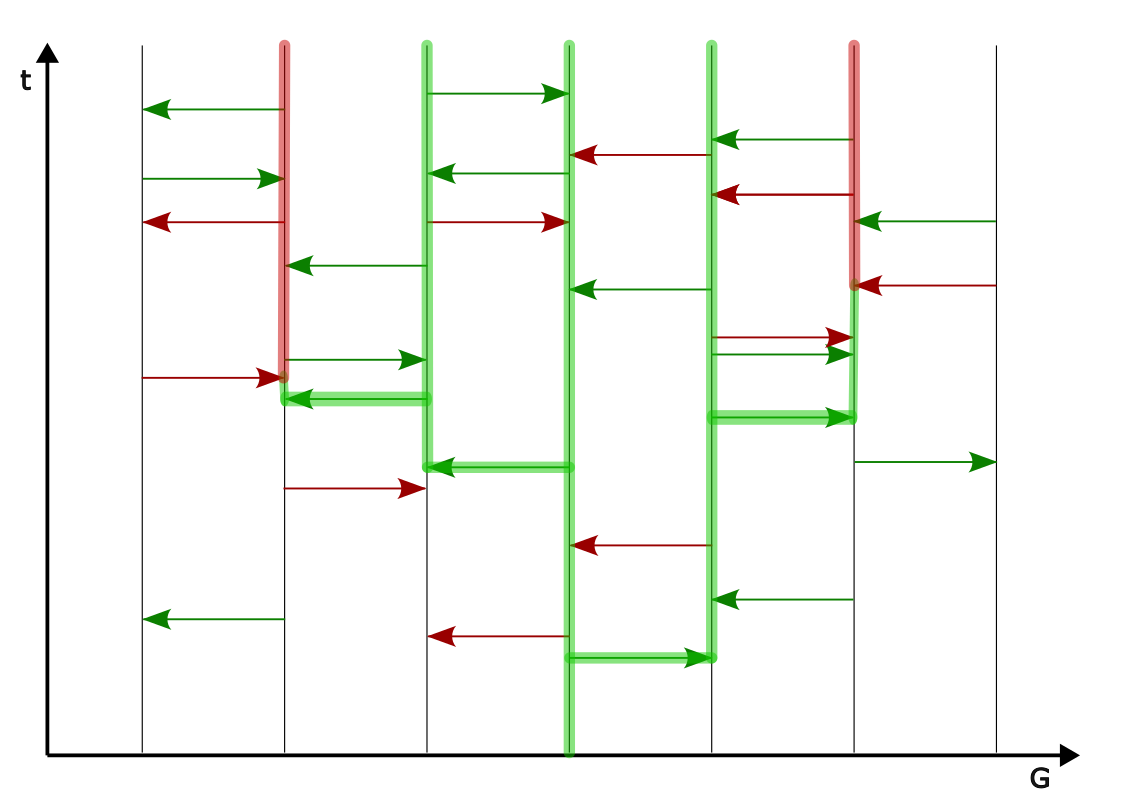}
			\subcaption{Baseline case to compare with.} \label{fig:harris_counterex_a}
		\end{minipage}
		\hfill
		\begin{minipage}{0.49\textwidth}
			\centering
			\includegraphics[width=\textwidth]{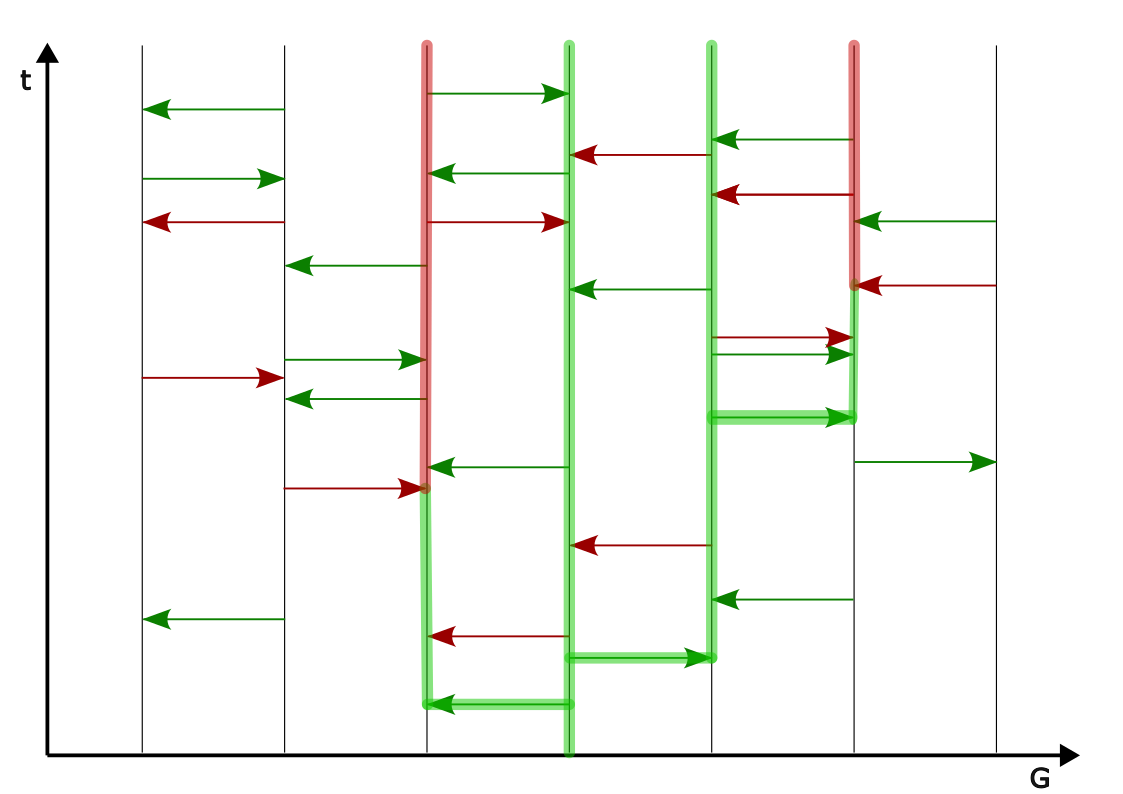}
			\subcaption{Case with an added green arrow.}
			\label{fig:harris_counterex_b}
		\end{minipage}

		\vspace{0.5em}

		\begin{minipage}{0.49\textwidth}
			\centering
			\includegraphics[width=\textwidth]{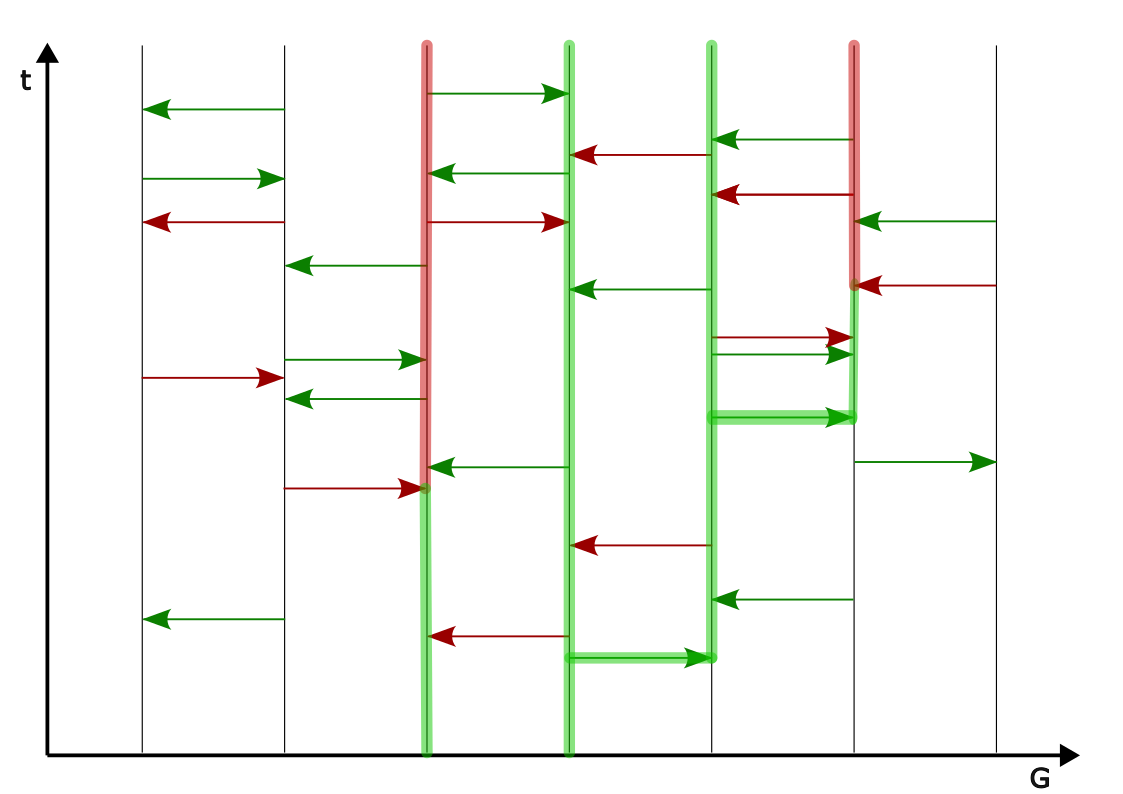}
			\subcaption{Case with an additional initial infected node.}
			\label{fig:harris_counterex_c}
		\end{minipage}

		\caption{Illustration of non-monotonicity of the Harris graphical representation
		for the ZIM.}
		\label{fig:harris_counterex}
	\end{figure}

	Nevertheless, restricting to trees, for both the SIR model and the ZIM, we
	obtain monotonicity in the kill rate $\mu$ in the particular case where the
	initial state consists of one connected cluster, as detailed next.

	\begin{proposition}
		\label{prop:ZIM_tree_monotonicity_mu} Consider a tree $G=(V,E)$, parameters
		$\mu,\lambda \in (0,\infty)$ and $c\geq1$, and an initial set $\Delta \subset V$ consisting of one connected cluster. Then the following holds:
		\begin{enumerate}[label=\alph*)]
			\item \label{item:ZIM_tree_monotonicity_mu} There is a coupling $\widehat{\BP}^{\ZIM}$ of $\big(\eta_{t}^{(1)}\big) \sim\BP^{\ZIM}_{\lambda,\mu,G,\Delta}$ and $\big(\eta_{t}^{(2)}\big)\sim
				\BP^{\ZIM}_{\lambda, c\mu,G,\Delta}$ such that
				\begin{equation}
					\label{eq:ZIM_tree_monotonicity_mu}\widehat{\BP}^{\ZIM}\!\left( \eta_{t}^{(1)}
					\geq \eta_{t}^{(2)}\: \forall \: t\in [0,\infty) \right) =1.
				\end{equation}

			\item \label{item:SIR_tree_monotonicity_mu} There is a coupling $\widehat{\BP}^{\SIR}$ of $\big(\xi_{t}^{(1)}\big) \sim\BP^{\SIR}_{\lambda,\mu,G,\Delta}$ and $\big(\xi_{t}^{(2)}\big)\sim
				\BP^{\SIR}_{\lambda, c\mu,G,\Delta}$ such that
				\begin{equation}
					\widehat{\BP}^{\SIR}\!\left( \xi_{t}^{(1)}
					\geq \xi_{t}^{(2)}\: \forall \: t\in [0,\infty) \right) =1.
				\end{equation}
		\end{enumerate}
	\end{proposition}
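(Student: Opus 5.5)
The plan is to build both processes from a single family of i.i.d.\ exponential clocks attached to the edges of the tree. The clocks are arranged so that the only difference between the two processes is that the kill clocks in $\big(\eta^{(2)}_t\big)$ are the kill clocks of $\big(\eta^{(1)}_t\big)$ divided by $c$. Monotonicity then reduces to an induction over the tree.

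\textbf{Structure of the tree.} Since $G$ is a tree and $\Delta$ is connected, the set of affected nodes $\CA_t$ is a connected subtree containing $\Delta$ at all times. This is because a node can only become infected through an infected neighbour, and removed nodes were once infected. Consequently, every $y\notin\Delta$ has a unique neighbour $p(y)$ on the geodesic towards $\Delta$, its \emph{parent}. Moreover, $y$ can only ever be attacked by $p(y)$: any other neighbour of $y$ lies in the component of $G\setminus\{y\}$ not containing $\Delta$, so it cannot be affected before $y$ is. In the same way, an infected node $x$ only fights its susceptible neighbours, and these are exactly its children (or, for $x\in\Delta$, its neighbours outside $\Delta$). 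All of them are susceptible at time $\tau_x$, where we put $\tau_x=0$ for $x\in\Delta$.

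\textbf{The coupling.} For each edge $e=\{p(y),y\}$, sample independent $B_e\sim\Exponential{\lambda}$ and $K_e\sim\Exponential{\mu}$. In process $i$, set $K^{(1)}_e=K_e$ and $K^{(2)}_e=K_e/c\sim\Exponential{c\mu}$. Define recursively along generations (distance from $\Delta$):
\begin{equation}
D^{(i)}_x=\tau^{(i)}_x+\min\{K^{(i)}_e\colon e=\{x,y\},\ p(y)=x,\ K^{(i)}_e<B_e\},
\end{equation}
with $\min\emptyset=\infty$. Declare $y$ infected at time $\tau^{(i)}_y=\tau^{(i)}_x+B_e$, where $x=p(y)$, if and only if $x$ is infected in process $i$, $B_e<K^{(i)}_e$, and $\tau^{(i)}_x+B_e<D^{(i)}_x$. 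Node $x$ is removed at time $D^{(i)}_x$.

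The step I expect to require the most care is checking that this yields $\BP^{\ZIM}_{\lambda,\mu,G,\Delta}$, respectively $\BP^{\ZIM}_{\lambda,c\mu,G,\Delta}$. By the previous paragraph, while $x$ is infected each child-edge runs two competing clocks at rates $\lambda$ and $\mu$, started at $\tau_x$. The total removal rate of $x$ is therefore $\mu$ times its number of susceptible neighbours. When a child edge resolves (either $B_e$ or $K_e$ rings first), that child stops contributing: a bite removes the child from $x$'s susceptible neighbours, and a kill removes $x$ itself. Memorylessness and the strong Markov property at the stopping times $\tau_x$ then match the jump rates in \eqref{eq:ZIM_generator}. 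Unused clocks, for instance those of edges whose fight was pre-empted by the death of $x$, are harmless.

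\textbf{Monotonicity.} We prove by induction on the generation of $y$ that if $y$ is infected in process $2$, then $y$ is infected in process $1$ with $\tau^{(1)}_y=\tau^{(2)}_y$. We also prove that $D^{(1)}_x\geq D^{(2)}_x$ for every $x$ infected in process $2$.

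For the death times: $\{e\colon K_e<B_e\}\subseteq\{e\colon K_e/c<B_e\}$ and $K_e\ge K_e/c$. Given equal infection times, the minimum defining $D^{(1)}_x$ is therefore taken over fewer and larger values, so $D^{(1)}_x\ge D^{(2)}_x$.

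For infections: if $y$ is infected in process $2$ with parent $x$, then $x$ is infected in process $2$, hence in process $1$ at the same time $\tau_x$ by the induction hypothesis. We have $B_e<K_e/c\le K_e$ and $\tau_x+B_e<D^{(2)}_x\le D^{(1)}_x$, so $y$ is infected in process $1$ at the identical time $\tau_x+B_e$. The base case $x\in\Delta$ is immediate, since $\tau^{(1)}_x=\tau^{(2)}_x=0$.

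Hence, if $\eta^{(2)}_t(y)\ne S$ then $\tau^{(2)}_y\le t$, so $\tau^{(1)}_y\le t$ and $\eta^{(1)}_t(y)\neq S$, which gives \eqref{eq:ZIM_tree_monotonicity_mu}.

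\textbf{Part \ref{item:SIR_tree_monotonicity_mu}.} The SIR case is identical and simpler. Replace the edge kill clocks by one recovery clock $R_x\sim\Exponential{\mu}$ per node, and use $R_x/c$ in process $2$. Set $D^{(i)}_x=\tau^{(i)}_x+R^{(i)}_x$, and infect $y$ via $e=\{p(y),y\}$ if and only if $B_e<R^{(i)}_{p(y)}$. The same induction applies verbatim.
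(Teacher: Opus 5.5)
Your proof is correct. It reaches the same intermediate statement as the paper: every node infected in the process with kill rate $c\mu$ is infected in the process with kill rate $\mu$ at exactly the same time, shown along the unique path from $\Delta$. It gets there through a different coupling.

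\textbf{The paper's route.} The paper works with the Harris construction in absolute time. Both processes share green (rate $\lambda$) and red (rate $\mu$) Poisson arrows, and process~2 also sees independent purple arrows of rate $(c-1)\mu$. The paper then checks that the properties characterising the infection path $x_0,\dots,x_n$ in process~2 survive deletion of the purple arrows. These properties are: a green arrow at $t_i$, no earlier green arrow, and no red or purple arrow from a susceptible neighbour during $(t_i,t_{i+1})$.

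\textbf{Your route.} You attach a pair of clocks to each parent--child edge, started at the parent's infection time. This is the time-lag-invariant representation underlying Algorithm~\ref{alg:ZIM_coupling1}. You then speed up the kills by rescaling $K_e\mapsto K_e/c$. Taking $\min(K_e,K_e')$ with an independent $K_e'\sim\Exponential{(c-1)\mu}$ would be the exact clock analogue of the purple arrows and works equally well.

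\textbf{What each buys.} With your coupling the comparison becomes a deterministic statement about the clocks: $\{K_e<B_e\}\subseteq\{K_e/c<B_e\}$ and $K_e\ge K_e/c$. So $D^{(1)}_x\ge D^{(2)}_x$ follows without tracking which neighbours of $x_i$ are still susceptible in each process, bookkeeping that the paper's fourth property leaves implicit. The price is that you must verify the clock construction reproduces the ZIM dynamics. In your formulation this already uses the tree and connected-$\Delta$ hypotheses, since clocks live only on parent--child edges and all children of $x$ must be susceptible at $\tau_x$. The Harris construction, by contrast, is a standard representation of the ZIM on any graph, and the paper uses the hypotheses only in the path argument.

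\textbf{On part~b).} Your SIR construction is the Algorithm~\ref{alg:SIR_coupling1} coupling specialised to trees. Theorem~\ref{thm:SIR_monotonicity}, applied with kill rates $c\mu\ge\mu$, shows that part~b) in fact holds without the tree or connectivity assumptions.
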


	\begin{proof}
		Consider the ZIM constructed as detailed in Section~\ref{sec:Harris}.
		In particular, assume that green arrows are sampled, independently for each
		directed edge, according to a Poisson point process with rate $\lambda$ and red
		arrows are sampled, again independently, according to a Poisson point process
		with rate $\mu$. Additionally, purple arrows are sampled, independently for
		each directed edge, according to a Poisson point process with rate $(c-1)\mu$.
		Then, construct the $\big(\eta_{t}^{(1)}\big)$-process using the green and
		red arrows as in the Harris representation illustrated in Figure~\ref{fig:harris_ZIM}.
		Similarly, construct the $\big(\eta_{t}^{(2)}\big)$-process using the
		same set of green arrows, but where nodes in the $I$-state transition to the
		state $R$ at the occurrence of either a red or a purple arrow from a neighbouring
		node in state $S$.

		Let $x\in V\setminus\Delta$ and, recalling the notation introduced
		in \eqref{eq:def_tau}, let $\tau_x^{(i)}$ denote the infection time of $x$ in process $i$. Assume that $\tau_{x}^{(2)}<\infty$. Since $G$ is a tree and $\Delta$ consists of one sole
		connected cluster, there is a unique sequence of distinct nodes
		$(x_{0},\dots, x_{n})$ with $x_{0} \in \Delta$, $x_{i} \notin \Delta$ and $x_{i-1}\sim x_{i}$ for all
		$i=1,\dots, n$ and $x_{n}=x$. To this sequence, we associate the times $t_{i}
		=\tau_{x_i}^{(2)}$ and note that:
		\begin{enumerate}
			\item The sequence of times $\left( t_{i}\right)_{i=1,\dots,n}$ is almost surely
				strictly increasing.

			\item For every $i=1,\dots,n$, there is a green arrow on the edge
				$(x_{i-1}, x_{i})$ at time $t_{i}$.

			\item There is no $t\in(t_{i}, t_{i+1})$ with an associated green arrow on
				the edge $(x_{i}, x_{i+1})$.

			\item There is no $t\in(t_{i}, t_{i+1})$ with an associated red or purple arrow
				on an edge $(x_{i}, y)$, where $\eta_{t}^{(2)}(y)=S$.
		\end{enumerate}
		For any realization of the $\big(\eta_{t}^{(2)}\big)$-process satisfying the above properties,  the second and third property also holds for the
		$\big(\eta_{t}^{(1)}\big)$-process since it is constructed using the
		same green arrows. Moreover, since it is constructed using the same red
		arrows, but not the purple ones, the fourth property also necessarily holds.
		Together, this ensures that $\tau_{x}^{(1)}=\tau_{x}^{(2)}$. 
		Moreover, in the case that $\tau_{x}^{(2)}=\infty$, it is trivially true that
		$\tau_{x}^{(1)}\leq\tau_{x}^{(2)}$. Since almost surely this property holds for all $x\in V\setminus \Delta$, this concludes the proof of part~\ref{item:ZIM_tree_monotonicity_mu}. The proof for part
		\ref{item:SIR_tree_monotonicity_mu} is analogous and is left to the reader.
	\end{proof}

	Combining the above proposition with the scale-invariance noted in Proposition
	\ref{prop:scale_invariance}, we immediately also obtain a monotonicity
	property with respect to the bite rate $\lambda$, as stated next.

	\begin{corollary}
		\label{cor:monoTreelambda} Consider a tree $G=(V,E)$, parameters
		$\mu,\lambda \in (0,\infty)$ and $c\geq1$, and an initial set $\Delta \subset V$ consisting of one connected cluster. Then the following holds:
		\begin{enumerate}[label=\alph*)]
			\item There is a coupling $\widehat{\BP}^{\ZIM}$ of $\big(\eta_{t}^{(1)}
				\big)\sim\BP^{\ZIM}_{\lambda,\mu,G,\Delta}$ and $\big(\eta_{t}^{(2)}\big
				)\sim\BP^{\ZIM}_{c\lambda,\mu,G,\Delta}$ such that
				\begin{equation}
					\label{eq:ZIM_tree_monotonicity_lambda_scaling}\widehat{\BP}^{\ZIM}\!\left
					( \eta_{t/c}^{(2)}\geq \eta_{t}^{(1)}\: \forall \: t\in [0,\infty) \right
					) =1.
				\end{equation}

			\item There is a coupling $\widehat{\BP}^{\SIR}$ of $\big(\xi_{t}^{(1)}
				\big)\sim\BP^{\SIR}_{\lambda,\mu,G,\Delta}$ and $\big(\xi_{t}^{(2)}\big
				)\sim\BP^{\SIR}_{c\lambda, \mu,G,\Delta}$ such that
				\begin{equation}
					\label{eq:SIR_tree_monotonicity_lambda_scaling}\widehat{\BP}^{\SIR}\!\left
					( \xi_{t/c}^{(2)}\geq \xi_{t}^{(1)}\: \forall \: t\in [0,\infty) \right
					) =1.
				\end{equation}
		\end{enumerate}
	\end{corollary}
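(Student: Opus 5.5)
The plan is to derive the corollary directly from Proposition~\ref{prop:ZIM_tree_monotonicity_mu} by a time change, using Proposition~\ref{prop:scale_invariance}. We treat part (a) in detail; part (b) is identical with Proposition~\ref{prop:ZIM_tree_monotonicity_mu}\ref{item:SIR_tree_monotonicity_mu} in place of part \ref{item:ZIM_tree_monotonicity_mu}.

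\emph{Step 1: the coupling.} Apply Proposition~\ref{prop:ZIM_tree_monotonicity_mu}\ref{item:ZIM_tree_monotonicity_mu} with kill rates $\mu/c$ and $c\cdot(\mu/c)=\mu$, both at bite rate $\lambda$. This gives a coupling $\widehat{\BP}^{\ZIM}$ of
\begin{equation}
\big(\zeta_t^{(1)}\big)\sim\BP^{\ZIM}_{\lambda,\mu/c,G,\Delta} \quad\text{and}\quad \big(\eta_t^{(1)}\big)\sim\BP^{\ZIM}_{\lambda,\mu,G,\Delta}
\end{equation}
with $\zeta_t^{(1)}\geq \eta_t^{(1)}$ for all $t\geq 0$ almost surely. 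The hypotheses on $G$ and $\Delta$ (a tree, and a single connected cluster) are exactly those of the corollary.

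\emph{Step 2: identify the time-changed process.} By Proposition~\ref{prop:scale_invariance} applied with scaling factor $c$, the process $\big(\zeta_t^{(1)}\big)$ with parameters $(\lambda,\mu/c)$ has the same law as $\big(\eta^{(2)}_{t/c}\big)_{t\ge0}$, where $\big(\eta^{(2)}_t\big)\sim\BP^{\ZIM}_{c\lambda,\mu,G,\Delta}$. We therefore define $\eta^{(2)}_s\coloneqq \zeta^{(1)}_{cs}$ on the same probability space. Then $\big(\eta_t^{(2)}\big)$ has law $\BP^{\ZIM}_{c\lambda,\mu,G,\Delta}$ and satisfies $\eta^{(2)}_{t/c}=\zeta^{(1)}_t$ pathwise. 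Substituting into the inequality from Step 1 gives \eqref{eq:ZIM_tree_monotonicity_lambda_scaling}.

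\emph{Main obstacle.} The only point needing care is that the time change is deterministic and applied to the whole path. It therefore transports the pathwise inequality, which holds for all $t$ simultaneously and almost surely, without loss. It also keeps the coupling intact, since $\eta^{(2)}$ is a measurable function of $\zeta^{(1)}$.
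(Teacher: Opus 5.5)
Your proposal is correct and matches the paper's argument, which obtains the corollary directly by combining Proposition~\ref{prop:ZIM_tree_monotonicity_mu} (kill rates $\mu/c$ versus $\mu$) with the time scaling of Proposition~\ref{prop:scale_invariance}. Your explicit construction $\eta^{(2)}_s\coloneqq\zeta^{(1)}_{cs}$, which carries the pathwise inequality over as a coupling, is exactly the detail the paper leaves implicit.
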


	Notably, the coupling used in the proofs above
	fails to satisfy the corresponding monotonicity properties when the processes start from a set $\Delta$ having two connected clusters, as
	illustrated in Figure~\ref{fig:counterexample_disconnected_1}. Consider the initial configuration with $z_{1},z_{2}$ in state $I$, and $x,y$ in
	state $S$. With the purple arrow present, the infection reaches $y$. Without
	it, the process stops at $x$ and never reaches $y$. This demonstrates that the coupling is not monotone.

    \begin{figure}[tb]
		\centering
		\input{figures/tikz/counterexample_3}
		\caption{An example showing that the Harris representation of the ZIM used
		in Proposition~\ref{prop:ZIM_tree_monotonicity_mu} is not monotone when the
		initial configuration is not connected.}
		\label{fig:counterexample_disconnected_1}
	\end{figure}
    
    Figure~\ref{fig:counterexample_disconnected_1}
	depicts the ZIM, but an analogous argument applies to the SIR model by replacing the purple and red arrows with bricks
	located at the arrows' endpoints. Thus, the Harris
	representation does not provide a monotone coupling when the initial
	configuration consists of two or more disconnected clusters for either model.
    
	\subsection{Monotonicity for the SIR model}
	\label{sec:monotonicity_SIR}

	In this subsection we describe an alternative to the Harris construction for the
	SIR model, also considered in earlier work such as
	\cite{cox_limit_1988,andjel_shape_2011}. 
	Before providing a precise description of this representation, we need to introduce
	additional notation. For $\omega \in \Omega$, we denote the \emph{active SI-boundary}
	by
	\begin{equation}
		\SIboundary(\omega) \coloneqq \big\{( x,y)\in \overrightarrow{E}\colon \omega
		(x) = I, \omega(y)= S \big\}.
	\end{equation}
	Additionally, we define the \emph{active set} $\SIboundary^+(\omega)$ by adding the set of active nodes to the active SI-boundary, that is, 
	\begin{equation}
		\SIboundary^+(\omega) \coloneqq \SIboundary(\omega) \cup \left\{x\in V \colon \omega(x)
		= I \right\}.
	\end{equation}
	The SIR can then be constructed using Algorithm~\ref{alg:SIR_coupling1} below.
	Indeed, let $G = (V, E)$ be a countable and connected graph of bounded degree,
	and fix $\lambda,\mu>0$ and $\omega \in \Omega_{<\infty}$. Moreover, let $(W_{n})_{n
	\geq 0}$ and $(T_{n})_{n \geq 0}$ be constructed as in Algorithm~\ref{alg:SIR_coupling1}.
	Then the process $(\xi_{t})$ obtained by setting
	\begin{equation}
		\xi_{t} \coloneqq W_{k} , \quad \text{ for }t \in [T_{k},T_{k+1}),
	\end{equation}
	satisfies $(\xi_{t}) \sim \BP_{\lambda,\mu,G,\Delta}^{\SIR}$ with $\Delta = \{x \in V \colon \omega(x)=I\}$.  To see this,
	note that the random variables $Y_{\gamma}$ with
	$\gamma \in V \cup \overrightarrow{E}$ in Algorithm~\ref{alg:SIR_coupling1}
	are exponentially distributed timers associated with the nodes and edges of the
	graph. These are constructed using the uniformly distributed random variables $U_{\gamma}$
	to prepare for the monotone coupling presented below. Only timers in the
	active set are ticking, while others either remain dormant---awaiting activation---or
	have already rung. Upon activation, a timer begins its countdown, running for
	a randomly chosen duration drawn from an exponential distribution. Once the variables
	$\tilde{\tau}_{\gamma}$ become updated on Line~13 of Algorithm~\ref{alg:SIR_coupling1} they record the activation time, ensuring that each timer rings
	precisely at its designated moment, i.e.\ $\tilde{\tau}_{\gamma}+Y_{\gamma}$. In particular, $\tilde{\tau}_x$ equals the time $\tau_x$ as introduced in \eqref{eq:def_tau}. Moreover, the process
	$(W_{n})_{n\geq 0}$ in Algorithm~\ref{alg:SIR_coupling1} gives the
	embedded discrete-time process of the SIR process, where $W_{n}$ records the
	state of the process after the $n$th step. In addition, the variables $T_{n}$
	keep track of the (continuous) transition times for each discrete step.

	The subtle difference between this representation and the one based on the Harris
	representation is that it is locally invariant under time lags. That is, an infected
	node will spend the same amount of time in the infectious state, as well as
	attempt to infect the same neighbours, irrespective of the particular time it becomes
	infected. As an advantage, which is well known and shown e.g.\ in
	\cite{cox_limit_1988,andjel_shape_2011} for the general SIR process on
	$\BZ^{d}$ when $d\geq 1$, it provides a monotone coupling under rather mild (and natural)
	assumptions, as stated next for our Markovian version of the SIR process.
	Since this provides the basic idea for the representation that we use in order
	to prove Theorem~\ref{thm:monoLambdaTree} and Theorem~\ref{thm:monoStartTree},
	detailed in the following subsection, we include a full proof for this process
	on general graphs.

	\begin{algorithm}
		[tb]
		\caption{A monotone construction of SIR}
		\label{alg:SIR_coupling1} Let $T_{0}=0$ and $W_{0} = \omega \in \Omega_{<\infty}$

		For each $\gamma\in V\cup \overrightarrow{E}$, let $\tilde{\tau}_{\gamma} = 0$ if $\gamma\in\SIboundary^+(W_{0})$ and $\tilde{\tau}_{\gamma} = \infty$ otherwise 
        
        For each $\gamma\in V\cup\overrightarrow{E}$, independently, draw $U_{\gamma}\sim \Uniform{0}{1}$ 
        
        Set	$Y_{x} = - \mu \ln(U_{x})$ for each $x\in V$ and $Y_{e} = - \lambda \ln(U_{e})$ for each $e \in \overrightarrow{E}$ 
        
        \For{$n = 1$ \KwTo $\infty$}{ 
            \eIf{$\SIboundary^+(W_{n-1}) = \emptyset$}{ 
                $W_{n} = W_{n-1}$ and $T_{n} = \infty$ 
                
                \KwSty{break} 
            }{ 
                Let $T_{n} = \min\limits_{\gamma \in \SIboundary^+(W_{n-1})}(Y_{\gamma} + \tilde{\tau}_{\gamma})$ and $\gamma_{n} = \argmin\limits_{\gamma \in \SIboundary^+(W_{n-1})}(Y_{\gamma} + \tilde{\tau}_{\gamma})$ 
                    
                \uIf{$\gamma_{n} = (x,y) \in \overrightarrow{E}$}{ 
                    Set $W_{n} = W_{n-1}^{y \leftarrow I}$ 
                    
                    Set $\tilde{\tau}_{y} = T_{n}$ and $\tilde{\tau}_{( y,y' )}= T_{n}$ for each $(y,y') \in \SIboundary(W_{n})$ 
                } 
                \ElseIf{$\gamma_{n} = x\in V$}{ 
                    $W_{n} = W_{n-1}^{\gamma_n \leftarrow R}$ 
                } 
            } 
        }
	\end{algorithm}

	\begin{theorem}[General monotonicity of the SIR model]
		\label{thm:SIR_monotonicity} Let $G_{1} = (V_{1}, E_{1})$ and $G_{2} = (V_{2}
		, E_{2})$ be countable, locally bounded and connected graphs. Assume that the
		following conditions hold:
		\begin{enumerate}[label=\roman{*}.]
			\item $\lambda_{1}\leq\lambda_{2}$ and $\mu_{1}\geq\mu_{2}$ \label{item:SIRproof_cond1}

			\item $V_{1}\subseteq V_{2}$ and $E_{1}\subseteq E_{2}$ \label{item:SIRproof_cond2}

			\item $\Delta_{1} \Subset V_{1},\Delta_{2} \Subset V_{2}$ with $\Delta_{1}
				\subset \Delta_{2}$ \label{item:SIRproof_cond3}
		\end{enumerate}
		Then there is a coupling $\widehat{\BP}$ of
		$\big(\xi_{t}^{(1)}\big)\sim\BP_{\lambda_1,\mu_1,G_1,\Delta_1}^{\SIR}$
		and
		$\big(\xi_{t}^{(2)}\big)\sim\BP_{\lambda_2,\mu_2,G_2,\Delta_2}^{\SIR}$
		such that
		\begin{equation}
			\label{eq:SIR_monotonicity}\widehat{\BP}\!\left( \xi_{t}^{(1)}\leq \xi_{t}^{(2)}
			\: \forall \: t\in [0,\infty) \right) =1.
		\end{equation}
	\end{theorem}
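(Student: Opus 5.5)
We plan to couple the two processes through Algorithm~\ref{alg:SIR_coupling1}, run on $G_1$ and on $G_2$ with \emph{the same} uniform variables $U_\gamma$ for every $\gamma\in V_1\cup\overrightarrow{E}_1$. Variables for $\gamma\in (V_2\setminus V_1)\cup(\overrightarrow{E}_2\setminus\overrightarrow{E}_1)$ are drawn independently. The timers are then $Y^{(i)}_x=-\mu_i\ln U_x$ and $Y^{(i)}_e=-\lambda_i\ln U_e$, read with the rate parametrisation: the intended time to ring is $-\ln(U)/\text{rate}$, so the node timer is $-\mu_i^{-1}\ln U_x$ and the edge timer is $-\lambda_i^{-1}\ln U_e$. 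Under condition~\ref{item:SIRproof_cond1} this gives $Y^{(1)}_e\ge Y^{(2)}_e$ for edges and $Y^{(1)}_x\le Y^{(2)}_x$ for nodes. In words, in process~2 infections along a shared edge are faster and infectious periods are longer.

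It suffices to compare infection times, since the state at time $t$ is determined by them. A node $x$ is $S$ at time $t$ exactly when $\tau_x>t$. Hence $\xi^{(1)}_t\le\xi^{(2)}_t$ for all $t$ is equivalent to $\tau^{(2)}_x\le\tau^{(1)}_x$ for every $x\in V_1$. The key structural fact of the construction is its local invariance under time lags. In process $i$, $x$ is infected at time
\[
\tau^{(i)}_x=\min\Big\{\tau^{(i)}_y+Y^{(i)}_{(y,x)}\colon (y,x)\in\overrightarrow{E}_i,\ Y^{(i)}_{(y,x)}<Y^{(i)}_y\Big\},
\]
with $\tau_x^{(i)}=0$ for $x\in\Delta_i$. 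This holds because the edge timer $(y,x)$ starts at $\tau_y$ (if $x$ is still $S$), and it can only ring before $y$'s recovery timer, which also started at $\tau_y$. An edge $(y,x)$ is therefore ``open'' in process $i$ iff $Y^{(i)}_{(y,x)}<Y^{(i)}_y$, a condition that does not depend on when $y$ was infected.

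We must check this identity carefully. The subtlety is the reset on Line~13 of the algorithm: edge timers $(y,y')$ are activated at $T_n$ only if $y'$ is susceptible then. If $y'$ is already infected, the edge is irrelevant anyway, so the formula still holds. Equivalently, $\tau^{(i)}_x$ is the first-passage time from $\Delta_i$ in the directed graph of open edges with passage times $Y^{(i)}_e$. This is the step where we expect the main care to be needed: verifying rigorously that the dynamic definition in Algorithm~\ref{alg:SIR_coupling1} coincides with this static first-passage description, including a.s.\ absence of ties and well-definedness on infinite graphs with only finitely many steps before any finite time (bounded degree). We plan to prove it by induction on the step index $n$.

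Given the first-passage description, monotonicity is immediate. Every edge open in process~1 is open in process~2, since $Y^{(2)}_e\le Y^{(1)}_e<Y^{(1)}_y\le Y^{(2)}_y$ and $E_1\subseteq E_2$. Every shared edge has passage time no larger in process~2, and $\Delta_1\subseteq\Delta_2$. So any open path from $\Delta_1$ to $x$ in process~1 is an open path in process~2 with smaller total time, giving $\tau^{(2)}_x\le\tau^{(1)}_x$. Recovery must also be ordered: $x$ is $R$ at time $t$ iff $\tau_x+Y_x\le t$. That ordering is not needed for $\le$ as defined, which only concerns the $S$ state. This yields \eqref{eq:SIR_monotonicity}.
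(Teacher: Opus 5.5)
Your proposal is correct and is essentially the paper's proof: the same coupling via Algorithm~\ref{alg:SIR_coupling1} with shared uniforms. Your path comparison is the direct form of the paper's minimal-index-$K$ contradiction along the process-1 ancestral path, so only two one-sided halves of your first-passage identity are needed: $\tau^{(1)}_x$ is the weight of its ancestral path, and $\tau^{(2)}_{x'}\le\tau^{(2)}_{y}+Y^{(2)}_{(y,x')}$ whenever $Y^{(2)}_{(y,x')}<Y^{(2)}_y$. Your open-edge inclusion $Y^{(2)}_e\le Y^{(1)}_e<Y^{(1)}_y\le Y^{(2)}_y$ and your rate reading $-\lambda_i^{-1}\ln U_e$, $-\mu_i^{-1}\ln U_x$ of the timers are exactly what the paper's inequality chain tacitly requires; they also make explicit where $\mu_1\ge\mu_2$ enters, which the paper's argument never spells out.
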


	\begin{proof}
		We use the representation of the SIR described above via Algorithm~\ref{alg:SIR_coupling1}
		to construct a natural coupling of the two processes $\big( \xi_{t}^{(1)}\big
		)$ and $\big( \xi_{t}^{(2)}\big)$ by using the same random variables as input.
		That is, for each $\gamma\in V_{2}\cup \overrightarrow{E}_{2}$,
		independently, draw $U_{\gamma}\sim \Uniform{0}{1}$ and, for $i=\{1,2\}$, let
		\begin{equation}
			Y_{x}^{(i)}= - \mu_{i} \ln(U_{x}),\ x\in V_{i} \quad \text{ and }\quad Y_{e}
			^{(i)}= - \lambda_{i} \ln(U_{e}),\ e \in \overrightarrow{E}_{i}.
		\end{equation}
		Moreover, for $x \in V_{i}$, let $\tau_{x}^{(i)}= \inf \big\{ t\geq 0 \colon
		\xi_{t}^{(i)}(x) = I \big\}$, and note that this agrees with the
		corresponding variable $\tilde{\tau}_x$ initialized by Algorithm~\ref{alg:SIR_coupling1}, Line~$2$ and
		updated on Line~$13$, after termination.

		Firstly, if $\tau_{x}^{(1)}=\infty$, then necessarily
		$\xi_{t}^{(1)}(x)\leq \xi_{t}^{(2)}(x) \: \forall \: t\in [0,\infty)$. Therefore,
		assume $x \in V_{1}$ is such that $\tau_{x}^{(1)}<\infty$. Then, by the
		construction of $\big(\xi_{t}^{(1)}\big)$ via Algorithm~\ref{alg:SIR_coupling1},
		there is a unique ``ancestral'' sequence of nodes $x_{0}, \dots, x_{n}=x$ with
		$x_{0} \in \Delta_{1}$ and $n$ finite, and such that
		$(x_{i},x_{i+1}) = \gamma_{l_i}$ for some sequence of natural numbers
		$l_{1}<l_{2}<\dots <l_{n}$. Moreover, consider
		\begin{equation}
			\label{eq:K_contradiction_definition}K= \inf\! \left\{ k \in \{1,\dots, n\} \colon
			\tau_{x_k}^{(1)}< \tau_{x_k}^{(2)}\right\}.
		\end{equation}
		Assume first that $K \in \{1,\dots, n\}$. By definition of $K$ and the
		coupling construction, we then have that
		\begin{align}
			\tau_{x_K}^{(1)} & = \tau_{x_{K-1}}^{(1)}+ Y_{(x_{K-1},x_{K})}^{(1)}  \\
			    & \geq \tau_{x_{K-1}}^{(2)}+ Y_{(x_{K-1},x_{K})}^{(1)}  \\
			      & \geq \tau_{x_{K-1}}^{(2)}+ Y_{(x_{K-1},x_{K})}^{(2)}.
		\end{align}
        The final expression gives the time when, if not already infected from elsewhere,
		$x_{K}$ will be infected from $x_{K-1}$ in the second process. Therefore,
		$\tau_{x_{K-1}}^{(2)}+ Y_{(x_{K-1},x_{K})}^{(2)}$ provides an upper bound
		for $\tau_{x_K}^{(2)}$, showing that $\tau_{x_K}^{(1)}\geq \tau_{x_K}^{(2)}$, and contradicting that $K \in \{1,\dots,n\}$. Consequently, $\tau_{x_k}^{(1)} \geq \tau_{x_k}^{(2)}$ for each $k=1,\dots,n$ and this implies that
		$\xi_{t}^{(1)}(x)\leq \xi_{t}^{(2)}(x)$ for all $t\in [0,\infty)$. Since this
		holds for any $x \in V_{1}$ we conclude the proof.
	\end{proof}

	\subsection{Proofs of Theorems \ref{thm:monoLambdaTree} and
	\ref{thm:monoStartTree}}
	\label{sec:monotonicity_lambda}

	In this subsection we adapt the approach of the previous subsection to the ZIM,
	which yields a coupling that is monotone on trees with respect to $\lambda$.
	The result is stated in Theorem~\ref{thm:ZIM_tree_monotonicity_lambda} below, giving
	a slightly stronger result than Corollary~\ref{cor:monoTreelambda} that also implies Theorem~\ref{thm:monoLambdaTree}. In addition, we also give a
	proof of Theorem~\ref{thm:monoStartTree}.

	Recall the notation introduced in the previous subsection. We can construct the
	ZIM when initiated with finitely many nodes infected in a similar vein as the construction
	of the SIR model described there, by utilizing Algorithm~\ref{alg:ZIM_coupling1}
	instead. Indeed, consider $G = (V, E)$ a countable, connected and bounded graph,
	and let $\lambda,\mu>0$ and $\omega \in \Omega_{<\infty}$. Then the process $(\eta
	_{t})$ obtained by setting
	\begin{equation}
		\eta_{t} \coloneqq W_{k} , \quad \text{ for }t \in [T_{k},T_{k+1})
	\end{equation}
	satisfies $(\eta_{t}) \sim \BP_{\lambda,\mu,G,\Delta}^{\ZIM}$, with $\Delta = \{x \in V \colon \omega(x)=I\}$  and where $(W_{n}
	)_{n\geq 0}$ and $(T_{n})_{n\geq 0}$ are constructed as in Algorithm~\ref{alg:ZIM_coupling1}.
	Analogously to Algorithm~\ref{alg:SIR_coupling1} for the SIR model, Algorithm~\ref{alg:ZIM_coupling1} constructs the embedded discrete-time process $(W_{n})_{n\geq 0}$, and the $(T_{n})_{n\geq
	0}$ are the transition times for the continuous-time ZIM process. 

    \begin{algorithm}
		[tb]
		\caption{An alternative construction of ZIM}
		\label{alg:ZIM_coupling1} 
        Let $T_{0}=0$ and $W_{0} =\omega \in \Omega_{<\infty}$ 
        
        For each $e\in \overrightarrow{E}$, let $\tilde{\tau}_{e} = 0$ if $e\in\SIboundary(W_{0})$, $\tilde{\tau}_{e} = \infty$ otherwise

		For each $e \in \overrightarrow{E}$, draw $Y_{e} \sim \Exponential{1}$ and	$U_{e} \sim \Uniform{0}{1}$, all independent 
        
        \For{$n = 1$ \KwTo $\infty$}{ 
            \eIf{$\SIboundary(W_{n-1}) = \emptyset$}{ 
                $W_{n} = W_{n-1}$ and $T_{n} = \infty$ 
                
                \KwSty{break} 
            }{ 
                Let $T_{n} = \min\limits_{e \in \SIboundary(W_{n-1})} \left((\lambda+\mu)^{-1}Y_{e} + \tilde{\tau}_{e}\right)$ 
                
                Let $e_{n} = \argmin\limits_{e \in \SIboundary (W_{n-1})} \left((\lambda+\mu)^{-1}Y_{e} + \tilde{\tau}_{e}\right) =(x,y)$

		          \eIf{$U_{e_n}\leq \lambda/(\lambda+\mu)$}{ 
                    $W_{n} = W_{n-1}^{y \leftarrow I}$ 
                    
                    Set $\tilde{\tau}_{e}= T_{n}$ for each $e=(y,y') \in \SIboundary (W_{n})$

		          }{ 
                    $W_{n} = W_{n-1}^{x \leftarrow R}$ 
                } 
            } 
        }
	\end{algorithm}
    
    As for the SIR model, the key difference
	between this representation and the Harris graphical representation
	is that it is locally invariant under time lags. In contrast to Algorithm~\ref{alg:SIR_coupling1}, this algorithm only generates random variables ($Y_{e}$ and $U_{e}$) on the edges $e\in \overrightarrow{E}$. The exponential random variables $Y_{e}$ determine the time of a potential fight along that edge, and $U_{e}$ determines the outcome of that fight. The times $(T_{n}
	)_{n\geq 0}$ represent times at which there is a fight between a susceptible node
	and a neighbouring zombie, where $\lambda/(\lambda+\mu)$ gives the success probability of the
	zombie to win. Utilizing this construction of the
	ZIM, we prove the following generalization of Theorem~\ref{thm:monoLambdaTree}:

	\begin{theorem}[Monotonicity of the ZIM in the bite rate on trees]
		\label{thm:ZIM_tree_monotonicity_lambda} Let $G = (V, E)$ be a tree, fix $\lambda_{1}\leq \lambda_{2}$ and $\mu > 0$, and set $c = \frac{\lambda_{1}+ \mu}{\lambda_{2} + \mu}$. Suppose
		$\Delta \Subset V$ has at most two connected clusters. Then there is a coupling $\widehat{\BP}$ of
		$\big(\eta_{t}^{(1)}\big)\sim\BP_{\lambda_1,\mu, G,\Delta}^{\ZIM}$ and
		$\big(\eta_{t}^{(2)}\big)\sim\BP_{\lambda_2,\mu, G,\Delta}^{\ZIM}$ such
		that
		\begin{equation}
			\label{eq:thm_monotonicity_on_trees}\widehat{\BP}\!\left( \eta_{t}^{(1)}\leq
			\eta_{ct}^{(2)}\: \forall \: t\in [0,\infty) \right) =1.
		\end{equation}
	\end{theorem}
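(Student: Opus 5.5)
We use Algorithm~\ref{alg:ZIM_coupling1} for both processes with the \emph{same} random variables $(Y_e,U_e)_{e\in\overrightarrow{E}}$. With $p_i=\lambda_i/(\lambda_i+\mu)$ we have $p_1\le p_2$. The potential fight on edge $e$ then occurs $(\lambda_i+\mu)^{-1}Y_e$ after activation in process $i$. Time-rescaling process $2$ by $c=(\lambda_1+\mu)/(\lambda_2+\mu)$, i.e.\ looking at $\tilde\eta^{(2)}_t=\eta^{(2)}_{ct}$, makes every fight delay equal to $(\lambda_1+\mu)^{-1}Y_e$ in both processes. The only remaining difference is the outcome rule: the zombie wins along $e$ iff $U_e\le p_i$. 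Thus every edge won by the zombie in process~1 is also won in process~2.

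The goal is to show $\tau^{(1)}_x\ge\tilde\tau^{(2)}_x$ for every $x\in V\setminus\Delta$ with $\tau_x^{(1)}<\infty$. We will also need to control removals, since $\eta^{(1)}_t\le\tilde\eta^{(2)}_t$ only concerns $S$-states: a node non-$S$ in process~1 must be non-$S$ in process~2. Non-$S$ means infected at some earlier time or initially in $\Delta$, so it suffices to compare infection times. On a tree, if $x\notin\Delta$ is ever infected, it is infected through a unique path $x_0\in\Delta, x_1,\dots,x_n=x$ with $x_i\notin\Delta$. The edge $(x_{i-1},x_i)$ is activated at $\tau_{x_{i-1}}$ and fires at $\tau_{x_{i-1}}+(\lambda_1+\mu)^{-1}Y_{(x_{i-1},x_i)}$, provided $x_{i-1}$ is still infected and not removed. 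The timing is the same in both processes given the activation times, exactly as in the proof of Theorem~\ref{thm:SIR_monotonicity}. We then argue by induction along the path, via a minimal-counterexample index $K$ as there: if $\tilde\tau^{(2)}_{x_{K-1}}\le\tau^{(1)}_{x_{K-1}}$ and $U_{(x_{K-1},x_K)}\le p_1\le p_2$, then in process~2 $x_K$ is infected no later than in process~1. This holds unless $x_{K-1}$ is killed in process~2 before its fight with $x_K$ fires.

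The main obstacle is precisely this last possibility: $x_{K-1}$ could die in process~2 (but not in process~1) from a fight with another susceptible neighbour $y$. Fights of $x_{K-1}$ with neighbours $y\neq x_{K-2}$ occur after a delay $(\lambda_1+\mu)^{-1}Y_{(x_{K-1},y)}$ from activation. Such a fight is lost iff $U>p_2$, which implies $U>p_1$. So if it happens in process~2 before the fight with $x_K$, it happens in process~1 at the corresponding time too, relative to activation. It could fail to happen in process~1 only if $y$ was no longer susceptible there, i.e.\ infected earlier in process~1 than in process~2. On a tree with connected $\Delta$, $y$ can only be reached via $x_{K-1}$, so by the induction hypothesis, organised by infection order, this cannot occur. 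The role of the kill at the parent edge $(x_{K-1},x_{K-2})$ is also excluded, since $x_{K-2}$ is not susceptible. Hence losses in process~2 before the critical time are mirrored in process~1.

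With two clusters, $y$ might also be reached from the other cluster, along the unique tree path between the clusters. We handle this by a case analysis: a node $y$ adjacent to $x_{K-1}$ and reached from the other cluster lies on the segment joining the two clusters. There the two infection fronts approach each other, and we apply the same induction simultaneously from both ends, ordering events by time in process~1. With three or more clusters this presumably breaks, consistent with Figure~\ref{fig:counterexample_disconnected_1}. Once $\tilde\tau^{(2)}_x\le\tau^{(1)}_x$ holds for all $x$, we obtain \eqref{eq:thm_monotonicity_on_trees}, and Theorem~\ref{thm:monoLambdaTree} follows since $\eta^{(2)}$ is monotone in time in the order $\le$ and $c\le1$.
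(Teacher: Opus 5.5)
Your treatment of a connected $\Delta$ is correct and is the paper's proof. It uses the same $(Y_e,U_e)$ in Algorithm~\ref{alg:ZIM_coupling1}, rescales process~2 by $c$, and takes a minimal index $K$ along the ancestral path. It works because, for connected $\Delta$ on a tree, each node outside $\Delta$ is joined to $\Delta$ by a unique path. Hence a node infected in both processes is infected at the same rescaled time, and every susceptible neighbour of $x_{K-1}$ other than $x_{K-2}$ can only be reached through $x_{K-1}$.

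The two-cluster paragraph, however, is a genuine gap, and the step it needs is false for this coupling. You claim that the kill along the parent edge is excluded because $x_{K-2}$ is not susceptible. You also claim that a fight occurring in process~2 happens in process~1 ``at the corresponding time, relative to activation''. Both claims presuppose that $x_{K-1}$ is activated at the same time in both processes.

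With two clusters this fails. $x_{K-1}$ can be infected strictly earlier in process~2, from the other cluster, through a fight with $U_e\in(p_1,p_2]$. Its fights are then scheduled earlier, at a time when $x_{K-2}$ may still be susceptible. An induction hypothesis $\tilde\tau^{(2)}\le\tau^{(1)}$ points the wrong way to prevent this.

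Concretely, let $G$ be the path $a\sim v'\sim v\sim v''\sim b$ with an extra leaf $x$ attached to $v$. Let $\Delta=\{a,b\}$ and $s_e=(\lambda_1+\mu)^{-1}Y_e$. Consider the event on which:
\begin{itemize}
\item $s_{(b,v'')},s_{(v'',v)},s_{(v',v)},s_{(v,x)}$ are within $0.1$ of $1$, $s_{(v,v')}$ is within $0.1$ of $1/2$, $s_{(a,v')}$ of $10$, and $s_{(v,v'')}$ of $5$;
\item $U_{(b,v'')}\in(p_1,p_2]$ and $U_{(v,v')}>p_2$;
\item $U_e\le p_1$ for $e\in\{(a,v'),(v',v),(v'',v),(v,x)\}$.
\end{itemize}
This event has positive probability.

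On it, process~1 loses $b$ at time about $1$ and then infects $v',v,x$ at times about $10,11,12$. The rescaled process~2 instead infects $v''$ at time about $1$ and $v$ at time about $2$. Then $v$ fights the still susceptible $v'$ at time about $2.5$ and is killed, so $x$ is never infected. Here $x_{K-2}=v'$, $x_{K-1}=v$ and $x_K=x$, and $x\in\CA^{(1)}_\infty\setminus\CA^{(2)}_\infty$. Moreover $\tilde\tau^{(2)}_{v'}=\tau^{(1)}_{v'}$, so your induction hypothesis holds at $v'$, and ordering events by time cannot rescue the argument.

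The paper's own two-cluster step does not supply the missing idea either. It asserts that if $x_{K-1}$ is infected from $\Delta_2$ in process~2, that infection passed through $x_K$. In this example $v$ is reached from $b$ via $v''$, not via $x$, so that step fails too. With this coupling, your argument, like the paper's, establishes the statement only for connected $\Delta$. The two-cluster case would need a genuinely different construction.
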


	Note first that Theorem~\ref{thm:monoLambdaTree} corresponds to the statement of
	the above theorem with $c=1$. 
	Since $c\leq1$ in Theorem~\ref{thm:ZIM_tree_monotonicity_lambda}, Theorem~\ref{thm:monoLambdaTree}
	follows as an immediate consequence. Indeed, for any $x \in V$ we have that $\eta
	_{ct}^{(2)}(x) \neq S$ implies $\eta_{s}^{(2)}(x) \neq S$ for all $s>ct$ and therefore
	also $\eta_{t}^{(2)}\geq \eta_{ct}^{(2)}$.

	\begin{proof}[Proof of Theorem~\ref{thm:ZIM_tree_monotonicity_lambda}]
		We use the representation of the ZIM described above in Algorithm~\ref{alg:ZIM_coupling1}
		to construct a natural coupling of the two processes $\big( \eta_{t}^{(1)}\big
		)$ and $\big( \eta_{t}^{(2)}\big)$ by using the same random variables as input.
		For each $e \in \overrightarrow{E}$, draw $Y_{e} \sim \Exponential{1}$
		and $U_{e} \sim \Uniform{0}{1}$ independently, and for $i\in\{1,2\}$, set $Y_{e}^{(i)}= (\lambda_{i} +\mu)^{-1}Y_{e}$. 
        This coupling has two key properties.
		First, by construction, $Y_{e}^{(1)}= c^{-1}Y_{e}^{(2)}$ for all $e\in \overrightarrow{E}$. 
        Second, since both processes use the same random variables $U_e$, the fight outcomes are monotone: any zombie victory in $\big( \eta_{t}^{(1)}\big)$ along edge $e$ is also a zombie victory in $\big( \eta_{t}^{(2)}\big)$, and any zombie defeat in $\big( \eta_{t}^{(2)}\big)$ along edge $e$ is also a zombie defeat in $\big( \eta_{t}^{(1)}\big)$ (provided a fight occurs along that edge). 
        
		As in the proof of Theorem \ref{thm:SIR_monotonicity}, for each $x \in V$, let
		$\tau_{x}^{(i)}= \inf\! \big\{ t\geq 0 \colon \eta_{t}^{(i)}(x)= I \big\}$. 
        We will prove the theorem by showing that $\tau_x^{(1)}\leq c^{-1}\tau_x^{(2)}$ for all $x\in V$, which immediately implies $\eta_{t}^{(1)}\leq\eta_{ct}^{(2)}$ for all $t\geq 0$. 
        
		If $\tau_{x}^{(1)}=\infty$, then necessarily
		$\eta_{t}^{(1)}(x)\leq \eta_{ct}^{(2)}(x) \: \forall \: t\in [0,\infty)$. Therefore,
		let $x \in V$ be such that $\tau_{x}^{(1)}<\infty$. Then, by the
		construction of $\big(\eta_{t}^{(1)}\big)$ via Algorithm~\ref{alg:ZIM_coupling1},
		there is a unique ancestral sequence of nodes $x_{0}, \dots, x_{n}=x$ with $x
		_{0} \in \Delta$ and $n$ finite, and such that $(x_{i},x_{i+1}) = \gamma_{l_i}$
		for some sequence of natural numbers $l_{1}<l_{2}<\dots <l_{n}$. 
        Similarly to \eqref{eq:K_contradiction_definition} in the proof of Theorem \ref{thm:SIR_monotonicity}, let
		\begin{equation}
			K= \inf\! \left\{ k \in \{1,\dots, n\} \colon \tau_{x_k}^{(1)}< c^{-1}\tau_{x_k}
			^{(2)}\right\}.
		\end{equation}
        As in that proof, assume first that $K\in \{1,\dots,n\}$. Then, by the definition of $K$ and the coupling construction, we have that
		\begin{align}
			\tau_{x_K}^{(1)} & = \tau_{x_{K-1}}^{(1)}+ Y_{(x_{K-1},x_{K})}^{(1)}  \\
			    & \geq c^{-1}\tau_{x_{K-1}}^{(2)}+ Y_{(x_{K-1},x_{K})}^{(1)}  \\
			    & = c^{-1}\tau_{x_{K-1}}^{(2)}+ c^{-1}Y_{(x_{K-1},x_{K})}^{(2)}.
		\end{align}
		Since $G$ is a tree and $\Delta$ consists of at most two connected
		clusters, we claim that
		\begin{equation}
			\label{eq:tau x_k}\tau_{x_{K}}^{(2)}\leq \tau_{x_{K-1}}^{(2)}+ Y_{(x_{K-1},x_{K})}
			^{(2)}.
		\end{equation}
		To verify \eqref{eq:tau x_k}, we consider two cases. For the first case, assume that $\Delta$ consists of only one connected cluster. Then 
		$x_{K-1}$ necessarily became infected for $\left(\eta_t^{(2)}\right)$) 
        at time $\tau_{x_{K-1}}^{(2)}$ following
		the same ancestral sequence as for $\big(\eta_{t}^{(1)}\big)$. Since the
		coupling implies that the (random) times
		$\big\{Y_{(x_{K-1},y)}^{(1)}, y \sim x_{K-1}\big\}$ and
		$\big\{Y_{(x_{K-1},y)}^{(2)}, y \sim x_{K-1}\big\}$ have the same
		ordering, it is necessarily the case that $\tau_{x_{K}}^{(2)}= \tau_{x_{K-1}}^{(2)}+ Y_{(x_{K-1},x_{K})}
		^{(2)}$ and so \eqref{eq:tau x_k} holds. 
		
        For the second case, assume instead that $\Delta=\Delta_{1} \cup \Delta_{2}$, where $\Delta_{1}$
		and $\Delta_{2}$ are two disjoint connected clusters. Then, $x_{K-1}$ may in
		principle have been infected in the $\big(\eta_{t}^{(2)}\big)$-process at
		time $\tau_{x_{K-1}}^{(2)}$ either by following the same ancestral sequence as
		given by the $\big(\eta_{t}^{(1)}\big)$, say from $\Delta_{1}$, or along the
		shortest path connecting $x_{K-1}$ to $\Delta_{2}$. If $x_{K-1}$ became infected from $\Delta_{1}$ also in the
		$\big(\eta_{t}^{(2)}\big)$-process, then again necessarily $\tau_{x_{K}}^{(2)}= \tau_{x_{K-1}}^{(2)}
		+ Y_{(x_{K-1},x_{K})}^{(2)}$ by the same argument as above. Alternatively, if
		$x_{K-1}$ became infected from $\Delta_{2}$, it must have gone through
		$x_{K}$ (since the graph is a tree), which implies
		$\tau_{x_{K}}^{(2)}\leq \tau_{x_{K-1}}^{(2)}\leq \tau_{x_{K-1}}^{(2)}+ Y_{(x_{K-1},x_{K})}
		^{(2)}$. 

        In both cases, \eqref{eq:tau x_k} holds. Combining this with the inequality chain above yields $\tau_{x_K}^{(1)}\geq c^{-1}\tau_{x_K}^{(2)}$, contradicting the definition of $K$. Therefore $K= \infty$, which means $\tau_x^{(1)}\geq c^{-1}\tau_x^{(2)}$ for all $x\in V$. This immediately implies \eqref{eq:thm_monotonicity_on_trees}.
	\end{proof}

	The condition that the initial configuration have no more than two connected
	clusters may seem peculiar, but it is easy to show that it is not possible to do
	better with the current representation. Indeed, consider Figure~\ref{fig:counterexample_4}
	that depicts a small graph for two different ZIM processes represented via Algorithm~\ref{alg:ZIM_coupling1}.
	For simplicity, and without loss of generality, we have assumed the variables
	$Y_{e}$ and $U_{e}$ to be equal when switching the direction of the edge
	$e\in\overrightarrow{E}$. Moreover, this justifies a slight abuse of notation: since only one direction matters in any realization, we may write
	$\left(Y_{e}\right)_{e\in E}$ and $\left(U_{e}\right)_{e\in E}$ instead of
	$\left(Y_{e}\right)_{e\in\overrightarrow{E}}$ and
	$\left(U_{e}\right)_{e\in\overrightarrow{E}}$ when convenient.

	\begin{figure}[tb]
		\centering
		\begin{minipage}{0.47\textwidth}
			\centering
			\input{figures/tikz/counterexample_4a}
			\subcaption{With the $\{x_{1}, z_{3}\}$ edge closed (red), the infection is able to spread to $y$.}
			\label{fig:counterexample_4a}
		\end{minipage}
        \hfill
		\begin{minipage}{0.47\textwidth}
			\centering
			\input{figures/tikz/counterexample_4b}
			\subcaption{With the $\{x_{1}, z_{3}\}$ edge open (green), the infection stops at $x_{1}$ before it can spread to $y$.}
			\label{fig:counterexample_4b}
		\end{minipage}
		\caption{An illustration of a counterexample, showing that the coupling in Theorem~\ref{thm:ZIM_tree_monotonicity_lambda}
		is no longer monotone when the initial configuration has 3 or more clusters that
		are not connected.}
		\label{fig:counterexample_4}
	\end{figure}

	The left process, denoted by $\big( \eta_{t}^{(1)}\big)$, and the right one,
	denoted by $\big( \eta_{t}^{(2)}\big)$, share the same starting configuration
	$\Delta=\{ z_{1}, z_{2}, z_{3} \}$. The difference between the two is that the
	latter has a higher bite rate than the former, resulting in the edge $\{ x_{1},
	z_{3} \}$ being open (in both directions) in $\big( \eta_{t}^{(2)}\big)$,
	but closed in $\big( \eta_{t}^{(1)}\big)$. Moreover, we list the following
	conditions for the exponential variables $Y_{1}$ through $Y_{5}$:
\[ Y_{1} < Y_{2}, \quad Y_{1} + Y_{2} < Y_{3}, \quad Y_{2} < Y_{5}, \quad Y_{3} < Y_{4}. \]
	These conditions are simultaneously fulfilled with positive probability.
	Importantly, they ensure that, as $\big( \eta_{t}^{(2)}\big)$ infects
	$x_{1}$, this node is killed before it has a chance to infect $y$, meaning $y$
	will never be infected in this process. Indeed, the first thing to happen is
	that $x_{1}$ gets infected from $z_{3}$, and then $x_{1}$ gets killed by $x_{2}$.
	Conversely, in $\big( \eta_{t}^{(1)}\big)$ the edge $\{ z_{3}, x_{1} \}$ is
	closed, 
	so first $z_{3}$ is killed by $x_{1}$. After that, $x_{2}$ is infected,
	which clears the way for $z_{1}$ to infect $x_{1}$ and then $y$ without
	interruption from $x_{2}$. This shows that the representation does not provide
	a monotone coupling when the initial configuration consists of $3$ or more connected clusters.

	In a similar vein as in the proof of Theorem~\ref{thm:ZIM_tree_monotonicity_lambda},
	we next exploit the coupling provided by Algorithm~\ref{alg:ZIM_coupling1} to
	obtain a result on the monotonicity in the initial configuration, providing a proof
	of Theorem~\ref{thm:monoStartTree}.

	\begin{proof}[Proof of Theorem~\ref{thm:monoStartTree}]
		Similarly to the previous proof, we construct a coupling of the two processes
		$\big( \eta_{t}^{(1)}\big)$ and $\big( \eta_{t}^{(2)}\big)$ using
		Algorithm~\ref{alg:ZIM_coupling1} with common random variables. Thus,
		for each $e \in \overrightarrow{E}$, draw
		$Y_{e} \sim \Exponential{1}$ and $U_{e} \sim \Uniform{0}{1}$ independently, and let $\hat{Y}_{e} = (\lambda +\mu)^{-1}Y_{e}$. This determines the coupling, as both processes use this same set of
		variables for the dynamics of the process. Recall that the difference is the
		starting configurations, which we assume satisfy
		$\Delta_{1} \subseteq \Lambda \subseteq \Delta_{2} \Subset V$, with $\Lambda$
		connected. As in the previous proofs, we let
		$\tau_{x}^{(i)}= \inf \{ t\geq 0 \colon \eta_{t}^{(i)}(x)= I \}$ for each
		$x \in V$.

		Now, if $\tau_{x}^{(1)}=\infty$, then necessarily
		$\eta_{t}^{(1)}(x)\leq \eta_{t}^{(2)}(x) \: \forall \: t\in [0,\infty)$. Therefore,
		let $x \in V$ be such that $\tau_{x}^{(1)}<\infty$. Then, since $G$ is a
		tree, there is a unique ancestral sequence of nodes $x_{0},\dots, x_{n}=x$ with
		$x_{0} \in \Delta_{1}$, $x_{i-1}\sim x_{i}$ and such that
		$\tau_{x_{i-1}}^{(1)}< \tau_{x_{i}}^{(1)}$ for $i=1,\dots,n$. This is the path
		along which $x$ eventually gets infected in the algorithm. Similarly to the proofs of Theorem~\ref{thm:SIR_monotonicity} and \ref{thm:ZIM_tree_monotonicity_lambda} let 
		\begin{equation}
			K= \inf\! \left\{ k \in \{1,\dots, n\} \colon \tau_{x_k}^{(1)}< \tau_{x_k}^{(2)}
			\right\}.
		\end{equation}
		Then, assuming
		 $K\in\{1,\dots, n\}$, by the definition of $K$ and the construction of the ancestral sequence,  we have that
		\begin{align}
			\tau_{x_K}^{(1)} & = \tau_{x_{K-1}}^{(1)}+ \hat{Y}_{(x_{K-1},x_{K})}  \\
			    & \geq \tau_{x_{K-1}}^{(2)}+ \hat{Y}_{(x_{K-1},x_{K})}\label{eq:tau_x_k1}
		\end{align}
		We claim that, under the assumptions of the theorem, it holds that 
		\begin{equation}
	\label{eq:tau_x_k2}\tau_{x_{K}}^{(2)}\leq \tau_{x_{K-1}}^{(2)} + \hat{Y}_{(x_{K-1},x_{K})}.
		\end{equation}
		Indeed, since the sequence $x_{0},\dots,x_{n}$ is the unique path between $x_{0}$
		and $x$ in $G$, any path from $\Lambda$ to $x$ necessarily uses the same
		edges between nodes that are not already part of $\Lambda$. And since $x_{K}$
		was eventually infected in $\big(\eta_{t}^{(1)}\big)$,
		$\hat{Y}_{(x_{K-1},x_K)}< \hat{Y}_{(x_{K-1},y)}$ for any $y\sim x_{K-1}$ for
		which $U_{(x_{K-1},y)}>\smash{\frac{\lambda}{\lambda +\mu}}$, which means $x_{K}$ will also eventually be infected
		in $\big(\eta_{t}^{(1)}\big)$. If it is not infected earlier, it will at least
		be infected from $x_{K-1}$, which is equivalent to the claim in \eqref{eq:tau_x_k2}. 
        
		From \eqref{eq:tau_x_k1} and \eqref{eq:tau_x_k2} we conclude that $\tau_{x_K}^{(1)} \geq \tau_{x_K}^{(2)}$, which contradicts the assumption of $K$ being finite, which implies that
		$\eta_{t}^{(1)}(x)\leq \eta_{t}^{(2)}(x) \: \forall \: t\in [0,\infty)$. Since $x$ was arbitrary, this property holds for any $x \in V$, and that concludes
		the proof.
	\end{proof}

	For the representation used in the above proof, the condition in Theorem~\ref{thm:monoStartTree}
	that $\Delta_{1} \subset \Lambda \subset \Delta_{2}$ with $\Lambda$ connected is
	a necessary requirement as the following example shows. Consider the graph
	$G=(V,E)$ with
	\begin{equation}
		V=\{z_{1},z_{2},z_{3},x,y\} \text{ and }E=\big\{\{x,z_{1}\},\{x,z_{3}\}, \{z
		_{2},z_{3}\},\{x,y\}\big\}
	\end{equation}
	as depicted in Figure~\ref{fig:counterexample_5}. Let $\big( \eta_{t}^{(1)}\big
	)$ be the ZIM with initial configuration $\Delta_{1} = \{ z_{1}, z_{2} \}$ and
	$\big( \eta_{t}^{(2)}\big)$ the ZIM with initial configuration $\Delta_{2} =
	\{ z_{1}, z_{2}, x \}$, and note that any connected $\Lambda \subset V$
	containing $\Delta_{1}$ also contains $\Delta_{2}$. Consider the ZIM constructed
	by $(Y_{e})_{e \in \overrightarrow{E}}$ and $(U_{e})_{e\in \overrightarrow{E}}$
	as in Algorithm~\ref{alg:ZIM_coupling1}. 
	Then, with positive probability, we have that
	\begin{align}
		 &Y_{(z_2,z_3)}+ Y_{(z_3,x)}<Y_{(z_1,x)} \quad \text{and} \quad Y_{(x,z_3)}< \min\!\left( Y_{(x,y)},Y_{(z_2,z_3)}\right);
\\ &U_{(x,z_3)}> \lambda/(\lambda +\mu),\: U_{(z_3,x)}> \lambda/(\lambda +\mu) \quad\text{and}\quad U_{e}\leq \lambda/(\lambda +\mu) \text{ for all
			other } e \in \overrightarrow{E}.
	\end{align}
	In this event, the $\big( \eta_{t}^{(1)}\big)$-process will first infect
	$z_{3}$ from $z_{2}$, after which $z_{3}$ is killed in the next step, before
	$x$ becomes infected and lastly also $y$. For the $\big( \eta_{t}^{(2)}\big)$-process,
	however, the first thing that happens is that $x$ is killed and then $z_{3}$ becomes
	infected, and the node $y$ remains susceptible for all times. 
	Thus, the monotonicity property of Theorem~\ref{thm:monoStartTree} fails in this
	case.

	\begin{figure}[tb]
		\centering
		\input{figures/tikz/counterexample_5}
		\caption{An illustration of a counterexample to the monotonicity in Theorem
		\ref{thm:monoStartTree} when the conditions on $\Delta_{1}$ and $\Delta_{2}$
		therein are not met.}
		\label{fig:counterexample_5}
	\end{figure}

	\subsection{Proof of Proposition~\ref{prop:cosurvival1}}

	We now present a proof of Proposition~\ref{prop:cosurvival1}, utilizing the
	Harris construction of the ZIM as presented in Section~\ref{sec:Harris}. For this we
	reason along the lines of the proof of \cite[Proposition I.2.8]{LiggettSIS1999}
	that concerns a similar property for the contact process.

	\begin{proof}[Proof of Proposition~\ref{prop:cosurvival1}]
		Let $\lambda>0$ and $\Delta \Subset V$ for a graph $G=(V,E)$. Consider the
		Harris construction of the corresponding ZIM and, for $t> 0$, let $\CF_{t}$
		be the corresponding $\sigma$-algebra generated by all the green and red
		arrows from this construction in the time interval $[0,t]$.

		We first observe that, for
		any fixed realization $(\eta_{t})$ of the ZIM, the function $f(t) = |\CA_{t}|$ (recall \eqref{eq:At})
		is non-decreasing, and hence the limit $f(\infty) =\lim_{t \rightarrow \infty}f(t)$ exists in $[0,\infty]$. Moreover, if
		$f(\infty)<\infty$, then $\lim
		_{t \rightarrow \infty}\CZ_{t}$ necessarily exists and is finite. If
		$f(\infty)=\infty$, then $\lim_{t
		\rightarrow \infty}\CZ_{t}$ could in principle fail to exist, but the following argument
		shows that it diverges to infinity almost surely in this case:

		Firstly, since $\BE_{\lambda,G,\Delta}^{\ZIM}\!\left[1_{\{f(\infty)<\infty\}}\mid \CF
		_{j}\right]$, $j\in \BN_0$, is a Doob martingale, it follows by the martingale
		convergence theorem that
		\begin{equation}
			\label{eq:claimProp2.4a}\lim_{j \rightarrow \infty}\BE_{\lambda,G,\Delta}^{\ZIM}\!
			\left[1_{\{f(\infty)<\infty\}} \,\middle|\,  \CF_{j}\right] = 1_{\{f(\infty) <\infty\}}\quad \text{
			a.s. }
		\end{equation}
        Furthermore, we claim that, for every $k\in \BN$ there is an
		$\epsilon = \epsilon(k) > 0$ such that
		\begin{equation}
			\label{eq:claimProp2.4}\BP_{\lambda,G,\Delta}^{\ZIM}\!\left( \partial_{SI}(\eta_{j+1/2}
			) = \emptyset \,\middle|\,  \CZ_{j} \leq k \right) >\epsilon \text{ for every }j \in \BN.
		\end{equation}
		Indeed, if $\CZ_{j} \leq k$, then since $G$ is of bounded degree, we have that
		$|\partial_{SI}(\eta_{j})| \leq \text{deg}(G) \cdot k$. In particular, $\partial
		_{SI}(\eta_{j})$ is a finite set. Therefore, with positive probability, the
		following can occur: There are no green arrows between edges in $\partial_{SI}
		(\eta_{j})$ in the time interval $[j,j+1/2)$ and, within the same time
		interval there are red arrows between each of these edges.
		Thus \eqref{eq:claimProp2.4} holds. By this, and since $\BP_{\lambda,G,\Delta}
		^{\ZIM}(|\CA_{j}|=\infty)=0$ for any $j \in \BN$, it also holds that
		\begin{equation}
			\BE_{\lambda,G,\Delta}^{\ZIM}\!\left[1_{\{f(\infty)<\infty\}}\,\middle|\, \CF_{j}\right]>\epsilon
			1_{\{ \CZ_j \leq k \}}.
		\end{equation}
		Therefore, by \eqref{eq:claimProp2.4a}, we have that
		\begin{equation}
			1_{\{ \CZ_{j} \leq k \: \text{ i.o.}\}}= 1_{\{f(\infty) <\infty \}} \quad \text{
			a.s.}
		\end{equation}
		Consequently, 
		$\BP_{\lambda,G,\Delta}^{\ZIM}(f(\infty) =\infty) =  \BP_{\lambda,G,\Delta}^{\ZIM}(\liminf_{j \rightarrow
		\infty}\CZ_{j} \geq k )$. Since this holds for any $k\in \BN$, it follows
		that
	$\BP_{\lambda,G,\Delta}^{\ZIM}( \lim_{j \rightarrow \infty}\CZ_{j} = \infty
		) =\BP_{\lambda,G,\Delta}^{\ZIM}( f(\infty) =\infty)$,
		from which the proposition follows.
	\end{proof}

    The adaptation to the SIR model is straightforward:  replace $\CA_{t}$ in the above argument by $\cup_{s\in [0,t]}\{x \in V \colon \xi_{s}(x)=I\}$  and $\CZ_{j}$ by $\CI_{j}$.
	Then similarly, for every $k \in \BN$, there is an $\epsilon=\epsilon(k)>0$ such
	that
	\begin{equation}
		\BP_{\lambda,G,\Delta}^{\SIR}\!\left( \SIboundary^+(\xi_{j+1/2})=\emptyset \mid \CI_{j}\leq
		k \right)> \epsilon \text{ for every }j \in \BN.
	\end{equation}
	Then, along the same lines of reasoning as above, it follows that
	\begin{equation}
		1_{\left\{ \CI_{j}\leq k \text{ i.o.}\right\}} = 1_{\left\{| \cup_{t\geq 0}\{x
		\in V \colon \xi_{t}(x)=I\}|<\infty \right\}} \quad \text{a.s.}
	\end{equation}
	Hence, we conclude that
	\begin{equation}
		\label{eq:SIR_survival_prob_equivalence}\BP^{\SIR}_{\lambda,G,\Delta}\!\Bigg( \bigg|
		\bigcup_{t\geq 0}\{x \in V \colon \xi_{t}(x)=I\} \bigg| = \infty \Bigg) = \BP^{\SIR}_{\lambda,G,\Delta}\!
		\left( \lim_{t \rightarrow \infty}\CI_{t}= \infty \right).
	\end{equation}

	\subsection{Further graphical representations}
	\label{sec:fgr}

	The construction of the ZIM via Algorithm~\ref{alg:ZIM_coupling1} provides a convenient coupling for proving monotonicity properties for the process on trees. 
    The main results in this paper rely on additional coupling constructions. 
    We take this opportunity to informally present the remaining coupling algorithms we will use throughout the paper in one place. 
    Each algorithm admits a representation similar to Algorithm~\ref{alg:ZIM_coupling1}, and we provide motivation and explanation for each construction below. 
    As usual, we consider the ZIM on $G=(V,E)$ with
	parameters $\lambda>0$, $\mu>0$ and $\Delta \Subset V$. Moreover, throughout the remainder of the paper we use $p \coloneqq \lambda/(\mu+\lambda)$ to denote the probability that a zombie wins any given fight.

	Algorithm~\ref{alg:ZIM_coupling_complete}, used in Section~\ref{sec:SRW} constructs the ZIM 
	using independent random variables $\left( Y_{i}, X_{i} \right)_{i\in\BN}$, where
	$Y_{i}\sim\Exponential{1}$ and $X_{i}\sim\Bernoulli{p}$. 
    As in Algorithm~\ref{alg:ZIM_coupling1},
	activation times in this construction are decoupled from the start time $t=0$. However, they are no longer tied to specific edges. The time until the next event in the ZIM
	is exponential with rate $\lambda+\mu$ times the number of active edges.
	So we can discretize the process and bring it back to continuous time with a
	sequence of random delays $(Y_{i})_{i\in\BN}$ between transition steps, that are scaled based
	on the state at step $n$ to give the time between steps $n$ and $n+1$.
	The outcome of the fight at step $n$ can then be decided by the Bernoulli
	variable $X_{n}$, and the particular edge is chosen uniformly at random among active edges. Then $X_{n}=0$ corresponds to the zombie at the
	chosen edge at step $n$ being killed, whereas $X_{n}=1$ indicates that the zombie wins and infects
	its susceptible neighbour at the other end of the chosen edge. By representing the
	ZIM in this way we allow for a coupling with a simple random walk on $\BZ$.

	When comparing the ZIM to site percolation in Section~\ref{sec:CPP}, we employ Algorithm~\ref{alg:ZIM_coupling_geom}, with
	independent random variables $\big( (Y_{e})_{e\in E}, (\gamma_{x})_{x\in V}\big)$, where $Y_{i}\sim\Exponential{1}$ and $\gamma_{x}\sim\Geometric{p}$. As in Algorithm~\ref{alg:ZIM_coupling_complete},
	the activation times are decoupled from the start time $t=0$ and not tied
	to specific edges, and the next edge to be activated is picked uniformly at random
	among the current active edges. 
    
    In the previous algorithmic
	representations, the fights on the edges were modelled as Bernoulli trials. If
	we group these Bernoulli trials site-wise based on the node that is a zombie
	when that particular trial is performed, we can capture the same information
	with a (truncated) geometric distribution. This construction no longer 
	tracks the order in which an infected node tries to infect other nodes. Instead, targets
	are chosen uniformly at random among all active edges. The geometric
	variable $\gamma_{x}$ represents the number of fights a zombie at $x$ will win
	before it is killed. For our purposes it does not matter if $\gamma_{x}$ is truncated
	to the maximum degree of $x$ or not; the important fact is that a zombie at
	$x$ will not die before it has infected $\gamma_{x}$ many other nodes. 
    
    Note that the particular version of the geometric distribution considered 
	counts the number of successful Bernoulli trials until the first failure. That
	is, we consider a geometric variable $X\sim\Geometric{p}$ to have the
	probability mass function
	\begin{equation}
		\label{eq:geom_dist}\BP(X=x)=p^{x}(1-p) \text{ for }x=0, 1, 2\dots .
	\end{equation}

	\section{Coupling with simple random walks}
	\label{sec:SRW}

	In this section we provide the proofs of Theorem~\ref{thm:exinction} and Theorem~\ref{thm:ZIMcompelete}. These results follow by a detailed study
	of the embedded discrete-time process $(W_{n})_{n\geq 0}$ through a coupling with
	a simple random walk. We also derive additional properties for the ZIM on the
	complete graph that play an important role in our proof of Theorem
	\ref{thm:NonMon}, given in Section~\ref{sec:NmonoWRTgraph}.

	\subsection{No zombie outbreak when
	\texorpdfstring{$\lambda\leq 1$}{bite rate is less than 1}}
	\label{sec:DiesOutLambda1}

	Let $G=(V,E)$ be a countable, locally bounded and connected graph. 
	We construct the ZIM on $G$ using two independent sequences of i.i.d.\ random variables: $(X_{i})_{i\geq 0}$ with $X_i\sim\Bernoulli{p}$ and $(Y_{i})_{i\geq 0}$ with $Y_i\sim\Exponential{1}$. 
    The construction is detailed in Algorithm~\ref{alg:ZIM_coupling_complete} below; see Section~\ref{sec:fgr} for a high-level explanation.

	\begin{algorithm}
		[tb]
		\caption{A random walk construction of the ZIM}
		\label{alg:ZIM_coupling_complete} 
        Let $T_{0}=0$, $W_{0} =\omega \in \Omega_{<\infty}$ and $p=\lambda/(\lambda+\mu)$

		For each $i \in\BN$, let $Y_{i} \sim \Exponential{1}$ and
		$X_{i} \sim \Bernoulli{p}$, all independent 
        
        \For{$n = 1$ \KwTo $\infty$}{ 
            \eIf{$\SIboundary(W_{n-1}) = \emptyset$}{ 
                $W_{n} = W_{n-1}$ and $T_{n} = \infty$ 
                
                \KwSty{break} 
            }{ 
                Set $T_{n} = T_{n-1}+ \big((\lambda+\mu)|\SIboundary(W_{n-1})|\big)^{-1}Y_{n}$ 
                
                Draw $e_{n}=(x,y) \in \SIboundary(W_{n-1})$ uniformly at random

		          \eIf{$X_{n}=1$}{ 
                    $W_{n} = W_{n-1}^{y \leftarrow I}$

		          }{ 
                    $W_{n} = W_{n-1}^{x \leftarrow R}$ 
                } 
            } 
        }
	\end{algorithm}

	Algorithm~\ref{alg:ZIM_coupling_complete} yields two key sequences: the embedded discrete-time process $(W_{n})_{n\geq 0}$, where $W_n$ represents the configuration at step $n$, and the transition times $(T_{n})_{n \geq 0}$. 
    From these, we construct the continuous-time process $(\eta_{t})\sim \BP_{\lambda,G,\Delta}^{\ZIM}$ with $\Delta = \{x \in V \colon \omega(x)=I\}$ by setting 
	\begin{equation}
		\eta_{t} \coloneqq W_{k} , \quad \text{ for }t \in [T_{k},T_{k+1}).
	\end{equation}
	We denote by $\tau_{dt}$ the last step before the embedded discrete-time
	process $(W_{n})_{n\geq 0}$ ceases to evolve, that is
	\begin{align} \label{eq:tau_dt}
		\tau_{dt}\coloneqq \inf\!\big\{ n\geq 1 \colon \SIboundary(W_{n}) =\emptyset\big\}.
	\end{align}
    Here, $dt$ stands for discrete-time. The corresponding continuous-time variable is given by $\tau_{ct}\coloneqq T_{{\tau_{dt}}}$, and indicates the time at which the ZIM ceases to evolve.
    
	A useful property that follows by exactly this construction is that the number
	of infected nodes $Z_{n}\coloneqq\sum_{x \in V}1_{\{W_n(x)=I\}}$, is given by the
	following recursive formula:
	\begin{equation}
		\label{eq:Zn_evolution}Z_{n} =
		\begin{cases}
			Z_{n-1}+ 2X_{n}-1 & \text{if }\SIboundary(W_{n-1}) \neq \emptyset, \\
			Z_{n-1}           & \text{otherwise}.
		\end{cases}
	\end{equation}

	This leads to a coupling with a simple random walk which we apply for the
	proof of Theorem~\ref{thm:exinction}.

	\begin{proof}[Proof of Theorem~\ref{thm:exinction}]
		Consider the ZIM on the graph $G$ with bite rate $\lambda\leq 1$ and $\Delta\Subset V$, constructed via Algorithm~\ref{alg:ZIM_coupling_complete}, and define $\tau_{X} \coloneqq \inf\! \left\{ n\geq 1 \colon \sum_{i=1}^{n} (2X_{i}-1)
		= -|\Delta|\right\}$. By \eqref{eq:Zn_evolution}, it holds that 
		$\BP_{\lambda,G,\Delta}( \tau_{dt}\leq \tau_{X}) = 1$, 
		and therefore also
		\begin{equation}
\label{eq:SRWcomparition}\BP_{\lambda,G,\Delta}(\tau_{dt}<\infty 
			) \geq \BP_{\lambda}( \tau_{X}<\infty).
		\end{equation}
		The right-hand side of \eqref{eq:SRWcomparition} is the probability that a nearest
		neighbour simple random walk on $\BZ$ starting at $|\Delta|$, with jump probabilities $p=\frac{\lambda}{\lambda+1}$ to the right and $1-p$ to the left, reaches the origin. Such a random
		walk is recurrent if $p=1/2$ and transient to the left if $p<1/2$; see e.g.\ \cite[Theorem
		5.4.4]{Durrett2019}. Therefore, $\BP_{\lambda, G, \Delta}( \tau_{X}<\infty
		) = 1$ if and only if $p\leq 1/2$, which holds precisely when $\lambda \leq 1$. 
        
        By \eqref{eq:SRWcomparition} this implies $\tau_{dt}<\infty$ almost surely. Since the process stops evolving at step $\tau_{dt}$, we have $Z_{n}=Z_{\tau_{dt}} \leq \tau_{dt}
		+ |\Delta|$ for all $n\geq \tau_{dt}$. In particular, $\CZ_\infty = \lim_{n\to\infty}Z_n=Z_{\tau_{dt}} < \infty$ almost surely. Hence we conclude that
		$\BP_{\lambda,G,\Delta}( \CZ_{\infty}<\infty) =1$. 
	\end{proof}

	\subsection{ZIM on the complete graph}

	We continue our study of the ZIM using the embedded
	discrete-time process $(W_{n})_{n\geq 0}$ introduced in the previous
	subsection, now focusing on the case where the underlying graph is the complete
	graph $K_{N}$ with node set $V=\{ 1,2,\dots, N \}$.

	Recall from the previous subsection that Algorithm~\ref{alg:ZIM_coupling_complete} yields a convenient construction of $(Z_{n})_{n\geq
	0}$, the number of infected nodes at iteration step $n$, which evolves analogously to a random walk (see \eqref{eq:Zn_evolution}). On the complete graph $K_{N}$, we additionally have that, for $n\geq 0$,
	\begin{equation}
		\label{eq:Sn_evolution}S_{n} =
		\begin{cases}
			S_{n-1}- X_{n} & \text{if }\SIboundary(W_{n-1}) \neq \emptyset, \\
			S_{n-1}        & \text{otherwise},
		\end{cases}
	\end{equation}
	where $S_{n} \coloneqq \sum_{i\in K_N}1_{\{W_n(i)=S\}}$ is the number of
	susceptible nodes. 
    
    In the following proofs we consider the ZIM constructed via
	Algorithm~\ref{alg:ZIM_coupling_complete}. Moreover, we let
	\begin{equation}
		\tau_{Z}\coloneqq \inf\! \big\{ n \geq 0 \colon Z_{n} = 0 \big\}\quad  \text{ and } \quad \tau_{S}\coloneqq \inf\! \big\{ n \geq 0 \colon S_{n} = 0 \big\},
	\end{equation}
	i.e.\ the iteration step at which there are no more infected nodes and no more
	susceptible nodes respectively, and where again we use the convention that $\inf \{ \emptyset
	\} = \infty$.

	\begin{lemma}
		\label{lem:ZIM_onCG} Consider the ZIM on $K_{N}$ with $\lambda >0$ and initially
		$\Delta= \{1,\dots,M\}$ infected. Then
		\begin{equation}
			\tau_{dt}= \min(\tau_{Z},\tau_{S}) \leq 2N - M.
		\end{equation}
		Moreover, we have that 
		\begin{equation}
			\lim_{N \rightarrow \infty}\BP_{\lambda,K_N, \Delta}^{\ZIM}( \tau_{dt}= \tau
			_{Z}) =
			\begin{cases}
				1                                  & \text{if }\lambda\leq 1, \\
				\left(\frac{1}{\lambda}\right)^{M} & \text{if }\lambda>1.
			\end{cases}
		\end{equation}
	\end{lemma}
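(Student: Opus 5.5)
We split the proof into a deterministic part and a probabilistic part, both read off from the recursions \eqref{eq:Zn_evolution} and \eqref{eq:Sn_evolution} for the construction via Algorithm~\ref{alg:ZIM_coupling_complete}.

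\emph{Deterministic part.} On $K_N$ every infected node is adjacent to every susceptible node. Hence $\SIboundary(W_n)=\emptyset$ if and only if $Z_n=0$ or $S_n=0$, which gives $\tau_{dt}=\min(\tau_Z,\tau_S)$. (If $M=N$ is excluded, $\tau_{dt}\ge1$ is consistent with $\min$ since $Z_0,S_0>0$.)

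For the bound, note that each step before $\tau_{dt}$ either removes an infected node ($X_n=0$) or converts a susceptible node into an infected one ($X_n=1$). Let $A=\sum_{i\le n}X_i$ and $B=n-A$. Then $A\le N-M$, because $S$ starts at $N-M$ and decreases by $X_n$. Every node ever infected can be killed at most once, so $B\le M+A$. Therefore $n=A+B\le 2A+M\le 2(N-M)+M=2N-M$.

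\emph{Probabilistic part.} Up to $\tau_{dt}$, $Z_n=M+\sum_{i\le n}(2X_i-1)$ is a simple random walk $R_n$ started at $M$ with right-step probability $p=\lambda/(\lambda+1)$, and $S_n=N-M-\sum_{i\le n}X_i$. So $\tau_{dt}=\tau_Z$ exactly when the walk $R$ hits $0$ before the number of right steps reaches $N-M$. Let $\sigma_0$ be the hitting time of $0$ by $R$, and $\rho_N$ the time of the $(N-M)$-th right step. Then
\[
\{\sigma_0<\rho_N\}=\{\tau_{dt}=\tau_Z\}.
\]
Indeed, $\sigma_0\ne\rho_N$, since at the $(N-M)$-th right step the walk moves up and cannot land at $0$.

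As $N\to\infty$ we have $\rho_N\uparrow\infty$ almost surely, so the events $\{\sigma_0<\rho_N\}$ increase to $\{\sigma_0<\infty\}$. By monotone convergence,
\[
\BP(\tau_{dt}=\tau_Z)\to\BP(\sigma_0<\infty).
\]
The usual gambler's ruin computation gives $\BP(\sigma_0<\infty)=1$ if $p\le1/2$, that is $\lambda\le1$, and $((1-p)/p)^M=\lambda^{-M}$ otherwise.

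The only delicate point is that the coupling uses a single fixed sequence $(X_i)$ for all $N$, which makes the events monotone in $N$. This holds because Algorithm~\ref{alg:ZIM_coupling_complete} draws the $X_i$ independently of the graph.
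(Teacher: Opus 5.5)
Your proof is correct and follows essentially the same route as the paper: $\tau_{dt}=\min(\tau_Z,\tau_S)$ via $|\SIboundary(W_n)|=S_nZ_n$, a counting bound giving $2N-M$, and gambler's ruin for the embedded walk, using that the competing stopping time ($\tau_S$, your $\rho_N$) is at least $N-M\to\infty$. The one difference is in the limit step. The paper sandwiches $\BP(\tau_Z<N-M)\le\BP(\tau_{dt}=\tau_Z)\le\BP(\sigma_0<\infty)$, whereas you identify the event exactly as $\{\tau_{dt}=\tau_Z\}=\{\sigma_0<\rho_N\}$ and apply monotone convergence, which is a slightly cleaner version of the same idea.
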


	\begin{proof}
		That $\tau_{dt}= \min(\tau_{Z},\tau_{S})$ follows since, using Algorithm~\ref{alg:ZIM_coupling_complete},
		$|\partial_{SI}(W_{n})| = S_{n} Z_{n}$ for the ZIM on $K_{N}$ as all pairs of nodes are	connected by an edge. Furthermore, if $\tau_{Z}> 2N-M$, then necessarily
		$Z_{2N-M}= M +\sum_{i=1}^{2N-M}(2 X_{i}-1) >0$. In that case,
		$\sum_{i=1}^{2N-M}X_{i} >N-M$, implying that $\tau_{S}\leq 2N-M$. Therefore
		$\tau_{dt}\leq 2N-M$. For the last claim, note first that $\tau_{S}\geq N-M$
		always holds. Therefore, we have that
		\begin{equation}
			\BP_{\lambda,K_N, \Delta}^{\ZIM}( \tau_{Z}< \tau_{S}) \geq \BP_{\lambda,K_N,
			\Delta}^{\ZIM}( \tau_{Z}< N-M),
		\end{equation}
		which goes to $1$ as $N \rightarrow \infty$ when $\lambda \leq 1$ since the
		corresponding random walk is recurrent, as in the proof of Theorem~\ref{thm:exinction}.
		Moreover, when $\lambda > 1$ and again using that $\tau_{S}\geq N-M$, we
		have that
		$\lim_{N \rightarrow \infty}\BP_{\lambda,K_N,\Delta}^{\ZIM}( \tau_{Z}< \infty
		) = (1/\lambda)^{M}$, i.e.\ the probability that the corresponding simple
		random walk reaches $0$ when started from $M$, see e.g.\ \cite[Theorem 4.8.9]{Durrett2019}.
	\end{proof}

	As a second property, we next show that the ZIM is monotone on the complete
	graph. Recalling the definition
	of the affected set $\CA_{t}$ from \eqref{eq:At}, this is the statement of the following proposition. 

	\begin{proposition}\label{prop:ZIM_onCGmono}
		Let $\lambda_{1}\leq \lambda_{2}$ and $N_{1}\leq N_{2}$, and consider
		$\Delta_{1}=\{1,\dots,M_{1}\}$ and $\Delta_{2} = \{1,\dots,M_{2}\}$ for
		some $M_{1}\leq M_{2}$ with $M_{1}\leq N_{1}$ and $M_{2}\leq N_{2}$. Then there
		is a coupling $\widehat{\BP}$ of $\big(\eta_{t}^{(1)}\big) \sim\BP_{\lambda_1,K_{N_1},\Delta_1}$
		and $\big(\eta_{t}^{(2)}\big)\sim\BP_{\lambda_2,K_{N_2},\Delta_2}$ such that
		\begin{equation}\label{eq:lemma42}
			\widehat{\BP}\!\left( |\CA_{t}^{(1)}| \leq |\CA_{t}^{(2)}| \: \forall \: t\in [0,
			\infty) \right) =1.
		\end{equation}
	\end{proposition}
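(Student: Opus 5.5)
The plan is to reduce both processes to their counting processes and build a Markovian coupling of those directly. On $K_N$ the state of the ZIM is determined, up to exchangeability, by the pair $(a,z)$, where $a=|\CA_t|$ is the number of affected nodes and $z=\CZ_t$ is the number of zombies. The number of susceptible nodes is then $s=N-a$.

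\textbf{Rates of the counting process.} From \eqref{eq:ZIM_generator}, $(a,z)$ is a continuous-time Markov chain with two kinds of jumps:
\begin{itemize}
\item $(a,z)\to(a+1,z+1)$ at rate $\lambda z s$ (a bite);
\item $(a,z)\to(a,z-1)$ at rate $z s$ (a kill).
\end{itemize}
Since $|\CA_t|$ changes only through bites, \eqref{eq:lemma42} only requires that process~$1$ never bites while $a^{(1)}=a^{(2)}$ unless process~$2$ bites at the same instant. I would therefore construct a coupled Markov chain on pairs of states, specified by its joint jump rates. Away from the diagonal $\{a^{(1)}=a^{(2)}\}$ the two chains move independently. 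On the diagonal, every bite of process~$1$ is attached to a simultaneous bite of process~$2$, which is possible only when $\lambda_1 z^{(1)} s^{(1)}\le \lambda_2 z^{(2)} s^{(2)}$.

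\textbf{Invariant and verification.} Because $N_1\le N_2$ and $a^{(1)}=a^{(2)}$ on the diagonal, we have $s^{(1)}\le s^{(2)}$ there. Hence the rate inequality follows if $z^{(1)}\le z^{(2)}$ whenever $a^{(1)}=a^{(2)}$. I would carry the invariant
\[
\mathcal{I}:\quad a^{(1)}<a^{(2)},\ \text{ or }\ a^{(1)}=a^{(2)} \text{ and } z^{(1)}\le z^{(2)}.
\]
It holds at time $0$ since $M_1\le M_2$. The next step is to check that every jump of the coupled chain preserves $\mathcal{I}$:
\begin{itemize}
\item Joint bites on the diagonal preserve it.
\item A solo bite of process~$2$ moves the pair off the diagonal.
\item A kill of process~$1$ only helps.
\item Entering the diagonal from $a^{(1)}<a^{(2)}$ happens only through a bite of process~$1$, which raises $z^{(1)}$. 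This case needs an argument via the embedded walk $Z_n=M+\sum(2X_i-1)$ of \eqref{eq:Zn_evolution}: compare $z$ along the coupled sequences of fights, using Bernoulli outcomes coupled so that $X^{(1)}_i\le X^{(2)}_i$.
\end{itemize}
Finally, the marginals are correct because the joint rates sum to the single-process rates.

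\textbf{Main obstacle.} The difficulty is kills of process~$2$ on the diagonal when $z^{(1)}=z^{(2)}$. Process~$2$ has kill rate $z s^{(2)}$, which may exceed $z s^{(1)}$, so a solo kill in process~$2$ can break $\mathcal{I}$. I would first try replacing $\mathcal{I}$ with the weaker, rate-level invariant
\[
a^{(1)}<a^{(2)},\ \text{ or }\ a^{(1)}=a^{(2)} \text{ and } \lambda_1 z^{(1)}s^{(1)}\le\lambda_2 z^{(2)}s^{(2)},
\]
since this is exactly what the bite coupling needs. If that still fails, the fallback is to stop coupling in real time. Instead, couple the embedded fight sequences, indexed by the number of bites, so that the $k$th bite of process~$2$ happens no later than the $k$th bite of process~$1$. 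This amounts to showing stochastic domination of the bite times through the coupled holding times $\big((\lambda_i+1)Z S\big)^{-1}Y$ (with $\mu=1$, as in Algorithm~\ref{alg:ZIM_coupling_complete}). In either approach, controlling the surplus kills in process~$2$ on the diagonal is the step I expect to be the crux.
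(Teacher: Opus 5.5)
\textbf{Gap: the primary route cannot work.} Your main plan is a Markovian coupling of the counting chains $(a,z)$, specified by joint jump rates in real time. The obstacle you flag is fatal for this plan, not merely technical.

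Take $N_1=2$, $N_2=3$, $M_1=M_2=1$ and $\lambda_1=\lambda_2=\lambda$.
\begin{itemize}
\item At time $0$ both chains sit at $(a,z)=(1,1)$, so process~1 may only bite jointly with process~2.
\item Process~2 kills at rate $2$, but process~1 kills only at rate $1$. Hence every joint-rate coupling gives rate at least $1$ to a transition in which the lone zombie of process~2 dies while process~1 does not kill.
\item If process~1 bites in that transition, the inequality fails at once.
\item Otherwise process~2 is absorbed with $|\CA^{(2)}_t|=1$ for all later $t$. Process~1 is still at $(1,1)$ with one susceptible node, so it bites with probability $p_1=\lambda/(\lambda+1)>0$.
\end{itemize}
Your rate-level invariant does not help, since after that kill $\lambda_2 z^{(2)}s^{(2)}=0$.

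Your fourth bullet fails for the same reason. Take $M_1=1$, $M_2=2$ and $N_1=N_2\ge 3$. Process~2 can lose a zombie and process~1 can then bite, so the diagonal is entered at $a=2$ with $z^{(1)}=2>z^{(2)}=1$. In a real-time coupling there is no common fight index $i$ along which $X^{(1)}_i\le X^{(2)}_i$ could be invoked.

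\textbf{Missing idea: match fights by affected level, not by real time.} This is the paper's construction, and your fallback points towards it without resolving it.
\begin{itemize}
\item Let process~2 use $U_i$ (bite iff $U_i\le p_2$) and $Y_i$ at its $i$th fight.
\item Keep a counter $\ell$, initially $0$. Whenever process~1 is about to fight with affected count equal to $A^{(2)}_\ell$ (the count of process~2 after $\ell$ fights), that fight uses $U_{\ell+1}$ (bite iff $U_{\ell+1}\le p_1$) and $Y_{\ell+1}$, and $\ell$ is increased. Otherwise process~1 uses fresh randomness.
\end{itemize}
With this coupling:
\begin{itemize}
\item A matched fight of process~1 is a bite only if fight $\ell+1$ of process~2 is. So process~1 never overtakes in affected count.
\item Every kill of process~2 is mirrored by a kill of process~1 at the corresponding match. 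So when process~1 is matched at level $m$, it has at least as many kills as process~2 after $m$ fights. Since $Z=A-K$, this gives $Z^{(1)}\le Z^{(2)}_m$.
\item Also $S^{(1)}=N_1-A^{(2)}_m\le S^{(2)}_m$.
\item Hence each matched holding time $Y_{m+1}/((\lambda_1+1)Z^{(1)}S^{(1)})$ dominates $Y_{m+1}/((\lambda_2+1)Z^{(2)}_mS^{(2)}_m)$.
\item Summing over levels $0,\dots,k$ and dropping unmatched fights shows that process~1 cannot exceed affected count $A^{(2)}_k$ before $T^{(2)}_{k+1}$.
\end{itemize}
The ``surplus kills'' of process~2 are thus answered by kills of process~1 at later real times. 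This is harmless because $|\CA_t|$ changes only at bites.

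In your bite-indexed formulation, the comparison must also be shifted by $M_2-M_1$ bites when $M_1<M_2$; comparing the $k$th bite with the $k$th bite is only right when $M_1=M_2$.
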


    \begin{proof}
        First, we show that the claimed monotonicity holds for the embedded discrete-time processes by coupling the two processes using Algorithm~\ref{alg:ZIM_coupling_complete}.  
        Let $(\widetilde{U}_{i})_{i\geq 1}$ be an i.i.d.\ sequence of standard uniform random variables, and $\big( \widetilde{Y}_i \big)_{i\geq 1}$ i.i.d.\ $\Exponential{1}$. 
        We proceed to couple the processes by setting the input variables on Line~2 of Algorithm~\ref{alg:ZIM_coupling_complete} for process $j\in\{ 1,2 \}$ to
        \begin{equation}
            \widetilde{X}_{i}^{(j)} = 1_{\left\{\widetilde{U}_{i} \leq \frac{\lambda_{j}}{\lambda_{j}+1}\right\}}, 
            \quad
            \widetilde{Y}_{i}^{(j)} = \widetilde{Y}_{i},
        \end{equation}
        for $i=1,2,\dots$. 
        As the processes evolve on complete graphs, any coupling of the random edge selections on Line 9 does not affect the affected counts, and can be performed independently for the two processes. 
        Indeed, the affected count at step $n$ can be defined for the embedded discrete-time processes $\big( W_n^{(j)} \big)_{n\geq 1}$, $j\in\{ 1,2 \}$, as 
        \begin{equation}\label{eq:A_njdef}
            A_{n}^{(j)} \coloneqq 
            \left| \left\{ x: W_n^{(j)}(x)\in\{ I,R \} \right\} \right|.
        \end{equation}
        With $\tau_{dt}^{(j)}$ defined as in \eqref{eq:tau_dt}, for $n \leq \tau_{dt}^{(j)}$ this is equivalent to
        \begin{equation}\label{eq:A_nj}
            A_{n}^{(j)}= M_j + \sum_{i=1}^n \widetilde{X}_i^{(j)},
        \end{equation}
        where the sum is taken to be zero when $n=0$. 
		With this coupling, clearly
        $A_i^{(1)}\leq A_i^{(2)}$. 

        We now establish the claimed monotonicity for the continuous-time processes. Again, the coupling of the random edge selections is inconsequential, so it stays independent. However, the random variables on Line $2$ have to be more carefully coupled. 

        For the $\smash{\big(\eta_t^{(2)}\big)}$-process, we keep the same Bernoulli sequence by setting $X_i^{(2)}=\widetilde{X}_i^{(2)}$ for all $i\geq 1$ to yield the sequence $\big( X_i^{(2)} \big)_{i\geq 1}$. For the waiting times, we pick a new sequence $\smash{\big( Y_i^{(2)} \big)_{i\geq 1}}$ of i.i.d.\ $\Exponential{1}$ variables. 
        
        The idea is now to synchronize the Bernoulli trials and the waiting times whenever $\smash{\big(\eta_t^{(1)}\big)}$ “catches up” to the affected count of $\smash{\big(\eta_t^{(2)}\big)}$. 
        For this, we modify the iterative construction of $\smash{\big(\eta_t^{(1)}\big)}$ by initializing an auxiliary counter $\ell=0$ on Line 1 of Algorithm~\ref{alg:ZIM_coupling_complete}. At each iteration step $i$, if $A_{i-1}^{(1)}=A_{\ell}^{(2)}$, we say that $\smash{\big(\eta_t^{(1)}\big)}$ \emph{catches up to level} $\ell$, and increment $\ell$ by one at the end of the step. 
        The desired coupling is then specified by picking a new sequence $\big( U_i \big)_{i\geq 1}$ of i.i.d.\ standard uniform variables and setting 
        \begin{equation}
            X_{i}^{(1)} = \begin{cases} \widetilde{X}_{\ell+1}^{(1)} & \text{if } A_{i-1}^{(1)} = A_{\ell}^{(2)}, \\ 1_{\{U_i \leq \frac{\lambda_1}{\lambda_1+1}\}} & \text{otherwise} \end{cases}
        \end{equation}
        and
        \begin{equation}
            Y_{i}^{(1)} = \begin{cases} Y_{\ell+1}^{(2)} & \text{if } A_{i-1}^{(1)} = A_{\ell}^{(2)}, \\ \widetilde{Y}_i & \text{otherwise} \end{cases}
        \end{equation}
        for $i\geq 1$. Note that the construction is iterative, since $X_i^{(1)}$, $Y_i^{(1)}$ and the update of $\ell$ all depend on $A_{i-1}^{(1)}$. Let $c_m$ denote the step at which $\smash{\big( \eta_t^{(1)} \big)}$ catches up to level $m$, i.e.\ the step $i$ at which $A_{i-1}^{(1)}= A_m^{(2)}$ and $\ell=m$. By construction, at step $c_m$, $\smash{\big( \eta_t^{(1)} \big)}$ uses the exponential $Y_{m+1}^{(2)}$. 
        
        Whenever $\smash{\big(\eta_t^{(1)}\big)}$ catches up to level $m$, i.e. $A_{c_m-1}^{(1)} = A_{m}^{(2)}$, we have $Z_{c_m-1}^{(1)} \leq Z_{m}^{(2)}$. 
        Indeed, from the random walk representation, we have
        \begin{equation}
            Z_n^{(j)} = M_j+\sum_{i=1}^{n}\left( 2X_i^{(j)} -1 \right),
        \end{equation}
        so from \eqref{eq:A_nj} we get $Z_n^{(j)}= 2A_n^{(j)}-M_j - n$. At a catch-up step where $A_{c_m-1}^{(1)} = A_m^{(2)}$, we have $Z_{c_m-1}^{(1)} - Z_m^{(2)} = (M_2 - M_1) - (c_m-1 - m)$. Since $\smash{\big( \eta_t^{(1)} \big)}$ starts with affected count $M_1$, while $\smash{\big( \eta_t^{(2)} \big)}$ starts with $M_2\geq M_1$, and each process increases its affected count by at most one per step, $\smash{\big( \eta_t^{(1)} \big)}$ requires at least $M_2-M_1$ additional steps to reach the same affected count. 
        Hence, $c_m-1\geq m+(M_2 - M_1)$, giving $Z_{c_m-1}^{(1)} \leq Z_{m}^{(2)}$.
        
        From this construction, we now verify the claimed monotonicity for the continuous-time processes. 
        For any $t\geq 0$, we have $t \in [T_k^{(2)},T_{k+1}^{(2)})$ for some $k \in \BN_0$, so that $\big| \CA_t^{(2)}\big| =A_k^{(2)}$. We show that $\big| \CA_t^{(1)}\big| \leq A_k^{(2)}$ by proving $T_{c_k}^{(1)} \geq T_{k+1}^{(2)}$, since $\smash{\big( \eta_t^{(1)} \big)}$ cannot exceed affected count $A_k^{(2)}$ before step $c_k$.

        Recalling that for the ZIM on the complete graph $\big|\partial_{SI}\big(W_{n}^{(j)}\big)\big| = S_n^{(j)}Z_n^{(j)}$ where $S_n^{(j)} = N_j-A_n^{(j)}$, the transition times satisfy 
        \begin{equation}
            T_{k+1}^{(2)} = \sum_{m=0}^{k} \frac{Y_{m+1}^{(2)}}{(\lambda_2 + 1) Z_m^{(2)} S_m^{(2)}}.
        \end{equation}
        For $\smash{\big( \eta_t^{(1)} \big)}$, discarding positive contributions from non-catch-up steps,
        \begin{equation}
            T_{c_k}^{(1)} \geq \sum_{m=0}^{k} \frac{Y_{m+1}^{(2)}}{(\lambda_1 + 1) Z_{c_m-1}^{(1)} S_{c_m-1}^{(1)}}.
        \end{equation}
        At each catch-up step, $A_{c_m-1}^{(1)}=A_m^{(2)}$ implies $S_{c_m-1}^{(1)}=N_1 - A_m^{(2)}\leq N_2 - A_m^{(2)}= S_m^{(2)}$. Moreover, $Z_{c_m-1}^{(1)}\leq Z_m^{(2)}$ as established above.
        Combined with $\lambda_1\leq \lambda_2$, each term in the sum for $T_{c_k}^{(1)}$ is at least the corresponding term for $T_{k+1}^{(2)}$, yielding $T_{c_k}^{(1)} \geq T_{k+1}^{(2)}$ and hence $\lvert \CA_t^{(1)}\rvert \leq \lvert \CA_t^{(2)}\rvert$. 
    \end{proof}

	The construction via Algorithm~\ref{alg:ZIM_coupling_complete} also  enables us to conclude the proof of Theorem~\ref{thm:ZIMcompelete},
	as presented next.

	\begin{proof}[Proof of Theorem~\ref{thm:ZIMcompelete}]
		By construction, we have that  $\lim_{n \rightarrow \infty}Z_{n} = Z_{\tau_{dt}}$ and therefore also that $\CZ_{\infty}=Z_{\tau_{dt}}$  almost surely.  
		If $\tau_{dt}=\tau_{Z}$ it follows that $Z_{\tau_{dt}}= 0$ and so
		$\CZ_{\infty}/N=0$, regardless of $N\in \BN$. Hence, by Lemma~\ref{lem:ZIM_onCG}, if $\lambda \leq 1$, then $\lim_{N \rightarrow \infty}\BP_{\lambda,K_N,\Delta}
		( \tau_{dt}=\tau_{Z}) =1$, and this implies the first part of the theorem. Assume now
		that $\lambda>1$. Let $G_{0}=0$ and iteratively for $k \in \BN$ consider
		\begin{equation}
			G_{k} \coloneqq \inf\!\left\{ n > G_{k-1}\colon \sum_{i=1}^{n} X_{i} = k\right\}.
		\end{equation}
		The random variables $(G_{k}-G_{k-1})_{k\geq 1}$ are independent
		and follow a geometric distribution with mean $1/p$, i.e. 
		\begin{equation}
			\BP(G_{k} -G_{k-1}=l) = (1-p)^{l-1}p, \quad l=1,2,\dots .
		\end{equation}
		Note that this is a different geometric distribution than the one introduced in \eqref{eq:geom_dist}. 
		By the Strong Law of Large Numbers, we have that
		\begin{equation}
			\label{eq:SLLN2}\lim_{N\rightarrow \infty}\frac{G_{N-M}}{N}= \frac{1}{p}\quad \text{
			a.s.}
		\end{equation}
		The Strong Law of Large Numbers also gives that
		\begin{equation}
			\label{eq:SLLN1}\lim_{n \rightarrow \infty}\frac{1}{n}\sum_{i=1}^{n}(2 X_{i}-1) \rightarrow
			2p-1 \quad \text{a.s.}
		\end{equation}
		Therefore, by combining \eqref{eq:SLLN2} and \eqref{eq:SLLN1}, and also using that  $G_{N-M}\to\infty$ almost surely as $N\rightarrow \infty$, we find that
		\begin{equation}
			\frac{G_{N-M}}{N}\frac{1}{G_{N-M}}\sum_{i=1}^{G_{N-M}}(2X_{i}-1) \rightarrow
			\frac{2p-1}{p}\quad \text{ a.s.}
		\end{equation}
        Now, noting that $\CZ
		_{\infty}/N = \sum_{i=1}^{\tau_{dt}}(2 X_{i}-1)/N$ and since by Lemma~\ref{lem:ZIM_onCG},
		\begin{equation}
			\lim_{N \rightarrow \infty}\BP_{\lambda,K_N,\Delta}^{\ZIM}( \tau_{dt}=\tau_{S}
			)= 1- \left( \frac{1}{\lambda}\right)^{M},
		\end{equation}
		we conclude that, for any $0<\epsilon< (2p-1)/p$,
		\begin{align}
			 \lim_{N \rightarrow \infty}\BP_{\lambda,K_N,\Delta}^{\ZIM}\! & \left( \bigg| \frac{\CZ_{\infty}^{(N)}}{N} - \frac{2p-1}{p}\bigg|<\epsilon \right)        \\
			 & = \lim_{N \rightarrow \infty}\BP_{\lambda,K_N,\Delta}^{\ZIM}\! \Bigg( \bigg| N^{-1}\sum_{i=1}^{G_{N-M}}(2X_{i}-1) - \frac{2p-1}{p}\bigg| <\epsilon, \tau_{dt}=\tau_{S} \Bigg) \\
			 & = \lim_{N \rightarrow \infty}\BP_{\lambda,K_N,\Delta}^{\ZIM}( \tau_{dt}=\tau_{S})                                                      \\
			 & = 1- \left( \frac{1}{\lambda}\right)^{M}.
		\end{align}
		From this, recalling that $p=\lambda/(\lambda +\mu)$ and by noting that $\frac{2p-1}{p}$
		equals $\frac{\lambda-1}{\lambda}$, we also conclude the proof.
        
	\end{proof}

	We end this subsection with two additional lemmas that will be useful later in the proof of Theorem~\ref{thm:NonMon}. The first one follows by classic random walk
	estimates. Informally, it states that, with high probability the zombies either
	all get killed within discrete time $\sqrt{N}$ or a positive proportion of the nodes remain
	infected at the iteration step at which the process ceases to evolve.

	\begin{lemma}
		\label{lem:ZIMfast1} Let $\lambda>1$ and $\Delta = \{1,\dots,M\}$ with $M\leq
		N$. Then there are constants $C,c\in (0,\infty)$, depending on $\lambda$,
		such that
		\begin{align}
			\label{eq:ZIM_fast_HD1} & \BP_{\lambda,K_N, \Delta}^{\ZIM}\!\big(\sqrt{N}< \tau_{Z}<\infty \big) \leq Ce^{-c\sqrt{N}}; \\
			    & \BP_{\lambda,K_N,\Delta}^{\ZIM}\!\big( Z_{l} \leq l(2p-1)/2 \text{ for some }l \in[ \sqrt{N}, \tau_{dt}] \mid \tau_{dt}=\tau_{S} \big) \leq Ce^{-c\sqrt{N}}.\label{eq:ZIM_fast_HD2}
		\end{align}
	\end{lemma}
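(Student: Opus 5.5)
Both bounds come from the random walk representation of Algorithm~\ref{alg:ZIM_coupling_complete}. Write $R_n \coloneqq M + \sum_{i=1}^{n}(2X_i-1)$ for the walk driven by the i.i.d.\ $\Bernoulli{p}$ variables, with $p=\lambda/(\lambda+1)>1/2$. By \eqref{eq:Zn_evolution}, $Z_n = R_n$ for all $n\le \tau_{dt}$. On $K_N$, Lemma~\ref{lem:ZIM_onCG} gives $\tau_{dt}=\min(\tau_Z,\tau_S)$. Both events in the lemma therefore become statements about the walk $(R_n)$ up to time $\tau_{dt}$, and I will bound them using only large deviations for the i.i.d.\ increments $2X_i-1$, which have mean $2p-1>0$.

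\textbf{First bound.} On $\{\sqrt N<\tau_Z<\infty\}$ we have $\tau_Z\le\tau_S$, so $R_{\tau_Z}=0$ with $\tau_Z=n$ for some $n>\sqrt N$. Hence $\sum_{i=1}^n(2X_i-1)=-M\le 0$. By Hoeffding (or Cram\'er),
\[
\BP\Bigl(\textstyle\sum_{i=1}^n(2X_i-1)\le 0\Bigr)\le e^{-n(2p-1)^2/2}.
\]
A union bound over $n>\sqrt N$ sums this geometric tail to $C e^{-c\sqrt N}$ with $c=(2p-1)^2/2$. This bound holds uniformly in $M$ and $N$.

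\textbf{Second bound.} Let $B\coloneqq\{R_l\le l(2p-1)/2 \text{ for some } l\ge\sqrt N\}$. The event in question is contained in $B\cap\{\tau_{dt}=\tau_S\}$, because $Z_l=R_l$ for $l\le\tau_{dt}$. Since $R_l\ge\sum_{i\le l}(2X_i-1)$, the same Hoeffding estimate gives
\[
\BP\Bigl(\textstyle\sum_{i\le l}(2X_i-1)\le l(2p-1)/2\Bigr)\le e^{-l(2p-1)^2/8},
\]
and a union bound over $l\ge\sqrt N$ yields $\BP(B)\le Ce^{-c\sqrt N}$. It then remains to divide by $\BP(\tau_{dt}=\tau_S)$.

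The main obstacle is showing that $\BP(\tau_{dt}=\tau_S)$ is bounded below uniformly in $N$ and $M$. Since $\tau_S\le 2N-M<\infty$ always, $\{\tau_{dt}=\tau_S\}\supseteq\{\tau_Z=\infty\text{ for the unstopped walk}\}$ (ties at $0$ can be ignored, since a hit of $0$ forces $\tau_Z$). The walk started from $M\ge1$ never hits $0$ with probability $1-\lambda^{-M}\ge 1-\lambda^{-1}>0$, by the classical gambler's ruin estimate used in Lemma~\ref{lem:ZIM_onCG}. So conditioning costs at most the factor $(1-1/\lambda)^{-1}$, which is absorbed into $C$; this gives \eqref{eq:ZIM_fast_HD2}. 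The event ranges over $l\in[\sqrt N,\tau_{dt}]$, and if $\tau_{dt}<\sqrt N$ it is empty, so this case needs no separate treatment.
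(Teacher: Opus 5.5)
Your proposal is correct and follows the paper's argument: a Hoeffding-plus-union bound for the i.i.d.\ increments $2X_i-1$ over $l\ge\sqrt N$, with the second estimate obtained by dividing by a lower bound on $\BP(\tau_{dt}=\tau_S)$. The one difference is how you get that lower bound. You derive $1-\lambda^{-M}\ge 1-\lambda^{-1}$ from the inclusion of the event that the walk never hits $0$, which is uniform in $N$, whereas the paper uses the limit in Lemma~\ref{lem:ZIM_onCG} and so only for $N$ large; what your inclusion actually needs is $\tau_{dt}\le 2N-M$, not ``$\tau_S<\infty$ always'', which is false when $\tau_Z$ comes first.
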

	\begin{proof}
		Consider the ZIM constructed via Algorithm~\ref{alg:ZIM_coupling_complete},
		let $(X_{i})_{i\geq 1}$ be the sequence of independent $\Bernoulli{p}$ random
		variables therein, and recall from the previous proof that \eqref{eq:SLLN1}
		holds. By classic concentration inequalities, for some constants
		$C,c \in (0,\infty)$, we have that
		\begin{align}
			 \begin{split}\label{eq:LDP_SRW}\BP_{\lambda,K_N,\Delta}^{\ZIM}\! & \left( \sum_{i=1}^{l}(2X_{i}-1) \leq l(2p-1)/2 \text{ for some }l \geq \sqrt{N}\right)  \\
			 & \leq \sum_{l = \sqrt{N}}^{\infty}\BP_{\lambda,K_N,\Delta}^{\ZIM}\!\left( \sum_{i=1}^{l}(2X_{i}-1) \leq l(2p-1)/2 \right) \\
			 & \leq C e^{-c \sqrt{N}}.
\end{split}
		\end{align}
        Note that $\sum_{i=1}^l(2X_{i}-1)=-M$ if $\tau_Z = l$. 
        Therefore, the event
		$\big\{\sqrt{N}< \tau_{Z} <\infty\big\}$ is contained in 
		$\big\{ \sum_{i=1}^{l}(2X_{i}-1) \leq l(2p-1)/2 \text{ for some }l \geq \sqrt{N}\big\}$ and so, by \eqref{eq:LDP_SRW}, we conclude \eqref{eq:ZIM_fast_HD1}.

		Now, recall that $\tau_{S}\geq N-M$ and $Z_{l} = M+\sum_{i=1}^{l} (2X_{i}-1)$
		whenever $l\leq \tau_{dt}$. Therefore, since by Lemma~\ref{lem:ZIM_onCG} we have that
		\begin{equation}
			\BP_{\lambda,K_N,\Delta}^{\ZIM}( \tau_{dt}=\tau_{S})\geq 1- \lambda^{-M+1/2}>0
		\end{equation}
		for all $N$ large whenever $\lambda>1$, we also have that
		\begin{align}
			\BP_{\lambda,K_N,\Delta}^{\ZIM}\! & \left( Z_{l} \leq l(2p-1)/2 \text{ for some }l \in[ \sqrt{N}, \tau_{dt}] \mid \tau_{dt}=\tau_{S} \right)   \\
			& \leq (1- \lambda^{-M+1/2})^{-1} \BP_{\lambda,K_N,\Delta}^{\ZIM}\!\left( \sum_{i=1}^{l}(2X_{i}-1) \leq l(2p-1)/2 \text{ for some }l \geq \sqrt{N}\right).
		\end{align}
		From this, by the estimate on the numerator derived above, we also
		conclude the exponential decay of \eqref{eq:ZIM_fast_HD2}.
	\end{proof}

	The next lemma says that the continuous-time ZIM on $K_{N}$ ceases to
	evolve almost immediately for $N$ large. We note that a more quantitative statement
	of this is well known for the pure infection process where $\mu=0$, i.e.\ the corresponding
	(Markovian) SI (or first passage percolation) model, see \cite{Janson1999}.

	\begin{lemma}
		\label{lem:ZIMfast2} Consider the ZIM on $K_{N}$ with $\lambda >1$ and initially
		$\Delta= \{1,\dots,M\}$ infected. Then, for any $\epsilon>0$, it holds that
		\begin{equation}
			\lim_{N \rightarrow \infty}\BP_{\lambda,K_N,\Delta}^{\ZIM}\left( \tau_{ct}>
			\epsilon \right) =0.
		\end{equation}
	\end{lemma}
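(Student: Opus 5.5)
We will work with the construction of Algorithm~\ref{alg:ZIM_coupling_complete} and $\mu=1$. On $K_N$ we have $|\SIboundary(W_{n})|=Z_nS_n$, so
\begin{equation}
	\tau_{ct}=T_{\tau_{dt}}=\sum_{i=1}^{\tau_{dt}}\frac{Y_i}{(\lambda+1)Z_{i-1}S_{i-1}} .
\end{equation}
Conditionally on the Bernoulli sequence $(X_i)$ (which determines $\tau_{dt}$, $(Z_i)$ and $(S_i)$), the $Y_i$ are still i.i.d.\ $\Exponential{1}$. So it suffices to show that, outside an event of vanishing probability, the conditional expectation
\begin{equation}
	\Sigma_N\coloneqq\sum_{i=1}^{\tau_{dt}}\frac{1}{Z_{i-1}S_{i-1}}
\end{equation}
tends to $0$. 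The claim then follows from Markov's inequality. By Lemma~\ref{lem:ZIM_onCG}, $\tau_{dt}=\min(\tau_Z,\tau_S)\le 2N-M$ always. We split according to whether $\tau_{dt}=\tau_Z$ or $\tau_{dt}=\tau_S$, and use Lemma~\ref{lem:ZIMfast1} to discard the bad events $\{\sqrt N<\tau_Z<\infty\}$ and $\{Z_l\le l(2p-1)/2$ for some $l\in[\sqrt N,\tau_{dt}]\}$. Both have probability at most $Ce^{-c\sqrt N}$; for the second this follows after multiplying the conditional bound by $\BP(\tau_{dt}=\tau_S)\le1$.

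\emph{Early steps.} For $i\le \sqrt N$ we have $Z_{i-1}\ge1$ and $S_{i-1}\ge N-M-\sqrt N\ge N/2$ for $N$ large, since $S$ decreases by at most one per step. The contribution of these steps to $\Sigma_N$ is therefore at most $2\sqrt N/N\to0$. On the event $\tau_{dt}=\tau_Z$, the good event gives $\tau_{dt}\le\sqrt N$, so only early steps occur and we are done in this case.

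\emph{Late steps, case $\tau_{dt}=\tau_S$.} On the good event, $Z_{l}\ge c_0 l$ for all $l\in[\sqrt N,\tau_{dt}]$, where $c_0=(2p-1)/2>0$. We split the late steps by the size of $S_l$.
\begin{enumerate}
	\item Steps with $S_l\ge N/4$ contribute at most $\sum_{l\le 2N}\frac{4}{c_0 lN}=O(\log N/N)$.
	\item If $S_l<N/4$, then the affected count $A_l=N-S_l>3N/4$. Since $A_l\le M+l$, this forces $l\ge 3N/4-M$, and hence $Z_l\ge c_0N/2$. These steps therefore contribute at most
	\begin{equation}
		\frac{2}{c_0N}\sum_{k=1}^{N}\frac{L_k}{k},
	\end{equation}
	where $L_k$ is the number of steps $l$ with $S_l=k$.
\end{enumerate}

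\emph{Bounding the occupation numbers.} Each $L_k$ is at most the waiting time until the next $X_i=1$, as in the proof of Theorem~\ref{thm:ZIMcompelete}. It is thus dominated by a geometric variable with mean $1/p$, and these variables are independent across $k$. Hence the expectation of the bound in item 2 is $O(\log N/N)$, and it tends to $0$ in probability.

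Combining the estimates, $\Sigma_N\to0$ in probability. Conditioning on $(X_i)$ and applying Markov's inequality to $\sum_i Y_i/((\lambda+1)Z_{i-1}S_{i-1})$ then gives $\BP(\tau_{ct}>\epsilon)\to0$.

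The main obstacle is the final phase, when few susceptibles remain and the rates $Z_lS_l$ could be small. We handle it by observing that $S_l$ can only be small once order-$N$ steps have passed, so that $Z_l$ is linear in $N$ by Lemma~\ref{lem:ZIMfast1}. We also need that the process lingers only a geometric number of steps at each value of $S$, which produces the harmonic sum $\sum_k 1/k=O(\log N)$.
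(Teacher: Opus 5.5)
Your proof is correct. Its skeleton matches the paper's: the representation via Algorithm~\ref{alg:ZIM_coupling_complete}, the split into $\tau_{dt}=\tau_Z$ and $\tau_{dt}=\tau_S$, the bound $S_{i-1}\ge N-M-\sqrt N$ for the first $\sqrt N$ steps, and Lemma~\ref{lem:ZIMfast1} to get $Z_l\ge l(2p-1)/2$ afterwards.

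The genuine difference is in the terminal phase, when few susceptibles remain.

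\begin{itemize}
\item The paper cuts at $\tau_{S,\sqrt N}$. Before that time, $S\ge\sqrt N$ gives an $O(N^{-1/2}\log N)$ bound. After it, the paper only uses $S\ge 1$. It compensates by showing that at most $N^{3/4}$ steps remain, writing $\tau_S-\tau_{S,\sqrt N}$ as a sum of $\sqrt N$ geometric gaps and applying the law of large numbers. This yields an $O(N^{-1/4})$ contribution.
\item You instead cut at $S=N/4$ and keep the actual value $S_l=k$. The geometric occupation times $L_k$ then produce the harmonic sum $N^{-1}\sum_k L_k/k$, whose expectation is $O(\log N/N)$.
\end{itemize}

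Your route is slightly cleaner and quantitatively sharper. For the $L_k$ it needs only linearity of expectation; the independence you mention is not used. It also avoids the paper's lower bound on $\BP(\tau_{dt}=\tau_S)$ for undoing the conditioning, because you work with intersections rather than conditional probabilities. Finally, conditioning once on $(X_i)$ and applying Markov's inequality to $\Sigma_N$ replaces the paper's three separate Markov estimates.
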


	Thus, in other words, $\tau_{ct}\rightarrow 0$ in probability. That is, the ZIM
	on $K_{N}$ ceases to evolve almost instantaneously for $N$ large.

	\begin{proof}
		Let $\epsilon>0$ and consider the ZIM on $K_{N}$ constructed using Algorithm
		\ref{alg:ZIM_coupling_complete}. By definition, we have that
		\begin{equation}
			\BP_{\lambda,K_N,\Delta}^{\ZIM}( \tau_{ct}> \epsilon ) = \BP_{\lambda,K_N,\Delta}
			^{\ZIM}( T_{\tau_{dt}}> \epsilon).
		\end{equation}
		Moreover, since $|\SIboundary(W_{k-1})|=S_{k-1}Z_{k-1}$ for the ZIM on the complete
		graph, for each $k \in \BN$, we have that
		\begin{align}
			T_{k}-T_{k-1} & = \big((\lambda+\mu)|\SIboundary(W_{k-1})|\big)^{-1}Y_{k} \\
			    & = \big((\lambda+\mu)(S_{k-1}Z_{k-1})\big)^{-1}Y_{k},
		\end{align}
		where the $(Y_{k})$ are i.i.d.\ $\Exponential{1}$. Thus,
		$T_{\tau_{dt}}= \sum_{k=1}^{\tau_{dt}}((\lambda+\mu)(S_{k-1}Z_{k-1}))^{-1}Y_{k}$,
		where we also note that the $Y_{k}$ are independent of the processes $(S_{n}
		)_{n\geq 0}$ and $(Z_{n})_{n\geq 0}$. In the following we consider the two
		cases that $\tau_{dt}= \tau_{Z}$ and $\tau_{dt}=\tau_{S}$ separately.

		When $\tau_{dt}=\tau_{Z}$, using that $S_{k-1}Z_{k-1}\geq N-M- \sqrt{N}$ whenever
		$k \leq \sqrt{N}$ and $\tau_{Z}\geq k$, we have that
		\begin{align}
			\BP_{\lambda,K_N,\Delta}^{\ZIM}( T_{\tau_{dt}}> \epsilon \mid \tau_{dt}=\tau_{Z} ) 
            & =\BP_{\lambda,K_N,\Delta}^{\ZIM}\!\left( \sum_{k=1}^{\tau_{Z}}\frac{Y_{k}}{(\lambda+\mu)(S_{k-1}Z_{k-1})}> \epsilon \,\middle|\, \tau_{dt}=\tau_{Z} \right)  \\
			    & \leq\BP_{\lambda,K_N,\Delta}^{\ZIM}\!\left( \sum_{k=1}^{\lfloor \sqrt{N} \rfloor}\frac{Y_{k}}{(\lambda+\mu)(N-M-\sqrt{N})}> \epsilon \,\middle|\, \tau_{dt}=\tau_{Z} \right) \\
			    & + \BP_{\lambda,K_N,\Delta}^{\ZIM}\!\left( \lfloor \sqrt{N}\rfloor < \tau_{Z} <\infty \right).
		\end{align}
		For the first term, 
		we apply Markov's inequality to obtain
		\begin{equation}
			\BP_{\lambda,K_N,\Delta}^{\ZIM}\!\left( \sum_{k=1}^{\lfloor \sqrt{N} \rfloor }
			Y_{k} > \epsilon (\lambda+\mu)(N-M-\sqrt{N}) \right) \leq \frac{\lfloor
			\sqrt{N} \rfloor }{\epsilon (\lambda+\mu)(N-M-\sqrt{N})},
		\end{equation}
		which goes to $0$ as $N\rightarrow \infty$. Moreover, the second term also goes to
		$0$ as $N\rightarrow \infty$, by \eqref{eq:ZIM_fast_HD1}.

		Now, for the case that $\tau_{dt}= \tau_{S}$, consider the iteration step 
		$\tau_{S,\sqrt{N}}\coloneqq \inf \big\{ n \geq 0 \colon S_{n} = \lfloor \sqrt{N}\rfloor
		\big\}$. 
		Then we can write
		\begin{align}
			T_{\tau_{dt}} & = \label{ZIM_fast_term}\sum_{k=1}^{ \tau_{S}}\big((\lambda+\mu)(S_{k-1}Z_{k-1})\big)^{-1}Y_{k}  \\
			    & = \sum_{k=1}^{\lfloor \sqrt{N} \rfloor}\big((\lambda+\mu)(S_{k-1}Z_{k-1})\big)^{-1}Y_{k} \label{ZIM_fast_term1}                    \\
			    & + \sum_{k=\lfloor \sqrt{N} \rfloor +1}^{\tau_{S,\sqrt{N}}}\big((\lambda+\mu)(S_{k-1}Z_{k-1})\big)^{-1}Y_{k} \label{ZIM_fast_term2} \\
			    & + \sum_{k=\tau_{S,\sqrt{N}}+1}^{\tau_{dt}}\big((\lambda+\mu)(S_{k-1}Z_{k-1})\big)^{-1}Y_{k} . \label{ZIM_fast_term3}
		\end{align}
		By exactly the same argument as above, using Markov's inequality, we have that
		\begin{equation}\label{ZIM_fast_term_11}
			\lim_{N \rightarrow \infty}\BP_{\lambda,K_N,\Delta}^{\ZIM}\!\left(\sum_{k=1}^{\lfloor
			\sqrt{N} \rfloor }\big((\lambda+\mu)(S_{k-1}Z_{k-1})\big)^{-1}Y_{k} > \epsilon/3 \,\middle|\,
			\tau_{dt}=\tau_{S} \right) =0,
		\end{equation}
		since $S_{k-1}Z_{k-1}\geq N-M-\sqrt{N}$ for each $k=1,\dots,\lfloor \sqrt{N} \rfloor$ when $\tau_{dt}=\tau_{S}$.  Furthermore, recalling
		\eqref{eq:ZIM_fast_HD2}, and noting that in the complement of the event therein,
		$S_{k-1}Z_{k-1}\geq \sqrt{N}(k-1)(2p-1)/2$ for
		$k \in \big(\lfloor \sqrt{N}\rfloor,\tau_{S,\sqrt{N}}\big)$, we have that
		\begin{align}
			 &\BP_{\lambda,K_N,\Delta}^{\ZIM}\!  \left(\sum_{k=\lfloor \sqrt{N} \rfloor +1}^{\tau_{S,\sqrt{N}}}\big((\lambda+\mu)(S_{k-1}Z_{k-1})\big)^{-1}Y_{k} > \epsilon/3 \,\middle|\, \tau_{dt}=\tau_{S}\right) \\
			  \leq &\frac{1}{\BP(\tau_{dt}=\tau_S)}\BP_{\lambda,K_N,\Delta}^{\ZIM}\!\left(\sum_{k=\lfloor \sqrt{N} \rfloor +1}^{2N}\Big((\lambda+\mu)\big(\sqrt{N}(k-1)(2p-1)/2\big)\Big)^{-1}Y_{k} > \epsilon/3 \right) \label{eq:ZIM_fast_term4} \\
			  + &Ce^{-c\sqrt{N}},
		\end{align}
		where we also used that $\tau_{dt}\leq 2N$. Therefore, again by applying Markov's
		inequality, we conclude that
		\begin{equation}\label{ZIM_fast_term_22}
			\lim_{N \rightarrow \infty}\BP_{\lambda,K_N,\Delta}^{\ZIM}\!\left(\sum_{k=\lfloor\sqrt{N}\rfloor+1}
			^{\tau_{S,\sqrt{N}}}\big((\lambda+\mu)(S_{k-1}Z_{k-1})\big)^{-1}Y_{k} > \epsilon/3
			\,\middle|\, \tau_{dt}=\tau_{S}\right) =0
		\end{equation}
		since the expectation of the sum within \eqref{eq:ZIM_fast_term4}
		is bounded by
		\begin{equation}
			\frac{2}{(\lambda+\mu)(2p-1)}N^{-1/2}\sum_{k=1}^{2N}\frac{1}{k}
		\end{equation}
		which goes to $0$ as $N\rightarrow \infty$. 
		Finally, by similar arguments, we also have that
		\begin{align}
			 &\BP_{\lambda,K_N,\Delta}^{\ZIM}\!\left(\sum_{k=\tau_{S,\sqrt{N}}+1}^{\tau_{dt} }\big((\lambda+\mu)(S_{k-1}Z_{k-1})\big)^{-1}Y_{k} > \epsilon/3 \,\middle|\, \tau_{dt}= \tau_{S} \right) \\
			 & \leq \label{ZIM_fast_term_111}\BP_{\lambda,K_N,\Delta}^{\ZIM}\!\left(\sum_{k=1}^{N^{3/4}}\big((\lambda+\mu)(N-M-\sqrt{N})(2p-1)/2\big)^{-1}Y_{k} > \epsilon/3 \,\middle|\, \tau_{dt}= \tau_{S} \right)           \\
			 & + \label{ZIM_fast_term_222}\BP_{\lambda,K_N,\Delta}^{\ZIM}\!\left (Z_{l} < l(2p-1)/2 \text{ for some }l \geq \sqrt{N}\,\middle|\, \tau_{dt}=\tau_{S} \right)                                   \\
			 & + \label{ZIM_fast_term_333}\BP_{\lambda,K_N,\Delta}^{\ZIM}\!\left (\tau_{S}- \tau_{S,\sqrt{N}}> N^{3/4}\,\middle|\, \tau_{dt}= \tau_{S} \right).
		\end{align}
		Indeed, in the complement event to those in the last two terms we have that $Z
		_{k} \geq (N-M-\sqrt{N}) (2p-1)/2$ for $k \in [\tau_{S,\sqrt{N}}+1,\tau_{S}]$
		with $\tau_{S}- \tau_{S,\sqrt{N}}+1< N^{3/4}$.
        
		Now, the  term \eqref{ZIM_fast_term_222} goes to $0$ as $N\rightarrow \infty$ by \eqref{eq:ZIM_fast_HD2},
		and the term \eqref{ZIM_fast_term_111} goes to $0$ as $N\rightarrow \infty$ by Markov's
		inequality. For the term \eqref{ZIM_fast_term_333} we note that, if $\tau_{dt}= \tau_{S}$, then as
		in the proof of Theorem~\ref{thm:ZIMcompelete} we can write $\tau_{S} - \tau_{S,\sqrt{N}}$
		as the sum of $\lfloor \sqrt{N}\rfloor$ independent geometric random variables
		with parameter $p$. 
        That is, we have that,
        \begin{align}
            \BP_{\lambda,K_N,\Delta}^{\ZIM}\! &\left( \tau_{S}- \tau_{S,\sqrt{N}}> N^{3/4}\,\middle|\, \tau_{dt}= \tau_{S} \right) \\
            &= \frac{\BP_{\lambda,K_N,\Delta}^{\ZIM}\!\left( \tau_{S}- \tau_{S,\sqrt{N}}> N^{3/4},  \tau_{dt}= \tau_{S} \right)}{\BP_{\lambda,K_N,\Delta}^{\ZIM}( \tau_{dt}= \tau_{S})} \\ 
            &= \frac{\BP_{\lambda,K_N,\Delta}^{\ZIM}\!\left( \sum_{k=N-M-\lfloor \sqrt{N} \rfloor +1}^{N-M }G_k-G_{k-1}> N^{3/4},  \tau_{dt}= \tau_{S} \right)}{\BP_{\lambda,K_N,\Delta}^{\ZIM}( \tau_{dt}= \tau_{S})}.
        \end{align}
        By Lemma~\ref{lem:ZIM_onCG} the denominator is bounded by $(\lambda-1)/2\lambda$ for all large $N$ and so, by \eqref{eq:SLLN2}, \eqref{ZIM_fast_term_333} also goes to $0$ as $N\rightarrow \infty$. From this we conclude that \begin{equation}\label{ZIM_fast_term_33}
			\lim_{N\rightarrow \infty}\BP_{\lambda,K_N,\Delta}^{\ZIM}\!\left(\sum_{k=\tau_{S,\sqrt{N}}+1}
			^{\tau_{dt} }\big((\lambda+\mu)(S_{k-1}Z_{k-1})\big)^{-1}Y_{k} > \epsilon/3 \,\middle|\, \tau
			_{dt}= \tau_{S} \right) =0.
		\end{equation}
		 Therefore, by \eqref{ZIM_fast_term}, and by the asymptotic bounds obtained in \eqref{ZIM_fast_term_11}, \eqref{ZIM_fast_term_22} and \eqref{ZIM_fast_term_33}, we conclude the proof. 
	\end{proof}

	\section{Coupling with percolation processes}
	\label{sec:CPP}

	In this section we couple the ZIM with percolation processes to establish bounds on survival probabilities. We begin by developing a coupling with bond percolation (Subsection~\ref{sec:bondperc}), which provides a one-sided bound on the asymptotic configuration of the ZIM. We then introduce a more refined coupling with site percolation (Subsection~\ref{sec:siteperc}) that bounds the asymptotic configuration from both above and below, yielding a partial proof of Theorem~\ref{thm:ZIMsurvival}. In Subsection~\ref{sec:SIR_siteperc_coupling}, we establish analogous coupling results for the SIR model with site percolation. These couplings are combined in Subsection~\ref{sec:proofs_perco_thms} to complete the proofs of Theorems~\ref{thm:ZIMsurvival}, \ref{thm:ZIMsurvivalZd1}, and \ref{thm:ZIMsurvivalZd2}, establishing the existence of non-trivial subcritical and supercritical phases for the ZIM (and SIR model) on graphs where site percolation exhibits a phase transition. Finally, in Subsection~\ref{sec:trees_and_SIR}, we specialize to regular trees, where the connection to branching processes provides an explicit formula for the critical parameter (Theorem~\ref{thm:ZIMtrees}).

	\subsection{A coupling between ZIM and bond percolation}\label{sec:bondperc}

	The construction of the ZIM via Algorithm~\ref{alg:ZIM_coupling1}
	(Subsection~\ref{sec:monotonicity_lambda}) provides a natural coupling between the ZIM and ordinary bond percolation. To describe this coupling, we first recall the bond percolation model. 
	For $q\in[0,1]$ and a graph $G=(V,E)$, denote by $G_{q}^{\bondperc}$ the random subgraph obtained by independently removing each edge $e\in E$ with probability $1-q$. 
    For $x\in V$, we denote by $\CC^{\bondperc}_{x, q}$ the connected cluster of
	$G_{q}^{\bondperc}$ containing $x$, and denote by $\CC^{\bondperc}_{\Delta, q}$ the set $\cup_{x \in \Delta}\CC^{\bondperc}_{x, q}$ when $\Delta \subset V$. 
	Recalling \eqref{eq:At}, and that $p=\lambda/(\lambda+1)$, we have the following result:

	\begin{proposition}
		\label{prop:bondPerco} Let $G=(V,E)$ be a countable, locally bounded and
		connected graph. Then, for $\lambda>0$ and $\Delta \subset V$ finite, there is
		a coupling $\widehat{\BP}$ of $(\eta_{t}) \sim \BP^{\ZIM}_{\lambda,G,\Delta}$
		and $G_{p}^{\bondperc}$  such that
		\begin{equation}
			\widehat{\BP}\!\left( \CA_{\infty}\subset \CC^{\bondperc}_{\Delta,p}\right) =1.
		\end{equation}
	\end{proposition}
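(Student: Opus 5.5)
We construct the ZIM via Algorithm~\ref{alg:ZIM_coupling1} with $\mu=1$, and read off the bond percolation configuration from the same uniform variables that decide the fights. The one subtlety is that the algorithm draws variables on directed edges, while bond percolation lives on undirected edges. So, as in the discussion following Figure~\ref{fig:counterexample_4}, we take the variables $Y_e$ and $U_e$ to coincide for the two orientations of each edge. This is legitimate because a fight can occur along at most one orientation of any edge $\{x,y\}$. Indeed, a fight along $(x,y)$ requires $x$ in state $I$ and $y$ in state $S$. Its outcome either turns $y$ into $I$ or $x$ into $R$, and afterwards neither $(x,y)$ nor $(y,x)$ is ever in $\SIboundary$ again, since states only move $S\to I\to R$. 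Hence each undirected edge hosts at most one fight, and for each undirected $e$ only one uniform variable $U_e$ is ever consulted. Using a single $U_e$ per undirected edge therefore still produces the law $\BP^{\ZIM}_{\lambda,G,\Delta}$.

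We declare an undirected edge $e$ open if $U_e\le p=\lambda/(\lambda+1)$. These events are independent across edges with probability $p$, so they form $G_p^{\bondperc}$. The same variables drive the ZIM, which gives the coupling $\widehat{\BP}$.

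To prove $\CA_\infty\subset\CC^{\bondperc}_{\Delta,p}$, take $x\in\CA_\infty$. Either $x\in\Delta$, or $x$ was infected at some step $n$ through a fight along $e_n=(x',x)$ with $U_{e_n}\le p$; in that case the edge $\{x',x\}$ is open and $x'\in\CA_\infty$ was infected strictly earlier. Inducting on the step at which nodes become infected (equivalently, tracing the finite ancestral sequence back to some $x_0\in\Delta$), every node on this chain lies in $\CA_\infty$ and consecutive nodes are joined by open edges. Hence $x$ is connected to $\Delta$ in $G_p^{\bondperc}$.

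The only point needing care, and the main potential obstacle, is the at-most-one-fight-per-edge observation. It is what justifies identifying the directed variables into undirected ones without changing the law; without it, two different uniforms could govern one edge and the percolation would no longer be well defined. The rest is the straightforward ancestral-path induction.
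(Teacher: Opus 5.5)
Your proof is correct and is essentially the paper's argument: share the variables between the two orientations of each edge, declare $e$ open when $U_e\le p$, and trace the ancestral chain of open edges back to $\Delta$. One small precision: sharing the $Y_e$ (not just the $U_e$) needs the fact the paper cites, namely that at most one orientation of an edge is \emph{ever} in $\SIboundary$, which follows from the same $S\to I\to R$ monotonicity; "at most one fight per edge" alone is not enough, because a timer is consulted while its edge is active even if no fight happens on it. Alternatively, keep the $Y$'s independent, since only the $U_e$ enter the percolation.
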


	\begin{proof}
		For each $e\in E$, let $T_{e} \sim \Exponential{1}$ and
		$U_{e} \sim \Uniform{0}{1}$, all independent. Consider the construction of the
		ZIM via Algorithm~\ref{alg:ZIM_coupling1} with the slight modification
		that for each directed edge $(x,y) \in \overrightarrow{E}$ we set
		$T_{(x,y)}=T_{e}$, where $e$ is the corresponding undirected edge. Note that
		this does not alter the distribution of the process since in any realization, at most one of the two directed edges can be in the active SI-boundary. 
        
        Let $\widetilde{G}$ be the random subgraph of $G$ obtained by
		removing all edges $e$ for which $U_{e}\geq p$. 
		Then $\widetilde{G}\sim G_{p}^{\bondperc}$. Now, if $x\in \CA_{\infty}$, there must exist a sequence of edges
		$(e_{1},\dots,e_{k})$ connecting $x$ with $\Delta$ such that $U_{e_i}<p$
		for all $i=1,\dots,k$. Hence, $x\in \CC^{\bondperc}_{\Delta,p}$, which concludes
		the proof.
	\end{proof}

	\subsection{A coupling between ZIM and site percolation}\label{sec:siteperc}

	The coupling with bond percolation in Proposition~\ref{prop:bondPerco} yields a partial proof of Theorem~\ref{thm:ZIMsurvival}. For the
	full statement, we require a coupling with site percolation, which we
	also use in the proofs of Theorems~\ref{thm:ZIMsurvivalZd1} and \ref{thm:ZIMsurvivalZd2}. 
	This coupling uses the geometric representation of the ZIM outlined in Subsection~\ref{sec:fgr} and detailed in Algorithm~\ref{alg:ZIM_coupling_geom} below. 
     
	The key idea is as follows. Once a node $x$ becomes infected, it 
	remains active as long as it has susceptible neighbors. Each fight with a
	susceptible neighbor is a Bernoulli trial, so the number of
	fights won before the first loss is a (truncated) geometric random
	variable that we denote by $\gamma_{x}$. 
    Algorithm~\ref{alg:ZIM_coupling_geom} implements this idea using a counter $\nu_{x}$ for
	each node to track the number of fights won. 
    The geometric distribution used is as defined in \eqref{eq:geom_dist}.

    Let $G = (V, E)$ be a countable, locally bounded and connected graph, let $\lambda,\mu>0$ and $\Delta\subset V$ be finite.
    Algorithm~\ref{alg:ZIM_coupling_geom} produces an embedded discrete-time process $(W_{n})_{n \geq 0}$ and transition times $(T_{n})_{n\geq 0}$. 
    As in previous sections, defining
	\begin{equation}
		\eta_{t} \coloneqq W_{k} , \quad \text{ for }t \in [T_{k},T_{k+1}),
	\end{equation} 
    determines the continuous-time process $(\eta_{t}) \sim \BP_{\lambda,\mu,G,\Delta}^{\ZIM}$.

	\begin{algorithm}
		[tb]
		\caption{Constructing ZIM using geometric random variables}
		\label{alg:ZIM_coupling_geom} 
        Let $T_{0}=0$ and set $\nu_{x}=0$ for each $x \in V$ 
        
        Let $W_{0} \in \Omega_{<\infty}$ satisfy $W_{0} \equiv I$ on $\Delta$ and
		$W_{0} \equiv S$ on $\Delta^{c}$ 
        
        For each $x \in V$, independently, draw $\gamma_{x} \sim \Geometric{\frac{\lambda}{\lambda+\mu}}$ 
        
        For each $e \in E$, independently, draw $Y_{e} \sim \Exponential{1}$

		\For{$n = 1$ \KwTo $\infty$}{ 
            \eIf{$\SIboundary(W_{n-1}) = \emptyset$}{ 
                $W_{n} = W_{n-1}$ and $T_{n} = \infty$ 
                
                \KwSty{break} 
            }{ 
                Draw $(x_{n},y_{n})$ uniformly from $\SIboundary(W_{n-1})$

		        Let $T_{n} = T_{n-1}+ \big( \lvert \SIboundary(W_{n-1}) \rvert (\lambda + \mu)\big)^{-1} Y_{\{ x_n, y_n \}}$ 
                
                \eIf{$\gamma_{x_n}>\nu_{x_n}$}{ 
                    $W_{n} = W_{n-1}^{y_n \leftarrow I}$

		              $\nu_{x_n}\leftarrow \nu_{x_n}+ 1$ 
                }{ 
                    $W_{n} = W_{n-1}^{{x_n} \leftarrow R}$ 
                } 
            } 
        }
	\end{algorithm}

	To state the next result precisely, we need one additional definition. 
    For a graph $G=(V,E)$ and a subset $\Delta \subset V$, the \emph{outer vertex (or node)
	boundary} of $\Delta$ is 
	\begin{equation}
		\label{eqn:outervbndry}\partial_{v}^{o} (\Delta) \coloneqq \big\{ x\in V\setminus
		\Delta \colon \exists y\in \Delta, x\sim y \big\}.
	\end{equation}
    With this notation, we can now state a result similar to Proposition~\ref{prop:bondPerco}, obtained via the coupling between the ZIM and site percolation.

	\begin{proposition}
		\label{prop:siteperc_dom_ZIM} Let $G=(V,E)$ be a graph of bounded degree.
		Then, for $\lambda\in (0,\infty)$ and $\Delta \Subset V$, there is a
		coupling $\widehat{\BP}$ of $(\eta_{t}) \sim \BP^{\ZIM}_{\lambda,G,\Delta}$,
		$\widetilde{G}_{u}\sim G_{p}^{\siteperc}$ and $\widetilde{G}_{l}\sim G_{q}^{\siteperc}$
		where $p=\lambda/(\lambda+1)$ and $q=p^{\mathrm{deg}(G)}$, such that
		\begin{equation}
			\widehat{\BP}\!\left( \CC^{\siteperc}_{\Delta}(\widetilde{G}_{l}) \subset \CA_{\infty}
			\subset \CC^{\siteperc}_{\Delta}(\widetilde{G}_{u})\cup \partial_{v}^{o}\big(\CC
			^{\siteperc}_{\Delta}(\widetilde{G}_{u}) \big)\right) =1.
		\end{equation}
	\end{proposition}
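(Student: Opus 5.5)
We construct all three objects from a single realization of Algorithm~\ref{alg:ZIM_coupling_geom}, with $\mu=1$ and the geometric variables $(\gamma_x)_{x\in V}$, $\gamma_x\sim\Geometric{p}$, independent over $x$. We define the two site percolation configurations directly from these variables. For the upper configuration, declare $x$ open in $\widetilde{G}_u$ if and only if $\gamma_x\geq 1$ (nodes of $\Delta$ are handled in the same way). Since $\BP(\gamma_x\geq 1)=p$ independently over $x$, we get $\widetilde{G}_u\sim G^{\siteperc}_p$. For the lower configuration, declare $x$ open in $\widetilde{G}_l$ if and only if $\gamma_x\geq \mathrm{deg}(G)$. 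Since $\BP(\gamma_x\geq \mathrm{deg}(G))=p^{\mathrm{deg}(G)}=q$, we get $\widetilde{G}_l\sim G^{\siteperc}_q$. By construction $\widetilde{G}_l\subset\widetilde{G}_u$. The waiting times $Y_e$ and the uniform edge choices do not affect these configurations, so the coupling is well defined.

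\emph{Upper inclusion.} A zombie at $x$ with $\gamma_x=0$ loses its first fight, so it never infects anyone. Suppose $y\in\CA_\infty\setminus\Delta$. Then $y$ was infected along an ancestral chain $x_0,x_1,\dots,x_n=y$ with $x_0\in\Delta$, where each $x_i$ infected $x_{i+1}$. Every $x_i$ with $i<n$ infected someone and so has $\gamma_{x_i}\geq 1$, meaning it is open in $\widetilde{G}_u$. Hence $x_0,\dots,x_{n-1}$ is an open path starting in $\Delta$, so $x_{n-1}\in\CC^{\siteperc}_\Delta(\widetilde{G}_u)$. It follows that $y$ lies either in this cluster or in its outer vertex boundary $\partial_{v}^{o}$. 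The nodes of $\Delta$ themselves belong to the cluster, or at least to the set on the right-hand side, since by convention $C_x^{\siteperc}\ni x$. If the convention requires it, we include $\Delta$ in the cluster trivially.

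\emph{Lower inclusion.} We argue by induction along open paths. Let $x\in\CC^{\siteperc}_\Delta(\widetilde{G}_l)$, connected to $\Delta$ by an open path $x_0\in\Delta,x_1,\dots,x_n=x$. The claim is that each $x_i$ becomes infected. Suppose $x_i$ is infected; this holds for $i=0$. Since $\gamma_{x_i}\geq\mathrm{deg}(G)\geq \deg(x_i)$, the zombie at $x_i$ wins every fight it takes part in, because it can have at most $\deg(x_i)$ fights in total. So it is never removed while it has susceptible neighbours. Moreover, it never dies at all, since death only happens when $\nu_{x_i}=\gamma_{x_i}$ and a fight occurs, and that cannot happen. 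Consider $x_{i+1}$. If $x_{i+1}$ is still susceptible, the edge $(x_i,x_{i+1})$ remains in $\SIboundary$ until something changes. In the algorithm, each active edge is eventually selected if the process does not stop; the process stops only when $\SIboundary=\emptyset$, which requires $x_{i+1}$ to be non-susceptible. The state of $x_{i+1}$ can leave $S$ only by infection, so in all cases $x_{i+1}$ becomes infected and $x_{i+1}\in\CA_\infty$.

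\emph{Main obstacle.} The step needing care is the assertion that the edge $(x_i,x_{i+1})$ is almost surely eventually selected. With infinitely many active edges chosen uniformly, a fixed edge might in principle be ignored forever. I would handle this by noting that the algorithm is equivalent in law to the continuous-time dynamics, where each active edge rings at rate $\lambda+1$ independently (the time-lag-invariant picture). To make this explicit, I would realize the ``uniform choice'' as the minimum of exponential clocks on active edges, as in Algorithm~\ref{alg:ZIM_coupling1}. Under this realization the clock on $(x_i,x_{i+1})$ is a.s. finite, and the process cannot explode in finite time, by bounded degree and comparison with first passage percolation. Hence that edge fires unless $x_{i+1}$ is infected earlier. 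The fight outcomes are still decided by the $\gamma_x$ counters, so the percolation configurations are unchanged.
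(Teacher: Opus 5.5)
Your proposal is correct and is essentially the paper's proof: the same realization of Algorithm~\ref{alg:ZIM_coupling_geom}, with $\widetilde{G}_u$ given by the sites where $\gamma_x\geq 1$ and $\widetilde{G}_l$ by the sites where $\gamma_x\geq \mathrm{deg}(G)$, and the same ancestral-path arguments for the two inclusions. Your check that the edge $(x_i,x_{i+1})$ is eventually selected fills in a step the paper only asserts, and it can also be settled directly within Algorithm~\ref{alg:ZIM_coupling_geom}: at step $n$ there are at most $\mathrm{deg}(G)(|\Delta|+n)$ active edges (finitely many, not infinitely many), and $\sum_n \big(\mathrm{deg}(G)(|\Delta|+n)\big)^{-1}=\infty$, so a conditional Borel--Cantelli argument applies.
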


	\begin{proof}
	We start by showing that $\widehat{\BP}\!\left( \CA_{\infty}\subset \CC^{\siteperc}
	_{\Delta}(\widetilde{G}_{u})\cup \partial_{v}^{o}\big(\CC^{\siteperc}_{\Delta}(\widetilde
	{G}_{u}) \big)\right) =1$. Let $(\gamma_{x})_{x \in V}$ be i.i.d.\ geometric
	random variables with success parameter $p$ and consider the construction of
	the ZIM from Algorithm~\ref{alg:ZIM_coupling_geom}. Then, let $\widetilde{G}_{u}$
	be the random subgraph of $G$ obtained by removing all nodes for which $\gamma
	_{x}=0$. Note that $\widetilde{G}_{u} \sim G_{p}^{\siteperc}$. 
    For any $x\in \CA_\infty$, there is necessarily
	a sequence $(x_{0},x_{1},\dots,x_{n})$ satisfying the following:
	$x_{0} \in \Delta$, $x_{i-1}\sim x_{i}$ for each $i=1,\dots,n$, $x_{n} =x$, and
	$\gamma_{x_i}>0$ for each $i=1,\dots,n-1$. Consequently
	$x \in \CC^{\siteperc}_{\Delta}(\widetilde{G}_{u}) \cup \partial_{v}^{o}\big(\CC^{\siteperc}
	_{\Delta}(\widetilde{G}_{u})\big)$, from which the claim immediately follows.

	Now, let $\widetilde{G}_{l}$ be the random subgraph obtained by removing all
	nodes for which $\gamma_{x}<\mathrm{deg}(G)$, and note that $\widetilde{G}_{l}
	\sim G_{q}^{\siteperc}$. Let
	$y \in \CC^{\siteperc}_{\Delta}(\widetilde{G}_{l})$, i.e.\ there is a
	sequence $(y_{0},y_{1},\dots,y_{m})$ satisfying the following: $y_{0} \in
	\Delta$, $y_{i-1}\sim y_{i}$ for each $i=1,\dots,m$, $y_{m} =y$, and
	$\gamma_{y_i}\geq \mathrm{deg}(G)$ for each $i=1,\dots,m$. Consequently,
	there is a $t>0$ such that $\eta_{t}(y)=I$. From this we see that $\widehat
	{\BP}\!\left( \CC^{\siteperc}_{\Delta}(\widetilde{G}_{l}) \subset \CA_{\infty}\right
	)=1$, and thus conclude the proof.
	\end{proof}

	The coupling in the above proof is not tight. For the upper bound, a site is open in the percolation process if the corresponding zombie wins its first fight. For the lower bound, a site must win $\mathrm{deg}(G)$ consecutive fights. In practice, the ZIM cluster size will typically lie strictly between these bounds. However, the lower bound can be tightened slightly.
    
	In the proof, a site is declared open in the percolation process if the corresponding node can potentially infect all its neighbors, which requires winning $\mathrm{deg}(G)$ fights. 
    However, if for example $\Delta$ is connected and $\lvert\Delta\rvert\geq 2$, then each infected node has at most $\mathrm{deg}(G)-1$ susceptible neighbors to infect. 
    This observation allows us to improve the lower bound in Proposition~\ref{prop:siteperc_dom_ZIM} by using the percolation parameter $q=p^{\mathrm{deg}(G)-1}$ instead of $q=p^{\mathrm{deg}(G)}$. There are also other, weaker conditions that allow us the same improvement of the lower bound.

	\subsection{A coupling between SIR and site percolation}
	\label{sec:SIR_siteperc_coupling}

	In this subsection we present a coupling between SIR and site percolation
	reminiscent of Proposition~\ref{prop:siteperc_dom_ZIM}. This is based on well known
	theory, i.e.\ as in \cite{cox_limit_1988,andjel_shape_2011}. However,
	we include it here since this approach will be useful to us in later subsections.
	In particular, the coupling that we now construct provides a 1-1 relation between
	the final state of the SIR model and a particular kind of dependent, directed
	percolation model.

	Let $G=(V,E)$ be a graph, and recall the construction of the SIR process given
	in Subsection~\ref{sec:monotonicity_SIR} using Algorithm~\ref{alg:SIR_coupling1}.
	Let $(Y_{e})_{e\in\overrightarrow{E}}$ be the corresponding collection of i.i.d.\ $\Exponential
	{\lambda}$ random variables, and $(Y_{x})_{x\in V}$ i.i.d.\ $\Exponential{1}$ random
	variables, and consider the subset
	\begin{equation}
		E^{\SIR}_{\lambda} := \left\{ (x,y) \in \overrightarrow{E}\colon Y_{(x,y)}< Y
		_{x} \right\} \subset \overrightarrow{E}.
	\end{equation}
	The resulting random digraph is denoted by
	\begin{equation}
		D^{\SIR}_{\lambda} := \left(V,E^{\SIR}_{\lambda}\right),
	\end{equation}
	and for any $\Delta\subset V$, we define the forward reachable cluster from $\Delta$
	to be
	\begin{equation}
		\CC^{\SIR}_{\Delta,\lambda}:= \left\{ x\in V \colon y\rightarrow x \textrm{ in
		}D^{\SIR}_{\lambda} \text{ for some }y \in \Delta \right\}.
	\end{equation}
	Here $y\rightarrow x \textrm{ in }D^{\SIR}_{\lambda}$ denotes the event that $y$
	and $x$ are connected using (directed) edges contained in $D^{\SIR}_{\lambda}$.

	The following result is a (slight) generalization of previous work in e.g.\ \cite{cox_limit_1988}
	for the SIR on $\BZ^{d}$. Their proof is directly applicable to yield the following
	statement.

	\begin{proposition}
		\label{prop:SIR_perc_equivalence} Let $G=(V,E)$ be a countable, connected
		and locally bounded graph. Then, for $\lambda\in (0,\infty)$ and
		$\Delta\Subset V$, there is a coupling $\widehat{\BP}$ of $\left( \xi_{t} \right
		)\sim\BP^{\SIR}_{\lambda, G, \Delta}$ and $D^{\SIR}_{\lambda}$ such that
		\begin{equation}
			\widehat{\BP}\!\left( \CC^{\SIR}_{\Delta, \lambda}= \cup_{t\geq 0}\{x \in V \colon
			\xi_{t}(x)=I\} \right) = 1.
		\end{equation}
	\end{proposition}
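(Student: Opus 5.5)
We construct the SIR process via Algorithm~\ref{alg:SIR_coupling1} and read off the digraph $D^{\SIR}_{\lambda}$ from the same random variables $(Y_{e})_{e\in\overrightarrow{E}}$ and $(Y_{x})_{x\in V}$ used there. With $\mu=1$, the variable $Y_x$ is the infectious period of $x$, and $Y_{(x,y)}$ is the delay after $\tau_x$ at which $x$ would infect $y$. These are exactly the exponentials of the paper's notation, with the rates as stated in this subsection. The key feature of Algorithm~\ref{alg:SIR_coupling1} is that once $x$ is infected at time $\tau_x=\tilde\tau_x$, all its timers start simultaneously. Consequently $x$ is removed at time $\tau_x+Y_x$, and $x$ attempts to infect $y$ at time $\tau_x+Y_{(x,y)}$. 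This attempt succeeds (if $y$ is still susceptible) precisely when $Y_{(x,y)}<Y_x$. Ties have probability zero. The claim is that under this coupling the set of ever-infected nodes equals $\CC^{\SIR}_{\Delta,\lambda}$ almost surely.

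For the inclusion ``$\subseteq$'', take $x$ with $\tau_x<\infty$ and use the ancestral sequence $x_0\in\Delta,\dots,x_n=x$ as in the proof of Theorem~\ref{thm:SIR_monotonicity}. Each $x_{i+1}$ was infected along the edge $(x_i,x_{i+1})$ at time $\tau_{x_i}+Y_{(x_i,x_{i+1})}$, while $x_i$ was still infected, i.e.\ before $\tau_{x_i}+Y_{x_i}$. Hence $Y_{(x_i,x_{i+1})}<Y_{x_i}$, so every edge of the sequence lies in $E^{\SIR}_\lambda$ and $x\in\CC^{\SIR}_{\Delta,\lambda}$.

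For the inclusion ``$\supseteq$'', we argue by induction along a directed path $x_0\in\Delta, x_1,\dots,x_n=x$ in $D^{\SIR}_\lambda$ that $\tau_{x_i}<\infty$ for each $i$. The base case is $\tau_{x_0}=0$. Given $\tau_{x_i}<\infty$, the edge $(x_i,x_{i+1})$ belongs to $\SIboundary(W)$ from time $\tau_{x_i}$ onward, unless $x_{i+1}$ is already non-susceptible, in which case it has already been infected. The edge stays in the active set until $x_i$ is removed at $\tau_{x_i}+Y_{x_i}$. The edge timer rings at $\tau_{x_i}+Y_{(x_i,x_{i+1})}$, which is strictly earlier. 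So either $x_{i+1}$ was infected before that moment, or it is infected then. Hence $\tau_{x_{i+1}}<\infty$.

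The main obstacle is making rigorous that the discrete algorithm actually processes this timer, i.e.\ that the minimum on Line~10 is attained and that the ring time $\tau_{x_i}+Y_{(x_i,x_{i+1})}$ is eventually reached by some $T_n$. On an infinite, locally bounded graph one must rule out accumulation of the $T_n$ before this time (explosion). I would handle this by a standard comparison: since $\Delta$ is finite and degrees are locally bounded, the number of active timers grows at most like a branching process (or like first passage percolation with exponential weights). Its transition times therefore almost surely do not accumulate in finite time, so $T_n\to\infty$ whenever the algorithm does not terminate. This argument is the one in \cite{cox_limit_1988}, which the paper indicates transfers directly. Finally, the processes agree in law with $\BP^{\SIR}_{\lambda,G,\Delta}$ by the discussion following Algorithm~\ref{alg:SIR_coupling1}, and the coupling property holds for every realization outside a null set (ties and explosion), which gives the claimed probability one.
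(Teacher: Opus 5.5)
Your proposal is correct and is essentially the argument the paper relies on. The paper gives no separate proof; it builds $D^{\SIR}_{\lambda}$ from the same variables $(Y_e)$, $(Y_x)$ as Algorithm~\ref{alg:SIR_coupling1} and defers to \cite{cox_limit_1988}, whose proof is your ancestral-path argument for ``$\subseteq$'' and induction along a path of $D^{\SIR}_{\lambda}$ for ``$\supseteq$''. Two small corrections to your non-explosion step. First, the minimum on Line~10 is attained automatically, because the active set is finite at every step. Second, the comparison ruling out accumulation of the $T_n$ needs a uniform degree bound, under which the total rate after $n$ steps is at most $(1+\lambda\,\mathrm{deg}(G))(|\Delta|+n)$. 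Bounded degree is the paper's standing assumption and is implicit in $\BP^{\SIR}_{\lambda,G,\Delta}$ being well defined, but local finiteness alone does not suffice: on graphs with rapidly growing degrees the SIR can explode.
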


	Based on Proposition~\ref{prop:SIR_perc_equivalence}, we conclude the following.

	\begin{proposition}
		\label{prop:siteperc_dom_SIR} Let $G=(V,E)$ be a graph of bounded degree. Then, for any $\lambda > 0$ and $\Delta\Subset V$, there
		is a coupling $\widehat{\BP}$ of $\CC^{\SIR}_{\Delta, \lambda}$,
		$\widetilde{G}_{u}\sim G_{q_u}^{\siteperc}$ and
		$\widetilde{G}_{l}\sim G_{q_l}^{\siteperc}$ such that
		\begin{equation}
			\widehat{\BP}\!\left( \CC^{\siteperc}_{\Delta}(\widetilde{G}_{l}) \subseteq \CC
			^{\SIR}_{\Delta, \lambda}\subseteq \CC^{\siteperc}_{\Delta}( \widetilde{G}_{u}
			)\cup \partial_{v}^{o}\big(\CC^{\siteperc}_{\Delta}(\widetilde{G}_{u})\big) \right)
			= 1,
		\end{equation}
		where
		$q_{l} = \int_{0}^{\infty} (1-e^{-\lambda y})^{\mathrm{deg}(G)} e^{-y}\,dy \text{ and }q_{u}
    		= \frac{\lambda \mathrm{deg}(G)}{\lambda \mathrm{deg}(G) + 1}$. 
	\end{proposition}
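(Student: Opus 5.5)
We work directly with the digraph $D^{\SIR}_{\lambda}$ and, by Proposition~\ref{prop:SIR_perc_equivalence}, identify $\CC^{\SIR}_{\Delta,\lambda}$ with the set of ever-infected nodes. The idea is the same as in Proposition~\ref{prop:siteperc_dom_ZIM}: declare sites open according to a condition that depends only on the variables $Y_x$ and $(Y_{(x,y)})_{y\sim x}$ attached to $x$. Since these families are independent over $x\in V$, the resulting site configurations are i.i.d.\ Bernoulli. We then only need to compute the marginal probabilities and check the two inclusions path by path.

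\emph{Upper bound.} Call $x$ open in $\widetilde{G}_u$ if $\min_{y\sim x} Y_{(x,y)} < Y_x$, that is, if $x$ has at least one outgoing edge in $D^{\SIR}_\lambda$. Every ever-infected $z\notin\Delta$ is reached by a directed path $x_0\to x_1\to\dots\to x_n=z$ in $D^{\SIR}_\lambda$ with $x_0\in\Delta$. Each $x_i$ with $i<n$ has an outgoing open arc, so it is open. Hence $x_0,\dots,x_{n-1}$ lie in $\CC^{\siteperc}_{\Delta}(\widetilde G_u)$, which gives $z\in \CC^{\siteperc}_\Delta(\widetilde G_u)\cup\partial_v^o(\cdot)$.

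The marginal is $\BP(\min_{y\sim x}Y_{(x,y)}<Y_x)=\frac{\lambda d_x}{\lambda d_x+1}$, where $d_x$ is the degree of $x$. This is at most $q_u$, but it is not equal to $q_u$ for every $x$. To obtain exactly $q_u$-site percolation, I would monotonically enlarge the open set: with an independent uniform $V_x$, also open $x$ with the appropriate extra probability so that the total is $q_u$. Enlarging the open set only enlarges the cluster, so the inclusion is preserved. One could also pad $x$ with $\mathrm{deg}(G)-d_x$ fictitious exponential clocks. Either way, independence across sites is kept.

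\emph{Lower bound.} Call $x$ open in $\widetilde G_l$ if $\max_{y\sim x}Y_{(x,y)}<Y_x$, that is, if all outgoing arcs of $x$ are present. Conditioning on $Y_x=y$, the probability is $(1-e^{-\lambda y})^{d_x}$. Integrating against $e^{-y}\,dy$ gives a value that is at least $q_l$, since $d_x\le\mathrm{deg}(G)$. We thin this to exactly $q_l$ using an independent uniform; shrinking the open set only helps the inclusion.

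Now take $z\in\CC^{\siteperc}_\Delta(\widetilde G_l)$, with a path $y_0\in\Delta,\dots,y_m=z$ of open sites. Every arc $(y_{i-1},y_i)$ belongs to $D^{\SIR}_\lambda$, because $y_{i-1}$ is open. Therefore $y_0\to z$ in $D^{\SIR}_\lambda$, so $z\in\CC^{\SIR}_{\Delta,\lambda}$. Here it matters that the start $y_0\in\Delta$ is itself required to be open, which is how the site cluster $\CC^{\siteperc}_\Delta$ is defined.

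The main obstacle is the bookkeeping that turns non-identical marginals into the exact parameters $q_u$ and $q_l$ without breaking the inclusions or the independence. I would handle it with the auxiliary-uniform augmentation and thinning described above, both of which are monotone operations. Everything else is a direct path argument on $D^{\SIR}_\lambda$.
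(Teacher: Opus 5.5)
Your proposal is correct and is essentially the paper's proof: the paper uses the same threshold events, $\min_{y\sim x}Y_{(x,y)}<Y_x$ for $\widetilde{G}_u$ and $\max_{y\sim x}Y_{(x,y)}<Y_x$ for $\widetilde{G}_l$, site-wise independence from the outgoing-edge clocks, and the same path arguments for the two inclusions. The only difference is cosmetic. The paper equalizes the marginals by padding each node with $\mathrm{deg}(G)-\mathrm{deg}_G(x)$ fictitious independent $\mathrm{Exp}(\lambda)$ clocks, which is the alternative you mention. Your independent-uniform enlargement (upper) and thinning (lower) works equally well, since both operations are monotone in the right direction.
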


	\begin{proof}
		We consider the construction of SIR given in Subsection~\ref{sec:monotonicity_SIR}
		using Algorithm~\ref{alg:SIR_coupling1}, from which
		$\CC^{\SIR}_{\Delta, \lambda}$ is also defined. 
        Moreover, we let $N=\mathrm{deg}(G)$ and, for each $x \in V$, we consider independent $\widetilde{Y}_x^{(i)}\sim\Exponential{\lambda}$ for $i =1,\dots,N-\mathrm{deg}_{G}(x)$, where $\mathrm{deg}_{G}(x)$ is the number of edges in $E$ connected to $x$.
                
        For the upper inclusion, let $\widetilde{G}_u=(\widetilde{V}_u,\widetilde{E}_u)$ be the subgraph of $G$ obtained as follows. For each $x\in V$, if $\mathrm{deg}_{G}(x)=N$, we let $x \in \widetilde{V}_u$ if $x$ satisfies
        \begin{equation}\label{eq:minmin}
            \min_{y \sim x}\!\left(Y_{(x,y)}\right)<Y_x.
        \end{equation}
        Conversely, if $\mathrm{deg}_{G}(x)<N$, we let $x \in \widetilde{V}_u$ if either \eqref{eq:minmin} holds or 
     \begin{equation}
     \min_{i=1,\dots,N-\mathrm{deg}_{G}(x)}\!\left(\widetilde{Y}_{x}^{(i)} \right) <Y_x.
     \end{equation} 
     Moreover, an edge $e=(x,y)$ is contained in $\widetilde{E}_u$ if both $x$ and $y$ are in $\widetilde{V}_u$.  
        It follows by basic properties of the exponential distribution that this yields a random graph
		$\widetilde{G}_{u}\sim G_{q_u}^{\siteperc}$ with
		$q_{u} = \frac{\lambda N}{\lambda N + 1}$. It
		is also clear that $\widetilde{G}_{u}$ contains all nodes that ever have a
		chance of spreading the infection onward in the SIR process. Therefore, by adding the outer node boundary
		$\partial_{v}^{o}\big(\CC^{\siteperc}_{\Delta}(\widetilde{G}_{u})\big)$ to the percolation
		cluster $\CC^{\siteperc}_{\Delta}(\widetilde{G}_{u})$ we cover all nodes that
		can be infected in the SIR process. 

		We construct $\widetilde{G}_{l} = (\widetilde{V}_l,\widetilde{E}_l)$ in a similar way. For each $x\in V$, if $\mathrm{deg}_{G}(x)=N$, let $x \in \widetilde{V}_l$ only if 
        \begin{equation}\label{eq:maxmax}
            \max_{y \sim x}\!\left( Y_{(x,y)}\right) < Y_x.
        \end{equation}
        Conversely, if $\mathrm{deg}_{G}(x)<N$, we let $x \in \widetilde{V}_l$ if \eqref{eq:maxmax} holds and also
        \begin{equation} 
        \max_{i=1,\dots,N-\mathrm{deg}_{G}(x)}\!\left(\widetilde{Y}_{x}^{(i)}\right) <Y_x.
\end{equation} 
        Moreover, an edge $e=(x,y)$ is included in  $\widetilde{E}_l$ only if both $x$ and $y$ are in $\widetilde{V}_l$. Note that, with this construction, $\widetilde
		{G}_{l}\sim G_{q_l}^{\siteperc}$ where
		\begin{equation}
			q_{l} = \int_{0}^{\infty} (1-e^{-\lambda y})^{N}e^{-y}dy .
		\end{equation}
        By definition
		we have also guaranteed that $\CC^{\SIR}_{\Delta,\lambda}$ contains
	$\CC^{\siteperc}_{\Delta,q_l}$, from which we conclude the proof.
	\end{proof}

	\subsection{Proofs of Theorems~\ref{thm:ZIMsurvival}, \ref{thm:ZIMsurvivalZd1}
	and \ref{thm:ZIMsurvivalZd2}}\label{sec:proofs_perco_thms}

	In this subsection, we present the proofs of Theorems~\ref{thm:ZIMsurvival},
	\ref{thm:ZIMsurvivalZd1} and \ref{thm:ZIMsurvivalZd2}. We begin with Theorem~\ref{thm:ZIMsurvival}, which shows that the ZIM, the SIR model, and site percolation all admit non-trivial supercritical phases on the same graphs.

	\begin{proof}[Proof of Theorem~\ref{thm:ZIMsurvival}]
		The equivalences between the statements follow by Proposition~\ref{prop:siteperc_dom_ZIM}
		and Proposition~\ref{prop:siteperc_dom_SIR}. Indeed, if $p_{c}^{\siteperc}(G)
		<1$, then for every $q \in (p_{c}^{\siteperc}(G),1)$ it holds that
		$\BP_{q}(\lvert C_{\Delta}^{\siteperc}\rvert = \infty)>0$ for any
		$\Delta \subset V$. Thus, by Proposition~\ref{prop:siteperc_dom_ZIM}, it holds
		that $\BP^{\ZIM}_{\lambda,G,\Delta}( | \CA_{\infty}| = \infty) >0$
		with $\lambda = \frac{q^{1/\mathrm{deg}(G)}}{1 - q^{1/\mathrm{deg}(G)}}$.
		Combined with Proposition~\ref{prop:cosurvival1} this gives that Property \ref{item:thmsupercrit_equiv-3} implies
		Property \ref{item:thmsupercrit_equiv-1}.

		Similarly, we conclude that Property \ref{item:thmsupercrit_equiv-3} implies Property \ref{item:thmsupercrit_equiv-2} by utilizing
		Propositions \ref{prop:SIR_perc_equivalence} and \ref{prop:siteperc_dom_SIR}.
		Indeed, note that $q_{l}$ in the latter proposition goes to $1$ as
		$\lambda \rightarrow \infty$, and this implies that $\BP^{\SIR}_{\lambda,G,\Delta}
		( | \cup_{t\geq 0}\{x \in V \colon \xi_{t}(x)=I\}| = \infty) >0$
		for all $\lambda$ large. The implication therefore follows by \eqref{eq:SIR_survival_prob_equivalence}.

		For the other directions, if
		$\BP^{\ZIM}_{\lambda,G,\Delta}( \CZ_{\infty}= \infty) >0$ for
		all $\lambda>\lambda_{*}^{\ZIM}$, then since $\CZ_{\infty}\leq |\CA_{\infty}|$
		and by Proposition~\ref{prop:siteperc_dom_ZIM}, it follows that
		$\BP_{p}(\lvert C_{\Delta}^{\siteperc}\rvert = \infty)>0$ for all
		$\smash{p> \frac{\lambda_{*}^{\ZIM}}{\lambda_{*}^{\ZIM}+1}}$. This shows that Property
		\ref{item:thmsupercrit_equiv-1} implies Property \ref{item:thmsupercrit_equiv-3}. That Property \ref{item:thmsupercrit_equiv-2} implies Property \ref{item:thmsupercrit_equiv-3} follows similarly
		by utilizing Proposition~\ref{prop:siteperc_dom_SIR}.

		Lastly, the bound on $\lambda_{*}^{\ZIM}$ follows by the proof of
		Proposition~\ref{prop:siteperc_dom_ZIM} and the discussion directly
		following that proof.
	\end{proof}
	We note that the above proof implies the same bound on $\lambda_{c}^{\SIR}$ as obtained for $\lambda_{*}^{\ZIM}$. This
	follows by the proof of Proposition~\ref{prop:siteperc_dom_SIR}, noting that
	\begin{equation}
		q_{l} = \prod_{i=1}^{\mathrm{deg}(G)}\frac{i\lambda}{i\lambda + 1}\geq \left(\frac{\lambda}{\lambda + 1}
		\right)^{\mathrm{deg}(G)},
	\end{equation}
	and that the bound $\big(\frac{\lambda}{\lambda + 1}\big)^{\mathrm{deg}(G)-1}$ suffices
	for all nodes $x\in V \setminus \Delta$, similarly to the argument for the ZIM.

    Next, turning to specific graphs, Theorem~\ref{thm:ZIMsurvivalZd1} specializes to the two-dimensional integer lattice $\BZ^2$, establishing explicit bounds that separate subcritical and supercritical behavior. We obtain these bounds by combining Proposition~\ref{prop:siteperc_dom_ZIM} with known results on the critical probability for site percolation on $\BZ^2$.

	\begin{proof}[Proof of Theorem~\ref{thm:ZIMsurvivalZd1}]
		Fix $p\in \big(0,p_{c}^{\siteperc}(\BZ^{2})\big)$. Then the cluster
		$\CC_{\Delta, p}^{\siteperc}$ is almost surely finite, and since $\BZ^2$ has bounded degree, $\CC_{\Delta, p}^{\siteperc}\cup \partial_{v}^{o}( \CC_{\Delta, p}^{\siteperc}
		)$ is also almost surely finite. By Proposition~\ref{prop:siteperc_dom_ZIM}, the set $\CA_{\infty}$ for the ZIM on $\BZ^{2}$ with parameter $\lambda = p/(1-p)$, is therefore almost surely finite. 
        Since $p_{c}^{\siteperc}(\BZ^{2})\geq 0
		.556$ \cite{vdBerg1996}, Proposition~\ref{prop:cosurvival1} yields $\phi(\lambda,\BZ^{2},\Delta)=0$ for any $\Delta \Subset
		\BZ^{2}$ whenever $\lambda < \frac{0.556}{1-0.556}< 1.25$. 
        
        To establish the upper bound (showing survival for large $\lambda$), the recent
		work \cite{Wierman2024} establishes that $p_{c}^{\siteperc}(\BZ^{2})\leq 0.66
		6894$. By Theorem~\ref{thm:ZIMsurvival} we therefore conclude that
		$\phi(\lambda,\BZ^{2},\Delta)>0$ whenever
		$\lambda \geq \frac{0.666894^{1/3}}{1-0.666894^{1/3}}\geq 6.92$.
	\end{proof}

	Note that the above argument extends to any graph $G$. In particular, whenever
	$p_{c}^{\siteperc}(G)>1/2$, we obtain an improvement of Theorem
	\ref{thm:exinction} in the sense that the ZIM admits no zombie outbreak for
	$\lambda \in (0,c)$ with
	$c= \frac{p_{c}^{\siteperc}(G)}{1-p_{c}^{\siteperc}(G)}>1$.

	In the case of $\BZ^{2}$, the critical percolation parameter has been determined
	with a high degree of precision from simulation studies, for example in
	\cite{mertens_exact_2022}, to
\[ p_{c} = p_{c}^{\siteperc}(\BZ^{2}) \approx 0.59274,\]
	which corresponds to $\lambda = \frac{p_{c}}{1-p_{c}}\approx 1.45547$. Our
	simulations of the ZIM in $\BZ^{2}$, however, indicate that it has a phase
	transition at $\lambda \approx 2.28$, in line with the results of \cite{alemi_you_2015}.

	Turning our attention to Theorem~\ref{thm:ZIMsurvivalZd2}, we first present an
	auxiliary result that relates the survival of the ZIM and the SIR process.

	\begin{proposition}
		\label{prop:ZIM_dom_SIR} Let $G=(V,E)$ be a graph of bounded degree. Then, for
		$\lambda, \mu\in(0,\infty)$ and $\Delta\Subset V$, there is a coupling
		$\widehat{\BP}$ of
		$\left( \eta_{t} \right)\sim\BP_{\lambda,\mu/\mathrm{deg}(G),G, \Delta}^{\ZIM}$
		and $\left( \xi_{t} \right)\sim\BP_{\lambda,\mu,G, \Delta}^{\SIR}$ such that
		\begin{equation}
			\label{eq:prop_ZIM_dom_SIR}\widehat{\BP}( \xi_{t} \leq \eta_{t} \: \forall
			t\geq 0) = 1.
		\end{equation}
	\end{proposition}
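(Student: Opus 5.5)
The plan is to replace the Harris construction by a coupling in the spirit of Algorithm~\ref{alg:SIR_coupling1}, which is invariant under time lags. Each node gets a private \emph{lifetime} measured from its own infection time, and each directed edge gets a private infection delay. The reason is that the naive Harris coupling can fail. A brick at $y$ may kill $y$ in the ZIM before $y$ is ever infected in the SIR model, and $y$ can then infect neighbours in the SIR process that the dead zombie cannot reach. Time-lag invariance removes this: a node that is infected earlier in the ZIM also lives at least as long, measured from its own infection time.

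\textbf{Construction.} For each $e\in\overrightarrow{E}$ draw $Y_e\sim\Exponential{\lambda}$, and for each $x\in V$ draw $Y_x\sim\Exponential{\mu}$, all independent. The SIR process $(\xi_t)$ is built exactly as in Algorithm~\ref{alg:SIR_coupling1}:
\begin{itemize}
\item node $x$ is infected at $\tau^{\xi}_x$;
\item it recovers at $\tau^{\xi}_x+Y_x$;
\item it infects a still-susceptible neighbour $y$ at $\tau^{\xi}_x+Y_{(x,y)}$, provided $Y_{(x,y)}<Y_x$.
\end{itemize}
The ZIM $(\eta_t)$ with kill rate $\mu/\mathrm{deg}(G)$ uses the same edge delays, so $x$ infects a susceptible $y$ at $\tau^{\eta}_x+Y_{(x,y)}$ as long as $x$ is still alive. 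Death in the ZIM is realised through a cumulative hazard: $x$ is removed at the first time $t\geq\tau^{\eta}_x$ such that
\begin{equation}
\frac{1}{\mathrm{deg}(G)}\int_{\tau^{\eta}_x}^{t} \big|\{y\sim x\colon \eta_s(y)=S\}\big|\,ds \;=\; Y_x .
\end{equation}
Here $\mu Y_x\sim\Exponential{1}$, so this threshold yields the correct conditional death intensity $\frac{\mu}{\mathrm{deg}(G)}\cdot\#\{\text{susceptible neighbours of } x\}$. The infection clocks are exponential, independent and memoryless, and a clock $Y_{(x,y)}$ is only consulted while $(x,y)$ is in the active SI-boundary. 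Together these facts should give that $(\eta_t)\sim\BP^{\ZIM}_{\lambda,\mu/\mathrm{deg}(G),G,\Delta}$. Verifying this rigorously, by writing the jump rates of the joint embedded chain and checking them against \eqref{eq:ZIM_generator}, is the step I expect to need the most care. The cumulative-hazard kill mechanism departs from Algorithm~\ref{alg:ZIM_coupling1}, where the fight outcome is drawn edge by edge, so the law of $(\eta_t)$ must be checked directly.

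\textbf{Key deterministic inequality.} The integrand is at most $\mathrm{deg}(G)$, so the integral grows at rate at most $1$. Hence the death time of $x$ in the ZIM is at least $\tau^{\eta}_x+Y_x$. In words, every zombie lives at least as long as its SIR counterpart, measured from its own infection time.

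\textbf{Comparison of infection times.} Note that $\xi_t\leq\eta_t$ only requires that $\eta_t(x)=S$ implies $\xi_t(x)=S$. It therefore suffices to show $\tau^{\eta}_x\leq\tau^{\xi}_x$ for every $x$. The state $R$ in $\eta$ causes no problem here, since it already counts as ``not $S$''. I would argue exactly as in the proof of Theorem~\ref{thm:SIR_monotonicity}.
\begin{itemize}
\item Take $x$ with $\tau^{\xi}_x<\infty$ and its SIR ancestral path $x_0\in\Delta,\dots,x_n=x$.
\item Let $K$ be the first index with $\tau^{\eta}_{x_K}>\tau^{\xi}_{x_K}$, and suppose $K\leq n$.
\item In the SIR process, $\tau^{\xi}_{x_K}=\tau^{\xi}_{x_{K-1}}+Y_{(x_{K-1},x_K)}$ with $Y_{(x_{K-1},x_K)}<Y_{x_{K-1}}$.
\item By minimality of $K$, $\tau^{\eta}_{x_{K-1}}\leq\tau^{\xi}_{x_{K-1}}$.
\item By the key inequality, $x_{K-1}$ is still alive in the ZIM at time $\tau^{\eta}_{x_{K-1}}+Y_{(x_{K-1},x_K)}$.
\item So either $x_K$ was already non-susceptible in $\eta$ before that time, or $x_{K-1}$ infects it then. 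Either way, $\tau^{\eta}_{x_K}\leq\tau^{\eta}_{x_{K-1}}+Y_{(x_{K-1},x_K)}\leq\tau^{\xi}_{x_K}$.
\end{itemize}
This contradicts the choice of $K$. Since the SIR process starts from finitely many infected nodes on a graph of bounded degree, all infection times are almost surely distinct and only finitely many events occur in any bounded window, so the ancestral-path argument is well defined. It then yields \eqref{eq:prop_ZIM_dom_SIR}.
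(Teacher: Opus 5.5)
Your proof is correct and follows the paper's strategy. You use a time-lag-invariant coupling in the style of Algorithm~\ref{alg:SIR_coupling1}. You observe that every ZIM lifetime, measured from its own infection time, is at least the SIR recovery time $Y_x$, and you run the minimal-index ancestral-path argument from Theorem~\ref{thm:SIR_monotonicity}. The one difference is how ZIM deaths are realised. The paper gives each directed edge an independent $\Exponential{\mu/\mathrm{deg}(G)}$ kill clock, pads low-degree nodes, and takes $Y_x$ to be the minimum of the $\mathrm{deg}(G)$ clocks at $x$; this keeps the law check within the competing-clocks framework of Algorithm~\ref{alg:ZIM_couplingSIR}. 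Your single cumulative-hazard threshold avoids the padding and makes the lifetime bound immediate, at the cost of the standard time-change justification you already flagged.
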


	\begin{proof}
		Let $N = \mathrm{deg}(G)$ be the maximum degree of the graph. The
		proposition can be proven by constructing the ZIM and the SIR process
		jointly using the construction of SIR using Algorithm~\ref{alg:SIR_coupling1}
		as described in Subsection~\ref{sec:monotonicity_SIR} and an adaptation of
		this to construct the ZIM that we now explain. In particular, for each
		$e \in \overrightarrow{E}$ let $Y_{e}^{\lambda}\sim \Exponential{\lambda}$
		and $Y_{e}^{\mu}\sim\Exponential{\mu/N}$, all independent. Moreover, for each
		node $x\in V$ that has less than $N$ neighbours, we pad with extra, independent
		exponential variables
		\begin{equation}
			Y_{x,i}^{*}\sim\Exponential{\mu/N}, \quad i=1,\ldots, N-\mathrm{deg}(x).
		\end{equation}
		Doing this, we have ensured that each node has in total $N$
		$\Exponential{\mu/N}$-variables assigned to it -- the ones in incident edges
		and any potential extra padding. The next step is to set for each $x \in V$ the random variable $Y_x$ to be the minimum of $\min_{x\in e}Y_{e}^{\mu}$ and $\min_{i\geq 1}Y_{x,i}^{*}$ 
		which, as the minimum of $N$ independent $\Exponential{\mu/N}$-variables, is $\Exponential
		{\mu}$ distributed. Hence, using the $(Y_{x})$ and the $(Y_{e}^{\lambda})$
		random variables we can construct the SIR as in Algorithm~\ref{alg:SIR_coupling1}.
		In the same vein we can also construct the ZIM using the $Y_{e}^{\lambda}$
		and $Y_{e}^{\mu}$ random variables. The full construction is provided in Algorithm~\ref{alg:ZIM_couplingSIR}.
		To see that the coupling provided by these constructions fulfills~\eqref{eq:prop_ZIM_dom_SIR}
		consider, for $x \in V\setminus \Delta$,
		\begin{algorithm}
			[tb]
			\caption{An adaptation of Algorithm~\ref{alg:SIR_coupling1} for
			constructing the ZIM}
			\label{alg:ZIM_couplingSIR} 
            Let $T_{0}=0$ and $W_{0} = \omega \in \Omega_{<\infty}$

			For each $e\in \overrightarrow{E}$, let $\tilde{\tau}_{e} = 0$ if $e\in\SIboundary(W_{0})$ and $\tilde{\tau}_{e} = \infty$ otherwise 
            
            For each $e\in \overrightarrow{E}$,
			independently, draw $Y^{\lambda}_{e} \sim \Exponential{\lambda}$ and $Y^{\mu}
			_{e} \sim \Exponential{\mu}$ 
            
            \For{$n = 1$ \KwTo $\infty$}{ 
                \eIf{$\SIboundary(W_{n-1}) = \emptyset$}{ 
                    $W_{n} = W_{n-1}$ and $T_{n} = \infty$ 
                    
                    \KwSty{break} 
                }{ 
                    Let $T_{n} = \min\limits_{e \in \SIboundary(W_{n-1}),\, k \in \{\lambda,\mu\}}\left(Y_{e}^{k}+ \tilde{\tau}_{e} \right)$ 
                    
                    Let $\big((x, y),k\big) = \argmin\limits_{e \in \SIboundary(W_{n-1}),\, k \in \{\lambda,\mu\}}\left(Y_{e}^{k}+ \tilde{\tau}_{e} \right)$

	                \uIf{$k = \lambda$}{ 
                        Set $W_{n} = W_{n-1}^{y \leftarrow I}$ 
                        
                        Set $\tilde{\tau}_{( y,y' )}= T_{n}$ for each $(y,y') \in \SIboundary(W_{n})$ 
                    } \ElseIf{$k = \mu$}{ 
                        $W_{n} = W_{n-1}^{x \leftarrow R}$ 
                    } 
                } 
            }
		\end{algorithm}
		\begin{equation}
			\tau^{\SIR}_{x} \coloneqq \inf \{ t>0 \colon \xi_{t}(x) =I\} \quad  \text{
			and } \quad \tau^{\ZIM}_{x} \coloneqq \inf \{ t>0 \colon \eta_{t}(x) =I
			\}.
		\end{equation}
		If $\tau^{\SIR}_{x}<\infty$ for some $x\in V\setminus \Delta$, then by construction
		there is a unique directed path from $\Delta$ to $x$, say
		$x_{0},\dots,x_{m}=x$, and an increasing sequence $(n_{i})$ such that $(x_{i}
		,x_{i+1})=\gamma_{n_i}$ with $\gamma_{n_i}$ as in Algorithm~\ref{alg:SIR_coupling1}.
		By how we defined $Y_x$ and the above construction it holds that,
		for each $i=0,\dots,m-1$, the infection at $x_{i}$ is able to infect $x_{i+1}$
		in the ZIM process before $x_{i}$ is killed too. In particular, for each $i=0
		,\dots,m-1$, we have $Y_{(x_i,x_{i+1})}^{\lambda}< Y_{x_i}$, which implies that
		$\tau^{\SIR}_{x} \geq \tau^{\ZIM}_{x}$. Since this holds almost surely for
		all $x\in V \setminus \Delta$, the proof is concluded.
	\end{proof}

    Finally, Theorem~\ref{thm:ZIMsurvivalZd2} establishes that for any $\lambda>1$, survival with positive probability occurs for the ZIM on $\BZ^d$ in sufficiently high dimensions. This follows by comparing to the SIR model, whose critical behaviour is well-understood in high dimensions.

	\begin{proof}[Proof of Theorem~\ref{thm:ZIMsurvivalZd2}]
		Let $\lambda=1+\epsilon$ with $\epsilon>0$. Applying Proposition~\ref{prop:ZIM_dom_SIR} followed by Proposition~\ref{prop:scale_invariance}, we obtain
		\begin{align}
			\BP_{\lambda, \BZ^d, \Delta}^{\ZIM}\!\left( \lim_{t\rightarrow \infty}\CZ_{t} =\infty\right) \geq & \BP_{\lambda, 2d, \BZ^d, \Delta}^{\SIR}\!\left( \lim_{t\rightarrow \infty}\CI_{t} =\infty\right)  \\
			=                                                                                                  & \BP_{\lambda/2d,\BZ^d, \Delta}^{\SIR}\!\left( \lim_{t\rightarrow \infty}\CI_{t} =\infty\right).
		\end{align}
		By \cite[Theorem~2.1, and Equation~(2.4)]{xue_asymptotic_2018}, the final expression is strictly positive for all sufficiently large $d$ (depending on $\epsilon$), which establishes the claim.
	\end{proof}

	\subsection{The ZIM on regular trees}\label{sec:trees_and_SIR}

	In this subsection we present the proof of Theorem~\ref{thm:ZIMtrees}. The
	same proof technique also establishes that the SIR model stochastically dominates the ZIM when the underlying	graph is a tree. 
    This complements Proposition~\ref{prop:ZIM_dom_SIR}, which shows the reverse dominance (ZIM over SIR) holds for general graphs.

	Let $G=(V,E)$ be a tree with a root $o\in V$. For simplicity we will in this subsection mostly focus on the case where the ZIM is initiated with only $o$ infected. Let
	\begin{equation}\label{eq:dist_in_tree}
		\partial_{k} G \coloneqq \{ x\in V \colon \mathrm{dist}_{G}(x,o)=k\}
	\end{equation}
	be the nodes at distance $k$ from $o$ with respect to the graph distance
	$\mathrm{dist}_{G}$ on $G$. Moreover, as in Algorithm
	\ref{alg:ZIM_coupling_geom}, let $(Y_{e})_{e \in E}$ and $(\gamma_{x})_{x \in
	V}$ be independent sequences of $\Exponential{1}$ and $\Geometric{p}$ random variables,
	respectively. 
	Then, similarly to Section~\ref{sec:SIR_siteperc_coupling}, consider the
	subset
	\begin{equation}
		E^{\ZIM}_{\lambda} := \left\{ (x,y) \in \overrightarrow{E}\colon x \in \partial
		_{k-1}G, y \in \partial_{k} G, \sum_{z \in \partial_{k}G}1_{\{Y_{\{x,z\}} <
		Y_{\{x,y\}}\}}< \gamma_{x}, k\in \BN \right\}.
	\end{equation}
	The resulting random digraph is denoted by
	\begin{equation}
		D^{\ZIM}_{\lambda} := \left(V,E^{\ZIM}_{\lambda}\right),
	\end{equation}
	and the corresponding forward reachable cluster
	\begin{equation}
		\CC^{\ZIM}_{\lambda}:= \left\{ x\in V \colon o \rightarrow x \textrm{ in }D^{\ZIM}
		_{\lambda} \right\}.
	\end{equation}
	Here $o\rightarrow x \textrm{ in }D^{\ZIM}_{\lambda}$ denotes the event that $o$
	and $x$ are connected using (directed) edges contained in $D^{\ZIM}_{\lambda}$. 
	Recalling again the definition of the affected set $\CA_{t}$ from \eqref{eq:At}, our next result shows that $E^{\ZIM}_{\lambda}$ and $\CC^{\ZIM}_{\lambda}$ constructed
	thusly captures the asymptotic configuration of the ZIM on trees. 

	\begin{proposition}
		\label{prop:comparisionWbp} Let $G=(V,E)$ be a tree with root $o\in V$. Then,
		for any $\lambda \in (0,\infty)$, there is a coupling $\widehat{\BP}$ of
		$\CC^{\ZIM}_{\lambda}$ and $(\eta_{t}) \sim \BP_{\lambda,G,o}^{\ZIM}$ such
		that 
		\begin{equation}
			\widehat{\BP}\!\left( \CC^{\ZIM}_{\lambda}= \CA_{\infty}\right)=1.
		\end{equation}
	\end{proposition}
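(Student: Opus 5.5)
The plan is to build the ZIM from the same variables $(Y_e)_{e\in E}$ and $(\gamma_x)_{x\in V}$ that define $D^{\ZIM}_{\lambda}$. We use a tree-adapted version of Algorithm~\ref{alg:ZIM_coupling_geom} that is locally invariant under time lags, in the spirit of Algorithm~\ref{alg:ZIM_coupling1}.

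\textbf{Construction.} When a node $x\in\partial_{k-1}G$ becomes infected at time $\tau_x$ (with $\tau_o=0$), each edge $\{x,z\}$ to a child $z\in\partial_k G$ gets the fight time $\tau_x+(\lambda+1)^{-1}Y_{\{x,z\}}$. Because $G$ is a tree and the process starts from the single root $o$, every infected $x\neq o$ has exactly one non-susceptible neighbour, namely its parent. Its SI-boundary therefore consists of the edges to its still-susceptible children, and these edges carry i.i.d.\ $\Exponential{\lambda+1}$ clocks started at $\tau_x$. The ordering of the fights of $x$ is thus the ordering of the $Y_{\{x,z\}}$ over its children, and this ordering does not depend on any other node.

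\textbf{Fight outcomes.} Instead of tossing a $\Bernoulli{p}$ coin at each fight, we let $x$ win its first $\gamma_x$ fights and lose the next one. The number of wins before the first loss in i.i.d.\ $\Bernoulli{p}$ trials is $\Geometric{p}$ in the convention \eqref{eq:geom_dist}. Truncating at the number of children does not change the law of the observed outcomes, so these outcomes are exactly i.i.d.\ $\Bernoulli{p}$ fight results. Checking that this is a valid construction of $\BP^{\ZIM}_{\lambda,G,o}$ is routine: by memorylessness the active edges ring at rate $\lambda+1$ each, and the outcomes are independent of the timing. This reproduces the rates of \eqref{eq:ZIM_generator} with $\mu=1$.

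\textbf{Matching the clusters.} Fix an infected $x\in\partial_{k-1}G$. Once infected, $x$ fights its children one by one in increasing order of $Y_{\{x,z\}}$, up to the first loss, at which point $x$ is removed and stops. Could $x$ lose its $R$-status earlier, or be killed by another neighbour? Its only other neighbour is its parent, which is already non-susceptible, and children it infected are no longer susceptible. So the only fights $x$ takes part in are these. Hence a child $y$ gets infected by $x$ if and only if fewer than $\gamma_x$ children $z$ have $Y_{\{x,z\}}<Y_{\{x,y\}}$, i.e.\ $(x,y)\in E^{\ZIM}_\lambda$. Ties have probability zero. Note also that a node $y$ can only be infected through its parent, since all paths from $o$ pass through the parent.

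\textbf{Induction.} Proceeding by induction on $k$, we obtain $y\in\CA_\infty$ if and only if its parent is in $\CA_\infty$ and $(x,y)\in E^{\ZIM}_\lambda$. This is exactly the statement $o\to y$ in $D^{\ZIM}_\lambda$, which gives $\CC^{\ZIM}_\lambda=\CA_\infty$ almost surely.

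\textbf{Main obstacle.} The delicate step is verifying that the geometric-ordered construction has the correct law. This relies on the tree structure, which guarantees that each zombie's sequence of fights is with distinct children in an order fixed by its own edge variables. One must also handle the root: $o$ has no parent, so the sum is over all its neighbours in $\partial_1G$, which the definition already allows.
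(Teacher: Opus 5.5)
Your argument is correct and has the same skeleton as the paper's proof. You couple through the shared variables $(Y_e)_{e\in E}$ and $(\gamma_x)_{x\in V}$, use that on a tree started from $o$ a zombie only ever fights its own susceptible children, and match $\CA_\infty$ with the forward cluster of $D^{\ZIM}_\lambda$ along the unique root paths.

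The difference lies in which construction of the ZIM you couple to. The paper runs Algorithm~\ref{alg:ZIM_coupling_geom} as written, where the next fight is drawn uniformly from the active edges. There, the order in which $x$ fights its children is not the order of the $Y_{\{x,z\}}$ that the definition of $E^{\ZIM}_\lambda$ refers to. Read literally, that coupling therefore identifies the two sets only in law. For instance, take $\gamma_o=1$, two children $a,b$ of $o$, and $Y_{\{o,a\}}<Y_{\{o,b\}}$: the algorithm infects $b$ instead of $a$ with probability $1/2$.

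Your time-lag-invariant variant avoids this. Fights occur at times $\tau_x+(\lambda+1)^{-1}Y_{\{x,z\}}$, and the outcomes of $x$'s successive fights are read off from $\gamma_x$ instead of from per-edge coins. This forces each zombie's fight order to be the $Y$-order, so $\CC^{\ZIM}_\lambda=\CA_\infty$ holds pathwise rather than just in distribution.

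The price is an extra verification, which you do supply, that the hybrid construction has law $\BP^{\ZIM}_{\lambda,G,o}$. This follows from the memoryless $\Exponential{\lambda+1}$ clocks, together with the fact that, given $j$ wins so far, the next fight is won with probability $\BP(\gamma_x>j\mid\gamma_x\geq j)=p$.
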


	\begin{proof}
		Consider the processes $(W_{n})_{n \geq 0}$ and $(T_{n})_{n \geq 0}$ obtained
		from Algorithm~\ref{alg:ZIM_coupling_geom} with $\omega\in \Omega_{<\infty}$
		given by $\omega(o)=I$ and $\omega(x)=S$ for all $x\in V\setminus \{o\}$,
		and let $(\eta_{t})$ be the corresponding ZIM process. Consider also the
		cluster $\CC^{\ZIM}_{\lambda}$ obtained using the same random input as
		provided by Algorithm~\ref{alg:ZIM_coupling_geom}. We claim that this coupling satisfies the desired relation. 
        
        To prove this, fix $x\in V$ and note that since $G$ is a tree, there is a unique shortest path from $o$ to $x$. We show that $x\in\CC^{\ZIM}_{\lambda}$ if and only if $\eta_{t}(x)=I$ for some $t>0$.
        
        First, assume $x \in \CC^{\ZIM}_{\lambda}$. By construction of $\CC^{\ZIM}_{\lambda}$, for each edge $e=(x_{l},x_{l+1})$ along the path from $o$ to $x$, the node $x_{l+1}$ was bitten by a zombie at $x_{l}$ before $x_{l}$ was killed. 
        Therefore, $\eta_{t}(x)=I$ for some $t>0$.

        Conversely, assume $\eta_{t}(x)=I$ for some $t>0$. 
        Then the infection must have propagated along the unique path from $o$ to $x$.  
        In particular, for each edge $e=(x_{l},x_{l+1})$ along this path, the node
		$x_{l+1}$ was bitten by a zombie before $x_{l}$ was killed. Hence,
		$x \in \CC^{\ZIM}_{\lambda}$. 
        
        Since this holds for any $x\in V$ and $V$ is countable, we conclude that $\CC^{\ZIM}_{\lambda}= \CA_{\infty}$ almost surely under $\widehat{\BP}$.
	\end{proof}

    We conclude this subsection with the proof of Theorem~\ref{thm:ZIMtrees}. For regular trees, the coupling in Proposition~\ref{prop:comparisionWbp} allows us to analyze the forward reachable cluster as a branching process. We first determine the critical parameter for a single initially infected node, then use monotonicity to extend the result to arbitrary finite initial configurations.

	\begin{proof}[Proof of Theorem~\ref{thm:ZIMtrees}]
		Consider first the case $\Delta=\{ o \}$. 
        When $G=\BT_{d}$, i.e.\ a $d$-regular tree with $d\geq 3$, the forward reachable cluster $\CC_{\lambda}^{\ZIM}$ corresponds to
		a branching process where each individual $x\in V\setminus \{o\}$
		has $X=\min(\gamma_{x},d-1)$ offspring and the root $o$ has $\min(\gamma_{o},d)$ offspring. Such branching processes are supercritical if and only if $\BE[X]>1$. 
        
        The random variable $X$ has probability mass function $\BP(X=k)=(1-p)p^{k}$ for $k\in\{0,1,\dots,d-2\}$, $\BP(X=d-1)=p^{d-1}$, and $\BP(X=k)=0$ otherwise. Using the tail-sum formula for the expectation, we obtain 
        \begin{align}
        \BE[X]
        &= \sum_{k=0}^{\infty} \BP(X > k) \\
        &= \sum_{k=0}^{d-2}p^{k+1}\\
        &= \frac{p}{1-p}(1-p^{d-1}).
        \end{align} Thus, by Proposition~\ref{prop:comparisionWbp} the claim follows.
        
        We now extend to arbitrary $\Delta \Subset V$. Let $\Lambda \supset \Delta$ be the minimal connected set containing $\Delta$ and $\{o\}$. By Theorem~\ref{thm:monoStartTree}, we have $\phi(\lambda, \BT_d,o ) \leq \phi(\lambda,\BT_d,\Lambda)$. By the Markov property, $\phi(\lambda, \BT_d,o )>0$ implies $\phi(\lambda,\BT_d,\Delta)>0$, since
        \begin{equation}
\BP_{\lambda,\BT_d,\Delta}^{\ZIM}( \CA_{1} = \Lambda \text{ and }\eta_{1}(x) \neq R \text{ for any }x\in V)>0.
        \end{equation}        
        Conversely, $\phi(\lambda, \BT_d,o )=0$ implies $\phi(\lambda,\BT_d,\Lambda)=0$, since 
        \begin{equation}
        \BP_{\lambda,\BT_d,o}^{\ZIM}( \CA_{1} = \Lambda \text{ and }\eta_{1}(x) \neq R \text{ for any }x\in V)> 0. 
        \end{equation} 
        Therefore, by Theorem~\ref{thm:monoStartTree}, $\phi(\lambda, \BT_d,\Delta )=0$ as well. This establishes that the critical parameter value is the same for all finite initial configurations.
        
	\end{proof}

	Note that the above proof extends immediately to the ZIM on general trees,
	showing that the ZIM admits a zombie outbreak if and only if the
	corresponding branching process is supercritical. 
	Moreover, by combining Proposition~\ref{prop:comparisionWbp} and Proposition~\ref{prop:SIR_perc_equivalence},
	we immediately obtain the following:
	\begin{corollary}
		Let $G=(V,E)$ be a tree with root $o$. Then, for any $\lambda \in(0,\infty
		)$ there is a coupling $\widehat{\BP}$ of $\left( \eta_{t} \right)\sim\BP_{\lambda,G,
		o}^{\ZIM}$ and $\left( \xi_{t} \right)\sim\BP_{\lambda,G, o}^{\SIR}$ such
		that
		\begin{equation}
			\label{eq:prop_SIR_dom_ZIM}\widehat{\BP}\big( \CA_{\infty}\subset \cup_{t\geq
			0}\{ x\in V \colon \xi_{t}(x)=I\} \big) = 1.
		\end{equation}
	\end{corollary}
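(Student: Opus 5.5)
Combining Proposition~\ref{prop:comparisionWbp} and Proposition~\ref{prop:SIR_perc_equivalence}, I will show that on any realization of a common input, every edge of $E^{\ZIM}_{\lambda}$ is also in $E^{\SIR}_{\lambda}$. Since both forward reachable clusters are computed from $o$, this gives $\CC^{\ZIM}_{\lambda}\subseteq\CC^{\SIR}_{o,\lambda}$. The two propositions then identify these clusters with $\CA_\infty$ and $\cup_{t\geq0}\{x\colon\xi_t(x)=I\}$, respectively, which is the claim. On a tree rooted at $o$ only edges directed away from $o$ matter, because infection spreads outward along the unique paths. So it suffices to compare the outward edges $(x,y)$ with $x\in\partial_{k-1}G$ and $y\in\partial_k G$.

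The coupling itself is built node by node from the ZIM input. For a node $x$ with children $z_1,\dots,z_m$, the definition of $E^{\ZIM}_{\lambda}$ orders the children by the i.i.d.\ times $Y_{\{x,z\}}$, and $x$ bites the first $\gamma_x$ of them. Conditionally on the ordering, the event $\{(x,y)\in E^{\ZIM}_\lambda\}$ depends only on the rank of $Y_{\{x,y\}}$ and on $\gamma_x\sim\Geometric{p}$. On the SIR side I will realize $Y_{(x,z)}=\lambda^{-1}Y_{\{x,z\}}$, which are i.i.d.\ $\Exponential{\lambda}$ and induce the same ordering. It then remains to choose the recovery time $Y_x\sim\Exponential{1}$ so that $\#\{z\colon Y_{(x,z)}<Y_x\}\geq\min(\gamma_x,m)$ almost surely. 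By the definitions of $E^{\ZIM}_\lambda$ and $E^{\SIR}_\lambda$, this inequality gives the desired edge inclusion.

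So the core step is a one-node stochastic domination. Let $N=\#\{z\colon Y_{(x,z)}<Y_x\}$ and $M=\min(\gamma_x,m)$; I need $\BP(N\geq j)\geq\BP(M\geq j)=p^{j}$ for all $j\leq m$. Given the $j-1$ earliest arrivals, the next one comes at rate $(m-j+1)\lambda$, which is at least $\lambda$, while recovery comes at rate $1$. By memorylessness, the successive SIR ``successes'' therefore occur with conditional probabilities $\frac{(m-i)\lambda}{(m-i)\lambda+1}\geq\frac{\lambda}{\lambda+1}=p$. Hence
\begin{equation}
\BP(N\geq j)=\prod_{i=0}^{j-1}\frac{(m-i)\lambda}{(m-i)\lambda+1}\geq p^{j}.
\end{equation}
This is exactly the product computation that appears after the proof of Theorem~\ref{thm:ZIMsurvival}.

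To turn this into an almost sure coupling, I will simulate $Y_x$ through its competing-exponentials representation. At each of the successive stages, a uniform variable decides whether the next event is a bite or the recovery. I reuse the same uniforms for the ZIM Bernoulli trials that generate $\gamma_x$: the ZIM trial at a stage succeeds if the uniform is at most $p$, and the SIR trial succeeds if it is at most the larger threshold $\frac{(m-i)\lambda}{(m-i)\lambda+1}$. With this monotone quantile coupling, $N\geq M$ holds pointwise. The resulting $Y_x$ is then exactly $\Exponential{1}$ and independent of the ordering.

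The main obstacle is that the SIR construction requires $Y_x$ to be independent of the edge variables $Y_{(x,z)}$, yet I am building $Y_x$ from them together with the uniforms. I will handle this by first sampling the edge times and then constructing $Y_x$ from the conditional law of recovery given them. Given the edge times, this law is still $\Exponential{1}$, so independence is preserved and the quantile coupling is valid. Doing this independently at every node yields the required joint law, and hence the corollary.
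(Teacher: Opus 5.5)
Your reduction is right: restrict to outward edges, show $E^{\ZIM}_\lambda\subseteq E^{\SIR}_\lambda$ node by node, and use the tail bound $\BP(N\geq j)=\prod_{i=0}^{j-1}\frac{(m-i)\lambda}{(m-i)\lambda+1}\geq p^j$. This is what the paper's one-line ``combine Propositions~\ref{prop:comparisionWbp} and \ref{prop:SIR_perc_equivalence}'' relies on. The gap is that the almost sure coupling, as you set it up, cannot exist.

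You pin the SIR edge times to the ZIM input, $Y_{(x,z)}=\lambda^{-1}Y_{\{x,z\}}$, where $\gamma_x$ is independent of the $Y_{\{x,z\}}$. You then ask for $Y_x\sim\Exponential{1}$, independent of the edge times, with $N\geq\min(\gamma_x,m)$ a.s. Conditionally on the edge times, $\BP(N\geq 1\mid\cdot)=e^{-\min_z Y_{(x,z)}}$, with the minimum over the children of $x$, while $\BP(\gamma_x\geq1\mid\cdot)=p$. The inclusion $\{\gamma_x\geq 1\}\subseteq\{N\geq 1\}$ would therefore force $p\leq e^{-\min_z Y_{(x,z)}}$ a.s. This fails on $\{\min_z Y_{(x,z)}>\ln(1/p)\}$, which has positive probability.

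Your last paragraph does not repair this. Once the edge times are fixed, the conditional probability that the $(i+1)$-st bite precedes recovery is $e^{-(T_{(i+1)}-T_{(i)})}$, where $T_{(i)}$ are the order statistics, not $\frac{(m-i)\lambda}{(m-i)\lambda+1}$. If you use the former as thresholds, they drop below $p$ and the pointwise comparison breaks. If you use the latter, $Y_x$ is no longer independent of the edge times. The race probabilities are valid only when the edge times are \emph{not} conditioned on.

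The missing observation is that $E^{\ZIM}_\lambda$ depends on the edge variables only through the ordering of the children at each node. So $\gamma_x$ need only be independent of that ordering, not of the SIR edge magnitudes. Run the race at each node as a joint simulation:
\begin{itemize}
\item take i.i.d.\ uniforms $(V_i)_{i\geq0}$;
\item at stage $i$, draw a holding time $\Exponential{(m-i)\lambda+1}$ and bite iff $V_i\leq\frac{(m-i)\lambda}{(m-i)\lambda+1}$;
\item choose the bitten child uniformly among those remaining;
\item after recovery, complete the unbitten children's edge times by memorylessness.
\end{itemize}
This produces $((Y_{(x,z)})_z,Y_x)$ with exactly the SIR law. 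The induced ordering of the children is a uniform permutation independent of $(V_i)$ and of the holding times.

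Set $\gamma_x=\inf\{i\colon V_i>p\}$ and pass only this ordering to the ZIM. For instance, assign fresh i.i.d.\ $\Exponential{1}$ order statistics to the children in this order, so the ZIM input has the fully independent structure used in Proposition~\ref{prop:comparisionWbp}. Then $N\geq\min(\gamma_x,m)$ holds pointwise, exactly as in your fourth paragraph, and the rest of your argument goes through. Note that in this coupling $\gamma_x$ is correlated with the SIR edge magnitudes. That correlation is precisely what your construction ruled out, and it is what makes the almost sure domination possible.
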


	This corollary shows that on trees, the SIR model stochastically dominates the ZIM. This contrasts with Proposition~\ref{prop:ZIM_dom_SIR}, which establishes the reverse dominance for general graphs (under appropriate parameter restrictions).

	\section{Proofs of non-monotonicity}
	\label{sec:NmonoWRTgraph} In this section we present the proofs of Theorems \ref{thm:NonMon}
	and \ref{thm:NonMonBit} and provide examples showing that the ZIM is in general
	not monotone.

	\subsection{Non-monotonicity with respect to graph inclusion}
	\label{subs:nMonInGraph}

	Theorem~\ref{thm:NonMon}\ref{thm:NonMon-a} states that the ZIM is not monotone with respect to
	the graph structure. This is a significant difference to e.g.\ the SIR model, which
	is monotone in a rather general sense, as seen in Theorem
	\ref{thm:SIR_monotonicity}.

	\begin{proof}[Proof of Theorem~\ref{thm:NonMon}\ref{thm:NonMon-a}]
    Consider the graph $G_{1}=(V_{1},E_{1}
				)$ with $V_{1}= \{z,w,x,y\}$ and where $w$ is connected by an edge to both $z$ and $x$, and $x$ is also connected to $y$. Let $G_{2}
				=(V_{1},E_{2})$ be the graph having all the edges from $E_1$, but also having an edge between $z$ and $x$ so that $z$, $x$, and $w$ form a triangle. 
                The graphs $G_1$ and $G_2$ are illustrated in Figure~\ref{fig:G1_nonmonotone} and \ref{fig:G2_nonmonotone} respectively.
                
        The respective probabilities as in \eqref{eq:Non_mon_a)} can now be readily and explicitly computed. Indeed,
		letting $(W_{n})_{n \geq 0}$ denote the embedded discrete-time process on $\{
		S,I,R\}^{V}$ as e.g.\ introduced in Algorithm~\ref{alg:ZIM_coupling1}, and writing
		$p=\lambda/(1+\lambda)$ where $\lambda>0$, we have that
		\begin{align}
			\BP^{\ZIM}_{\lambda,G_1,x} & \big( \eta_{t}(y)=I \text{ for some }t>0 \big)  \\
			    = & \,\BP^{\ZIM}_{\lambda,G_1,x}\!\big(W_{1}(y) =I \big)                         \\
			    & + \BP^{\ZIM}_{\lambda,G_1,x}\!\big(W_{1}(w) =I, W_{2}(y)=I \big)             \\
			    & + \BP^{\ZIM}_{\lambda,G_1,x}\!\big(W_{1}(w) =I, W_{2}(z)=I, W_{3}(y)=I\big) \\
			    & + \BP^{\ZIM}_{\lambda,G_1,x}\!\big(W_{1}(w) =I, W_{2}(w)=R, W_{3}(y)=I\big) \\
			    = & \ \frac{p}{2}+ \frac{p^{2}}{4}+ \frac{p^{3}}{4}+ \frac{p^{2}(1-p)}{4};
		\end{align}
		and
		\begin{align}
			\BP^{\ZIM}_{\lambda,G_2,x} &\big( \eta_{t}(y)=I \text{ for some }t>0\big)  \\
			    = & \,\BP^{\ZIM}_{\lambda,G_2,x}\!\big(W_{1}(y) =I\big)  \\
			    & + 2\BP^{\ZIM}_{\lambda,G_2,x}\!\big(W_{1}(w) =I, W_{2}(y)=I\big)  \\
			    & + 2\BP^{\ZIM}_{\lambda,G_2,x}\!\big(W_{1}(w) =I, W_{2}(z)=I, W_{3}(y)=I\big)  \\
			    & + 2\BP^{\ZIM}_{\lambda,G_2,x}\!\big(W_{1}(w) =I, W_{2}(w)=R, W_{3}(z)=I,W_{4}(y)=I\big)  \\
			    & + 2\BP^{\ZIM}_{\lambda,G_2,x}\!\big(W_{1}(w) =I, W_{2}(w)=R, W_{3}(y)=I\big)  \\
			    = & \ \frac{p}{3}+ \frac{2p^{2}}{9}+ \frac{4p^{3}}{9}+ \frac{p^{3}(1-p)}{9}+ \frac{p^{2}(1-p)}{9}.
		\end{align}
		By elementary methods, comparing these two polynomials, it is easily seen
		that the claimed inequality is satisfied for all $p\in(0,1)$.
	\end{proof}

    \begin{figure}[tb]
	\centering
	\begin{minipage}{0.45\textwidth}
		\centering
		\input{figures/tikz/G1}
		\subcaption{Graph $G_1$ without the edge $\{z,x\}$.}
		\label{fig:G1_nonmonotone}
	\end{minipage}
	\hfill
	\begin{minipage}{0.45\textwidth}
		\centering
		\input{figures/tikz/G2}
		\subcaption{Graph $G_2$ with the additional edge $\{z,x\}$.}
		\label{fig:G2_nonmonotone}
	\end{minipage}
	\caption{Graphs illustrating non-monotonicity with respect to graph inclusion. Adding the edge $\{z,x\}$ to obtain $G_2$ from $G_1$ can decrease the survival probability of the ZIM.}
	\label{fig:nonmonotone_graphs}
    \end{figure}

	There are various ways to generalize this result. For instance, one may also consider
	increasing the graph by including an additional node, connected by an edge to
	the original graph. In general this also leads to reversed monotonicity as seen
	by the following simple example.

	Consider $G_{1}=(V_{1},E_{1})$ with $V_{1}= \{x,y\}$ and $E_{1}=\big\{\{x,y\}\big\}$ and
	$G_{2}=(V_{2},E_{2})$ with $V_{2}=V_{1} \cup\{z\}$ and $E_{2}=E_{1}\cup \big\{\{z,x\}\big\}$. Then, for any $\lambda>0$,
	\begin{align}
		\BP^{\ZIM}_{\lambda,G_1,x}\!\big( \eta_{t}(y)=I \text{ for some }t>0\big) > \BP^{\ZIM}_{\lambda,G_2,x}\!\big( \eta_{t}(y)=I \text{ for some }t>0\big) .
	\end{align}
	Indeed, the probability that the ZIM infects $y$ from $x$ in the graph $G_{1}$ is
	simply $p$, where $p=\lambda/(1+\lambda)$. Adding the node $z$ and connecting it
	to $x$, as in $G_{2}$, changes that probability to $\frac{p}{2}(1+p)$ which is
	strictly smaller than $p$ for any $p\in(0,1)$.

	Building on this basic example, we now turn to the proof of Theorem
	\ref{thm:NonMonBit}\ref{thm:NonMonBit-a}.

	\begin{proof}[Proof of Theorem~\ref{thm:NonMonBit}\ref{thm:NonMonBit-a}]
		Let $\lambda>1$. Then, by Theorem~\ref{thm:ZIMtrees}, we can find
		$d \in \BN$ such that
		$\BP_{\lambda,\BT_d,\Delta}^{\ZIM}( \CZ_{\infty}= \infty) >0$ for
		any $\Delta \Subset V$. 

		Now, set $G_{1}=\BT_{d}$ and for $n \in \BN$, consider the extension of
		$G_{1}$ where for each node $x$ of $G_{1}$ there are additional $n$ new
		nodes, say $x_{1},\dots,x_{n}$, connected by an edge to $x$ so that each of
		them is a leaf in the extended graph. Note that this graph, say $G_1^{(n)}$, is also a tree and thus
		Proposition~\ref{prop:comparisionWbp} still applies. In particular, letting $\widehat
		{\BP}$ denote the same coupling as in that proposition, and recalling \eqref{eq:dist_in_tree}, for any
		$x \in \partial_{k}G_{1}$, $k =0,1,2,\dots$, we have that
		\begin{align}
			\widehat{\BP}\! & \left( \exists y \in \partial_{k+1}G_{1} \colon (
			x,y) \in E_{\lambda}^{\ZIM}\right) \\
            & = \sum_{l \geq 1}\widehat{\BP}(\gamma_x =l) \widehat{\BP}\!\left(\sum_{z \in \partial_{k+1} G_1^{(n)}} 1_{\{Y_{\{x,y\}} <Y_{\{x,z\}}\}}<l \text{ for some } y \in \partial_{k+1}G_{1}  \right) 
		\end{align}
		where here $E_{\lambda}^{\ZIM} = E_{\lambda}^{\ZIM} (G_1^{(n)})$ is the edge set of the random digraph $D_{\lambda}
		^{\ZIM}$ corresponding to the ZIM on the extended graph $G_1^{(n)}$ initiated with only the
		root infected. Moreover, $\gamma_x$ is the number of fights that the zombie at $x$ wins, whereas
        \begin{equation}
        \widehat{\BP}\!\left(\sum_{z \in \partial_{k+1} G_1^{(n)}} 1_{\{Y_{\{x,y\}} <Y_{\{x,z\}}\}}<l \text{ for some } y \in \partial_{k+1}G_{1}  \right)
        \end{equation} 
        is the probability that at least one of the nodes in $\partial_{k+1}G_1$ becomes infected given that $\gamma_x=l$. Since this probability is bounded above by $\big(\frac{d}{n+d-l} \big)^{l}$, we find that 
        \begin{align} \label{eq:G2}\widehat{\BP}\!\left( \exists y \in \partial_{k+1}G_{1} \colon (
			x,y) \in E_{\lambda}^{\ZIM}\right) 
            \leq \sum_{l \geq 1}(1-p)p^{l}\!\left(\frac{d}{n+d-l} \right)^{l}.
		\end{align}
        Note that $\sum_{l \geq 1}(1-p)p^{l} = p<1$. Moreover, for each fixed $l$, we have $\big(\frac{d}{n+d-l} \big)^{l}\to 0$ as $n\to \infty$. Thus, by dominated convergence, we can choose $N\in\BN$ sufficiently large so that the probability in \eqref{eq:G2} is smaller than $\frac{1}{d}$ for all $n \geq N$. 
        Set $G_{2} = G_1^{(N)}$. Then, for any node $x$ in $\partial_{k} G_{1}$ with $k=0,
		1,\dots$, we have that
		\begin{equation}
			\widehat{\BE}[X_{x}] \leq d \cdot \frac{1}{d}= 1, \text{ where }\quad X_{x}
			= \sum_{y \in \partial_{k+1}G_1 }1_{\left\{(x,y) \in E_{\lambda}^{\ZIM}(G_2)\right\}}.
		\end{equation}
		Consequently, the forward reachable cluster $\CC_{\lambda}^{\ZIM}$ of $G_{2}$
		restricted to nodes in $G_{1}$ forms, as in the proof of Theorem
		\ref{thm:ZIMtrees}, a subcritical branching process. From this we conclude
		that $\BP_{\lambda,G_2,o}^{\ZIM}( \CZ_{\infty}= \infty) =0$. Moreover,
		as in the proof of Theorem~\ref{thm:ZIMtrees}, this extends to any $\Delta \Subset V$  by Theorem~\ref{thm:monoStartTree} since
		$\widehat{\BP}(\Delta \subset \CC_{\lambda}^{\ZIM})>0$.
	\end{proof}

	\subsection{Non-monotonicity with respect to initial configuration}

	In this subsection we present the proof of Theorem~\ref{thm:NonMon}\ref{thm:NonMon-b}. That is,
	we construct a finite and connected graph under which the ZIM is in general
	not monotone with respect to the starting configuration.

    For $k,m,n \in \BN$ with $k+m< n$,  denote by 
	$G(k,m,n)=(V,E)$ the graph  with node set 
    \begin{equation} V=\{z_{1}, \dots, z_{k}, w_{1},\dots,w_{n},x,y\}\end{equation} and
	with edge set $E$ given by the following description: 
	\begin{enumerate}
		\item All nodes in $\{w_{1},\dots,w_{n}\}$ are connected by an edge so that they form a clique. 

		\item For $i\in\{ 1, \dots, k \}$, $z_{i}$ is connected to $w_{i}$ by an edge.

		\item There is an edge from each of $\{w_{n-m+1},\dots,w_{n}\}$ to $x$.

		\item There is an edge from $x$ to $y$.
	\end{enumerate}
	For clarity, we also define the alternative labeling of $v_{i} = w_{n-i+1}$
	for $i=1, \dots, m$. See Figure~\ref{fig:G(k,m,n)-graph} for an illustration of
	the graph $G(k,m,n)$.

	\begin{figure}[tb]
		\centering
		\input{figures/tikz/G-kmn}
		\caption{An illustration of the graph $G(k, m, n)$.}
		\label{fig:G(k,m,n)-graph}
	\end{figure}

	In the following proof, we will consider the ZIM on $G(k,m,n)$ with initial configurations
	given by 
	$\Delta_{1}\coloneqq \{ z_{1}, \dots, z_{k} \}$ and
	$\Delta_{2}\coloneqq \Delta_{1}\cup\{ x \}$, respectively. We denote by
	$\big(\eta_{t}^{(1)}\big)$ and $\big(\eta_{t}^{(2)}\big)$ the processes on $G(k,m,n)$ with
	bite rate $\lambda$ and initial configuration $\Delta_{1}$ and $\Delta_{2}$, respectively.
	We also write $\BP_{i}$ as a shorthand notation for
	$\BP_{\lambda,G(k,m,n),\Delta_i}^{\ZIM}$, $i=1,2$. 
    
    As it will be helpful
	in the following description, we consider the $\big(\eta_{t}^{(1)}\big)$-process
	constructed via Algorithm~\ref{alg:ZIM_coupling1} and the random variables
	$(Y_{e})_{e\in \overrightarrow{E}}$ and $(U_{e})_{e \in \overrightarrow{E}}$
	as described therein.
Moreover, for the proof of Theorem~\ref{thm:NonMon}\ref{thm:NonMon-b}, and also the proof of Theorem~\ref{thm:NonMon}\ref{thm:NonMon-c}
	presented in the following subsection, it will be convenient to use some specific
	terminology that we introduce next. 
    
   Firstly, we refer to the nodes $\{ w_{1},\dots, w_{n} \}$ together with the edges connecting these nodes simply as $K_{n}$. 
   In addition, we say that an infected node $z$ \emph{attempts to infect} a susceptible node $w\sim z$ at the time $\tau_z+ (\lambda+1)^{-1}Y_{(z,w)}$. The attempt is \emph{viable} if $U_{(z,w)}\leq p$, where $p=\lambda/(\lambda+1)$, and in that case \emph{the attempt succeeds} if $\tau_{w}= \tau_z+(\lambda+1)^{-1}Y_{(z,w)}$. In the latter event, when  $w \in K_n$ and $z \notin K_n$, we say that the node $w$ \emph{attempts
	to ignite} $K_{n}$ at time $\tau_w$. Moreover, the attempt to ignite $K_{n}$ \emph{terminates}
	at the first time $t> \tau_{w}$ for which the set $\partial_{SI}(\eta_{t})$ no longer
	contains any edges in $K_{n}$. That is, either no node of $K_{n}$ is
	susceptible, in which case the attempt to ignite $K_{n}$ was \emph{successful},
	or no node of $K_{n}$ is infected (and some node is susceptible), in which case it \emph{failed}. 
    
    Before presenting the detailed proof, which involves several technical steps, we outline the underlying idea, which builds on relatively simple observations. 
    Firstly, by utilizing our
	estimates for the ZIM on the complete graph, we will show that, with probability
	close to $1$ when $k$ is large and $n \gg k$, the process initiated from
	$\Delta_{1}$ will manage to ignite $K_{n}$ and thereafter infect $x$. 
    Consequently,
	since at that time none of the nodes $\{v_{1},\dots,v_{m}\}$ are
	in the susceptible state, the node $y$ will eventually
	become infected for this process with probability close to $p$. On the other hand, for $m \gg k$, the
	process initiated from $\Delta_{2}$ will with probability close to $1$ first
	attempt to infect one of the nodes $\{v_{1},\dots,v_{m} \}$ from $x$. In that case,
	in order to eventually infect $y$, it necessarily has to win the first fight, which happens
	with probability $p$. Therefore, the probability that $y$ eventually becomes
	infected will be of the order $p^{2}$, yielding the promised non-monotonicity.

	\begin{proof}[Proof of Theorem~\ref{thm:NonMon}\ref{thm:NonMon-b}]
		Fix $\lambda>1$ and therefore also $p=\lambda/(\lambda+1) \in (1/2,1)$.
		Furthermore, fix $\epsilon>0$ such that
		\begin{equation}
			p+\frac{3\epsilon}{8}< (1-\epsilon)^{10}.
		\end{equation}
		In the remainder of this proof we will argue that the parameters $k,m,n\in \BN$
		can be chosen such that
		\begin{align}
			\label{eq:keyInequ1} & \BP_{1}( \tau_{y}<\infty) \geq p(1-\epsilon)^{10};  \\
			\label{eq:keyInequ2} & \BP_{2}( \tau_{y}<\infty) \leq p (p + 3\epsilon/8).
		\end{align}
		From these inequalities and our choice of $\epsilon$ the conclusion of the theorem
		immediately follows.

		We now detail how to obtain the inequality in \eqref{eq:keyInequ1}, which is
		the more technical part of the argument. For this, consider the event
		\begin{equation}
			H = \left\{\tau_{x}<\infty \text{ and }\{v_{1},\dots,v_{m}\} \in \CA_{\tau_x}
			\right\},
		\end{equation}
		i.e.\ $x$  eventually becomes infected and, when this happens, none of the
		nodes connected to $x$ in $K_{n}$ are in the susceptible state. We next specify
		values for $k$, $m$ and $n$, and argue below that with these parameters
		\begin{equation}
			\label{eq:H}\BP_{1}(H) \geq (1-\epsilon)^{10},
		\end{equation}
		and this implies that \eqref{eq:keyInequ1} holds since
		$\BP_{1}(\tau_{y}<\infty \mid H) = p$.

		\begin{enumerate}
			\item \label{item:initnonmon_01}Let $Z= \sum_{i=1}^{k} 1_{\{U_{(z_i,w_i)}\leq p\}}$, which gives the
				number of potentially viable attempts to infect $K_{n}$ from a node in
				$\Delta_{1}$. Note that $Z\sim \Binomial{k}{p}$. We fix the parameter $k$ of
				$G(k,m,n)$ so large that
				\begin{equation}
					\label{eq:req_k}\BP_{1}\!\left(Z \geq \frac{\ln(\epsilon)}{\ln((1-p)/p)}
					\right) \geq (1-\epsilon) .
				\end{equation}

			\item \label{item:initnonmon_02}Next, we let $m=m(k)$ be so large that
				\begin{equation}
					\label{eq:req_m}1-(1-p)^{(2p-1)m/8}>1-\epsilon
				\end{equation}
				holds. Moreover, by possibly increasing $m$, we also assume that
				\begin{align}
					 & \label{eq:nonmon_init_concluding_ineq1}e^{-m(2p-1)^2/32} \leq \epsilon;  \\
					 & \label{eq:nonmon_init_concluding_ineq2}\frac{k}{k+m+1} <\frac{\epsilon}{8}\text{ and }\frac{1}{m}\leq \frac{\epsilon}{8}.
				\end{align}
				We note that the bounds in \eqref{eq:nonmon_init_concluding_ineq2} will
				only be needed later when we argue that inequality \eqref{eq:keyInequ2} holds.

			\item \label{item:initnonmon_03}Given our fixed $k$ and $m$ from the previous steps, we now specify
				how to tune $n=n(k,m)$:

				\begin{enumerate}
					\item \label{item:initnonmon_03a}By applying Lemma~\ref{lem:ZIMfast1}, we can fix $N$ so
						large that, for all $n \geq N$, it holds that
						\begin{align}
							\label{eq:ZIMlit1} & \BP_{\lambda,K_{n},w_1}^{\ZIM}\!\left( \sqrt{n}< \tau_{Z}<\infty \right) < \epsilon/k;  \\
							\label{eq:ZIMlit2} & \BP_{\lambda,K_n,w_1}^{\ZIM}\!\left( Z_{l} \leq l(2p-1)/2 \text{ for some }l \in[ \sqrt{n}, \tau_{dt}] \mid \tau_{dt}=\tau_{S} \right) \leq \epsilon/k.
						\end{align}
						Moreover, by taking $N$ perhaps even larger, we may also assume that
						for all $n\geq N$, it holds that
						\begin{equation}
							\label{eq:ZIMlit3}1- \left(1 - \frac{k+m}{n-k\sqrt{n}}\right)^{\sqrt{n}}
							< \epsilon/k.
						\end{equation}

					\item \label{item:initnonmon_03b}Let $\delta>0$ be so small that
						\begin{equation}
							\label{eq:ZIMstart1}\BP_{1}\!\left(Y_{(i+1)}-Y_{(i)}>\delta(\lambda
							+1) \text{ for all }i=1,\dots,k\right) \geq 1-\epsilon,
						\end{equation}
						where $Y_{(1)}<Y_{(2)}<\dots<Y_{(k)}$ is the order sequence of
						$(Y_{(z_i,w_i)})_{i=1,\dots,k}$, and that
						\begin{equation}
							\label{eq:ZIMstart3}\BP_{1}\!\left(Y_{(v_i,x)}>\delta (\lambda+1) \text{
							for all }i=1,\dots,m\right) \geq 1-\epsilon.
						\end{equation}
						Then, using Lemma~\ref{lem:ZIMfast2} and by possibly increasing $N$,
						we can guarantee that, for all $n\geq N$,
						\begin{equation}
							\label{eq:ZIMstart2}\BP_{\lambda,K_{n},w_1}^{\ZIM}\left( \tau_{ct}
							> \delta/2\right) < \epsilon/k,
						\end{equation}
						i.e.\ the process on $K_{n}$ terminates within time $\delta/2$ with probability
						at least $1-\epsilon/k$.
                        
                        \item \label{item:initnonmon_03c}By Lemma~\ref{lem:ZIM_onCG}, by increasing $N$ further if necessary, we can assume that for all $n\geq N$
                        \begin{equation}\label{eq:sqrt-bound}
                        \BP_{\lambda,K_n,w_1}^{\ZIM}( \tau_{dt} = \tau_{S}) \geq \frac{1}{2}\frac{\lambda-1}{\lambda}.
                        \end{equation}
				\end{enumerate}
		\end{enumerate}
		Consider now the process $\big(\eta_{t}^{(1)}\big)$ on $G=G(k,m,n)$ with $k$ and $m$
		as specified by Step~\ref{item:initnonmon_01} and Step~\ref{item:initnonmon_02} above, 
        and with $n\geq 2N$ (where $N$ is specified in Step~\ref{item:initnonmon_03}) such that 
        \begin{equation}
            n-k\sqrt{n}\geq \frac{n}{2}.
        \end{equation} 
        Then, utilizing the above estimates, we argue that \eqref{eq:H} holds for the ZIM on $G$. 

		Firstly, from Step~\ref{item:initnonmon_03b}, the inequality in \eqref{eq:ZIMstart1} implies that,
		with probability at least $1-\epsilon$, the time intervals between consecutive
		attempts to infect $K_{n}$ from $\Delta_{1}$, of which there are at most $k$,
		are all larger than the quantity $\delta$. Independently of this event, with
		probability at least $1-\epsilon$, the inequality \eqref{eq:ZIMstart3} implies
		that the time it takes from when a node in $\{v_{1},\dots,v_{m}\}$ becomes infected until
		it attempts to spread its infection to $x$, is bounded from below by the
		same quantity $\delta$.  
        Thus, the intersection of these events, say $H_1$, therefore happens with
		probability at least $(1-\epsilon)^{2}$.

        Secondly, conditional on the event $H_1$, with probability at least $1-\epsilon$ each of the
		at most $k$ attempts to ignite $K_{n}$ terminates within time $\delta/2$. 
This follows by  \eqref{eq:ZIMstart2}
		and a basic union bound since, under $H_1$, within a time-span of $\delta$ after the initiation of such an attempt none of the nodes in $K_n$ attempts to infect nodes outside of $K_n$.
        Now, if the first such attempt to ignite $K_n$ fails, the inequality \eqref{eq:ZIMlit1} in Step~\ref{item:initnonmon_03a} implies that, with probability at
		least $1-\epsilon/k$, at least $n-\sqrt{n}$ of the nodes in $K_{n}$ are still  susceptible. Note that this set of susceptibles form a clique of size $n-\sqrt{n}$. In fact, since $n-k\sqrt{n}\geq N$ and there can be at most $k$ failed attempts, we can repeat this argument for any subsequent failed attempt to ignite $K_n$. 
        Hence, with probability at least $1-\epsilon$, this leaves us with at least $n-k\sqrt{n}$ susceptible nodes in $K_n$ after the last failed attempt to ignite $K_n$. Denoting the intersection of the events described in this paragraph by $H_2$, we then have $\BP_1(H_2 | H_1) \geq (1-\epsilon)^2$.
        
        Now, conditional on the intersection of the events $H_1$ and $H_2$, we claim that in such a failed attempt to ignite $K_n$, with probability at least $1-\epsilon/k$, no  node connected to $\Delta_{1}$ or $x$, other than possibly the node attempting to ignite $K_n$ changes state before the attempt terminates. This follows by      
        \eqref{eq:ZIMlit3}. 
        
        Indeed, using the symmetry of the
		ZIM on a complete graph, the number of  nodes that become infected in such an attempt is
		hypergeometrically distributed with population size that is always greater than
		$n-(k-1)\sqrt{n}$ and having at most $\sqrt{n}$ many trials. Therefore, the
		probability that a node in $K_{n}$ connected to $\Delta_{1}$ or $x$ becomes
		infected in one of these attempts is bounded by
		\begin{equation}
		1- \left(1 - \frac{k+m}{n-k\sqrt{n}}\right)^{\sqrt{n}}
							< \epsilon/k.
		\end{equation}
        Hence, by a union bound, with probability at least $1-\epsilon$ none of the nodes connected to $\Delta_1$ or $x$ become infected during the time span until termination of any failed attempt to ignite $K_n$. 
        
        Using this, the assumptions
		on $k$ in Step~\ref{item:initnonmon_01} and on $n$ in Step~\ref{item:initnonmon_03c}, we next argue that with probability at least $(1-\epsilon)^3$, the
		$\big(\eta_{t}^{(1)}\big)$-process eventually ignites $K_{n}$. Indeed, by \eqref{eq:sqrt-bound} each such attempt is successful with probability
		bounded from below by $(\lambda-1)/2\lambda$, which equals $(2p-1)/2p$. Furthermore, by \eqref{eq:req_k},
		there are at least $2 \ln(\epsilon)/\ln((1-p)/p)$ viable attempts with probability
		at least $1-\epsilon$. Therefore, denoting by $H_{3}$ the event that at least
		one succeeds, we have that
		\begin{align}
			\BP_{1}(H_{3} \mid H_{1} \cap H_{2}) & \geq (1-\epsilon)^2\left(1-\left(1-\frac{2p-1}{2p}\right)^{\ln(\epsilon)/\ln(2(1-p))}\right) \\
			    & = (1-\epsilon)^2\left(1-\left(\frac{1}{2p}\right)^{\ln(\epsilon)/\ln((1-p)/p)}\right)  \\
			    & = (1-\epsilon)^{3}.
		\end{align}
        
		Conditional on all the previously defined  events, \eqref{eq:ZIMlit2} implies that, with probability
		at least $1-\epsilon$, if a node in $\{w_{1},\dots,w_{k}\}$ succeeds to ignite
		$K_{n}$, then at the time of termination there are at least $(n-k\sqrt{n})(2p-1)/2$ infected nodes
		in $K_{n}$. In particular, by our assumption on $n$, this is larger than $n(2p-1)/4$. We claim that then, with probability at least $1-\epsilon$, at least $m(2p-1)/8$ of the nodes $\{v_{1},\dots,v_{m}\}$ are in the infected state. 
        Indeed, again by symmetry of the ZIM on the complete graph, the number of
		these nodes that are infected is hypergeometrically distributed, but now with population size trivially bounded from above by $n$, with more than $n(2p-1)/4$ success states, and having $m$ trials. Therefore, the claim follows by Hoeffding's inequality \cite[Theorem 1 and the discussion in Section 6]{hoeffding_probability_1963} 
        and by our condition on $m$ in \eqref{eq:nonmon_init_concluding_ineq1}
		from Step~\ref{item:initnonmon_02}. 

        We next show that the infection also reaches node $x$. 
		The assumption on $m$ in \eqref{eq:req_m} ensures that
		when $K_{n}$ is ignited, more than $(2p-1)m/8$ of the nodes in
		$K_{n}$ connected to $x$ become infected. Consequently, with probability at least $1-\epsilon$, the node $x$ becomes infected by one of them. Therefore, denoting by $H_{4}$ the intersection of these events, we have that
        \begin{equation}
            \BP_{1}(H_{4} \mid H_{1} \cap H_{2}\cap H_3) \geq (1-\epsilon)^3. 
        \end{equation}
        Now, noting that
		$H_{1}\cap H_{2} \cap H_{3} \cap H_{4} \subset H$, we conclude that
		\begin{align}
			\BP_{1}(H) & \geq \BP_{1}(H_{4} \mid H_{3} \cap H_{2} \cap H_{1}) \BP_{1}(H_{3}\mid H_{2} \cap H_{1}) \BP_{1}(H_{2}\mid H_1) \BP_{1}(H_{1})  \\
			    & \geq (1-\epsilon)^3\cdot (1-\epsilon)^{3} \cdot (1-\epsilon)^{2} \cdot (1-\epsilon)^{2} \geq (1-\epsilon)^{10}.
		\end{align}
		We now turn to the proof of inequality \eqref{eq:keyInequ2}. Observe that
		when the $\big(\eta_{t}^{(2)}\big)$-process starts at time $t=0$, there are $k+m+1$ active
		edges and therefore, as seen by the construction via Algorithm
		\ref{alg:ZIM_coupling_geom}, it is a uniform random sampling among them that
		decides which of them gets activated first. Letting $B$ be the event that
		the first edge that gets activated is connected to $\Delta_{1}$, we can partition
		the probability of $y$ successfully getting infected thus
		\begin{align}
			\label{eq:p21}\BP_{2}( \tau_{y} < \infty) & = \BP_{2}( \tau_{y} < \infty \mid B )\BP_{2}(B) + \BP_{2}\!\left( \tau_{y} < \infty \,\middle|\, B^{\complement} \right)\BP_{2}\!\left(B^{\complement}\right) \\
			    & = \BP_{2}( \tau_{y} < \infty \mid B)\frac{k}{k+m+1}+ \BP_{2}\!\left( \tau_{y} < \infty \,\middle|\, B^{\complement} \right)\frac{m+1}{k+m+1}.
		\end{align}
		Furthermore, we have that
		\begin{align}
			\label{eq:p22}
            \BP_{2}\! & \left( \tau_{y} < \infty \,\middle|\, B^{\complement} \right) \\
            &=\BP_{2}\!\left( \{W_{1}(y)= I\} \cup  \{W_{1}(y)= S,W_{n}(y)=I \text{ for some }n\geq 2\} \,\middle|\, B^{\complement} \right)  \\
			    & \leq \BP_{2}\!\left( W_{1}(y)=I \,\middle|\, B^{\complement} \right) + \BP_{2}\!\left(W_{1}(y)=S, \gamma_{x}\geq 2 \,\middle|\, B^{\complement} \right) \\
			    & \leq p\frac{1}{m+1}+ p^{2}\frac{m}{m+1}.
		\end{align}
		Hence, by combining \eqref{eq:p22} and \eqref{eq:p21}, by the assumptions on
		$m$ in \eqref{eq:nonmon_init_concluding_ineq2}, we find that
		\begin{equation}
			\BP_{2}( \tau_{y} < \infty) \leq \frac{\epsilon}{8}+ p\frac{\epsilon}{8}
			+ p^{2}.
		\end{equation}

		Therefore, since $p>1/2$ it follows that the inequality \eqref{eq:keyInequ2}
		holds and by this we conclude the proof.
	\end{proof}

	\subsection{Non-monotonicity with respect to the bite rate}
	\label{sec:non-mon_biterate}

	We now present the proof of Theorem~\ref{thm:NonMon}\ref{thm:NonMon-c}, which states that the
	ZIM is in general not monotone with respect to the bite rate $\lambda$. For this we construct a graph $G$ using a similar approach to the previous subsection, with modifications we now describe.

	For $m\leq n$ in $\BN$, denote by $G(m,n)$ the graph with node set $\{w_{1},\dots
	,w_{n},x,y,u\}$ and with the edge set given by the following description:
	\begin{enumerate}
		\item All nodes in $\{w_{1},\dots,w_{n}\}$ are connected by an edge, thus forming a clique.

		\item There is an edge from each node in $\{w_{1},\dots,w_{m}\}$ to $x$.

		\item There is an edge from $u$ to $x$.

		\item There is an edge from $x$ to $y$.
	\end{enumerate}

	An illustration of the graph is provided in Figure~\ref{fig:G(m,n)-graph}. 
	The final graph $G$ will be an extension of $G(m,n)$ with the parameters $m$ and
	$n$ suitably chosen; see below for details. However, the underlying idea of
	the proof of Theorem~\ref{thm:NonMon}\ref{thm:NonMon-c} can be gleaned
    from the following
	lemma.

	\begin{lemma}
		Let $\Delta_{1}=\{w_{1},\dots,w_{n},x\}$ and $\Delta_{2}= \{x,u\}$. 
		Then, for any $\lambda>0$ and with $p=\lambda/(1+\lambda)$, it holds that 
		\begin{align}
			 & \BP_{\lambda,G(m,n),\Delta_2}^{\ZIM}( \tau_{y} <\infty) \leq \frac{1}{m+1}p + \frac{m}{m+1}p^{2}; \label{eq:nonmonBIT_basic1}\\
			 & \BP_{\lambda,G(m,n),\Delta_1}^{\ZIM}( \tau_{y} <\infty) = \frac{p}{2}\left(1+p \right);        \label{eq:nonmonBIT_basic2}\\
			 & \BP_{\lambda,G(m,n),\Delta_1 \cup \Delta_2}^{\ZIM}( \tau_{y} <\infty) = p. \label{eq:nonmonBIT_basic3}
		\end{align}
	\end{lemma}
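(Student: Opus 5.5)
The plan is to use the construction via Algorithm~\ref{alg:ZIM_coupling_geom} (equivalently Algorithm~\ref{alg:ZIM_coupling_complete}) for each of the three initial configurations. In each case, first identify the active SI-boundary $\SIboundary(W_0)$. Then use two facts from that construction: the next fight takes place along an edge chosen uniformly at random from $\SIboundary(W_{n-1})$, and the zombie at $x$ wins exactly $\gamma_x\sim\Geometric{p}$ fights before it loses one and is killed. The key structural observation is that in $G(m,n)$ the only neighbours of $y$ and of $u$ are (apart from $y$--$x$) the node $x$. Hence $y$ can only ever be infected by $x$, and only while $x$ is still infected.

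For \eqref{eq:nonmonBIT_basic3}, every node except $y$ is initially infected, so $\SIboundary(W_0)=\{(x,y)\}$. The single fight is won by the zombie with probability $p$, and after it the process stops either way. This gives exactly $p$.

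For \eqref{eq:nonmonBIT_basic2}, the initially susceptible nodes are $y$ and $u$. Every $w_i$ has only infected neighbours (the clique and $x$), so $\SIboundary(W_0)=\{(x,y),(x,u)\}$ and $\SIboundary$ only ever involves $x$. I then split on the first edge drawn, each with probability $1/2$:
\begin{itemize}
\item If the first edge is $(x,y)$, then $y$ is infected if and only if $\gamma_x\geq 1$, which has probability $p$.
\item If the first edge is $(x,u)$, then $x$ must win it and also the subsequent fight with $y$, i.e.\ $\gamma_x\geq 2$, which has probability $p^2$.
\end{itemize}
A loss kills $x$ and ends the dynamics. Summing gives $\frac{p}{2}+\frac{p^2}{2}=\frac p2(1+p)$.

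For \eqref{eq:nonmonBIT_basic1}, node $u$ is infected but its only neighbour $x$ is also infected, so $u$ is inert. The active boundary is $\{(x,w_1),\dots,(x,w_m),(x,y)\}$, of size $m+1$. I would argue exactly as in \eqref{eq:p22}:
\begin{itemize}
\item With probability $\frac{1}{m+1}$ the first fight is along $(x,y)$, and $y$ is infected with probability $p$.
\item Otherwise the first fight is with some $w_i$, so $y$ is still susceptible after step $1$. For $x$ to infect $y$ later, the zombie at $x$ must win at least two fights, so the event is contained in $\{\gamma_x\geq 2\}$, which has probability $p^2$.
\end{itemize}
Only an upper bound is claimed, so I do not need to track the clique dynamics triggered by infecting some $w_i$. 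Those dynamics can only kill $x$ earlier (via a susceptible $w_j$) or have no effect on $y$, and the containment in $\{\gamma_x\geq2\}$ remains valid. Since $\gamma_x$ is independent of the uniform edge choice, we obtain $\frac{1}{m+1}p+\frac{m}{m+1}p^2$.

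The only mildly delicate point is justifying, in \eqref{eq:nonmonBIT_basic1}, that the $\gamma_x$-bound holds regardless of what happens in $K_n$. This amounts to noting that $\nu_x$ counts all of $x$'s wins, and $y$ requires a win occurring after at least one earlier win.
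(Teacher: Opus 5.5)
Your proof is correct and follows essentially the same route as the paper. Both arguments decompose on the first edge, drawn uniformly from the initial active SI-boundary (of sizes $1$, $2$ and $m+1$ respectively); both compute \eqref{eq:nonmonBIT_basic3} and \eqref{eq:nonmonBIT_basic2} exactly, and both bound the case in \eqref{eq:nonmonBIT_basic1} where the first fight is with some $w_i$ by the probability $p^2$ that $x$ wins two fights. The only difference is cosmetic: you encode $x$'s fight outcomes through $\gamma_x$ from Algorithm~\ref{alg:ZIM_coupling_geom}, whereas the paper writes the second event as $\{W_1(x)\neq R,\, W_1(y)=S,\, U_{(x,y)}\leq p\}$ using the per-edge uniforms of Algorithm~\ref{alg:ZIM_coupling1}.
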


	\begin{figure}[tb]
		\centering
		\input{figures/tikz/G-mn}
		\caption{An illustration of the graph $G(m, n)$.}
		\label{fig:G(m,n)-graph}
	\end{figure}

	\begin{proof}
		As in the proof of Theorem~\ref{thm:NonMon}\ref{thm:NonMon-a}, let $(W_{n})_{n \geq 0}$
		denote the embedded discrete-time process of the ZIM on $\{S,I,R\}^{V}$. Then we immediately have that
		\begin{align}
			 & \BP_{\lambda,G(m,n),\Delta_1 \cup \Delta_2}^{\ZIM}( \tau_{y} <\infty) = \BP_{\lambda,G(m,n),\Delta_1 \cup \Delta_2}^{\ZIM}\!\big( W_{1}(y)=I \big) = p,
		\end{align}
		i.e.\ \eqref{eq:nonmonBIT_basic3} holds, and also that
		\begin{align}
			\BP_{\lambda,G(m,n),\Delta_1}^{\ZIM} & ( \tau_{y} <\infty) \\& = \BP_{\lambda,G(m,n),\Delta_1}^{\ZIM}\!\big( W_{1}(y)=I \big) + \BP_{\lambda,G(m,n),\Delta_1}^{\ZIM}\!\big( W_{1}(u)=I,W_{2}(y)=I \big) \\
			    & = \frac{p}{2}\left(1+p \right),
		\end{align}
		which gives \eqref{eq:nonmonBIT_basic2}. Lastly, we conclude \eqref{eq:nonmonBIT_basic1}
		by noting that
		\begin{align}
			 \BP_{\lambda,G(m,n),\Delta_2}^{\ZIM} & ( \tau_{y} <\infty)  \\
			 & = \BP_{\lambda,G(m,n),\Delta_2}^{\ZIM}\!\big( W_{1}(y)=I \text{ or }W_{1}(y)=S,W_{n}(y)=I \text{ for some }n\geq 2 \big)  \\
			 & \leq \BP_{\lambda,G(m,n),\Delta_2}^{\ZIM}\!\big( W_{1}(y)=I \big) + \BP_{\lambda,G(m,n),\Delta_2}^{\ZIM}\!\big(W_{1}(x)\neq R,W_{1}(y)=S, U_{x,y}\leq p \big) \\
			 & = \frac{1}{m+1}p + \frac{m}{m+1}p^{2}.
		\end{align}
	\end{proof}

	Before describing how the above lemma comes into play, we first present
	another graph $G(m,n,l_{3},l_{4},k)$, with a schematic illustration in Figure~\ref{fig:G(m,n,l1,l2,k)-graph}.
	Here, $l_{3},l_{4},k \in \BN$ are additional parameters, assumed to always satisfy 
    \begin{equation} \label{eq:n_condition}
        n\geq m+k (4\cdot3^{l_4-1}+ 3\cdot 2^{l_3-1}).
    \end{equation}
    The parameters will eventually be appropriately
	tuned to give the aforementioned graph $G$. The graph is constructed as follows:
	\begin{enumerate}
		\item The graph $G(m,n,l_{3},l_{4},k)$ contains the edges and nodes of $G(m,n
			)$. We refer to the clique $
            \{w_1,\dots,w_n\}$ as $K_n^{(4)}$.

		\item Additionally, the graph contains the nodes $z_{0},z_{1},\dots,z_{n-1}$. These $n$ nodes are all pairwise connected by edges to form a clique, i.e.\ a copy of $K_{n}$,
			denoted $K_{n}^{(0)}$.

        \item There are $k$ copies of the tree $\BT_4$, truncated at distance $l_{4}$, connected to $K_{n}^{(0)}$ with $z_{i}$, $i=1,\dots, k$, as their root nodes.

        \item There are $k$ copies of the tree $\BT_3$, truncated at distance $l_{3}$, connected to $K_{n}^{(0)}$ with $z_{i}$, $i= k+1,\dots,2k$, as their root nodes.

		\item The leaves of the $k$ truncated copies of $\BT_4$ correspond to distinct nodes in $K_{n}
			^{(4)}$ that are not connected to $x$.
            Note that this is possible by \eqref{eq:n_condition}.

		\item The leaves of the $k$ truncated copies of $\BT_3$ correspond to distinct nodes in another copy of $K_{n}$, denoted by $K_{n}^{(3)}$, which
			is also contained in the graph. This is again possible by \eqref{eq:n_condition}.

		\item The node $u$ is connected by edges to $m$ distinct nodes in $K_{n}^{(3)}$.
            By \eqref{eq:n_condition}, these nodes can be chosen such that none of them
			are leaves of the $k$ copies of $\BT_3$.
	\end{enumerate}

	\begin{figure}[tb]
		\centering
		\input{figures/tikz/G_mnllk}
		\caption{A schematic illustration of the graph $G(m, n, l_{3}, l_{4}, k)$
		with $k=1$, $m=5$, and $l_{4} \gg l_{3}$.}
		\label{fig:G(m,n,l1,l2,k)-graph}
	\end{figure}

	We now sketch the general idea of the proof of Theorem~\ref{thm:NonMon}\ref{thm:NonMon-c}.
	Firstly, we fix $\lambda_{1}<\lambda_{2}$ such that the ZIM with bite rate $\lambda
	_{1}$ admits a zombie outbreak on $\BT_4$, but not on $\BT_3$, whereas the ZIM
	with bite rate $\lambda_{2}$ admits a zombie outbreak on both of these graphs.
	Then, we consider the ZIM on $G(m,n,l_{3},l_{4},k)$ with bite rate $\lambda_{1}$
	and $\lambda_{2}$, respectively, both initiated with only the node $z_{0}$ infected and
	in the following denoted by $\big(\eta_{t}^{(1)}\big)$ and $\big(\eta_{t}^{(2)}\big)$,
	respectively.

	Both of these processes attempt to ignite the clique $K_{n}^{(0)}$ and, for $n$
	large, by Theorem~\ref{thm:ZIMcompelete}, the probability that they succeed is
	close to $(2p_{i}-1)/p_{i}$, where $p_{i} = \lambda_{i}/(1+\lambda_{i})$ for
	$i\in \{1,2\}$. In that event, by suitably tuning the parameters $l_{3},l_{4}$
	and $k$, we can guarantee that both processes manage to spread to the node $x$ with probability close to $1$. However, they do so along different paths of $G$, as we describe
	below. Conversely, if they fail to ignite $K_{n}^{(0)}$, then for $n$ large, with high
	probability none of the nodes $z_{1},\dots,z_{2k}$ ever become infected, and hence neither does $y$.   

    By choosing $l_{4} \gg l_{3}$, we ensure the $\big(\eta_{t}^{(2)}\big)$-process, upon igniting $K_{n}^{(0)}$, reaches $K_n^{(3)}$ via the $\BT_3$-trees long before reaching $K_n^{(4)}$ via the $\BT_4$-trees.
    In particular, if $K_n^{(0)}$ is ignited, this process will infect $u$ from $K_{n}^{(3)}$ with probability close to $1$ and then, with probability
	close to $p_{2}$, end up in a situation resembling \eqref{eq:nonmonBIT_basic1}.
	However, if the attempt to infect $x$ from $u$ fails, the process
	instead reaches $x$ along the truncated copies of $\BT_4$ and eventually 
	arrives at a situation resembling \eqref{eq:nonmonBIT_basic3}.

	In contrast, by our choice of $\lambda_{1}$ 
    and for sufficiently large $l_3$, with probability close to $1$, the process $\big(\eta_{t}^{(1)}\big)$ will not manage to spread along the truncated copies of
	$\BT_3$ to reach $K_{n}^{(3)}$. Instead, 
    if $K_n^{(0)}$ is ignited, 
    it will manage to infect $x$ along the truncated copies of $\BT_4$ and $K_{n}^{(4)}$ with probability close to $1$. 
    In that case, this process ends up in a situation resembling  \eqref{eq:nonmonBIT_basic2}. 

	The claimed non-monotonicity will be obtained from the above construction by
	noting that we can choose the bite rates $\lambda_{1}$ and $\lambda_{2}$ such
	that, for some $M\in \BN$ large, we have
	\begin{equation}
		\label{eq:conditionNonMonBit}\frac{(2p_{1}-1)}{p_{1}}\left( \frac{p_{1}}{2}\left
		(1+p_{1} \right)\right) > \frac{(2p_{2}-1)}{p_{2}}\left( p_{2}\left(\frac{1}{M+1}
		p_{2} + \frac{M}{M+1}p_{2}^{2}\right) + (1-p_{2})p_{2} \right).
	\end{equation}
	Here the left-hand side of \eqref{eq:conditionNonMonBit} serves as a proxy for the probability that 
	$\big(\eta_{t}^{(1)}\big)$ eventually infects $y$, namely that
	$z_{0}$ successfully ignites $K_{n}^{(0)}$ and subsequently manages
	to infect $y$ along the truncated copies of $\BT_4$ and $K_{n}^{(4)}$.
	Similarly, the right-hand side of \eqref{eq:conditionNonMonBit} serves as a proxy for this same event for $\big(\eta_{t}^{(2)}\big)$, i.e.\ that $z_{0}$ is
	successful in its attempt to ignite $K_{n}^{(0)}$ and thereafter manages to
	infect $y$. However, as outlined above, this can occur via two paths. 
    First, the infection may spread via the truncated copies of $\BT_3$, $K_{n}^{(3)}$ and $u$, which occurs with probability approximately $\smash{p_{2}\big(\frac{1}{M+1}p_{2}+ \frac{M}{M+1}p_{2}^{2}\big)}$ (where the second factor stems from \eqref{eq:nonmonBIT_basic1}). Second, it may spread via the truncated copies of $\BT_4$ and $K_{n}^{(4)}$, which occurs with probability approximately $(1-p_{2})p_{2}$ (where the second factor stems from \eqref{eq:nonmonBIT_basic3}). 

	We now specify concrete values for $\lambda_{1}$ and $\lambda_{2}$ that
	satisfy the above assumptions and for which \eqref{eq:conditionNonMonBit} holds.
	Indeed, let $p_{1} = 0.618$ and $p_{2} =0.619$, which gives
	$\lambda_{1}= p_{1}/(1-p_{1})\approx 1.6178$ and
	$\lambda_{2} = p_{2}/(1-p_{2}) \approx 1.6247$ and, importantly,
	$\lambda_{1}<\lambda_{2}$. Furthermore, by Theorem~\ref{thm:ZIMtrees}, the ZIM
	with bite rate $\lambda_{1}$ admits a zombie outbreak on $\BT_4$, but not on
	$\BT_3$, whereas the ZIM with bite rate $\lambda_{2}$ admits a zombie outbreak
	on both these graphs. For the left hand side of \eqref{eq:conditionNonMonBit} we define
	\begin{equation}\label{eq:specify_q1}
		q_{1} \coloneqq \frac{(2p_{1}-1)}{p_{1}}\left( \frac{p_{1}}{2}\left(1+p_{1} \right
		)\right) > 0.1909
	\end{equation}
	and, moreover, we can fix $M \in \BN$ such that, for the right hand side,
	\begin{equation}\label{eq:specifyM}
		q_{2} \coloneqq \frac{(2p_{2}-1)}{p_{2}}\left( p_{2}\left(\frac{1}{M+1}p_{2}
		+ \frac{M}{M+1}p_{2}^{2}\right) + (1-p_{2})p_{2} \right) < 0.1820.
	\end{equation}
	Note that $q_{1}>q_{2}$ and that replacing $M$ by some $m\geq M$ will only
	lead to a decrease in the value of $q_{2}$. As should be clear from the above heuristic
	explanation, our argument can be made to work for a range of parameter values.
	However, working with these explicit numbers will be useful later in the proof
	of Theorem~\ref{thm:NonMonBit}\ref{thm:NonMonBit-b}.

	\begin{proof}[Proof of Theorem~\ref{thm:NonMon}\ref{thm:NonMon-c}]
		We consider the ZIM on the graph $G(m,n,l_{3},l_{4},k)$ initiated at $z_{0}$ with bite rates $\lambda_{1}$ and $\lambda_{2}$,
		respectively, as specified above. As in the previous subsection, we simply write
		$\BP_{i}$, $i=1,2$, instead of $\BP_{\lambda_i,G(m,n,l_3,l_4,k),z_0}^{\ZIM}$.
		Below, we will specify conditions on the parameters of the graph
		under which the claimed non-monotonicity holds, using arguments similar to those in the proof 
		of Theorem~\ref{thm:NonMon}\ref{thm:NonMon-b}. For this, we fix $\epsilon>0$ small so that 
        \begin{equation}
           2\epsilon+ (1-(1-\epsilon)^{6})
        \leq 0.0005.
        \end{equation}
        We will see the purpose of this particular bound in the concluding steps of the proof. The parameters are chosen iteratively as follows, with each choice depending on the previous ones:

		\begin{enumerate}
            \item \label{item:nonmongraph_01}By Lemma~\ref{lem:ZIM_onCG} and as in \eqref{eq:sqrt-bound}, there is $N_1\in \BN$ large so that for all $n\geq N_1$
                \begin{equation}\label{eq:thm2.3_proof1}
                    \BP_{\lambda_1,K_n^{(0)},z_0}^{\ZIM}( \tau_{dt} = \tau_{S}) \geq \frac{1}{2} \frac{\lambda_1-1}{\lambda_1}.
                \end{equation}
                Note that, by the monotonicity of the ZIM on the complete graph as in Proposition~\ref{prop:ZIM_onCGmono} this bound also holds if we replace $\lambda_1$ by $\lambda_2$ on the left hand side of \eqref{eq:thm2.3_proof1}.
			\item \label{item:nonmongraph_02}In a similar vein as Step~\ref{item:initnonmon_02} of Theorem~\ref{thm:NonMon}\ref{thm:NonMon-b}, see in particular \eqref{eq:req_m}, we fix $m\geq M$ such that
				\begin{equation}\label{eq:req_m_3c2}
                (1-p_{1})^{(2p_1-1)m}< \epsilon,
                \end{equation}
                where $M\in \BN$ is specified by \eqref{eq:specifyM}. 
                Note that, since $p_2>p_1$, this inequality still holds when replacing $(1-p_1)$ by $(1-p_2)$. Thus, for either process, in $(2p_1-1)m$ independent attempts to infect a node, with probability at least $1-\epsilon$ at least one will succeed. 

                Additionally, by possibly increasing $m$, we also assume that 
				\begin{align}\label{eq:req_m_3c}
					 \left(1-\frac{1}{2} \frac{\lambda_1-1} {\lambda_1}\right)^m < \epsilon.
                \end{align}
                This condition ensures that, with probability at least $1-\epsilon$, in $m$ independent attempts to ignite copies of $K_n^{(3)}$ or $K_n^{(4)}$ with $n\geq N_1$, at least one succeeds.

            \item \label{item:nonmongraph_03}Next, we fix the parameter $k$ of the graph $G(m,n,l_{3},l_{4},k)$ so
				large that
                \begin{equation} \label{eq:req_k2}\BP(Z_k \leq m) < \epsilon,
				\end{equation}
				where $Z_k \sim \Binomial{\lfloor k(2p_1-1)\rfloor}{q}$ with 
					$q$ given by the minimum of $\BP_{\lambda_1,\BT_4,o}^{\ZIM}(\CZ_{\infty}=\infty)$ and $\BP_{\lambda_2,\BT_3,o}^{\ZIM}(\CZ_{\infty}=\infty)$. 
				This is possible since $2p_1-1>0$ and since $q>0$ by our choice of parameters $\lambda_{1}$ and $\lambda_{2}$,
				and by Theorem~\ref{thm:ZIMtrees}. 

			\item \label{item:nonmongraph_04}We now argue that we may tune $N_2=N_2(k,N_1)$ so large that, for any
				$n \geq N_2$,  the following holds for both $i=1$ and $i=2$:
                \begin{enumerate}
                    \item \label{item:nonmongraph_04a}With probability at least $(1-\epsilon) \frac{2p_i-1}{p_i}$ the $\big(\eta_t^{(i)}\big)$-process ignites $K_n^{(0)}$ such that at least $k(2p_1-1)$ of the nodes $\{z_1,\dots,z_k\}$ become infected and at least $k(2p_1-1)$ of the nodes $\{z_{k+1},\dots,z_{2k}\}$ become infected. Moreover, this attempt to ignite $K_n^{(0)}$ terminates 
                    within time $1/2$ and 
                    before any node in $K_n^{(0)}$ attempts to infect nodes outside $K_n^{(0)}$. 
                    
                    \item \label{item:nonmongraph_04b}With probability at least $(1-\epsilon)\big(1-\frac{2p_i-1}{p_i}\big)$, none of the nodes $z_j,j=1,\dots,2k$, become infected in the sense that $\{z_j,j=1,\dots,2k\} \cap \CA_{\infty} = \emptyset$.
                \end{enumerate}
                To see that it is possible to tune $N_2$ so that the above two claims hold, we  reason  as in the proof of Theorem~\ref{thm:NonMon}\ref{thm:NonMon-b}. 
                In particular, we may fix $\delta\in(0,1)$ sufficiently small so that with arbitrarily large probability 
                $\smash{\min_{i=1,\dots,14k}\frac{Y_{e_i}}{1+\lambda}> \delta}$, 
                where $\{e_{i}\}_{i=1,\dots,14k}$ denotes the $14k$ edges connecting $\smash{K_{n}^{(0)}}$ to nodes outside of this set. Then, by Lemma~\ref{lem:ZIMfast2}, the probability that the attempt to ignite $\smash{K_{n}^{(0)}}$ terminates within time $\delta/2$ can be made arbitrarily close to $1$ by tuning $N_2$ sufficiently large.  
                Therefore, by possibly increasing $N_2$ further, the claim of \ref{item:nonmongraph_04a} follows by Lemma~\ref{lem:ZIM_onCG}, noting that $(2p_i-1)/p_i = 1-1/\lambda_i$. 
                
                Claim \ref{item:nonmongraph_04b} follows similarly by additionally using Lemma~\ref{lem:ZIMfast1} and the symmetry of the ZIM on a complete graph. In particular, if the attempt fails, then, by Lemma~\ref{lem:ZIMfast1}, with high probability at most $\sqrt{n}$ of the nodes in $K_n^{(0)}$ are not susceptible at termination. Hence, by symmetry, with high probability all of the nodes $\{z_j, j=1,\dots, 2k\}$ remain susceptible. 

                Note that with $N_2$ tuned this way, we guarantee that either the event in \ref{item:nonmongraph_04a} or the one in \ref{item:nonmongraph_04b} occur, with probability at least $1-\epsilon$.

			\item \label{item:nonmongraph_05}Since, by Theorem~\ref{thm:ZIMtrees},  the ZIM on $\BT_3$ with bite rate $\lambda_{1}$ almost surely only
				infects finitely many nodes, we may tune $l_{3}
				=l_{3}(k)$ large so that 
				\begin{equation}
        \BP_{\lambda_1,\BT_3,o}^{\ZIM}\!\left(
                    \exists x \in \partial_{l_3}\BT_3 \colon
					x \in \CA_{\infty}
                    \right) <\epsilon/k.
				\end{equation}
				As a consequence, by a basic union bound, the $\big(\eta_{t}^{(1)}\big)$-process
				will with probability at least $1-\epsilon$ not manage to infect $K_{n}
				^{(3)}$ along any of the $k$ truncated copies of $\BT_3$.

			\item \label{item:nonmongraph_06}
            Given $l_3$ and $m$ we can, again by Theorem~\ref{thm:ZIMtrees}, fix $s>0$ such that 
                    \begin{equation}
                \left(\BP_{\lambda_2,\BT_3,o}^{\ZIM}\!\left(
                    \exists x \in \partial_{l_3}\BT_3 \colon
					x \in \CA_{s} \mid \CZ_{\infty}=\infty
                    \right)\right)^{m} > 1-\epsilon.
                    \end{equation}
                     This ensures that, with probability at least $1-\epsilon$, in $m$ independent attempts that succeed in spreading the infection from the root of a copy of $\BT_3$ indefinitely, all of them will reach distance $l_3$ within time $s$. With the time constant $s$ now specified, the parameter $l_{4}=l_{4}(m,N_2,l_{3},k,s)\geq l_{3}$ can be fixed sufficiently large that,
                \begin{equation}
					\label{eq:time_to_infect_T4}\BP_{\lambda_2,\BT_4,o}^{\ZIM}\!\left(
\partial_{l_4}\BT_4 \cap \CA_t \neq \emptyset \text{ for some }t<s+Y+1
                    \right) < \epsilon/k,
				\end{equation}
                where 
                \begin{equation}
                    Y=
                    \sum_{\substack{v \in K_n^{(3)} \\ v\sim u}} Y_{(v,u)}
                    +Y_{(u,x)}
                    + \min_{\substack{w \sim x \\ w \neq u}}\!\left(Y_{(x,w)}\right)
                \end{equation} 
                is independent of the corresponding ZIM process on $
                \BT_4$. Here $Y$ represents an upper bound on the time it takes for the process to infect first $u$ from $K_n^{(3)}$, then $x$ from $u$ and finally a neighbouring node to $x$. 
                
                To obtain the bound in \eqref{eq:time_to_infect_T4}, note that the number $\CT_t$ of updates of $\big(\eta_t^{(2)}\big)$ by time $t$ in Algorithm~\ref{alg:ZIM_coupling1} is dominated by a continuous-time pure-birth process with rate at time $t$ given by $4(\lambda_2+1)$ times $\CT_t$. Therefore, with $l \in \BN$ and $\tilde{t}>0$, we have that 
\begin{align}
    \BP_{\lambda_2,\BT_4,o}^{\ZIM}\! & \left(
\partial_{l}\BT_4 \cap \CA_t \neq \emptyset \text{ for some }t<s+Y+1  \right) 
\\&\leq \BP_{\lambda_2,\BT_4,o}^{\ZIM}\!
			\left(\CT_{s+\tilde{t}+1} \geq l \text{ or } Y\geq \tilde{t} \right)
            \\ &\leq \BP_{\lambda_2,\BT_4,o}^{\ZIM}\!
			\left(\CT_{s+\tilde{t}+ 1} \geq l\right) + \BP_{\lambda_2,\BT_4,o}^{\ZIM}\!
			\left(Y\geq \tilde{t}\right),
\end{align}
which can be made arbitrary small by first tuning $\tilde{t}$ large and then letting $l \rightarrow \infty$, using that  $(\CT_t)$ is nonexplosive. 
                
			\item \label{item:nonmongraph_07}Lastly, we now tune $N_3=N_3(N_2,k,l_{3},l_{4},m)\geq N_2$ large to ensure that the following holds for all $n \geq N_3$  with probability at least $1-\epsilon$:
				\begin{enumerate}
					\item \label{item:nonmongraph_07a}For the $\big(\eta_{t}^{(2)}\big)$-process, the $m+1$ first attempts to ignite $K_{n}^{(3)}$ 
                    from one of the truncated copies of $\BT_3$ are sufficiently separated in time so that each such attempt terminates before the next is initiated.
                        If such an attempt to ignite $K_{n}^{(3)}$ fails, at most $\sqrt{n}$ of its nodes become infected, none of which are leaves of
                        the truncated $\BT_3$-trees nor connected to $u$. Conversely, if such an attempt is successful, 
                        it terminates within time $1/2$ and 
                        before any node in $K_n^{(3)}$ attempts to infect $u$. At this termination, 
                        more than $m(2p_{1}-1)$ of the $m$ nodes connected to $u$ are infected.
                        
					\item \label{item:nonmongraph_07b}Similarly, for the $\big(\eta_t^{(1)}\big)$-process, 
                         the $m+1$ first attempts to ignite $K_{n}^{(4)}$ 
                         from one of the truncated copies of $\BT_4$ are sufficiently separated in time so that each such attempt terminates before the next is initiated.
                        If such an attempt to ignite $K_{n}^{(4)}$ fails
                        at most $\sqrt{n}$ of its nodes become infected, none of which are
                        leaves of the truncated $\BT_4$-trees nor connected to $x$. Conversely, if such an attempt is successful, it terminates before any node in $K_n^{(4)}$ attempts to infect $x$. At this termination, more than $m(2p_{1}-1)$ of the $m$ nodes connected to
				        $x$ are infected.
				\end{enumerate}
                That this is possible follows by  reasoning as in Step~\ref{item:nonmongraph_04} above, see also the proof 						of Theorem~\ref{thm:NonMon}\ref{thm:NonMon-b}, using the bounds for the ZIM on the complete graph from Lemma~\ref{lem:ZIM_onCG}, Lemma 						\ref{lem:ZIMfast1} and Lemma~\ref{lem:ZIMfast2}, and that the statements \eqref{item:nonmongraph_07a} and \eqref{item:nonmongraph_07b} only involve the ZIM on finite graphs. 
		\end{enumerate}
		Finally, consider 
        the parameters $m$, $l_{3}$,
		$l_{4}$ and $k$ specified above, and $n$ fixed sufficiently large so that $n-m \sqrt{n}\geq N_3$. Then we now proceed to show how the 
        statement of the theorem 
        follows for the ZIM on $G=G(m,n,l_{3},l_{4},k)$.

		Consider first the $\big(\eta_{t}^{(1)}\big)$-process. 
        Denoting by $H_{1}$ the event described in Step~\ref{item:nonmongraph_04a}, 
        we have that
		\begin{equation}
			\BP_{1}(H_{1})\geq \left(\frac{2p_{1}-1}{p_{1}}\right)(1-\epsilon).
		\end{equation}
        Combining this with Step~\ref{item:nonmongraph_05}, we obtain
		\begin{equation}
			\BP_{1}(H_{1}\cap H_{2})\geq \left(\frac{2p_{1}-1}{p_{1}}\right)(1-\epsilon)^{2},
		\end{equation}
		where $H_2$ is the event that $\big(\eta_t^{(1)}\big)$ does not infect $K_n^{(3)}$ via the truncated copies of $\BT_3$, meaning that $\tau_x \leq \tau_u$ and,  for $t<\tau_u$ no node has yet attempted to infect a node in $K_n^{(3)}$. 
        
        By combining Steps~\ref{item:nonmongraph_01}--\ref{item:nonmongraph_03} with Step~\ref{item:nonmongraph_07b}, as we argue next, 
        it follows that
		\begin{equation}
			\BP_{1}(H_{1}\cap H_{2}\cap H_{3})\geq \left(\frac{2p_{1}-1}{p_{1}}\right)(1-\epsilon
			)^{6},
		\end{equation}
		where $H_{3}$ is the event that the $\big(\eta_{t}^{(1)}\big)$-process spreads along
		the truncated $\BT_4$-trees to eventually ignite $K_{n}^{(4)}$ and thereafter infects $x$. 
        Indeed, on the event $H_1$ and $H_2$, with probability at least $1-\epsilon$ there are at least $m+1$ attempts to ignite $K_n^{(4)}$. This follows from our specification of $k$ via \eqref{eq:req_k2} in Step~\ref{item:nonmongraph_03}. Moreover, by our choice of $n$, the event described in Step~\ref{item:nonmongraph_07b} occurs with probability at least $1-\epsilon$, ensuring that the $m+1$ first attempts to ignite $K_n^{(4)}$ are sufficiently separated in time to be essentially independent.        
        Given these conditions, our choice of $n$ and $m$ in \eqref{eq:req_m_3c} of Step~\ref{item:nonmongraph_02}  ensures that, with probability at least $1-\epsilon$ one of these attempts succeeds in igniting $K_n^{(4)}$. When such an event succeeds, Step~\ref{item:nonmongraph_07b} guarantees that, with probability at least $1-\epsilon$, more than $m(2p_1-1)$ of the $m$ nodes connected to $x$ are infected at termination, and this occurs before any node in $\smash{K_n^{(4)}}$ attempts to infect $x$. Therefore, by the choice of $m$ as in \eqref{eq:req_m_3c2} of Step~\ref{item:nonmongraph_02}, $x$ subsequently becomes infected with probability at least $1-\epsilon$. Consequently, conditional on $H_1$ and $H_2$, the intersection of all these events occur with probability $(1-\epsilon)^4$ and from which the claimed inequality follows. 

Now, writing $H =  H_{1}\cap H_{2}\cap H_{3}$, we observe that
		\begin{equation}
			\BP_{1}( \tau_{y} < \infty \mid H) =\frac{p_{1}}{2}(1+p_{1}),
		\end{equation}
        since, under $H$, the process ends up in a situation as in \eqref{eq:nonmonBIT_basic2}. 
		Therefore, recalling \eqref{eq:specify_q1} and noting that  $(1-\epsilon)^6 \geq 0.9995$, we  conclude that
        \begin{equation}
			\label{eq:final2}\BP_{1}( \tau_{y} < \infty)  \geq \BP_{1}( H \cap \tau_{y} < \infty) \geq \left(\frac{2p_{1}-1}{p_{1}}\right)(1-\epsilon)^{6} \frac{p_{1}}{2}(1+p_{1}) 
                 >0.1908.
        \end{equation}
        We now consider the $\big(\eta_{t}^{(2)}\big)$-process.  Let $H_4$ denote the event described in Step~\ref{item:nonmongraph_04b} and observe that 
        \begin{align}
             \BP_{2}\!\left(\tau_y <\infty \cap H_{1}^c \right) \leq  1 - \BP_{2}(H_1 \cup H_{4}) \leq \epsilon
        \end{align}
        since $\tau_y<\infty$ can only occur if at least one of the nodes $z_j$, $j=1,\dots,2k$ becomes infected. Moreover, since $H_1$ and $H_4$ are disjoint events, we have that 
        \begin{equation}
            \BP_2(H_1) \leq 1- \BP_2(H_4) \leq 1-(1-\epsilon)\left(1-\frac{2p_2-1}{p_2}\right) \leq \frac{2p_2-1}{p_2} + \epsilon.
        \end{equation}
        It follows by these two estimates that 
		\begin{align}
			\BP_{2}(\tau_y <\infty) 
            &\leq \BP_{2}(\tau_y <\infty \mid H_{1}) \BP_2(H_1) + \epsilon 
            \\
            &\leq \BP_{2}(\tau_y<\infty \mid H_{1})\left(\frac{2p_{2}-1} {p_{2}}\right) + 2\epsilon.
		\end{align}
        We claim that
\begin{equation}\label{eq:final1}
\BP_{2}(\tau_y<\infty \mid H_{1})\left(\frac{2p_{2}-1} {p_{2}}\right) \leq q_2 +  \big(1-(1-\epsilon)^{6}\big).
\end{equation}
Thus, recalling \eqref{eq:specifyM} and the choice of $\epsilon$, it follows that
\begin{align}
            \BP_{2}( \tau_{y} < \infty) & \leq 
            q_2 + 2\epsilon +  \big(1-(1-\epsilon)^{6}\big)
            \leq 0.1825;
		\end{align}
		and from which, by comparing with \eqref{eq:final2}, we conclude the proof.

        It remains to show that \eqref{eq:final1} indeed holds. 
		To see this, we will first argue that, by the estimates obtained in Steps~\ref{item:nonmongraph_01}--\ref{item:nonmongraph_03} and \ref{item:nonmongraph_06}--\ref{item:nonmongraph_07}, 
		\begin{equation}
			\BP_{2}(H_{5} \mid H_{1})>(1-\epsilon)^{6},
		\end{equation}
		where $H_{5}$ is the following event: the $\big(\eta_t^{(2)}\big)$-process successfully ignites $K_{n}^{(3)}$ along the truncated copies of $\BT_3$ by time $s+1$ and subsequently infects $u$, where $s$ is as in Step~\ref{item:nonmongraph_06}, and that, for 
        \begin{equation} \label{eq:nonmon_t-condition}
            t<\tau_u+\frac{Y_{(u,x)}}{1+\lambda_2} + \min_{\substack{v \sim x \\ v \neq u}}\! \left(\frac{Y_{(x,v)}}{1+\lambda_2} \right), 
        \end{equation}
        the affected set $\CA_t$ does not contain any node of $K_n^{(4)}$.
            
             To see this, first note that on the event $H_1$, $K_n^{(0)}$ is ignited within time $1/2$ with at least $k(2p_1-1)$ root nodes of the truncated copies of $\BT_3$ infected. With probability at least $(1-\epsilon)$, at least $m+1$ of those lead to viable attempts to infect $K_n^{(3)}$ by Step~\ref{item:nonmongraph_03}. Additionally, by Step~\ref{item:nonmongraph_06}, these attempts will realize before time $s+1/2$ with probability at least $(1-\epsilon)$.
             
             Moreover, by our choice of $n$, the event described in Step~\ref{item:nonmongraph_07a} occurs with probability at least $1-\epsilon$, ensuring that the first $m+1$ attempts to ignite $K_n^{(3)}$ are  sufficiently separated in time to be essentially independent. In particular, by our choice of $m$ in \eqref{eq:req_m_3c} of Step~\ref{item:nonmongraph_02}, with probability at least $1-\epsilon$, one of these attempts succeed in igniting $K_n^{(3)}$. 
             When such an attempt succeeds and the event from Step~\ref{item:nonmongraph_07a} holds, more than $m(2p_1-1)$ of the $m$ nodes connected to $u$ are infected at termination, which occurs before time $s+1$. Therefore, by the choice of $m$ in \eqref{eq:req_m_3c2} of Step~\ref{item:nonmongraph_02}, the node $u$ becomes infected with probability at least $1-\epsilon$. 
             
             Finally, by our choice of $l_4$ in Step~\ref{item:nonmongraph_06}, with probability at least $1-\epsilon$, the affected set $\CA_t$ contains no node of $K_n^{(4)}$ for times $t$ satisfying \eqref{eq:nonmon_t-condition}. Consequently, conditional on $H_1$, the intersection of all the above described events occur with probability at least $(1-\epsilon)^6$, and this implies the claimed inequality.
            
        Now, observe that 
        \begin{equation}
            \BP_{2}\!\left( \tau_y <\infty \cap \{U_{(u,x)}\leq p_2\} \,\middle|\,  H_5 \cap H_1 \right) \leq p_2\!\left(\frac{1}{M+1}p_{2} 
        + \frac{M}{M+1}p_{2}^{2}\right),
        \end{equation}
        since, under $H_5 \cap H_1$, if the attempt to infect $x$ from $u$ is viable, the process ends up in a situation as in \eqref{eq:nonmonBIT_basic1}. On the other hand, and similarly to \eqref{eq:nonmonBIT_basic3}, using that the events $\{U_{(u,x)}\leq p_2\}$ and $\{U_{(x,y)}\leq p_2\}$ are independent of $H_1$ and $H_5$,
        \begin{align}
            \BP_{2}\! & \left( \tau_y <\infty  \cap \{U_{(u,x)}\geq p_2\} \,\middle|\,  H_5  \cap H_1 \right) \\ 
            &\leq  \BP_{2}\!\left( \{U_{(x,y)}\leq p_2\}  \cap \{U_{(u,x)}\geq p_2\} \,\middle|\, H_5\cap H_1\right)
            \\ &= (1-p_2)p_2.
        \end{align}
        Consequently, it holds that 
       \begin{align}
            \BP_{2}\! & \left( \tau_y <\infty \,\middle|\, H_1 \right) \\
            &\leq \BP_{2}\!\left( \tau_y <\infty  \cap  H_5 \,\middle|\, H_1 \right) + \big(1-(1-\epsilon)^{6}\big)
            \\ & \leq p_2\left(\frac{1}{M+1}p_{2}
			+ \frac{M}{M+1}p_{2}^{2}\right) + (1-p_2)p_2 + \big(1-(1-\epsilon)^{6}\big)
        \end{align}
from which we conclude \eqref{eq:final1} and hence also the proof of the theorem with $z=z_0$. 
	\end{proof}

	\subsection{Non-monotonicity of zombie outbreak}
    \label{subs:nonMonZombieOutbreak}

	The idea of the following proof is to use the graph constructed in the proof
	of Theorem~\ref{thm:NonMon}\ref{thm:NonMon-c} as a building block to construct a tree-like
	graph where the corresponding branching process is supercritical under
	$\lambda_{1}$ and subcritical under $\lambda_{2}$ with
	$\lambda_{1}<\lambda_{2}$. This idea shares features with the approach of \cite{CandelleroStauffer2024}, though their work concerned first passage percolation in a hostile environment rather than infection processes.

	\begin{proof}[Proof of Theorem~\ref{thm:NonMonBit}\ref{thm:NonMonBit-b}]
		As in the proof of Theorem~\ref{thm:NonMon}\ref{thm:NonMon-c}, let $p_{1} = 0.618$ and
		$p_{2} =0.619$, which gives $\lambda_{1}= p_{1}/(1-p_{1})\approx 1.6178$ and
		$\lambda_{2} = p_{2}/(1-p_{2}) \approx 1.6247$. Then, with $G=G(m,n,l_{3},l_{4}
		,k)$ constructed as detailed in that proof, we have that
		\begin{equation}
			\BP_{\lambda_2,G,z_0}( \tau_{y} < \infty) < 0.1825 < 0.1908
			< \BP_{\lambda_1,G,z_0}( \tau_{y} < \infty).
		\end{equation}
		We now define an auxiliary graph $\widehat{G}$. It will use the graph $G$ as a building block as well as the rooted $6$-ary tree truncated at distance $9$, which we denote by $T_{6,9}$, where $|\partial_k T_{6,9}| = 6^k$ for $k=0,\dots,9$. 
        Thus, every non-root, non-leaf node has degree $7$, and the root has degree $6$. 
        The graph $\widehat{G}$ is constructed in the following way:
        \begin{enumerate}
            \item Start with the graph $G$.
            \item Attach another copy of $G$ in series with the first by letting the $y$ node of the first, call this $y_1$, be the $z_0$ node of the second.
            \item The $y$ node of the second copy of $G$, call this $y_2$, attaches via an edge to the root node $o$ of a copy of $T_{6,9}$.
        \end{enumerate}
        The graph $\widehat{G}$ constructed thusly is
		illustrated as the basic self-repeating part of the graph in Figure~\ref{fig:G-hat-graph}.

		\begin{figure}[tb]
			\centering
			\input{figures/tikz/G-hat}
			\caption{A schematic illustration of the graph $T$ constructed from copies
			of $\widehat{G}$. In the illustration, $G$ represents the graph
			$G(m, n, l_{3}, l_{4}, k)$ from Section~\ref{sec:non-mon_biterate}.}
			\label{fig:G-hat-graph}
		\end{figure}

		Now, consider the ZIM on $\widehat{G}$ having initially one zombie at $z_{0}$ (of the first copy of $G$), and denote
		by $Q$ the random variable that outputs the number of the leaves of $\widehat{G}$
		that eventually become infected by a zombie. Then, using that the ZIM on a regular
		tree corresponds to a branching process as seen in Theorem~\ref{thm:ZIMtrees},
		with this construction and the numbers at hand, we find that

		\begin{align}
			\BE_{\lambda_1,\widehat{G},z_0}[Q] > 0.1908^2 \cdot p_{1}\cdot \left(\frac{p_{1}}{1-p_{1}}(1-p_{1}^{6})\right)^{9} > 1.01.
		\end{align}
		Indeed, the first two factors, $0.1908^2 \cdot p_{1}$, yields a lower bound
		on the probability that the node $o$ eventually becomes infected.
		Moreover, the factor $\big(\frac{p_{1}}{1-p_{1}}(1-p_{1}^{6})\big)^{9}$ is
		the expected number of eventually infected nodes of $T_{6,9}$ at distance $9$ for
		the ZIM with the root initially infected. Indeed, as seen in the proof of Theorem~\ref{thm:ZIMtrees}, the ZIM on $T_{6,9}$ corresponds to a branching process with offspring distribution $\min(\gamma_x,6)$. 
        
        Similarly, we find that
		\begin{equation}
\BE_{\lambda_2,\widehat{G},z_0}[Q] < 0.1825^2 \cdot p_{2} \cdot
			\left(\frac{p_{2}}{1-p_{2}}(1-p_{2}^{6})\right)^{9}< 0.97.
		\end{equation}
		Utilizing the above bounds, we now describe an infinite graph $T$,
		satisfying the properties of the stated theorem. In particular, denote by
		$z_{k}$, $k=1,\dots, 6^{9}$, the leaves of the above graph $\widehat{G}$. Attach
		to each of these leaves a copy of $\widehat{G}$ where the $z_{k}$ nodes play the
		role of $z_{0}$, and with leaves $z_{k,l}$, $l=1,\dots,6^{9}$, and further
		attach to these leaves another copy of $\widehat{G}$ with leaves $z_{k,l,m}$, $m=
		1,\dots,6^{9}$, and so on in a recursive manner. See Figure~\ref{fig:G-hat-graph} for a schematic
		illustration. 
        Continuing in this way indefinitely gives our countable-infinite
		and connected graph $T$ of bounded degree on which the process with $\lambda_{1}$
		admits a zombie outbreak, but not for $\lambda_2$.
        Indeed, 
        let $(p_{k})_{k=0,\dots,6^9}$ denote the probability mass function
		corresponding to the number of the leaves of $\widehat{G}$ that eventually become
		infected for the ZIM on $\widehat{G}$ with infection parameter $\lambda$ and
		initially only $z_{0}$ infected. 
        As in the proof of Theorem~\ref{thm:ZIMtrees},
		since $\BE_{\lambda_1,\widehat{G},z_0}[Q]>1$, the ZIM on $T$ can be
		coupled with a supercritical branching process with offspring distribution $(p_k)$. Hence,  $\phi(\lambda_{1}, T, z_{0}) > 0$. 

		By the same argument, since $\BE_{\lambda_2,\widehat{G},z_0}[Q]<1$,
		the ZIM with bite rate $\lambda_{2}$ can be coupled with a subcritical
		branching process, and hence $\phi(\lambda_{2}, T, z_{0}) = 0$.

		Since the ZIM on $T$ with $\lambda>0$ and $z_{0}$ initially infected may with
		positive probability infect any finite set $\Lambda$ containing $z_{0}$, by
		the built-in tree structure of $T$, it also follows that these properties extends
		to the ZIM with infection parameter $\lambda_{1}$ and $\lambda_{2}$,
		respectively, and any finite starting set $\Delta$.

		Lastly, we note that the function $\phi(\cdot, T, z_{0})$ is continuous. Indeed, since $\widehat{G}$ is finite, 
		the offspring distribution  $(p_{k})_{k=0,\dots,6^9}$ 
		is continuous in $\lambda$. This follows e.g.\ from the Harris construction and
		basic properties of the Poisson processes therein, by arguments as given in
		\cite[Page 32-33]{LiggettSIS1999} for the contact process. Therefore,
		\begin{equation}
			\mu=\mu(\lambda)\coloneqq \sum_{k=0}^{6^9}kp_{k}
		\end{equation}
		is continuous in $\lambda$ and, by classic theory for discrete-time branching
		processes as e.g.\ in \cite[Theorem 4.3.10-12]{Durrett2019}, we have that $\phi
		(\cdot, T, z_{0})$ is continuous in $\lambda$ too. The extension of this to
		$\phi(\cdot, T, \Delta)$ for any finite set $\Delta$ goes along the same
		lines of reasoning, and is omitted here.
	\end{proof}




\providecommand{\bysame}{\leavevmode\hbox to3em{\hrulefill}\thinspace}
\providecommand{\MR}{\relax\ifhmode\unskip\space\fi MR }
\providecommand{\MRhref}[2]{%
  \href{http://www.ams.org/mathscinet-getitem?mr=#1}{#2}
}
\providecommand{\href}[2]{#2}








\begin{acks}
This work was partially supported by the project Pure Mathematics in Norway, funded by Trond Mohn Foundation and Tromsø Research Foundation.

The ZIM model was independently formulated by EB in 2010 thanks to a question posed
	to him by journalist and friend Carl Cato, concerning then recent news about
	research on zombies (unrelated to the current model). However, the model sat
	in a drawer for well over a decade before this paper was written. EB would therefore
	like to extend special thanks to Carl Cato for the original inspiration, and
	also to SB and SM for our collaboration, finally turning the model into a rigorous
	paper.\end{acks}


\end{document}

%% file: figures/tikz/counterexample_3.tex
\begin{tikzpicture}[
    node distance=3cm,
    vertex/.style={circle, fill=black, minimum size=6pt, inner sep=0pt},
    time axis/.style={thin, gray},
    edge/.style={thick, black}
]
    \node[vertex, label=below:$x$] (x) at (0,0) {};
    
    \node[vertex, label=below:$y$] (y) at (2,0) {};
    
    \node[vertex, label=below left:$z_1$] (z1) at (205:1.8) {};
    
    \node[vertex, label=above left:$z_2$] (z2) at (120:1.1) {};
    
    \draw[edge] (x) -- (y);
    \draw[edge] (x) -- (z1);
    \draw[edge] (x) -- (z2);
    
    \def\timeheight{3.5}
    
    \draw[time axis] (x) -- ++(0,\timeheight) coordinate (x-top);
    
    \draw[time axis] (y) -- ++(0,\timeheight) coordinate (y-top);
    
    \draw[time axis] (z1) -- ++(0,\timeheight) coordinate (z1-top);
    
    \draw[time axis] (z2) -- ++(0,\timeheight) coordinate (z2-top);
    
    \definecolor{arrowpurple}{RGB}{128,0,128}
    \definecolor{arrowgreen}{RGB}{0,128,0}
    \definecolor{arrowred}{RGB}{255,0,0}
    
    \draw[->, thick, arrowpurple] ([yshift=0.5cm]x.center) -- ([yshift=0.5cm]z2.center);
    
    \draw[->, thick, arrowgreen] ([yshift=0.8cm]z2.center) -- ([yshift=0.8cm]x.center);
    
    \draw[->, thick, arrowred] ([yshift=1.0cm]y.center) -- ([yshift=1.0cm]x.center);
    
    \draw[->, thick, arrowgreen] ([yshift=2.0cm]z1.center) -- ([yshift=2.0cm]x.center);
    
    \draw[->, thick, arrowgreen] ([yshift=2.8cm]x.center) -- ([yshift=2.8cm]y.center);

    \node[right] at (y) {$t=0$};

    \coordinate (t-T-pos) at ([yshift=3.2cm]y.center);
    \draw[time axis] ([xshift=-0.05cm]t-T-pos) -- ([xshift=0.05cm]t-T-pos);  
    \node[right] at (t-T-pos) {$t=T$};
\end{tikzpicture}

%% file: figures/tikz/counterexample_4a.tex
\begin{tikzpicture}[
    scale=0.75,
    every node/.style={transform shape},
    vertex/.style={circle, draw, fill=white, minimum size=8mm}, 
    edge label/.style={fill=none, draw=none, sloped, midway}   
    ]

    \coordinate (c)  at (0,0);   
    \coordinate (tl) at (-2, 2); 
    \coordinate (tr) at ( 2, 2); 
    \coordinate (bl) at (-2,-2); 
    \coordinate (br) at ( 2,-2); 
    \coordinate (t) at ( 2,5); 

    \draw[line width=8pt, green!50, line cap=round] (c) -- (tl);
    \draw[line width=8pt, red!50, line cap=round] (c) -- (tr);
    \draw[line width=8pt, green!50, line cap=round] (c) -- (bl);
    \draw[line width=8pt, red!50, line cap=round] (c) -- (br);
    \draw[line width=8pt, green!50, line cap=round] (tr) -- (t);

    \draw (c) -- (tl) node[edge label, above left]  {$Y_4$};
    \draw (c) -- (tr) node[edge label, above right] {$Y_2$};
    \draw (c) -- (bl) node[edge label, below left]  {$Y_5$};
    \draw (c) -- (br) node[edge label, below right] {$Y_1$};
    \draw (tr) -- (t) node[edge label, below right] {$Y_3$};

    \node[vertex,fill=white] (nC)  at (c)  {$x_1$};
    \node[vertex,fill=white] (nTL) at (tl) {$z_1$};
    \node[vertex,fill=white] (nTR) at (tr) {$x_2$};
    \node[vertex,fill=white] (nBL) at (bl) {$y$};
    \node[vertex,fill=white] (nBR) at (br) {$z_3$};
    \node[vertex,fill=white] (nT) at (t) {$z_2$};

\end{tikzpicture}

%% file: figures/tikz/counterexample_4b.tex
\begin{tikzpicture}[
    scale=0.75,
    every node/.style={transform shape},
    vertex/.style={circle, draw, fill=white, minimum size=8mm}, 
    edge label/.style={fill=none, draw=none, sloped, midway}   
    ]

    \coordinate (c)  at (0,0);   
    \coordinate (tl) at (-2, 2); 
    \coordinate (tr) at ( 2, 2); 
    \coordinate (bl) at (-2,-2); 
    \coordinate (br) at ( 2,-2); 
    \coordinate (t) at ( 2,5); 

    \draw[line width=8pt, green!50, line cap=round] (c) -- (tl);
    \draw[line width=8pt, red!50, line cap=round] (c) -- (tr);
    \draw[line width=8pt, green!50, line cap=round] (c) -- (bl);
    \draw[line width=8pt, green!50, line cap=round] (c) -- (br);
    \draw[line width=8pt, green!50, line cap=round] (tr) -- (t);

    \draw (c) -- (tl) node[edge label, above left]  {$Y_4$};
    \draw (c) -- (tr) node[edge label, above right] {$Y_2$};
    \draw (c) -- (bl) node[edge label, below left]  {$Y_5$};
    \draw (c) -- (br) node[edge label, below right] {$Y_1$};
    \draw (tr) -- (t) node[edge label, below right] {$Y_3$};

    \node[vertex,fill=white] (nC)  at (c)  {$x_1$};
    \node[vertex,fill=white] (nTL) at (tl) {$z_1$};
    \node[vertex,fill=white] (nTR) at (tr) {$x_2$};
    \node[vertex,fill=white] (nBL) at (bl) {$y$};
    \node[vertex,fill=white] (nBR) at (br) {$z_3$};
    \node[vertex,fill=white] (nT) at (t) {$z_2$};

\end{tikzpicture}

%% file: figures/tikz/counterexample_5.tex
\begin{tikzpicture}[scale=0.9,
    every node/.style={transform shape,circle, draw, minimum size=8mm}]
    \coordinate (v1) at (0,0);
    \coordinate (v2) at (2,0);
    \coordinate (v3) at (4,0);
    \coordinate (v4) at (6,0);
    \coordinate (v5) at (2,-2);

    \draw[line width=8pt, green!50, line cap=round] (v1) -- (v2);
    \draw[line width=8pt, red!50, line cap=round] (v2) -- (v3);
    \draw[line width=8pt, green!50, line cap=round] (v2) -- (v5);
    \draw[line width=8pt, green!50, line cap=round] (v3) -- (v4);

    \draw (v1) -- (v2);
    \draw (v2) -- (v3);
    \draw (v3) -- (v4);
    \draw (v2) -- (v5);

    \node[fill=white] (n1) at (v1) {$z_1$};
    \node[fill=white] (n2) at (v2) {$x$};
    \node[fill=white] (n3) at (v3) {$z_3$};
    \node[fill=white] (n4) at (v4) {$z_2$};
    \node[fill=white] (n5) at (v5) {$y$};
\end{tikzpicture}

%% file: figures/tikz/G1.tex
\begin{tikzpicture}[scale=1.2, every node/.style={font=\small}]
	\def\side{2}
    
	\node[fill,circle,inner sep=1.5pt,label=below:$z$] (z) at (0,0) {};
    
	\node[fill,circle,inner sep=1.5pt,label=below:$x$] (x) at (\side,0) {};
    
	\node[fill,circle,inner sep=1.5pt,label=above:$w$] (w) at (\side/2,{(\side*sqrt(3))/2}) {};
    
	\node[fill,circle,inner sep=1.5pt,label=below:$y$] (y) at ({2*\side},0) {};
			
	\draw (z) -- (w);
	\draw (w) -- (x);
	\draw (x) -- (y);
\end{tikzpicture}

%% file: figures/tikz/G2.tex
\begin{tikzpicture}[scale=1.2, every node/.style={font=\small}]
	\def\side{2}
	\node[fill,circle,inner sep=1.5pt,label=below:$z$] (z) at (0,0) {};
	\node[fill,circle,inner sep=1.5pt,label=below:$x$] (x) at (\side,0) {};
	\node[fill,circle,inner sep=1.5pt,label=above:$w$] (w) at (\side/2,{(\side*sqrt(3))/2}) {};
	\node[fill,circle,inner sep=1.5pt,label=below:$y$] (y) at ({2*\side},0) {};
			
	\draw (z) -- (w);
	\draw (w) -- (x);
	\draw (z) -- (x);
	\draw (x) -- (y);
\end{tikzpicture}

%% file: figures/tikz/G-kmn.tex
\begin{tikzpicture}[scale=1.2, every node/.style={font=\small}]
  \node[draw,ellipse,minimum width=2.5cm,minimum height=3.5cm] (clique) at (0,0) {};


  \node at (-0.50,1.0) (w1) {$w_1$};
  \node at (-0.76,0.3) (w2) {$w_2$};
  \node at (-0.50,-1.0) (wk) {$w_k$};
  \node at (-0.65,-0.3) {$\vdots$};

  \node at (0.535,1.0) (v1) {$v_1$};
  \node at (0.80,0.3) (v2) {$v_2$};
  \node at (0.49,-1.0) (vm) {$v_m$};
  \node at (0.6,-0.3) {$\vdots$};

  \node[fill,circle,inner sep=1.5pt,label=left:$z_1$] (z1) at (-3,1.0) {};
  \node[fill,circle,inner sep=1.5pt,label=left:$z_2$] (z2) at (-3,0.3) {};
  \node[fill,circle,inner sep=1.5pt,label=left:$z_k$] (zk) at (-3,-1.0) {};
  \node at (-3,-0.3) {$\vdots$};

  \draw (z1) -- (w1.west);
  \draw (z2) -- (w2.west);
  \draw (zk) -- (wk.west);

  \node[fill,circle,inner sep=1.5pt,label=below:$x$] (x) at (3,0) {};
  \node[fill,circle,inner sep=1.5pt,label=below:$y$] (y) at (4.5,0) {};
  \draw (x) -- (y);

  \draw (v1.east) -- (x);
  \draw (v2.east) -- (x);
  \draw (vm.east) -- (x);
\end{tikzpicture}

%% file: figures/tikz/G-mn.tex
\begin{tikzpicture}[scale=1.15, every node/.style={font=\small}]
  \node[draw,ellipse,minimum width=2.5cm,minimum height=3.5cm] (clique) at (0,0) {};


  \node at (-0.45,1.0) (w1) {$w_n$};
  \node at (-0.62,0.3) (w2) {$w_{n-1}$};
  \node at (-0.35,-1.0) (wk) {$w_{m+1}$};
  \node at (-0.58,-0.3) {$\vdots$};

  \node at (0.555,1.0) (v1) {$w_1$};
  \node at (0.80,0.3) (v2) {$w_2$};
  \node at (0.51,-1.0) (vm) {$w_m$};
  \node at (0.6,-0.3) {$\vdots$};

  \node[fill,circle,inner sep=1.5pt,label=315:$x$] (x) at (3,0) {};
  \node[fill,circle,inner sep=1.5pt,label=below:$y$] (y) at (4.5,0) {};
  \node[fill,circle,inner sep=1.5pt,label=below:$u$] (u) at (3,-1.0) {};
  \draw (x) -- (y);
  \draw (x) -- (u);

  \draw (v1.east) -- (x);
  \draw (v2.east) -- (x);
  \draw (vm.east) -- (x);
\end{tikzpicture}

%% file: figures/tikz/G_mnllk.tex
\begin{tikzpicture}[scale=1.1, every node/.style={font=\small}]
  \node[draw,ellipse,minimum width=2.5cm,minimum height=3.5cm] (left) at (0,0) {$K_{n}^{(0)}$};

  \node[draw,ellipse,minimum width=2.5cm,minimum height=3.5cm] (right1) at (7,2.0){$K_n^{(0)}$};
  \node[draw,ellipse,minimum width=2.5cm,minimum height=3.5cm] (right2) at (4,-2.0){$K_n^{(0)}$};

  \path[fill=blue!20,draw,fill opacity=0.3]
  (left.east) ++(-0.13,0.7) -- ([xshift=0.75cm,yshift=1.494cm]right1.west)
  -- ([xshift=1.1cm,yshift=-1.59cm]right1.west) -- cycle;
  \path[fill=blue!20,draw,fill opacity=0.3]
  (left.east) ++(-0.13,-0.7) -- ([xshift=1.1cm,yshift=1.59cm]right2.west)
  -- ([xshift=0.395cm,yshift=-1.2cm]right2.west) -- cycle;

  \node at (4,1.3) {$\BT_4$};
  \node at (2.3,-1.2) {$\BT_3$};
  \node at (4,2.6) {$l_4$};
  \node at (1.9,-2.2) {$l_3$};

  \node[draw,ellipse,minimum width=2.5cm,minimum height=3.5cm,fill=white] (left) at (7,2) {$K_n^{(4)}$};
  \node[draw,ellipse,minimum width=2.5cm,minimum height=3.5cm,fill=white] (left) at (4,-2) {$K_n^{(3)}$};

  \node at (4.3,-0.453) (v1) {};
  \node at (4.93,-1.3) (v2) {};
  \node at (5.03,-2.0) (v3) {};
  \node at (4.93,-2.7) (v4) {};
  \node at (4.3,-3.547) (v5) {};

  \node[fill,circle,inner sep=1.5pt,label=270:$u$] (u) at (7,-2) {};

  \draw (v1) -- (u);
  \draw (v2) -- (u);
  \draw (v3) -- (u);
  \draw (v4) -- (u);
  \draw (v5) -- (u);

  \node at (7.3,3.547) (w1) {};
  \node at (7.93,2.7) (w2) {};
  \node at (8.03,2.0) (w3) {};
  \node at (7.93,1.3) (w4) {};
  \node at (7.3,0.453) (w5) {};

  \node[fill,circle,inner sep=1.5pt,label=315:$x$] (x) at (10,2) {};
  \node[fill,circle,inner sep=1.5pt,label=270:$y$] (y) at (11,2) {};

  \draw (w1) -- (x);
  \draw (w2) -- (x);
  \draw (w3) -- (x);
  \draw (w4) -- (x);
  \draw (w5) -- (x);

  \draw (u) to[out=0,in=270] (x);
  \draw (x) -- (y);

  \node[fill,circle,inner sep=1.5pt,label=180:$z_0$] (z0) at (-1.137,0) {};
  \node[fill,circle,inner sep=1.5pt,label=180:$z_1$] (z1) at (1.018,0.7) {};
  \node[fill,circle,inner sep=1.5pt,label=180:$z_2$] (z2) at (1.018,-0.7) {};
  
\end{tikzpicture}

%% file: figures/tikz/G-hat.tex
\begin{tikzpicture}[scale=1.1, every node/.style={font=\small}]
  \node[fill,circle,inner sep=1.5pt,label=180:$z_0$] (z0) at (0,0) {};
  
  \node[draw,circle,minimum width=2.2cm] (G) at (1,0) {$G$};
  \node[draw,circle,minimum width=2.2cm] (G) at (3,0) {$G$};
  
  \node[fill,circle,inner sep=1.5pt,label=180:$y_1$] (y1) at (2,0) {};
  \node[fill,circle,inner sep=1.5pt,label=180:$y_2$] (y2) at (4,0) {};
  
  \node[fill,circle,inner sep=1.5pt,label=120:$o$] (n1) at (5,0) {};
  
  \path[fill=blue!20,draw,fill opacity=0.3]
  (5,0) -- (7,1.5) -- (7,-1.5) -- cycle;
  
  \node at (6,0) {$T_{6,9}$};
  
  \draw (y2) -- (n1);
  
  \begin{scope}[scale=0.3, shift={(23.33,3)}]
    \node[fill,circle,inner sep=1pt] (z0_1) at (0,0) {};
    
    \node[draw,circle,inner sep=0.17cm,font=\tiny] (G_11) at (1.0,0) {G};
    \node[draw,circle,inner sep=0.17cm,font=\tiny] (G_12) at (3.0,0) {G};
    
    \node[fill,circle,inner sep=1pt] (y_11) at (2.0,0) {};
    \node[fill,circle,inner sep=1pt] (y_12) at (4.0,0) {};
    
    \node[fill,circle,inner sep=1pt] (n1_1) at (5.0,0) {};
    
    \path[fill=blue!20,draw,fill opacity=0.3]
    (5.0,0) -- (7,1.5) -- (7,-1.5) -- cycle;

    \node[font=\tiny] at (6.2,0) {$T_{6,9}$};
    
    \draw (y_12) -- (n1_1);
  \end{scope}
  
  \begin{scope}[scale=0.3, shift={(23.33,-3)}]
    \node[fill,circle,inner sep=1pt] (z0_2) at (0,0) {};
    
    \node[draw,circle,inner sep=0.17cm,font=\tiny] (G_21) at (1.0,0) {G};
    \node[draw,circle,inner sep=0.17cm,font=\tiny] (G_22) at (3.0,0) {G};
    
    \node[fill,circle,inner sep=1pt] (y_21) at (2.0,0) {};
    \node[fill,circle,inner sep=1pt] (y_22) at (4.0,0) {};
    
    \node[fill,circle,inner sep=1pt] (n1_2) at (5.0,0) {};
    
    \path[fill=blue!20,draw,fill opacity=0.3]
    (5.0,0) -- (7,1.5) -- (7,-1.5) -- cycle;

    \node[font=\tiny] at (6.2,0) {$T_{6,9}$};
    
    \draw (y_22) -- (n1_2);
  \end{scope}

  \begin{scope}[scale=0.1, shift={(91,12)}]
    \node[fill,circle,inner sep=1pt] (z0_3) at (0,0) {};
    
    \node[draw,circle,inner sep=0.08cm] (G_31) at (1.0,0) {};
    \node[draw,circle,inner sep=0.08cm] (G_32) at (3.0,0) {};
    
    \node[fill,circle,inner sep=1pt] (y_31) at (2.0,0) {};
    \node[fill,circle,inner sep=1pt] (y_32) at (4.0,0) {};
    
    \node[fill,circle,inner sep=1pt] (n1_3) at (5.0,0) {};
    
    \path[fill=blue!20,draw,fill opacity=0.3]
    (5.0,0) -- (7,1.5) -- (7,-1.5) -- cycle;
    
    \draw (y_32) -- (n1_3);
  \end{scope}

  \begin{scope}[scale=0.1, shift={(91,-6)}]
    \node[fill,circle,inner sep=1pt] (z0_4) at (0,0) {};
    
    \node[draw,circle,inner sep=0.08cm] (G_41) at (1.0,0) {};
    \node[draw,circle,inner sep=0.08cm] (G_42) at (3.0,0) {};
    
    \node[fill,circle,inner sep=1pt] (y_41) at (2.0,0) {};
    \node[fill,circle,inner sep=1pt] (y_42) at (4.0,0) {};
    
    \node[fill,circle,inner sep=1pt] (n1_4) at (5.0,0) {};
    
    \path[fill=blue!20,draw,fill opacity=0.3]
    (5.0,0) -- (7,1.5) -- (7,-1.5) -- cycle;
    
    \draw (y_42) -- (n1_4);
  \end{scope}

  \node at (10.5,1) {$\cdots$};
  \node at (10.5,-1) {$\cdots$};
  \node at (8,0) {$\vdots$};
  
\end{tikzpicture}

%% file: ZIM_EJP_preprint.bbl
\begin{thebibliography}{10}

\bibitem{alemi_you_2015}
A.~Alemi, M.~Bierbaum, C.~Myers, and J.~Sethna, \emph{You can
  run, you can hide: The epidemiology and statistical mechanics of zombies}, Phys. Rev. E  \textbf{92} (2015). DOI: \url{https://doi.org/10.1103/PhysRevE.92.052801}

\bibitem{PhysRevE.101.062418}
M.~A.~Amaral, W.~G.~Dantas, and J.~J.~Arenzon, \emph{Skepticism
  and rumor spreading: The role of spatial correlations}, Phys. Rev. E
  \textbf{101} (2020). 
  DOI: \url{https://doi.org/10.1103/PhysRevE.101.062418}

\bibitem{andjel_shape_2011}
E.~D. Andjel, N.~Chabot, and E.~Saada, \emph{A shape theorem for an epidemic
  model in dimension {$d\geq3$}}, ALEA Lat. Am. J. Probab. Math. Stat.
  \textbf{12} (2015), no.~2, 917--953. \MR{3453301}

  \bibitem{bailey2025chaseescapeconversionmultiplesclerosis}
E.~Bailey, E.~Beckman, S.~Hernández-Torres, M.~Junge, A.~Kumar, A.~Lee, D.~Li, T.~Queer, A.~Raufov, L.~Reeves, and O.~Rondel, \emph{Chase-escape with conversion as a multiple sclerosis lesion
  model}, ArXiv Preprint (2025). \url{https://arxiv.org/abs/2507.21235}
  
  \bibitem{vdBerg1996}
J.~van~den~Berg and A.~Ermakov, \emph{A new lower bound for the critical
  probability of site percolation on the square lattice}, Random Structures
  Algorithms \textbf{8} (1996), no.~3, 199--212. \MR{1605409}

\bibitem{BergGrimmettSchinazi1998}
J.~van~den~Berg, G.~R.~Grimmett, and R.~B.~Schinazi, \emph{Dependent
  random graphs and spatial epidemics}, Ann. Appl. Probab. \textbf{8} (1998),
  no.~2, 317--336. \MR{1624925}

\bibitem{BramsonGriffeath1981}
M.~Bramson and D.~Griffeath, \emph{On the {W}illiams-{B}jerknes tumour
  growth model. {I}}, Ann. Probab. \textbf{9} (1981), no.~2, 173--185.
  \MR{606980}

\bibitem{CandelleroStauffer2024}
E.~Candellero and A~Stauffer, \emph{First passage percolation
  in hostile environment is not monotone}, Electron. J. Probab. \textbf{29}
  (2024), Paper No. 85, 42. \MR{4760273}

\bibitem{Chatterjee2022}
S.~Chatterjee, D.~Sivakoff, and M.~Wascher, \emph{The effect of
  avoiding known infected neighbors on the persistence of a recurring infection
  process}, Electron. J. Probab. \textbf{27} (2022), Paper No. 109, 40.
  \MR{4474536}

\bibitem{cox_limit_1988}
J.~T. Cox and R.~Durrett, \emph{Limit theorems for the spread of epidemics
  and forest fires},  \textbf{30}, no.~2, 171--191. \MR{978353}

\bibitem{DurrettNeuhauser1991}
R.~Durrett and C.~Neuhauser, \emph{Epidemics with recovery in {$D=2$}}, Ann.
  Appl. Probab. \textbf{1} (1991), no.~2, 189--206. \MR{1102316}

\bibitem{Durrett2019}
R.~Durrett, \emph{Probability---theory and examples}, fifth ed., 
Cambridge University Press, Cambridge, 2019. \MR{3930614}

\bibitem{DurrettJungeTang2020}
R.~Durrett, M.~Junge, and S.~Tang, \emph{Coexistence in chase-escape},
  Electron. Commun. Probab. \textbf{25} (2020), Paper No. 22, 14. \MR{4089729}

\bibitem{Grimmett1999}
G.~Grimmett, \emph{Percolation}, second ed., 
  Springer-Verlag, Berlin, 1999. \MR{1707339}

\bibitem{harris_contact_1974}
T.~E. Harris, \emph{Contact interactions on a lattice}, Ann. Probability
  \textbf{2} (1974), 969--988. \MR{356292}

\bibitem{hoeffding_probability_1963}
W.~Hoeffding, \emph{Probability inequalities for sums of bounded random
  variables}, J. Amer. Statist. Assoc. \textbf{58} (1963), 13--30. \MR{144363}

\bibitem{Janson1999}
S.~Janson, \emph{One, two and three times {$\log n/n$} for paths in a
  complete graph with random weights}, vol.~8, 1999, Random graphs and
  combinatorial structures (Oberwolfach, 1997), pp.~347--361. \MR{1723648}

\bibitem{LiggettSIS1999}
T.~M.~Liggett, \emph{Stochastic interacting systems: contact, voter and
  exclusion processes}, 
  Springer-Verlag,
  Berlin, 1999. \MR{1717346}

\bibitem{liggett_interacting_1985}
\bysame, \emph{Interacting particle systems}, 
  Springer-Verlag, Berlin, 2005, Reprint of the 1985 original. \MR{2108619}

\bibitem{mertens_exact_2022}
S.~Mertens, \emph{Exact site-percolation probability on the square
  lattice}, J. Phys. A \textbf{55} (2022), no.~33, Paper No. 334002, 24.
  \MR{4474303}
  
\bibitem{modee_SIM_2025}
  S.~Modée, \emph{GraphEpimodels.jl}, GitHub repository (2025).\\ \url{https://github.com/smodee/GraphEpimodels.jl}

\bibitem{munz_when_2009}
P.~Munz, I.~Hudea, J.~Imad, and R.~Smith, \emph{When zombies
  attack!: mathematical modelling of an outbreak of zombie infection},  In: Infectious Disease Modelling Research Progress, 
  \textbf{4} (2009), 133--150. ISBN 978-1-60741-347-9

\bibitem{swart_lecnotes_2022}
Jan~M.~Swart, \emph{A course in interacting particle systems}, ArXiv Preprint (2025).\\  \url{https://arxiv.org/abs/1703.10007}


\bibitem{Wierman2024}
J.~C.~Wierman and S.~P.~Oberly, \emph{A new upper bound for the site
  percolation threshold of the square lattice}, Combinatorics, graph theory and
  computing, Springer Proc. Math. Stat., vol. 462, Springer, Cham, 2024,
  pp.~129--138. \MR{4841317}

\bibitem{xue_asymptotic_2018}
X.~Xue, \emph{Asymptotic of the critical value of the large-dimensional
  {SIR} epidemic on clusters}, J. Theoret. Probab. \textbf{31} (2018), no.~4,
  2343--2365. \MR{3866615}

\end{thebibliography}
